\DeclareMathAlphabet\mathbfcal{OMS}{cmsy}{b}{n}
\numberwithin{equation}{section}
\newtheorem{theorem}{Theorem}[section]
\newtheorem{definition}[theorem]{Definition}
\newtheorem{lemma}[theorem]{Lemma}
\newtheorem{conjecture}[theorem]{Conjecture}
\newtheorem{proposition}[theorem]{Proposition}
\newtheorem{corollary}[theorem]{Corollary}
\newtheorem{remark}[theorem]{Remark}
\newtheorem{problem}[theorem]{Problem}
\newcommand{\DD}{{\mathbb D}}
\newcommand{\PP}{{\mathbb P}} \newcommand{\RR}{\mathbb{R}}
\newcommand{\QQ}{\mathbb{Q}} \newcommand{\CC}{{\mathbb C}}
\newcommand{\ZZ}{{\mathbb Z}} \newcommand{\NN}{{\mathbb N}}
\newcommand{\cF}{\mathcal{F}}
\newcommand{\cB}{\mathcal{B}}
\newcommand{\cE}{\mathcal{E}}
\newcommand{\cH}{\mathcal{H}}
\newcommand{\cC}{\mathcal{C}}
\newcommand{\cJ}{\mathcal{J}}
\newcommand{\cO}{\mathcal{O}}
\newcommand{\cP}{\mathcal{P}}
\newcommand{\cR}{\mathcal{R}}
\newcommand{\Ric}{\operatorname{Ric}}
\newcommand{\vol}{\operatorname{vol}}
\newcommand{\ord}{\mathrm{ord}}
\newcommand{\lct}{\mathrm{lct}}
\newcommand{\ddc}{dd^c}
\def\K{K\"ahler } 
\def\KE{K\"ahler--Einstein } \def\KEno{K\"ahler--Einstein}
\def\KRS{K\"ahler--Ricci soliton }
\def\KRSno{K\"ahler--Ricci soliton}
\def\beq{\begin{equation}}
\def\eeq{\end{equation}}
\def\beqno{\begin{equation*}}
\def\eeqno{\end{equation*}}
\def\eaeq{\end{aligned}}
\def\baeq{\begin{aligned}}
\def\bpf{\begin{proof}}
\def\epf{\end{proof}}
\def\ra{\rightarrow}
\def\bclaim{\begin{claim}}
\def\eclaim{\end{claim}}
\def\bdefin{\begin{definition}}
\def\edefin{\end{definition}}
\def\bcor{\begin{corollary}}
\def\ecor{\end{corollary}}
\def\bthm{\begin{theorem}}
\def\ethm{\end{theorem}}
\def\bconj{\begin{conjecture}}
\def\econj{\end{conjecture}}
\def\blem{\begin{lemma}}
\def\elem{\end{lemma}}
\def\blemma{\begin{lemma}}
\def\elemma{\end{lemma}}
\def\bprop{\begin{prop}}
\def\eprop{\end{prop}}
\def\bprob{\begin{problem}}
\def\eprob{\end{problem}}
\def\bremark{\begin{remark}}
\def\eremark{\end{remark}}
\theoremstyle{remark}
\def\lb{\label}
\def\er{\eqref}
\def\noi{\noindent}
\def\meds{\medskip}
\def\maxop{\operatorname{max}}
\def\pis{\pi^\star }
\def\o{\omega}
\def\FS{\operatorname{FS}}
\def\vp{\varphi}
\def\mth{{$m$}}
\def\mthit{$m^{\text{\tiny th}}$}
\title{Basis divisors and balanced metrics\\
}
\begin{document}

\author[]{Yanir A. Rubinstein}
\address{
Department of Mathematics, University of Maryland, College Park, MD 20742, USA
}
\email{yanir@alum.mit.edu}

\author[]{Gang Tian}
\address{Beijing International Center for Mathematical Research, Peking University, Beijing, 100871, China}
\email{tian@math.pku.edu.cn}

\author[]{Kewei Zhang}
\address{School of Mathematical Sciences, Beijing Normal University, Beijing, 100875, China.}
\email{kwzhang@pku.edu.cn}

\maketitle

\noi{\bf Abstract.} 
Using log canonical thresholds and basis divisors Fujita--Odaka introduced purely algebro-geometric invariants 
$\delta_m$ whose limit in $m$ is now known to characterize uniform K-stability on a Fano variety. As shown by Blum--Jonsson this carries over to a general polarization, and together with work of Berman, Boucksom, and Jonsson, it is now known that 
the limit of these $\delta_m$-invariants characterizes uniform Ding stability. A basic question since Fujita--Odaka's work has been to find an analytic interpretation of these invariants. We show that each $\delta_m$ is the coercivity threshold of a 
quantized Ding functional on the \mthit\ Bergman space and thus characterizes the existence of balanced metrics.
This approach has a number of applications. The most basic one is that it provides an alternative way to compute these
invariants, which is new even for $\PP^n$. Second, it allows us to introduce algebraically defined invariants that characterize the existence of \KRSno{s} (and the more general $g$-solitons of Berman--Witt Nystr\"om), as well as coupled versions thereof. Third, it leads to approximation results involving balanced metrics in the presence of automorphisms that extend some results of Donaldson. 

\tableofcontents

\section{Introduction}
\lb{introsec}

Complex singularity exponents serve as an important bridge between complex geometry
and algebraic geometry. On the one hand, they measure integrability thresholds for
analytic singularities. On the other hand, as conjectured by Cheltsov and shown by Demailly \cite{CS08} (see also Shi \cite{Shi10}), they 
can be interpreted as log canonical thresholds (lct) of divisors. This link has proven
immensely useful in attacking the \KE problem on a Fano manifold $X$, going back to \cite{Tian87}, see, e.g.,
\cite{Tian89,Tia90,CS08}. The terminology for such invariants has become ``$\alpha$-invariants",
and Demailly's theorem states that the $\alpha$-invariant from \cite{Tian87} 
is the limit of a sequence of thresholds lct${}_m$ defined using the linear
system of $m$-anticanonical divisors $|-mK_X|$, which in turn are each equal
to an analytically defined invariant $\alpha_m$ introduced in \cite{Tian89} 
that measures integrability thresholds of \K potentials associated to 
the Kodaira embedding of that linear system.

Recently, Fujita--Odaka \cite{FO18} proposed an approach to the \KE problem by studying 
log canonical thresholds of a particular class of divisors in $|-mK_X|$, called {\it basis
divisors}. This approach yields a sequence of so-called {\it stability thresholds}
(a common terminology here is ``$\delta$-invariants") denoted $\delta_m(-K_X)$
and their limit, denoted $\delta(-K_X)$, detects uniform K-stability by op. cit. and
work of Blum--Jonsson \cite{BJ17}.
However, missing from
this picture is an analytic interpretation of these purely algebraic Fujita--Odaka invariants. Such an interpretation
seems highly desirable, especially given the success in the $\alpha$/lct world. 
In this article we obtain such an analytic interpretation to the Fujita--Odaka approach,
which can also be viewed as an analogue of the Demailly--Tian results in the world of basis divisors.

Our main contribution is to provide an
analytic counterpart of the 
$\delta_m$-invariant, denoted $\delta_m^A$.
Moreover, as it turns out, the analytic approach in the $\delta$-setting yields a number of 
useful applications to canonical metrics that are new, and completely absent from the $\alpha$-setting.
While each $\alpha_m$-invariant does not have a clear geometric application, the analytic
\mthit\ stability threshold $\delta^A_m$ that we introduce here turn out to characterize
balanced metrics. Moreover, they serve as coercivity thresholds for certain quantized Ding functionals.
They are very much computable, in some instances more so than their algebraic counterparts that
up until now were unknown even for $\PP^n$.
Since we show that the two actually coincide, 
$$\delta^A_m=\delta_m,
$$
this proves quite useful in a number of situations. Moreover, via the work of Blum--Jonsson \cite{BJ17} 
this shows that our analytic invariants converge to the Fujita--Odaka $\delta$, and so this
gives via the Yau--Tian--Donaldson framework a new approach to existence and stability. 
Once the connection of our invariants to balanced metrics is proven, one realizes that this
framework is quite flexible, and indeed we show it extends general polarized manifolds $X$ and characterizes
twisted
\KE metrics, \KRSno{s}, coupled \KE metrics, among other canonical metrics. This ties quite neatly with
work of Donaldson,  Berman--Boucksom--Guedj--Zeriahi, Berman--Witt Nystr\"om, Berman--Bouckom--Jonsson and others
\cite{Don01,BBGZ,BWN14,BBJ18}
on relations between balanced metrics, stability, and existence
of canonical metrics. 
We mention that our analysis also extends to the setting of klt currents, 
following Berman--Boucksom--Eyssidieux--Guedj--Zeriahi
and Berman--Boucksom--Jonsson.

\section{Results}

Let $L$ be an ample $\QQ$-line bundle over an $n$-dimensional projective manifold $X$. We assume throughout
\begin{equation}
    \label{eq:m-divisible-mL-ample}
    m\in\NN \text { is sufficiently divisible and } mL\text{ is very ample.}
\end{equation}
Set
$$
d_m:=\dim H^0(X,mL).
$$
The following notion was introduced by Fujita--Odaka.
\begin{definition}
\label{definition:basisdiv}
We say that $D\sim_{\QQ}L$ is a {\rm basis divisor} if for
some $m\in\NN$,
$$
D=\frac{1}{md_m}\sum_{i=0}^{d_m}(s_i),
$$
where
$s_1,...,s_{d_m}$ is a basis of $H^0(X,mL)$, and where
$
(s_i)
$
is the divisor cut out by $s_i$.
We also say that $D$ is the
$m$-basis divisor associated to
the basis $\{s_i\}_{i=1}^{d_m}$.
\end{definition}

Following Fujita--Odaka \cite[Definition 0.2]{FO18}, set 

\beq
\label{def:delta-m-delta}
\delta_m(L):=\inf\Big\{\lct(X,D)\,:\,\ \hbox{\mth}\text{-basis divisor $D$ of } L\Big\},
\eeq
(see \S\ref{lctsubsec} for the definition of $\lct$)
also referred to as the \mth-basis log canonical threshold (blct${}_m$) or 
\mth-stability threshold \cite{CRZ19,BJ17},
and set
\beq
\lb{deltameq}
\delta(L):=\limsup_m\delta_m(L)=\lim_m\delta_m(L),
\eeq
where the last equality is due to Blum--Jonsson \cite[Theorem A]{BJ17}.

The following result of Berman--Boucksom--Jonsson says that 
$\delta=1$ is a threshold for existence of solutions
of the K\"ahler--Einstein type equations
\beq
\lb{thetaKE}
\Ric(\omega)=\omega+\theta,
\eeq 
where $\theta$ is a semipositive smooth $(1,1)$-form in $c_1(X)-c_1(L)$.
Such $\o$ are sometimes called  $\theta$-twisted \KEno, and for
brevity we will simply call them {\it $\theta$-\KEno}.

\begin{theorem}\cite[smooth version of Theorem A]{BBJ18}
\label{thm:BBJ-YTD}
Let $L$ be an ample $\QQ$-line bundle on $X$ and let 
$\theta$ be smooth and semipositive with $[\theta]=c_1(X)-c_1(L)$. 
Then,
\begin{enumerate}[(i)]
    \item if $\delta(L)>1$, there exists a solution
    to \er{thetaKE};
    
    \item if there exists a solution
    to \er{thetaKE} then $\delta(L)\geq1$.
    If such $\o$ is unique then $\delta(L)>1$.
\end{enumerate}
\end{theorem}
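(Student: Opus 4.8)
The plan is to run the variational/non-Archimedean strategy of Berman--Boucksom--Jonsson, with the Ding functional as the central object; I set up the analytic side first. Fix a reference \K form $\o_0\in c_1(L)$ and work on the finite-energy space $\cE^1=\cE^1(X,L)$. The relevant functional is the $\theta$-twisted Ding functional $D_\theta(\vp)=-E(\vp)-\log\int_X e^{-\vp}\mu_\theta$, where $E$ is the Monge--Amp\`ere energy and $\mu_\theta$ is the smooth positive measure $\mu_\theta=e^{h_\theta}\o_0^n$ determined by $\Ric\o_0=\o_0+\theta+\ddc h_\theta$; its Euler--Lagrange equation is precisely \er{thetaKE}. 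By Berndtsson's theorem (subharmonicity of direct images) $D_\theta$ is convex along weak geodesics in $\cE^1$, so a $\theta$-\KE metric, being a critical point, is automatically a global minimizer of $D_\theta$; conversely, lower semicontinuity of $D_\theta$ and compactness of the sublevel sets of $J$ in $\cE^1$ show that if $D_\theta$ is \emph{coercive} ($D_\theta\ge\epsilon J-C$ for some $\epsilon>0$) then it attains a minimum, and Kolodziej's $L^\infty$ estimate together with the standard elliptic bootstrap upgrade any minimizer to a smooth solution of \er{thetaKE}. Thus it suffices to prove: (a) $\delta(L)>1\Rightarrow D_\theta$ is coercive; (b) a solution of \er{thetaKE} exists $\Rightarrow D_\theta$ is bounded below; (c) the solution is unique $\Rightarrow D_\theta$ is coercive.

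The second ingredient is the non-Archimedean counterpart. To a geodesic ray $(\vp_t)_{t\ge0}$ in $\cE^1$ one associates, via the Ross--Witt Nystr\"om correspondence, a filtration of $\bigoplus_m H^0(X,mL)$ and hence radial functionals $E^{\mathrm{NA}}$, $L^{\mathrm{NA}}$ and the non-Archimedean Ding functional $D^{\mathrm{NA}}_\theta:=L^{\mathrm{NA}}-E^{\mathrm{NA}}$, realized as the slope $\lim_{t\to\infty}D_\theta(\vp_t)/t$. Since $\mu_\theta$ is a smooth positive measure (comparable to $\o_0^n$), neither $L^{\mathrm{NA}}$ nor the slope of $D_\theta$ sees the twist --- which is exactly why one and the same threshold $\delta(L)$ governs \er{thetaKE} for every admissible $\theta$. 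On a divisorial valuation $v$ one computes, by an Okounkov-body/Legendre-transform argument, $D^{\mathrm{NA}}_\theta(v)=A_X(v)-S_L(v)$ with $S_L(v)=\tfrac1{\vol L}\int_0^\infty\vol(L-tv)\,dt$; combined with the valuative formula $\delta(L)=\inf_v A_X(v)/S_L(v)$ of Blum--Jonsson \cite{BJ17} this gives $D^{\mathrm{NA}}_\theta\ge(1-\delta(L)^{-1})\,S_L$ on non-Archimedean potentials, and since $S_L$ is comparable to $J^{\mathrm{NA}}$ one concludes that $\delta(L)>1$ is equivalent to uniform Ding stability ($D^{\mathrm{NA}}_\theta\ge\epsilon J^{\mathrm{NA}}$) and $\delta(L)\ge1$ to Ding semistability ($D^{\mathrm{NA}}_\theta\ge0$).

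These assemble as follows. For (i): assume $\delta(L)>1$, so $D^{\mathrm{NA}}_\theta$ is coercive; if $D_\theta$ were not coercive, a compactness argument in $\cE^1$ extracts from a non-coercive minimizing sequence a nonconstant geodesic ray along which $D_\theta$ grows sublinearly, i.e.\ with $D^{\mathrm{NA}}_\theta\le0$, contradicting uniform Ding stability. Hence $D_\theta$ is coercive and \er{thetaKE} has a smooth solution. For (ii): a solution of \er{thetaKE} is a minimizer of $D_\theta$, so $D_\theta$ is bounded below, hence $D^{\mathrm{NA}}_\theta\ge0$ on every ray and $\delta(L)\ge1$. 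If moreover $\o$ is the unique solution, then along any nonconstant geodesic ray issuing from the minimizer, $D_\theta$ is convex, bounded below, with slope $D^{\mathrm{NA}}_\theta\ge0$; were the slope $0$, convexity would force $D_\theta$ to be constant along the ray, producing a second $\theta$-\KE metric --- impossible. So $D^{\mathrm{NA}}_\theta>0$ on every nonconstant ray, and upgrading this to a uniform gap (compactness of the space of normalized geodesic rays and lower semicontinuity of $D^{\mathrm{NA}}_\theta$, the latter itself a form of semicontinuity of log canonical thresholds) yields $\delta(L)>1$.

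I expect the main obstacle to be the descent from non-Archimedean to Archimedean coercivity used in (i), i.e.\ the implication ``uniform Ding stability $\Rightarrow D_\theta$ coercive''. Two points are delicate. First, the destabilizing minimizing sequence need not be subgeodesic, so one must manufacture a genuine geodesic ray out of it (and, to make the comparison clean, pass to its maximization) while keeping the $D_\theta$-slope nonpositive. Second, and more substantially, one must identify this slope --- the radial $L$-functional $\lim_t\bigl(-\tfrac1t\log\int_X e^{-\vp_t}\mu_\theta\bigr)$ --- with its algebraic shadow, a log-canonical-threshold-type invariant of the associated filtration, and establish the necessary semicontinuity. This is precisely where pluripotential theory is indispensable (strong openness, in the spirit of Demailly's theorem identifying log canonical thresholds with complex singularity exponents), and it is the exact point at which the $\delta$-story runs parallel to the classical $\alpha$-invariant/lct circle. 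The remaining ingredients --- the Ross--Witt Nystr\"om correspondence on $\cE^1$, the computation of $D^{\mathrm{NA}}_\theta$ on valuations, the compactness of $\cE^1$ and of the non-Archimedean space, and the final elliptic regularity --- are by now standard and may be imported from \cite{BBGZ,BBJ18} and the references therein.
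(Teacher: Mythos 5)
This statement appears in the paper only as a cited result --- it is quoted verbatim as ``the smooth version of Theorem A'' of Berman--Boucksom--Jonsson \cite{BBJ18}, and the paper supplies no proof of its own; indeed Theorem~\ref{thm:BBJ-YTD} is precisely what the rest of the paper sets out to \emph{quantize} (Theorem~\ref{thm:finite-dim-YTD}), not to reprove. So there is no proof in the paper to compare yours against.

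That said, your sketch is a faithful outline of the Berman--Boucksom--Jonsson argument: convexity of the Ding functional via Berndtsson, the reduction to (a) coercivity, (b) boundedness below, and (c) coercivity-from-uniqueness, the non-Archimedean Ding functional $D^{\mathrm{NA}}_\theta=L^{\mathrm{NA}}-E^{\mathrm{NA}}$ computed on divisorial valuations as $A_X-S_L$, the observation that smoothness of $\theta$ (hence vanishing Lelong numbers) makes the twist invisible to the radial functionals, and the Blum--Jonsson valuative formula for $\delta(L)$. You also correctly locate the two genuinely hard steps: extracting a destabilizing geodesic ray from a non-coercive minimizing sequence with controlled slope, and identifying the Archimedean slope of the $L$-term with a non-Archimedean (lct-type) quantity, which requires strong openness. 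Two very minor slips worth noting. First, the coercivity estimate you write, $D^{\mathrm{NA}}_\theta\ge(1-\delta(L)^{-1})S_L$, is weaker than what the valuative inequality $A_X\ge\delta(L)S_L$ actually gives, namely $D^{\mathrm{NA}}_\theta\ge(\delta(L)-1)S_L$; either suffices for coercivity when $\delta(L)>1$, but the factor should be $\delta-1$, not $1-\delta^{-1}$, if you bound against $S_L$. Second, the step ``upgrading $D^{\mathrm{NA}}_\theta>0$ on all nonconstant rays to a uniform gap'' in your part (c) is where BBJ do substantial work (via compactness in the space of rays with normalized $E^{\mathrm{NA}}$-slope together with lower semicontinuity of $D^{\mathrm{NA}}_\theta$); stating it as an expected semicontinuity/compactness step is honest, but this is the real content of that implication, not a routine compactness argument. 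None of this amounts to a gap in the sense of a wrong idea --- it is the correct roadmap with the correct acknowledged lacunae.
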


\subsection{Algebraic characterization of balanced metrics}

One of the main results of this article is a quantized version of 
Theorem \ref{thm:BBJ-YTD}. 
We show that $\delta_m=1$ is a threshold for the existence of {\it $\theta$-balanced metrics of level $m$} (sometimes referred to 
as twisted balanced metrics);
see \S\ref{sec:alpha-balanced}
for the precise definition.

\begin{theorem}
\label{thm:finite-dim-YTD}
{\rm (Algebraic characterization of balanced metrics)}
Let $L$ be an ample $\QQ$-line bundle on $X$ and let 
$\theta$ be smooth with $[\theta]=c_1(X)-c_1(L)$. 
Then,
\begin{enumerate}[(i)]
    \item if $\delta_m(L)>1$, there exists a
    $\theta$-balanced metric of level $m$;
    
    \item suppose $\theta$ is semipositive. If there exists a  $\theta$-balanced metric of level $m$ then $\delta_m(L)\geq1$.
    If such a metric is unique then $\delta_m(L)>1$.
\end{enumerate}

\end{theorem}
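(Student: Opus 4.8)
The plan is to set up the quantized Ding functional on the $m$th Bergman space and identify $\delta_m(L)$ with its coercivity threshold, so that the theorem becomes a finite-dimensional analogue of the Berman--Boucksom--Jonsson variational picture. Concretely, fix a reference metric $h_0$ on $mL$ with curvature form $m\omega_0$ and a volume form adapted to $\theta$. The space of positively-curved Hermitian metrics on $mL$ that are ``algebraic of level $m$'' is parametrized by the symmetric space $\mathcal{B}_m = GL(d_m,\CC)/U(d_m)$ (Bergman metrics), and on it one writes down the natural quantized analogue $D_m$ of the Ding functional: the difference of a quantized ``$L$-energy'' term (built from the Fubini--Study-type potential, essentially $\frac{1}{m}$ times a $\log\det$ of the Gram matrix) and a quantized ``entropy/free energy'' term $-\log\int_X e^{-\text{(potential)}}\,d\mu_\theta$. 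The first step is to record that critical points of $D_m$ on $\mathcal{B}_m$ are exactly the $\theta$-balanced metrics of level $m$ — this is essentially the definition in \S\ref{sec:alpha-balanced}, and the Euler--Lagrange equation is the balanced condition equating the Bergman/Gram matrix to the natural $L^2$-pairing against $e^{-\varphi}d\mu_\theta$.

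The second step is the convexity structure: $D_m$ is geodesically convex along the (linear, i.e. one-parameter subgroup) geodesics of $\mathcal{B}_m$ when $\theta\geq 0$, with the $L$-energy part affine and the free-energy part convex by Hölder/Jensen, and one computes its asymptotic slope along a geodesic ray emanating from a point in the direction of a Hermitian endomorphism $A$. The key computation — and I expect this to be the technical heart — is that this slope is a piecewise-linear function of the eigenvalues of $A$ which, after optimizing, is governed precisely by $\lct(X,D_A)$ where $D_A$ is the $m$-basis divisor associated to a basis diagonalizing $A$; more precisely the slope is nonnegative for all $A$ (normalized to be trace-free) if and only if $\lct(X,D)\geq 1$ for all $m$-basis divisors $D$, i.e. iff $\delta_m(L)\geq 1$, and is uniformly positive (coercive) iff $\delta_m(L)>1$. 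This is the finite-dimensional shadow of the non-Archimedean computation in \cite{BBJ18,BJ17}: the filtration by order of vanishing along the geodesic degenerates the basis to a monomial-type basis whose associated divisor is exactly the basis divisor, and the integrability of $e^{-t\cdot(\text{slope potential})}$ along the ray is controlled by the log canonical threshold of that divisor. The identity ``quantized Ding slope $=$ blct${}_m$ expression'' packages this.

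Granting the convexity and the slope formula, part (i) follows from the standard argument that a geodesically convex, proper (coercive) functional on the symmetric space $\mathcal{B}_m$ — which is a Hadamard manifold, so geodesically complete and convex — attains its infimum, and the minimizer is the sought $\theta$-balanced metric; coercivity is exactly what $\delta_m(L)>1$ buys us via the slope estimate. For part (ii), existence of a $\theta$-balanced metric means $D_m$ has a critical point; when $\theta\geq 0$ so that $D_m$ is geodesically convex, a critical point is a global minimum, hence all asymptotic slopes are $\geq 0$, which by the slope formula gives $\delta_m(L)\geq 1$. If in addition the balanced metric is unique, then $D_m$ is strictly convex transverse to the automorphism directions and cannot have a geodesic ray of zero slope (such a ray would produce a one-parameter family of minimizers contradicting uniqueness, after quotienting by the trivial $\CC^\ast$-rescaling), forcing every normalized slope to be strictly positive, i.e. $\delta_m(L)>1$.

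The main obstacle is the slope computation of Step 2: one must show rigorously that the asymptotic behavior of the quantized free-energy term $-\log\int_X e^{-\varphi_{tA}}\,d\mu_\theta$ along a linear geodesic $t\mapsto e^{tA}$ is, up to $o(t)$ and an explicit affine term coming from the $L$-energy, exactly $t\cdot(1-\lct(X,D_A))\cdot(\text{positive const})$ (to leading order in the regime where the integrability threshold is reached), so that nonnegativity of the total slope is equivalent to $\lct(X,D_A)\geq 1$. This requires carefully relating the sup over the geodesic direction of the Fubini--Study potential $\varphi_{tA}$ to the singularity of the divisor $D_A=\frac{1}{md_m}\sum(s_i)$ for the eigenbasis $\{s_i\}$ of $A$, and then an integrability (log canonical threshold) estimate of the type $\int_X |s|^{-2c}\,d\mu < \infty \iff c < \lct$; one also needs to handle the normalization (trace-free vs. the overall scaling that does not affect the balanced equation) and the contribution of $\theta$ through $d\mu_\theta$ being a genuine, possibly singular, volume form. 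Once this asymptotic slope formula is in hand, the rest is the soft convex-analysis argument sketched above.
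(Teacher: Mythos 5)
Your overall architecture agrees with the paper's: the quantized Ding functional $F^\theta_m$ on $\cB_m$, whose critical points are the $\theta$-balanced potentials; Berndtsson convexity along Bergman geodesics when $\theta\ge 0$; and $\delta_m(L)$ as the coercivity threshold (this is the paper's Proposition \ref{prop:F-coerc=delta-m}, proved via Proposition \ref{prop:deltam=int<C}, the $E_m$/$S_m$ identity Lemma \ref{lem:E-m=S-m}, and the local integral estimate Lemma \ref{lem:I-lambda>eta}). So the strategy is right, and you correctly flag the coercivity characterization as the technical heart. But there are two places where you conflate properness with coercivity in a way that would break the argument.

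First, your ``slope formula'' gives at best that all asymptotic slopes of $F^\theta_m$ are $\ge 0$ iff $\delta_m(L)\ge 1$, and that along the Bergman ray $\varphi_F(t)$ the upper bound on the slope is $A_X(F)-S_m(F)$ (not $1-\lct(X,D_A)$ per se; these agree after taking the infimum over $F$ because $\delta_m=\inf_F A_X(F)/S_m(F)$, but the slope of a single ray is governed by $A_X(F)/S_m(F)$, not by $\lct(X,D_A)$ which is an inf over all $F'$). More seriously, ``all slopes $>0$'' is properness, not coercivity $F^\theta_m\ge\varepsilon(\sup\varphi-E_m)-C$, and it is the latter that gives existence of a minimizer in part (i) by the compactness of $d_{1,m}$-bounded sets (Lemma \ref{lem:d-1-cptness}). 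Extracting the strict coercivity constant from $\delta_m(L)>1$ requires the extra bookkeeping with the pseudo-effective threshold $\tau_m(F)$ and the inequality $S_m(F)\le\frac{d_m-1}{md_m}\tau_m(F)$ appearing in the proof of Proposition \ref{prop:F-coerc=delta-m} (together with Lemma \ref{lem:mu-max=sup-E} relating $\sup\varphi-E_m$ to $\log(\mu_{\mathrm{max}}/\mu_i)$). Your sketch does not recover this quantitative step, so ``$\delta_m>1\Rightarrow$ coercive'' remains unproved in your write-up.

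Second, for the uniqueness clause in part (ii): ``no ray of zero asymptotic slope'' does not give $\delta_m>1$. Every slope could be strictly positive while the infimum over rays is $0$, which would give $\delta_m=1$, not $>1$. The paper closes this gap with the Darvas--Rubinstein coercivity-from-uniqueness principle \cite{DR17}: one shows $\lambda:=\inf\{[F^\theta_m(\varphi)-F^\theta_m(\varphi_0)]/d_{1,m}(\varphi_0,\varphi):d_{1,m}\ge 2,\,E_m(H,\varphi)=0\}>0$ by contradiction, extracting a subsequential $C^\infty$ limit of unit-speed geodesic points via the compactness Lemma \ref{lem:d-1-cptness} and using the uniqueness of the minimizer; then Lemma \ref{lem:d-1-m=sup-E} converts $d_{1,m}$-coercivity into the $\sup\varphi-E_m$-coercivity required by Proposition \ref{prop:F-coerc=delta-m}. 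Without this compactness step your uniqueness argument does not close, and the quotient-by-$\CC^*$ heuristic you invoke does not substitute for it.
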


\bremark
{\rm 
Berman--Boucksom--Jonsson prove a more general version of
Theorem \ref{thm:BBJ-YTD} which allows
$\theta$ to be a klt current. 
For conciseness and clarity we assume $\theta$ to be smooth throughtout the main body of this article. 
As we shall see in
Appendix \ref{sec:current}, Theorem \ref{thm:finite-dim-YTD}
also generalizes,
inspired by \cite{BBJ18},
to the setting where $\theta$ is 
merely a klt current;
see Theorem \ref{thm:finite-dim-YTD-current} which can be considered as the quantized version of \cite[Theorem A]{BBJ18}.
}
\eremark

One of the main motivations for studying
balanced metrics is Donaldson's theorem
that shows that balanced
metrics, in the case of finite automorphisms,
approximate constant scalar curvature metrics \cite[Theorem 3]{D05} 
(see \cite{IO20} for an alternative proof in the \KE case); 
we prove in Proposition \ref{prop:alpha-approx-by-0-delta-i}
that a $\theta$-K\"ahler--Einstein metric can be 
approximated by a sequence of suitably twisted balanced metrics 
even when there are possibly non-trivial holomorphic vector fields.

Theorem \ref{thm:finite-dim-YTD} 
relates the differential geometric notion
of a balanced metric with the algebraic geometric
invariant $\delta_m(L)$. To prove this relation we quantize certain 
analytic $\delta$-invariant, studied in detail by one of us \cite{Zha20},
that we now turn to describe; an outline of the 
proof of Theorem \ref{thm:finite-dim-YTD} 
is given at the end of \S\ref{analyticsubsec}.

\subsection{An analytic construction of the $\delta_m$- and $\delta$-invariant}
\lb{analyticsubsec}

The work of Fujita--Odaka, Blum--Jonsson,
and Berman--Boucksom--Jonsson placed at center stage
the algebraically defined $\delta$-invariant by
showing it detects the existence of \KE metrics.
The latter, of course, can be defined purely analytically.
Thus, and in light of the Demailly--Tian results described
in \S\ref{introsec}, a basic question in \KE theory is:

\bprob
\lb{deltanalyticprob}
Can the $\delta_m$- and $\delta$-invariant be computed/defined analytically?
\eprob
\meds

Partial progress on an analytic definition of the $\delta$-invariant
(but not the $\delta_m$-invariant) was obtained by 
Cheltsov--Rubinstein--Zhang \cite[Theorem 5.7]{CRZ19}
and Berman--Boucksom--Jonsson \cite[Theorem C]{BBJ18}. They proved 
that when $\delta$ is at most
$1$ it coincides with the greatest Ricci lower bound $\beta$ going back
to \cite{Tia92,Rub08,Rub09,Sz11}. However, it is precisely the regime where
$\delta>1$ that guarantees existence and in that regime an analytic
insight into $\delta$ is incomplete. What is more, computing $\beta$
itself is challenging and provides further motivation to our study.

We solve Problem \ref{deltanalyticprob} affirmatively. To explain our approach
we introduce some notation first.
Given the polarized pair $(X,L)$ as above, we fix a positively curved smooth Hermitian metric $h$ on $L$. 
The curvature form of $h$, $\omega:=-\ddc\log h$ (where $\ddc:=\frac{\sqrt{-1}}{2\pi}\partial\bar{\partial}$), 
defines a space of \K potentials,
\begin{equation*}
    \cH_\omega:=\Big\{\varphi\in C^\infty(X)\,:\,\omega_\varphi:=\omega+\ddc\varphi>0\Big\}.
\end{equation*}
The Monge--Amp\`ere energy $E(\varphi)$ 
for $\varphi\in\cH_\omega$ is then defined by
\cite[Theorem 2.3]{Mabuchi}
\begin{equation}
\lb{MAener}
    E(\varphi):=E_\omega(
\varphi):=\frac{1}{(n+1)\int_X\omega^n}\sum_{i=0}^n\int_X\varphi\omega^{n-i}\wedge\omega^i_\varphi.
\end{equation}
Following \cite[Definition 3.1]{Zha20}, set
\begin{equation}
\label{deltaA}
\delta^A(L):=\sup\bigg\{\delta>0\,:\,
\sup_{\varphi\in\cH_\omega}
\int_Xe^{-\delta(\varphi-E(\varphi))}\omega^n<\infty
\bigg\}.
\end{equation}
In other words, using an identity of Aubin, $\delta^A(L)$ is the optimal constant in the so-called 
(generalized) Moser--Trudinger inequality \cite{Aub84}.
A conjectural approach to the second part of Problem \ref{deltanalyticprob} is
the following

\bconj
\lb{Zconj}
$\delta^A(L)=\delta(L)$.
\econj

In this article, inspired by Problem \ref{deltanalyticprob}
and Conjecture \ref{Zconj}
we take a quantization approach: First, we introduce a quantized
version of $\delta^A(L)$, that we denote by $\delta^A_m(L)$,
and that is defined analytically; second, we show that 
our invariant $\delta^A_m(L)$ coincides with the Fujita--Odaka
$\delta_m(L)$. This then solves Problem \ref{deltanalyticprob}
and confirms a quantized version of Conjecture \ref{Zconj}. Moreover, our quantization approach also provides a natural way to establish one direction of Conjecture \ref{Zconj}: 
$$\delta^A(L)\leq\delta(L),$$
see Proposition \ref{prop:delta-A<=delta}; see also \cite[Proposition 3.11]{Zha20} for an alternative proof relying the non-Archimedean approach in \cite{BBJ18}. 

To describe our invariants $\delta^A_m(L)$ we introduce some 
additional notation.
Let $\cP_m$ denote the space of all Hermitian inner products on the
complex vector space $H^0(X,mL)$. 
As observed by Donaldson 
\cite[p. 198]{Kob87},\cite{D05} 
a fundamental Bott--Chern type functional on $\cP_m\times\cP_m$ is
\beq
\lb{Em1st}
E_m(H,K):=\frac1{md_m}\log\det K^{-1}H.
\eeq 
In practice it is convenient
to fix some $H$ in the first slot and, in the second slot, 
to pull-back via the isomorphism $\operatorname{FS}:\cP_m\ra\cB_m$ \cite{D05,His17},
\begin{equation*}
	\operatorname{FS}(K):=\frac{1}{m}\log\sum_{i=1}^{d_m}|\sigma_i|^2_{h^m},
\end{equation*}
where $\{\sigma_i\}$ is an(y) orthonormal basis of $K$,
and where $\cB_m$ denotes the image of $\cP_m$ via $\operatorname{FS}$,
also called the \mth\ Bergman space,
\begin{equation*}
    \cB_m:=\bigg\{\varphi=\frac{1}{m}\log\sum_{i=1}^{d_m}|\sigma_i|_{h^m}^2\,:\,\{\sigma_i\}_{i=1}^{d_m}\text{ is a basis of }H^0(X,mL)
    \bigg\}.
\end{equation*}
This yields a functional $E_m\big(H,\FS^{-1}(\,\cdot\,)\big)$, 
that we also denote by
$$
E_m(H,\vp):= 
E_m(H,\FS^{-1}(\vp))=
E_m(H,K),
\quad \hbox{for $\varphi=\FS(K)
\in\cB_m$}.
$$
As shown by Donaldson $E_m$ is 
the natural quantization of $E$ \er{MAener} \cite[\S3]{D05}.
A heuristic
way to understand this is to note that while the gradient of $E$ in the Mabuchi $L^2$ metric on $\cH_\omega$ is the constant vector field $1$,
the gradient of $E_m$ in the symmetric space metric on $\cB_m\cong GL(d_m,\CC)/U(d_m)$ is the left-invariant
field associated to the identity matrix.

\begin{definition}
\label{def:delta-A-m}
Let $L$ be an ample $\QQ$-line bundle. The 
\mthit\ analytic stability threshold is defined by
    \begin{equation*}
   \delta_m^A(L):=\sup\bigg\{\delta>0\,:\,\sup_{\varphi\in\cB_m} \int_Xe^{-\delta(\varphi-E_m(H,\varphi))}\omega^n<\infty\bigg\}.
\end{equation*}
\end{definition}

One should think of $\delta_m^A(L)$ as the optimal constant in a \emph{quantized Moser--Trudinger inequality}.
Note that this definition depends neither on the choice of $H$ 
(due to the cocyclic nature of $E_m$, i.e., $E_m(H,K)+E_m(K,N)=E_m(H,N)$),
nor on the choice of $h$ (and hence of $\o$).

Our next result solves Problem  \ref{deltanalyticprob}
and establishes a quantized version of Conjecture \ref{Zconj}.

\begin{theorem}
\label{thm:delta-A-m=delta-m}
Let $L$ be an ample $\QQ$-line bundle. Then $\delta_m^A(L)=\delta_m(L)$.
\end{theorem}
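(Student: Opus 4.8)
The plan is to prove the two inequalities $\delta_m^A(L)\le\delta_m(L)$ and $\delta_m^A(L)\ge\delta_m(L)$ separately, both derived from one elementary observation. By the polar decomposition of $GL(d_m,\CC)$, every $\varphi\in\cB_m$ can be written, for some $H$-orthonormal basis $\{s_i\}_{i=1}^{d_m}$ of $H^0(X,mL)$ and some $b_1,\dots,b_{d_m}\in\RR$ (both depending on $\varphi$), as $\varphi=\frac1m\log\sum_i e^{b_i}|s_i|^2_{h^m}$, and then $E_m(H,\varphi)=\frac1{md_m}\sum_i b_i$. The arithmetic--geometric mean inequality applied to $\sum_i e^{b_i}|s_i|^2_{h^m}$ gives the pointwise bound $\varphi-E_m(H,\varphi)\ge\frac1{md_m}\sum_i\log|s_i|^2_{h^m}=:\psi_{D_{\{s\}}}$, where $D_{\{s\}}:=\frac1{md_m}\sum_i(s_i)$ is the $m$-basis divisor of $\{s_i\}$ and $\psi_{D_{\{s\}}}$ is a global $\omega$-psh potential for it, i.e.\ $\omega+\ddc\psi_{D_{\{s\}}}=[D_{\{s\}}]$. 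Hence $e^{-\delta(\varphi-E_m(H,\varphi))}\le e^{-\delta\psi_{D_{\{s\}}}}$ and $\int_X e^{-\delta(\varphi-E_m(H,\varphi))}\omega^n\le\int_X e^{-\delta\psi_{D_{\{s\}}}}\omega^n$, the latter being finite if and only if $\delta<\lct(X,D_{\{s\}})$.

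For $\delta_m^A\ge\delta_m$ I fix $\delta<\delta_m(L)$. Then $\lct(X,D_{\{s\}})\ge\delta_m(L)>\delta$ for every orthonormal basis $\{s_i\}$ by \eqref{def:delta-m-delta}, so each integral above is finite. To obtain a bound uniform in $\varphi$ I would use that the orthonormal bases form the compact group $U(d_m)$, so $\{D_{\{s\}}\}$ is a compact real-analytic family of effective $\QQ$-divisors in $c_1(L)$ with $\inf_{\{s\}}\lct(X,D_{\{s\}})\ge\delta_m(L)>\delta$, and invoke the effective semicontinuity theorem for log canonical thresholds (Demailly--Kollár) to conclude $\sup_{\{s\}}\int_X e^{-\delta\psi_{D_{\{s\}}}}\omega^n<\infty$; hence $\sup_{\varphi\in\cB_m}\int_X e^{-\delta(\varphi-E_m(H,\varphi))}\omega^n<\infty$, so $\delta\le\delta_m^A(L)$, and letting $\delta\uparrow\delta_m(L)$ gives the inequality.

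For $\delta_m^A\le\delta_m$, given an $m$-basis divisor $D=\frac1{md_m}\sum_i(t_i)$, I would fix a log resolution $\pi:Y\to X$ of $(X,D)$ and a prime divisor $F\subset Y$ computing $\lct(X,D)=A_X(\ord_F)/\ord_F(D)$, set $v=\ord_F$, and test with the ray $\varphi_\lambda:=\frac1m\log\sum_i e^{2\lambda v(t_i)}|t_i|^2_{h^m}\in\cB_m$. Using $v(D)=\frac1{md_m}\sum_i v(t_i)$ one gets $\varphi_\lambda-E_m(H,\varphi_\lambda)=\frac1m\log\sum_i e^{2\lambda(v(t_i)-m\,v(D))}|t_i|^2_{h^m}+O(1)$. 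Working in local coordinates $(y_1,y')$ on $Y$ near a general point of $F$, where $F=\{y_1=0\}$ and $\pi^*t_i=y_1^{v(t_i)}g_i$ with $g_i$ nonvanishing, the sum is $\asymp e^{-2m\lambda v(D)}$ on the shell $R_\lambda=\{\varepsilon e^{-\lambda}<|y_1|<e^{-\lambda},\ |y'|<\delta_0\}$, so $e^{-\delta(\varphi_\lambda-E_m)}\asymp e^{2\delta\lambda v(D)}$ there; integrating against $\pi^*\omega^n\asymp|y_1|^{2(A_X(v)-1)}dV_Y$ and using $\mathrm{vol}(R_\lambda)\asymp e^{-2\lambda}$ gives $\int_X e^{-\delta(\varphi_\lambda-E_m(H,\varphi_\lambda))}\omega^n\gtrsim e^{2\lambda(\delta v(D)-A_X(v))}\to\infty$ whenever $\delta>A_X(v)/v(D)=\lct(X,D)$. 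Thus $\sup_{\varphi\in\cB_m}\int_X e^{-\delta(\varphi-E_m)}\omega^n=\infty$ for every $\delta>\lct(X,D)$, and taking the infimum over all $m$-basis divisors $D$, together with \eqref{def:delta-m-delta}, yields $\delta_m^A(L)\le\delta_m(L)$; combined with the previous step this gives equality.

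I expect the main obstacle to be the \emph{uniform} integrability in the second step — passing from finiteness of each $\int_X e^{-\delta\psi_{D_{\{s\}}}}\omega^n$ to a bound uniform over the compact family of orthonormal bases. Everything else (the arithmetic--geometric mean comparison, the identification of $E_m$ on diagonal potentials, the local valuation computation) is routine. I would handle this either via the effective Demailly--Kollár semicontinuity theorem, or, more self-containedly, by a normal-families argument on the Bergman potentials together with the openness theorem.
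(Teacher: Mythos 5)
Your proposal is correct and follows essentially the same route as the paper: the arithmetic--geometric mean comparison together with Demailly--Koll\'ar lower semicontinuity for $\delta_m(L)\le\delta_m^A(L)$ (cf.\ Proposition \ref{prop:deltam=int<C} and Corollary \ref{cor:delta-m<=delta-m-A}), and a Bergman geodesic ray associated to a divisorial valuation plus a local estimate near the center for $\delta_m^A(L)\le\delta_m(L)$ (cf.\ Definition \ref{FsiBergGeod}, Lemmas \ref{lem:E-m=S-m} and \ref{lem:I-lambda>eta}, Corollary \ref{lowerbndcor}). The only deviations are cosmetic---the paper parametrizes the destabilizing ray by a divisor $F$ over $X$ together with a basis compatible with the $\ord_F$-filtration (so that $E_m(H,\varphi_F(t))=tS_m(F)$ exactly) and obtains a uniform positive lower bound on the residual integral for all $t\ge0$ via a change of variables, whereas you parametrize by the basis divisor $D$, identify $F$ computing $\lct(X,D)$, and use a shell estimate on an annulus; both close the argument.
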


In fact we will prove a more precise statement, which says that  $\delta_m(L)$ is the coercivity threshold of a quantized Ding type energy on the Bergman space (Proposition \ref{prop:F-coerc=delta-m}).
Combining Theorem \ref{thm:delta-A-m=delta-m} with Blum--Jonsson's theorem \cite[Theorem A]{BJ17} (recall \er{deltameq}), 
we obtain a purely analytic definition of
the $\delta$-invariant.

\begin{corollary}
\label{thm:lim-delta-A-m=delta}
	Let $L$ be an ample $\QQ$-line bundle. Then 
	$\delta(L)=\lim_{m\rightarrow\infty}\delta^A_m(L).$
\end{corollary}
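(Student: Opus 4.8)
The statement to prove is Corollary \ref{thm:lim-delta-A-m=delta}: $\delta(L)=\lim_{m\to\infty}\delta^A_m(L)$. The strategy is to reduce it immediately to the two results that precede it in the text. By Theorem \ref{thm:delta-A-m=delta-m} we have $\delta^A_m(L)=\delta_m(L)$ for every sufficiently divisible $m$ (those $m$ for which \eqref{eq:m-divisible-mL-ample} holds). By Blum--Jonsson \cite[Theorem A]{BJ17}, recalled in \eqref{deltameq}, the limit $\lim_m\delta_m(L)$ exists along the sequence of sufficiently divisible $m$ and equals $\delta(L)$. Combining the two, $\lim_m\delta^A_m(L)=\lim_m\delta_m(L)=\delta(L)$.

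The one point that requires a word of care is the indexing of the limit. The invariant $\delta^A_m(L)$ (and $\delta_m(L)$) is only defined under the standing assumption \eqref{eq:m-divisible-mL-ample} that $m$ is sufficiently divisible and $mL$ is very ample, so the ``$\lim_{m\to\infty}$'' in the statement should be read as the limit along that cofinal subset of $\NN$ (equivalently, along $m$ in a fixed arithmetic progression, for $m$ large). I would make this explicit in one sentence, matching the convention already used in \eqref{deltameq}, so that there is no ambiguity: the equality $\delta^A_m=\delta_m$ from Theorem \ref{thm:delta-A-m=delta-m} holds term-by-term along exactly this index set, and Blum--Jonsson's limit is taken along the same index set, so the substitution is legitimate.

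Since this is a direct corollary, there is essentially no obstacle: the real content was already dispatched in Theorem \ref{thm:delta-A-m=delta-m} (whose proof is the substantive work) and is imported from \cite{BJ17}. If one wanted a self-contained remark, one could additionally observe that $\delta^A(L)\le\delta(L)$ (Proposition \ref{prop:delta-A<=delta}) is consistent with this picture but is not needed here. Thus the proof is three lines: invoke Theorem \ref{thm:delta-A-m=delta-m}, invoke \eqref{deltameq} (Blum--Jonsson), conclude.

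\begin{proof}
By our standing assumption \eqref{eq:m-divisible-mL-ample}, both $\delta_m(L)$ and $\delta^A_m(L)$ are defined for $m$ ranging over the cofinal set of sufficiently divisible integers with $mL$ very ample, and the limits below are understood along this set. By Theorem \ref{thm:delta-A-m=delta-m}, $\delta^A_m(L)=\delta_m(L)$ for every such $m$. By Blum--Jonsson \cite[Theorem A]{BJ17} (see \eqref{deltameq}), $\lim_m\delta_m(L)$ exists and equals $\delta(L)$. Hence
\[
\lim_{m\to\infty}\delta^A_m(L)=\lim_{m\to\infty}\delta_m(L)=\delta(L),
\]
as claimed.
\end{proof}
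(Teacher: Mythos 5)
Your proof is correct and is precisely the argument the paper itself uses: combine Theorem \ref{thm:delta-A-m=delta-m} (giving $\delta^A_m=\delta_m$ term-by-term) with Blum--Jonsson's convergence $\lim_m\delta_m(L)=\delta(L)$ from \eqref{deltameq}. Your extra remark clarifying that the limit runs over the cofinal set of sufficiently divisible $m$ is a fair pedantic point but matches the convention the paper already uses.
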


The proof of Theorem \ref{thm:delta-A-m=delta-m} has three key ingredients. First, applying the lower semi-continuity of complex singularity exponent of Demailly--Koll\'ar \cite{DK01}, we show that it suffices to consider basis divisors associated to $H$-orthonormal basis of $H^0(X,mL)$.
Second, it is observed that Donaldson's $E_m$-functional can be related to the \mthit\ expected vanishing order of $L$ along divisors over $X$ (Lemma \ref{lem:E-m=S-m}), which allows us to draw connections between Bergman geodesics and divisorial valuations. The third ingredient is a local computation around the center of a divisorial valuation, 
from which we obtain a uniform integral control along Bergman geodesics
(Lemma \ref{lem:I-lambda>eta}). We combine these ingredients with the valuative description of $\delta_m(L)$ to
conclude Theorem \ref{thm:delta-A-m=delta-m}. 
To obtain the more precise version (Proposition \ref{prop:F-coerc=delta-m}), we need to work a bit harder, by observing that $\sup_X\varphi$ for suitable $\varphi\in\cB_m$ can be related to the \mthit\ pseudo-effective threshold of divisorial valuations via Lemma \ref{lem:sup=max-mu} and \eqref{eq:mu-max=T-m}.

Another corollary of Theorem \ref{thm:delta-A-m=delta-m}
is Theorem \ref{thm:finite-dim-YTD} using
the following variational argument. We consider a certain quantized Ding type functional on the Bergman space $\cB_m$, whose critical points turn out to be balanced metrics in a suitable sense. Using Theorem \ref{thm:delta-A-m=delta-m} we see that $\delta_m(L)$ serves as the coercivity threshold of this functional. Using Berndtsson's convexity \cite{Bern15}, the coercivity principle of Darvas--Rubinstein \cite{DR17}, and Proposition \ref{prop:F-coerc=delta-m}, 
we then conclude Theorem \ref{thm:finite-dim-YTD}.

\subsection{A $\delta$-invariant for K\"ahler--Ricci solitons}

The quantization approach in this article intuitively explains why $\delta$-invariants appear in the K\"ahler--Einstein problem.
It turns out that this viewpoint also naturally leads us to a weighted version of the $\delta_m$-invariant (Definition \ref{def:delta-g-m}) 
for \KRS type metrics, and, moreover, to an associated $\delta$-invariant.
This seems to be the first relation between \KRSno{}s and log canonical
thresholds in algebraic geometry. 
We show that the weighted $\delta_m$-invariant is the stability threshold 
for \emph{quantized soliton metrics} introduced by Berman--Witt Nystr\"om \cite{BWN14}; more precisely, we prove a soliton version of Theorem \ref{thm:finite-dim-YTD} (see Theorem \ref{thm:finite-dim-g-weighted-YTD}). Then building on the recent work of Han--Li \cite{HL20}, an expression of \emph{the greatest Bakry--Emery Ricci lower bound} in terms of the weighted $\delta$-invariant is given (see Proposition \ref{prop:beta-g=delta-g}). This generalizes \cite[Appendix]{CRZ19} and \cite[Theorem C]{BBJ18} to the soliton setting.

Even more generally, 
in
Appendix \ref{sec:coupled-soliton},
we consider coupled soliton type metrics
(cf. Hultgren--Witt Nystr\"om \cite{HNW19} and Delcroix--Hultgren \cite{DH18}).
We introduce a coupled $\delta_m$-invariant (Definition \ref{def:coupled-delta-m}),  and show 
in Theorem \ref{thm:finite-coupled-dim-g-weighted-YTD}
that it
serves as the stability threshold for
the coupled balanced metrics studied by Takahashi \cite{Tak19}. 

\subsection{$\delta$-invariants and automorphisms}
Finally, we state an interesting byproduct of our quantization approach. Let $T_\CC\cong(\CC^*)^r$ be a complex torus acting effectively and holomorphically on $X$. Assume that the action of $T_\CC$ lifts to $L$. 
By Blum--Jonsson and Golota \cite{BJ17,G19}, to compute $\delta(L)$ 
it is enough to only consider $T_\CC$-invariant divisorial valuations. 
However the same fact is not known at level $m$:

\bprob
Can $\delta_m(L)$ be computed using only $T_\CC$-invariant divisorial 
valuations?
\eprob

In fact, even in the simplest case, $X=\PP^n$, it is unclear how to 
compute  $\delta_m$,
the only known case being $n=2$
due to Park--Won.
By its definition as an infimum, it
follows directly that 
$\delta_m(-K_{\PP^n})\le1$  \cite[\S3]{PW18}, 
but even when $n=2$, to show that $\delta_m(-K_{\PP^2})\ge1$ 
is quite tricky and involves detailled
Newton polygon computations \cite[Theorem 3.1]{PW18}.
An alternative analytic approach is furnished by
the following general result.

Let
\beq
\lb{nefsL}
s(L):=\sup\{s\in\RR|-K_X-sL\text{ nef}\,\}
\eeq
denote the nef threshold of $L$. 

\begin{theorem}
\label{thm:delta-T-m}
	Let $L$ be an ample $\QQ$-line bundle 
and assume \er{eq:m-divisible-mL-ample}. Then,
	$$
	\min\{\delta_m^{T_\CC}(L),s(L)\}\le\delta_m(L)\le\delta_m^{T_\CC}(L),
	$$
	where $s(L)$ is defined in \er{nefsL}, and $\delta_m^{T_\CC}(L)$ 
	is defined in \eqref{eq:def-delta-T-m}.
\end{theorem}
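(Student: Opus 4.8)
The plan is to prove the two inequalities separately: the right one is formal, and the left one carries the content. For the bound $\delta_m(L)\le\delta_m^{T_\CC}(L)$, note that $\delta_m^{T_\CC}(L)$ is by construction the result of restricting the optimization defining $\delta_m(L)$ to the $T_\CC$-invariant objects: on the algebraic side, the infimum of $\lct(X,D)$ over those $m$-basis divisors $D$ cut out by a basis of $T_\CC$-weight vectors of $H^0(X,mL)$ (these are exactly the $T_\CC$-invariant $m$-basis divisors); equivalently, via the $T$-equivariant form of Theorem~\ref{thm:delta-A-m=delta-m}, the threshold of Definition~\ref{def:delta-A-m} computed over the subspace $\cB_m^{T}\subset\cB_m$ of Bergman potentials arising from $T$-invariant inner products. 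Either way the optimization domain for $\delta_m^{T_\CC}(L)$ sits inside the one for $\delta_m(L)$, so the inequality is immediate; analytically, $\cB_m^{T}\subseteq\cB_m$ gives $\sup_{\cB_m^{T}}\le\sup_{\cB_m}$ of the quantized Moser--Trudinger integrand, hence $\delta_m^{T_\CC}(L)\ge\delta_m(L)$ after invoking Theorem~\ref{thm:delta-A-m=delta-m} in both its forms.

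For the bound $\min\{\delta_m^{T_\CC}(L),s(L)\}\le\delta_m(L)$, by the valuative description of $\delta_m(L)$ of Fujita--Odaka and Blum--Jonsson it suffices to show that every divisorial valuation $v$ over $X$ satisfies $A_X(v)/S_m(v)\ge\min\{\delta_m^{T_\CC}(L),s(L)\}$, with $S_m(v)=\frac{1}{md_m}\sum_{j\ge1}\dim\{s\in H^0(X,mL):v(s)\ge j\}$. I would fix such a $v$ and choose a one-parameter subgroup $\xi$ of $T_\CC$ that is generic, in the sense that $\xi$ pairs injectively with the finitely many $T_\CC$-weights occurring in $\bigoplus_{k\le N}H^0(X,kmL)$ for a fixed large $N$; then on those spaces the $\xi$-weight grading agrees with the $T_\CC$-weight grading. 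Conjugating by the real torus element $\exp(t\xi)\in\operatorname{Aut}(X,L)$ produces a path of divisorial valuations $v_t:=(\exp t\xi)^{*}v$ along which $A_X(v_t)=A_X(v)$ and $S_m(v_t)=S_m(v)$ are constant. Let $\tilde v:=\lim_{t\to+\infty}v_t$ be the resulting initial (leading-term) degeneration, a valuation which, when nontrivial, is $\xi$-invariant and --- after iterating the construction over a flag of generic one-parameter subgroups when $\dim T_\CC>1$ --- $T_\CC$-invariant.

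Now split into two cases. If $\tilde v$ is nontrivial, lower semicontinuity of the log discrepancy gives $A_X(\tilde v)\le\liminf_t A_X(v_t)=A_X(v)$, while flatness of the ensuing product degeneration preserves the dimensions of the $v$-graded pieces of $H^0(X,mL)$, so $S_m(\tilde v)=S_m(v)$; hence $A_X(v)/S_m(v)=A_X(v)/S_m(\tilde v)\ge A_X(\tilde v)/S_m(\tilde v)\ge\delta_m^{T_\CC}(L)$, and we are done. The remaining case --- $\tilde v$ trivial for every generic $\xi$ --- is the heart of the matter: it means the center of $v$ is not specialized onto any $T_\CC$-invariant proper subvariety but is swept by $T_\CC$ across rational curves $C$ (closures of one-parameter orbits through that center). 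For each $s<s(L)$ the class $-K_X-sL$ is nef, so $(-K_X)\cdot C\ge s\,(L\cdot C)$, and I would bound $A_X(v)$ from below and $S_m(v)$ from above along these curves --- $A_X(v)$ via adjunction/discrepancy estimates in the orbit direction (controlled below by $(-K_X)\cdot C$ in the same normalization that enters $S_m$), and $S_m(v)$ via the restriction of the degree-$m$ graded linear series $H^0(X,mL)$ to $C$ (controlled above by $L\cdot C$) --- so that $(-K_X)\cdot C\ge s\,(L\cdot C)$ upgrades to $A_X(v)\ge s\,S_m(v)$; letting $s\uparrow s(L)$ then gives $A_X(v)/S_m(v)\ge s(L)$.

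The main obstacle is precisely this last case: making the dichotomy rigorous --- the torus degeneration of $v$ either survives as a genuine $T_\CC$-invariant valuation or is pinned by a nef rational curve class --- and, in the second alternative, extracting the clean constant $s(L)$ from the balance between the adjunction lower bound on $A_X(v)$ and the linear-series upper bound on $S_m(v)$; this is also the only point where the nef threshold enters, which explains its appearance in the statement. By contrast the first case is routine once the equivariant version of Theorem~\ref{thm:delta-A-m=delta-m} is available, since that is what allows one to pass freely between the algebraic and analytic descriptions of $\delta_m^{T_\CC}(L)$.
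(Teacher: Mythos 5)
Your right-hand inequality is fine and essentially tautological, as you observe: $\delta_m^{T_\CC}(L)$ is an infimum over a subset of the valuations entering the infimum for $\delta_m(L)$.

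For the left-hand inequality your approach is fundamentally different from the paper's, and it has a genuine gap. The paper's argument is analytic: taking $\delta<\min\{\delta_m^{T_\CC}(L),s(L)\}$ and a $T$-invariant semi-positive twist $\theta\in c_1(X)-\delta c_1(L)$ (possible because $\delta<s(L)$), one restricts the quantized Ding functional $F_m^{f_\theta,\delta}$ to the $T$-invariant Bergman space $\cB_m^T$, where it is coercive by the $g\equiv 1$ case of Propositions~\ref{prop:F-delta-g-m-coerc=delta-g-m} and \ref{prop:F-f-delta-g-coerc=>g-balanced}, hence has a $T$-invariant critical point. Crucially, semi-positivity of $\theta$ makes $F_m^{f_\theta,\delta}$ convex along \emph{all} Bergman geodesics in $\cB_m$ by Berndtsson's theorem, so the $T$-invariant critical point is a global minimizer, giving a lower bound for $F_m^{f_\theta,\delta}$ on $\cB_m$ and thus $\delta_m(L)=\delta_m^A(L)\ge\delta$ via Theorem~\ref{thm:delta-A-m=delta-m}. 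This is exactly where $s(L)$ enters: it guarantees the positivity needed to promote a $T$-invariant critical point to a global minimum.

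Your proposal instead tries to degenerate an arbitrary divisorial valuation $v$ along a one-parameter subgroup of $T_\CC$ to a $T_\CC$-invariant limit $\tilde v$. You yourself flag the hard part --- the alternative where $\tilde v$ is trivial --- as ``the main obstacle,'' and this is indeed where the argument fails to close. The sketched ``adjunction/discrepancy estimate in the orbit direction'' linking $A_X(v)$ to $(-K_X)\cdot C$ and $S_m(v)$ to $L\cdot C$ is not a worked-out argument; there is no evident mechanism that would make the nef threshold materialize from such a bound at the level of a single valuation, and the paper's actual use of $s(L)$ (convexity of the quantized Ding functional) has no analogue in your plan. In Case~1 there are also unaddressed technical issues: the limit $\tilde v$ need not exist as a divisorial valuation (the degeneration lives in a larger space of valuations/filtrations), ``$A_X(\tilde v)\le A_X(v)$ by lower semicontinuity'' requires specifying the space and topology, and the step $S_m(\tilde v)=S_m(v)$ is not automatic --- the initial degeneration of the filtration can only jump up, so one generally has $S_m(\tilde v)\ge S_m(v)$, which would give $A_X(\tilde v)/S_m(\tilde v)\le A_X(v)/S_m(v)$, but you then still need $A_X(\tilde v)/S_m(\tilde v)\ge\delta_m^{T_\CC}(L)$, which requires $\tilde v$ to be an honest $T_\CC$-invariant \emph{divisorial} valuation (or a filtration version of the inequality, as in Zhuang's work, which the paper explicitly describes as a ``completely different approach''). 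As written, your proposal is an interesting outline of an algebro-geometric alternative but does not constitute a proof.
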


In particular, when $L=-K_X$ and $\delta^{T_\CC}_m(-K_X)\leq1$, it follows that $\delta_m(-K_X)=\delta^{T_\CC}_m(-K_X)$.
This result is new already for $X=\PP^n$ and shows that
$\delta_m(-K_{\PP^n})=1$ for all $m$ and $n$. 
Quite more generally,
in Corollary \ref{cor:delta-m-toric-Fano} we 
use Theorem \ref{thm:delta-T-m}
to compute $\delta_m(-K_X)$ for any toric Fano $X$ (cf.
\cite[Section 7]{BJ17} for an algeraic approach that
seemingly only yields the limit in $m$ of these invariants; compare also the very recent work  of Zhuang where an equivariant statement for K-unstable log Fano pairs is obtained using a completely different approach
\cite[Theorem 4.4]{Zhua20}). Our results emphasize some interesting dichotomies between $\alpha$-invariants
and $\delta$-invariants. First, while usually it is easier to compute
$\alpha$-invariants using the algebraic definition, for $\delta$-invariants
it turns out that our analytic definition could be more computable,
at least in some cases. Second, while $\alpha_m$-invariants are 
conjecturally 
constant for sufficiently large $m$, one finds in explicit
examples that this is not at all the case for $\delta_m$-invariants 
(e.g., when $X=Bl_p\PP^2$). 

To prove Theorem \ref{thm:delta-T-m}, the first step is to restrict certain suitably twisted quantized Ding functional onto the $T$-invariant Bergman space $\cB^T_m$, whose coercivity is determined by $\delta^{T_\CC}_m(L)$. Then using the equation of balanced metrics and Berndtsson convexity \cite{Bern15} (this is where $s(L)$ comes into play), we observe that  any critical point of this restricted functional 
in $\cB^T_m$ is necessarily a global minimizer in the entire Bergman space $\cB_m$. Then Theorem \ref{thm:delta-T-m} follows from Theorem \ref{thm:delta-A-m=delta-m}.

\subsection{Organization}
The rest of this article is organized as follows. In Section \ref{sec:analy-delta} we reformulate the definition of $\delta_m(L)$ using analytic language and prove Theorem \ref{thm:delta-A-m=delta-m} and Corollary \ref{thm:lim-delta-A-m=delta}.
The main theme of this article is presented in Section \ref{sec:balanced-metric}, where the $\delta_m$-invariant is shown to fit naturally into a variational picture portraying quantized Ding functionals and balanced metrics, from which we prove Theorem \ref{thm:finite-dim-YTD}. In Section \ref{sec:limit-behavior}, we prove several approximation  results regarding the limit behavior of $\delta_m(L)$ and twisted balanced metrics. In Sections \ref{sec:soliton} we extend the main body of the article to the soliton setting. Then in Section \ref{sec:delta-T}, we prove Theorem \ref{thm:delta-T-m} and give the formula of $\delta_m$-invariants for toric Fano manifolds.
In Appendix \ref{sec:coupled-soliton}, we
extend our main results to the coupled setting. Finally, in Appendix \ref{sec:current}, we extend the analysis to the setting where $\theta$ is a klt current.

\section{Analytic reformulation of $\delta_m(L)$}
\label{sec:analy-delta}

In this section
we prove Theorems \ref{thm:delta-A-m=delta-m} and \ref{thm:lim-delta-A-m=delta}.
To that end, we start by recalling the valuative description of $\delta_m(L)$ \cite{FO18,BJ17}. In \S\ref{lctsubsec} 
we use this description to prove an
upper bound for $\delta_m(L)$, i.e.,  $\delta^A_m(L)\geq\delta_m(L)$. 
The harder lower bound is established
in \S\ref{subsec:destabilizing-ray} 
by constructing an optimal destablizing
Bergman geodesic rays from divisors over $X$.

Let $\pi:Y\rightarrow X$ be a proper birational morphism
and let $F\subset Y$ be a prime divisor $F$ in $Y$. We say
that {\it $F$ is a divisor over $X$}.
Let
\beq
\lb{SmF}
S_m(F):=\frac{1}{md_m}\sum_{j=1}^{\infty}\dim H^0(Y,m\pi^\star L-jF)
\eeq
denote the \emph{\mthit\ expected vanishing order of $L$ along $F$}
(the sum, of course, only runs up to a certain finite $j$ that
will be defined shortly).
Also note that one has
$$
S_m(F)=\sup\big\{\ord_F(D)\,:\,m\text{-basis divisor $D$ of }L\big\},
$$
and this supremum is attained by any $m$-basis divisor $D$ arising from a basis $\{s_i\}$ that is compatible with the filtration
\beq
\lb{filtration}
\{H^0(Y,m\pi^\star L-jF)\}_{j=0}^{\tau_m(\pi^\star L,F)}, 
\text{ where }
\tau_m(\pis L,F):=\max\{x\in\NN \,:\, H^0(Y,m\pi^\star L-xF)\neq0\},
\eeq
i.e., each $H^0(Y,m\pi^\star L-jF)$
is spanned by a subset of the $\{s_i\}_{i=1}^{d_m}$
\cite[Lemma 2.2]{FO18} (see \cite[Lemma 2.7]{CRZ19} for an exposition).
Here $\ord_F(D)$ is the vanishing order of $\pis D$ along $F$.
The log discrepancy of $F$ over $X$ is defined by
\beq\lb{AXF}
A_X(F):=1+\ord_F(K_Y-\pi^\star K_X).
\eeq
Then by \cite{FO18,BBJ18}, 
\begin{equation}
\lb{deltam2eq}
	\delta_m(L)=\inf_{F\, \text{over}\, X}\frac{A_X(F)}{S_m(F)}.
\end{equation}
A well-known fact is that this infimum is attained by some $F$, see Lemma \ref{lem:finite-lct-delta-g-m}
for the proof in a more general setting.
Finally, 
for any effective $\RR$-divisor $D\subset X$, its \emph{log canonical threshold} is defined by
\begin{equation}
\label{lctdef}
    \lct(X,D):=\inf_F\frac{A_X(F)}{\ord_F(D)}.
\end{equation}

\subsection{Reduction to orthonormal basis and an upper bound}
\lb{lctsubsec}
In Proposition \ref{prop:deltam=int<C} we show that to compute $\delta_m$-invariant, it is enough to consider all the orthonormal basis of $H^0(X,mL)$ with respect to 
a fixed Hermitian inner product $H\in\cP_m$. We apply this
to conclude that $\delta_m(L)\le \delta_m^A(L)$ (Corollary \ref{cor:delta-m<=delta-m-A}).

\begin{proposition}
\label{prop:deltam=int<C}
For any  $H\in\cP_m$,
	$$
	\delta_m(L)=\sup\bigg\{\delta>0\,:\,
\sup_{
\substack{
\{s_i\} H\text{-o.n.b.}
}
}
\int_X\frac{\omega^n}
{\prod_{i=1}^{d_m}|s_i|_{h^m}^{\frac{2\delta}{md_m}}}<\infty 
\bigg\}.
	$$
\end{proposition}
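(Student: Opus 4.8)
The plan is to pass through the classical dictionary between log canonical thresholds and $L^1$-integrability, and then to promote finiteness for each individual orthonormal basis to uniform finiteness over the compact family of all of them.

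\smallskip

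\noindent\emph{Step 1: the lct/integrability dictionary.} Fix $H\in\cP_m$. For any basis $\{s_i\}_{i=1}^{d_m}$ of $H^0(X,mL)$ with associated $m$-basis divisor $D=\frac1{md_m}\sum_i(s_i)$, the function $\varphi_D:=\frac1{md_m}\sum_i\log|s_i|_{h^m}^2$ is $\omega$-psh with $\omega+\ddc\varphi_D=[D]\ge0$, satisfies $e^{-\delta\varphi_D}=\prod_i|s_i|_{h^m}^{-2\delta/(md_m)}$, and depends on $\{s_i\}$ only through its underlying divisor (it is insensitive to rescaling each $s_i$). By the identification of the complex singularity exponent with the lct \cite{CS08,DK01}, for every effective $\QQ$-divisor $D$ one has
\[
\lct(X,D)=\sup\Big\{\delta>0:\int_Xe^{-\delta\varphi_D}\,\omega^n<\infty\Big\}
=\sup\Big\{\delta>0:\int_X\frac{\omega^n}{\prod_{i=1}^{d_m}|s_i|_{h^m}^{2\delta/(md_m)}}<\infty\Big\}.
\]
Denote by $\Delta$ the right-hand side of the Proposition; it suffices to prove $\Delta=\delta_m(L)$.

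\smallskip

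\noindent\emph{Step 2: $\Delta\le\delta_m(L)$.} Fix $\delta>\delta_m(L)$. By the valuative formula \eqref{deltam2eq} there is a divisor $F$ over $X$ with $A_X(F)/S_m(F)<\delta$. Applying Gram--Schmidt to the decreasing filtration \eqref{filtration} of $F$ — i.e.\ taking an $H$-orthonormal basis of each successive $H$-orthogonal complement $H^0(Y,m\pis L-jF)\ominus_H H^0(Y,m\pis L-(j+1)F)$ and concatenating — produces an $H$-orthonormal basis $\{s_i\}$ compatible with \eqref{filtration}. By \cite[Lemma 2.2]{FO18} (cf.\ \cite[Lemma 2.7]{CRZ19}) its $m$-basis divisor $D$ satisfies $\ord_F(D)=S_m(F)$, hence $\lct(X,D)\le A_X(F)/S_m(F)<\delta$, so by Step 1 the integral for this particular $H$-o.n.b.\ is $+\infty$. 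A fortiori $\sup$ over all $H$-o.n.b.\ is $+\infty$, so $\delta\notin\{\,\cdot\,\}$; this gives $\Delta\le\delta_m(L)$.

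\smallskip

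\noindent\emph{Step 3: $\Delta\ge\delta_m(L)$.} Fix $\delta<\delta_m(L)$. By the defining infimum \eqref{def:delta-m-delta}, every $m$-basis divisor — in particular each $D_u$ arising from an $H$-o.n.b.\ $\{s_i^u\}$, $s_i^u=\sum_k u_{ik}s_k^0$ with $u\in U(d_m)$ — has $\lct(X,D_u)\ge\delta_m(L)>\delta$, so each integral $I(u):=\int_Xe^{-\delta\varphi_{D_u}}\omega^n$ is finite. To make this uniform, note that the set of $H$-o.n.b.\ is simply transitively acted on by the compact metrizable group $U(d_m)$; the map $u\mapsto\varphi_{D_u}$ is continuous in $L^1(X)$; and by concavity of $\log$, $\varphi_{D_u}\le\frac1m\log\big(\tfrac1{d_m}\sum_i|s_i^u|_{h^m}^2\big)\le C$ uniformly in $u$. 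If $\sup_uI(u)=\infty$, choose $u_j$ with $I(u_j)\to\infty$ and, after passing to a subsequence, $u_j\to u_\infty$. Since $\delta<\lct(X,D_{u_\infty})$ we have $I(u_\infty)<\infty$, and the strong $L^1$-convergence in the Demailly--Kollár semicontinuity theorem \cite{DK01} — applicable since $\varphi_{D_{u_j}}\to\varphi_{D_{u_\infty}}$ in $L^1$ with a uniform upper bound — yields $I(u_j)\to I(u_\infty)<\infty$, a contradiction. Hence $\sup_uI(u)<\infty$ and $\Delta\ge\delta_m(L)$, completing the proof. The one point that is not bookkeeping is precisely this last uniform-integrability step: compactness of $U(d_m)$ alone gives only pointwise finiteness, and one genuinely needs the semicontinuity/strong-convergence content of \cite{DK01}.
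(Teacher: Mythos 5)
Your proof is correct and uses essentially the same two ingredients as the paper's: the Demailly--Koll\'ar semicontinuity/strong-convergence theorem to upgrade pointwise finiteness over the compact family of $H$-orthonormal bases to a uniform bound, and the fact that an $H$-orthonormal basis compatible with the filtration \eqref{filtration} realizes $\ord_F(D)=S_m(F)$, so restricting to $H$-orthonormal bases loses nothing in the valuative formula \eqref{deltam2eq}. The paper organizes the same argument through the intermediate quantity $\tilde{\delta}_m(L;H)$ (pointwise finiteness over all $H$-orthonormal bases) rather than proving the two inequalities head-on, but the content is identical.
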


\bremark
\lb{deltaHRem}
{\rm 
For  $H\in\cP_m$, 
denote by 
$\delta_m(L;H)$ the right hand side in the statement.
A consequence of Proposition \ref{prop:deltam=int<C} is
that it is independent of $H$, i.e., $\delta_m(L;H)=\delta_m(L;K)$ for
any $H,K\in\cP_m$.
}
\eremark

\begin{proof}
\meds
\noindent
We claim that, in the notation of Remark \ref{deltaHRem}, one has
\begin{equation}
\label{eq:delta-m-omega=int_x<infty}
    \delta_m(L;H)= \sup\bigg\{\delta>0\,:\,\int_X\frac{\omega^n}{\prod_{i=1}^{d_m}|s_i|_{h^m}^{\frac{2\delta}{md_m}}}<\infty,\quad\text{for all $H$-orthonormal bases }\{s_i\}_{i=1}^{d_m}\bigg\}.
\end{equation}
Indeed, denote the RHS of \eqref{eq:delta-m-omega=int_x<infty} by $\tilde{\delta}_m(L;H)$. Then clearly $\delta_m(L;H)\leq\tilde{\delta}_m(L;H)$. If $\delta_m(L;H)<\tilde{\delta}_m(L;H)$, then we can find $\delta\in(\delta_m(L;H),\tilde{\delta}_m(L;H))$ and a sequence of $H$-orthonormal bases $\{s_i^{(j)}\}_{i=1}^{d_m}$ such that
$$
\lim_{j\rightarrow\infty}
\int_X\frac{\omega^n}{\prod_{i=1}^{d_m}|s^{(j)}_i|_{h^m}^{\frac{2\delta}{md_m}}}=\infty.
$$
Up to a subsequence, $\{s_i^{(j)}\}$ converges smoothly to an $H$-orthonormal basis $\{s^{(\infty)}_i\}$. Then by the lower semi-continuity of complex singularity exponents \cite[Theorem 0.2(3)]{DK01},
$$
\lim_{j\rightarrow\infty}\int_X\frac{\omega^n}{\prod_{i=1}^{d_m}|s^{(j)}_i|_{h^m}^{\frac{2\delta}{md_m}}}=\int_X\frac{\omega^n}{\prod_{i=1}^{d_m}|s^{(\infty)}_i|_{h^m}^{\frac{2\delta}{md_m}}}
<\infty,
$$
a contradiction.
This proves \eqref{eq:delta-m-omega=int_x<infty}.

Now for any $F$ over $X$, we consider the filtration \er{filtration}. 
Given  $H\in\cP_m$, observe that one can choose a compatible $H$-orthonormal basis $\{s_i\}$ so 
$S_m(F)=\ord_F(\pi^\star D)$ with $D$ the basis divisor associated to $\{s_i\}_{i=1}^{d_m}$
(recall Definition \ref{definition:basisdiv}).
Namely, 
\beq
\lb{EV2eq}
S_m(F)=\sup\big\{\ord_F(D)\,:\,m\text{-basis divisor $D$ arising from $H$-orthonormal basis}\big\}.
\eeq
Combining \er{lctdef} and \er{EV2eq},
$$
\delta_m(L)=\inf\big\{\lct(X,D)\,:\, m\text{-basis divisor $D$ arising from $H$-orthonormal basis}\big\}.
$$
Thus, by the analytic interpretation of lct \cite[\S 8]{Kol97},
\begin{equation*}
		\delta_m(L)=
\sup\bigg\{\delta>0\,:\,\int_X\frac{\omega^n}{\prod_{i=1}^{d_m}|s_i|_{h^m}^{\frac{2\delta}{md_m}}}<\infty,\quad\text{for all $H$-orthonormal bases }\{s_i\}_{i=1}^{d_m}\bigg\}.
\end{equation*}
Combining this with \eqref{eq:delta-m-omega=int_x<infty} concludes
the proof.
\end{proof}

\begin{corollary}
\label{cor:delta-m<=delta-m-A}
$
	\delta_m(L)\le\delta^A_m(L).
$

\end{corollary}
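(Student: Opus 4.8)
The plan is to obtain the inequality directly from Proposition~\ref{prop:deltam=int<C}, which already expresses $\delta_m(L)$ as an integrability threshold over $H$-orthonormal bases, by showing that for the quantized Ding-type exponent $\varphi-E_m(H,\varphi)$ every Bergman potential $\varphi\in\cB_m$ is dominated by such a basis. Fix $H\in\cP_m$. It suffices to prove: for each $\delta>0$ and each $\varphi\in\cB_m$ there is an $H$-orthonormal basis $\{\tilde s_i\}_{i=1}^{d_m}$ of $H^0(X,mL)$ with
\[
e^{-\delta(\varphi-E_m(H,\varphi))}\ \le\ \prod_{i=1}^{d_m}|\tilde s_i|_{h^m}^{-\frac{2\delta}{md_m}}\qquad\text{pointwise on }X.
\]
Integrating this against $\omega^n$ and comparing Definition~\ref{def:delta-A-m} with the threshold characterization of $\delta_m(L)$ in Proposition~\ref{prop:deltam=int<C} then yields $\delta_m(L)\le\delta_m^A(L)$ (for any $\delta<\delta_m(L)$ the right-hand supremum in Proposition~\ref{prop:deltam=int<C} is finite, hence so is $\sup_{\varphi\in\cB_m}\int_Xe^{-\delta(\varphi-E_m(H,\varphi))}\omega^n$, so $\delta\le\delta_m^A(L)$).

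To produce $\{\tilde s_i\}$, first write $\varphi=\FS(K)$ for $K\in\cP_m$ and simultaneously diagonalize the pair $(H,K)$: choose a basis $\{s_i\}$ of $H^0(X,mL)$ that is $K$-orthonormal and $H$-orthogonal, with $H(s_i,s_i)=\lambda_i>0$. Then $\varphi=\frac1m\log\sum_i|s_i|_{h^m}^2$, and from $E_m(H,K)=\frac1{md_m}\log\det K^{-1}H$ (computing the determinant in the basis $\{s_i\}$, in which $K$ is the identity and $H=\mathrm{diag}(\lambda_i)$) one gets $E_m(H,\varphi)=\frac1{md_m}\sum_i\log\lambda_i$. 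Setting $\tilde s_i:=\lambda_i^{-1/2}s_i$ gives an $H$-orthonormal basis with $|\tilde s_i|_{h^m}^2=\lambda_i^{-1}|s_i|_{h^m}^2$.

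The remaining step is a pointwise estimate via the arithmetic--geometric mean inequality applied to $a_i:=|s_i|_{h^m}^2\ge 0$ (using $d_m\ge1$), namely $\sum_i a_i\ge\big(\prod_i a_i\big)^{1/d_m}$. Hence
\[
e^{m(\varphi-E_m(H,\varphi))}=\frac{\sum_i|s_i|_{h^m}^2}{\big(\prod_i\lambda_i\big)^{1/d_m}}\ \ge\ \frac{\big(\prod_i|s_i|_{h^m}^2\big)^{1/d_m}}{\big(\prod_i\lambda_i\big)^{1/d_m}}\ =\ \prod_{i=1}^{d_m}|\tilde s_i|_{h^m}^{2/d_m},
\]
and raising to the power $\delta/m$ gives the displayed pointwise bound; integration and the comparison described above conclude the proof. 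The only genuinely delicate point is the bookkeeping around $E_m$: one must pin down the sign/normalization in $E_m(H,K)=\frac1{md_m}\log\det K^{-1}H$ so that the single rescaling $\tilde s_i=\lambda_i^{-1/2}s_i$ simultaneously makes the basis $H$-orthonormal and cancels the factor $\big(\prod_i\lambda_i\big)^{1/d_m}$; once this is arranged the rest is AM--GM together with Proposition~\ref{prop:deltam=int<C}.
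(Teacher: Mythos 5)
Your proof is correct and follows essentially the same route as the paper's: diagonalize the pair $(H,K)$, observe $E_m(H,\varphi)=\frac1{md_m}\log\prod\lambda_i$, and apply the arithmetic--geometric mean inequality to reduce the quantized Moser--Trudinger integral to the integral in Proposition~\ref{prop:deltam=int<C}. The only cosmetic differences are that you cast the AM--GM step as a pointwise estimate before integrating (the paper integrates directly) and you discard the harmless factor $d_m$ from AM--GM; the bookkeeping around $E_m$ that you flag is handled correctly.
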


	\begin{proof}
We first reformulate
Definition \ref{def:delta-A-m}.
Fix a reference 
Hermitian inner product $H\in\cP_m$. 
Then $E_m$ \er{Em1st} can be expressed by
\beq\lb{Em2nd}
E_m(H,\varphi)=\frac{1}{md_m}\log\det\big[H(\sigma_i,\sigma_j)\big]_{i,j=1}^{d_m},
\eeq
for any $\varphi=\FS(K)=
\frac{1}{m}\log\sum_{i=1}^{d_m}|\sigma_i|_{h^m}^2\in\cB_m$, 
where $\{\sigma_i\}$ is $K$-orthonormal.
By linear algebra, for any basis $\{\sigma_i\}$ of $H^0(X,mL)$, after a unitary transformation, one may diagonalize it so that
$$
\sigma_i=\mu_i^{1/2} s_i
$$
for some $H$-orthonormal basis $\{s_i\}$, with $\mu_i>0$. Using such convention, one can also write
\begin{equation}
\label{eq:E-m=prod-mu-i}
    E_m(H,\varphi)=\frac{1}{md_m}\log\prod_{i=1}^{d_m}\mu_i.
\end{equation}
Note that a different choice of $H$ will only shift $E_m$ by a constant.
Thus, Definition \ref{def:delta-A-m} becomes
	\beq
\lb{deltaA2eq}
\delta^A_m(L)=\sup\bigg\{\delta>0\,:\,
\sup_{
\substack{
\{s_i\} H\text{-o.n.b.}\\
\mu_i>0}
}
\int_X\frac{\prod_{i=1}^{d_m}\mu_i^{\frac{\delta}{md_m}}}{\bigg(\sum_{i=1}^{d_m}\mu_i|s_i|^2_{h^m}\bigg)^{\frac{\delta}{m}}}\omega^n<\infty
\bigg\}.
   \eeq
By the arithmetic mean--geometric mean inequality,
\begin{equation*}
\sum_{i=1}^{d_m}\mu_i |s_i|^2_{h^m}
\ge
d_m
\Big(\prod_{i=1}^{d_m}\mu_i\Big)^{\frac{1}{d_m}}
\Big(\prod_{i=1}^{d_m}|s_i|^2_{h^m}\Big)^{\frac{1}{d_m}},
\end{equation*}
thus for any $H$-orthonormal basis $\{s_i\}$ and parameters $\mu_i>0$, 
	\begin{equation*}
			\int_X\frac{\prod_{i=1}^{d_m}\mu_i^{\frac{\delta}{md_m}}}{(\sum_{i=1}^{d_m}\mu_i |s_i|^2_{h^m})^{\frac{\delta}{m}}}\omega^n
			\le\bigg(\frac{1}{d_m}\bigg)^{\frac{\delta}{m}}\cdot\int_X\frac{\omega^n}{\prod_{i=1}^{d_m}|s_i|^{\frac{2\delta}{md_m}}_{h^m}},
		\end{equation*}
and the statement follows from Proposition \ref{prop:deltam=int<C}.
\end{proof}

\subsection{Optimal destabilization and a lower bound}
\label{subsec:destabilizing-ray}

We now turn to proving the harder inequality,
\begin{equation}\lb{lowerbnd}
	\delta_m(L)\ge\delta^A_m(L).
\end{equation}
Our strategy is as follows. Fix a prime divisor $F$ over $X$. We find
 an $H$-orthogonal basis $\{s_i\}$ of $H^0(X,mL)$ 
such that the integral
\begin{equation}
\label{eq:int-def-delta-A-m}
	\int_X\frac{\prod_{i=1}^{d_m}\mu_i^{\frac{\delta}{md_m}}}{\bigg(\sum_{i=1}^{d_m}\mu_i|s_i|^2_{h^m}\bigg)^{\frac{\delta}{m}}}\omega^n
\end{equation}
has no uniform upper bound for an appropriate choice 
of positive numbers $\{\mu_i\}$, whenever $\delta$ satisfies
$
\delta>\frac{A_X(F)}{S_m(F)}.
$
This  implies
$
\delta^A_m(L)\le\inf_{F}\frac{A_X(F)}{S_m(F)}=\delta_m(L),
$
i.e., \er{lowerbnd}, which, when combined with Corollary
\ref{cor:delta-m<=delta-m-A} will conclude the proof of 
 Theorem \ref{thm:delta-A-m=delta-m}.

\bdefin
\lb{FsiBergGeod}
Let $F$ be a prime divisor over $X$
and let $\{s_i\}$ be an $H$-orthonormal basis  
of $H^0(X,mL)$ compatible with the filtration \eqref{filtration}.
The Bergman
geodesic ray associated to $(F,\{s_i\})$ is
\beq
\lb{vpF}
\varphi_F(t):=\frac{1}{m}\log\sum_{i=1}^{d_m}e^{t\,\ord_F(s_i)}|s_i|^2_{h^m}\in\cB_m,
\quad t\in\RR.
\eeq

\edefin
A simple, but key, observation is that
the \mthit\ expected vanishing can
be viewed as the slope of the
Monge--Amp\`ere energy.

\blem
\label{lem:E-m=S-m}
Let $\vp(t)$ be defined by \er{vpF}. Then, 
$E_m(H,\varphi_F(t))=tS_m(F).
$
\elem

\bpf
By \er{eq:E-m=prod-mu-i}, 
$
E_m(H,\varphi_F(t))=
\frac{t}{md_m}\sum_{i=1}^{d_m}\ord_F(s_i). 
$
For any basis $\{s_i\}$ compatible with the filtration,
$$
\baeq
\sum_{i=1}^{d_m}\ord_F(s_i) 
&=\sum_{j=0}^{\infty}j\Big[\dim H^0(Y,m\pis L-jF)-\dim H^0(Y,m\pis L-(j+1)F)\Big]
\cr
&=\sum_{j=1}^{\infty}\dim H^0(Y,m\pis L-jF),
\eaeq
$$
where in the last line we used that $H^0(Y,m\pis L-jF)$
vanishes for large enough $j$ (recall
\er{filtration}). Thus,
by \er{SmF} the proof is complete.
\epf

Fix $F\subset Y$ over $X$ and let $\{s_i\}$ be a basis
as in the proof of Lemma \ref{lem:E-m=S-m}. 
We evaluate \eqref{eq:int-def-delta-A-m} along
the Bergman geodesic $\vp_F(t)$ of Definition \ref{FsiBergGeod},
i.e., put $\mu_i(t)=e^{t\,\ord_F(s_i)}$,
and use Lemma 
\ref{lem:E-m=S-m}, 
\begin{equation}
\lb{deltaEVAX}
	\begin{aligned}
		\eqref{eq:int-def-delta-A-m}(t)&=\int_X
\frac{e^{t\delta S_m(F)}}{\bigg(\sum_{i=1}^{d_m}e^{t\,\ord_F(s_i)}|s_i|^2_{h^m}\bigg)^{\frac{\delta}{m}}}\omega^n\\
		&=e^{t(\delta S_m(F)-A_X(F))}\int_X\frac{e^{tA_X(F)}}{\bigg(\sum_{i=1}^{d_m}e^{t\,\ord_F(s_i)}|s_i|^2_{h^m}\bigg)^{\frac{\delta}{m}}}\omega^n.
			\end{aligned}\\
\end{equation}
Now the key estimate is the following.
\begin{lemma}
\label{lem:I-lambda>eta}
	There exists $C>0$ such that
	$$
	\int_X\frac{e^{tA_X(F)}}{\bigg(\sum_{i=1}^{d_m}e^{t\,\ord_F(s_i)}|s_i|^2_{h^m}\bigg)^{\frac{\delta}{m}}}\omega^n> C>0, \hbox{\ for all $t\ge0$}.
	$$
\end{lemma}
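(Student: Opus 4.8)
The plan is to reduce the global integral to a neighborhood of the center of $F$ and perform a local estimate there. First I would pass to the birational model $\pi: Y \to X$ and pull back the integral: write $\omega^n = \pi^\star \omega^n$ (abusing notation, the pullback of the volume form acquires a Jacobian vanishing to order $\ord_F(K_Y - \pi^\star K_X) = A_X(F) - 1$ along $F$), and each $|s_i|^2_{h^m}$ pulls back to $|\pi^\star s_i|^2_{\pi^\star h^m}$, which vanishes to order at least $\ord_F(s_i)$ along $F$. Near a generic point of $F$ choose local coordinates $(z_1, \dots, z_n)$ on $Y$ with $F = \{z_1 = 0\}$. In these coordinates $\pi^\star s_i = z_1^{\ord_F(s_i)} \cdot u_i(z)$ with $u_i$ holomorphic and not identically zero on $F$, and the pulled-back measure is $|z_1|^{2(A_X(F)-1)} \cdot (\text{smooth positive density}) \, dV$ up to bounded factors. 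Substituting, the integrand of the left-hand side becomes, up to a bounded multiplicative factor,
$$
\frac{e^{t A_X(F)} \, |z_1|^{2(A_X(F)-1)}}{\Big( \sum_{i=1}^{d_m} e^{t \ord_F(s_i)} |z_1|^{2\ord_F(s_i)} |u_i(z)|^2 \Big)^{\delta/m}}.
$$

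The key observation is scale invariance: set $e^{t} |z_1|^2 = r$, or better, fix $z' = (z_2, \dots, z_n)$ and substitute $z_1 = e^{-t/2} w$. Then $e^{t \ord_F(s_i)} |z_1|^{2 \ord_F(s_i)} = |w|^{2\ord_F(s_i)}$, the factor $e^{t A_X(F)} |z_1|^{2(A_X(F)-1)} \, |dz_1|^2 = |w|^{2(A_X(F)-1)} |dw|^2$, and the whole integrand over a small polydisc in $z_1$ transforms into an integral over the expanding disc $\{|w| \le \varepsilon e^{t/2}\}$ of
$$
\frac{|w|^{2(A_X(F)-1)}}{\Big( \sum_i |w|^{2\ord_F(s_i)} |u_i(e^{-t/2}w, z')|^2 \Big)^{\delta/m}} \, |dw|^2.
$$
For $t$ bounded, the integral over any fixed annulus $\{1 \le |w| \le 2\}$ (say) in this new variable, further integrated over a small ball in $z'$ away from the zero loci of the $u_i$ restricted to $F$, is bounded below by a positive constant independent of $t$ (by continuity and compactness, since the denominator is a continuous positive function of $(w, z', e^{-t/2})$ on the relevant compact set, using $u_i|_F \not\equiv 0$). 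As $t \to \infty$ the domain of integration only grows, so monotonicity keeps the bound; and for $t$ in a compact range one has a positive lower bound by the same continuity argument, since the original integral is a continuous strictly positive function of $t$ (the denominator never vanishes identically for finite $t$). Taking the smaller of the two gives the uniform $C > 0$.

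The main obstacle I anticipate is bookkeeping the change of variables carefully on the model $Y$: making sure the Jacobian of $\pi$, the local expressions $\pi^\star s_i = z_1^{\ord_F(s_i)} u_i$, and the smooth positive densities all combine to give precisely the clean form above with errors that are uniformly bounded above and below, and justifying that one may restrict to a neighborhood of a \emph{generic} point of $F$ (where $\pi$ is an isomorphism onto its image away from $F$ and the $u_i|_F$ are generically nonvanishing) while discarding the rest of $X$ — which is harmless since we only need a lower bound and the integrand is nonnegative. A secondary point to handle is that the $u_i$ may have common zeros on $F$; but since $\{s_i\}$ is a basis of $H^0(X, mL)$, the sections $z_1^{-\ord_F(s_i)} \pi^\star s_i$ cannot all vanish at a generic point of $F$, so after shrinking the $z'$-ball the denominator is uniformly bounded below, and the estimate goes through.
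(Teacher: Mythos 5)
Your proposal is correct and follows essentially the same route as the paper: localize to a coordinate chart around a generic point of $F$ on the resolution $Y$, extract the factor $|z_1|^{2(A_X(F)-1)}$ from the discrepancy, write $\pi^\star s_i=z_1^{\ord_F(s_i)}g_i$, and use the scaling substitution $z_1\mapsto e^{-t/2}w$ which absorbs the factor $e^{tA_X(F)}$ and sends the integration domain to a disk of radius $e^{t/2}$. The only stylistic difference in the final step: the paper first uses the local upper bounds $h^m<C$, $|g_i|^2<C$ to discard those factors from the denominator entirely, so that after the change of variables the integrand is $t$-independent and the bound reduces to the monotonicity $\int_{|w|\le e^{t/2}}\ge\int_{|w|\le 1}$ for $t\ge 0$; you instead retain $u_i(e^{-t/2}w,z')$ and invoke a continuity/compactness argument in the compactified parameter $e^{-t/2}\in[0,1]$. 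Both are fine, though the paper's version cleanly eliminates the residual $t$-dependence and avoids the need to argue about the $u_i$ near $s=0$.
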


\begin{proof}
	Let $Z:=\pi(F)$ denote the center of the divisorial valuation $\ord_F$ on $X$. We will show that the desired estimate follows from a local calculation around $Z$. More percisely, Let $\Omega$ be a tubular neighborhood around $Z$. It suffices to find some $\eta>0$ such that
	$$
	\int_\Omega\frac{e^{tA_X(F)}}{\bigg(\sum_{i=1}^{d_m}e^{t\,\ord_F(s_i)}|s_i|^2_{h^m}\bigg)^{\frac{\delta}{m}}}\omega^n\ge \eta>0
	$$
	for any $t\ge0$.
	We will achieve this estimate by pulling back everything to $Y$. Note,
	\begin{equation}
	\label{eq:KY=KX+F+D}
	    K_Y=\pis K_X+(A_X(F)-1)F+D,
	\end{equation}
	where $D$ is some divisor whose support does not contain $F$.
Then we choose a small enough coordinate chart 
	$$\bigg(U,(z_1,\cdots,z_n)\bigg)\subseteq Y,$$
	 centered around some smooth point of $F$ with the following properties:
	\begin{enumerate}
		\item $U$ is away from all the other exceptional divisors of $\pi$ (i.e., $U\cap\operatorname{Supp}(D)=\emptyset$);
		\item  Over $U$, one has
	$
	F=\{z_1=0\};
	$
	\item $U$ contains the polydisk
	$
	\DD:=\bigg\{(z_1,...,z_n)\,:\,|z_i|\le 1,\ \forall i\bigg\};
	$
	\item $\pis(mL)$ is trivialized over $\DD$, so that each $\pis s_i$ can be represented as
	$
	\pis s_i=z_1^{\ord_F(s_i)}g_i(z),
	$
	where $g_i(z)$ is some holomorphic function on $\DD$.
	\item In the above trivialization, there exists some constant $C>0$ such that
	$
	h^m<C,\ |g_i|^2<C, \;\forall i.
	$
	\end{enumerate}
Using \eqref{eq:KY=KX+F+D} and (2),
the volume form $\pis\omega^n$ can be replaced (up to some bounded factor) by
$$
|z_1|^{2A_X(F)-2}(\sqrt{-1})^ndz_1\wedge \overline{dz_1}\wedge\cdots 
\wedge dz_n\wedge \overline{dz_n},
$$
since we are working away from $D$.

Therefore, to finish the proof of Lemma
\ref{lem:I-lambda>eta}, it suffices to find some constant $c>0$ such that for any $t\ge0$,
$$
\int_\DD\frac{e^{tA_X(F)}|z_1|^{2A_X(F)-2}}{\bigg(\sum_{i=1}^{d_m}e^{t\,\ord_F(s_i)}|z_1|^{2\ord_F(s_i)}|g_i|^2h^m\bigg)^{\frac{\delta}{m}}}(\sqrt{-1})^{n}dz_1\wedge 
\overline{dz_1}\wedge\cdots \wedge dz_n\wedge \overline{dz_n}\ge c>0.
$$
Using condition (5) above, it suffices to bound 
$$
\begin{aligned}
J(t)&:=\sqrt{-1}\int_{|z_1|\le1}\frac{e^{tA_X(F)}|z_1|^{2A_X(F)-2}}{\bigg(\sum_{i=1}^{d_m}|e^{t/2}z_1|^{2\ord_F(s_i)}\bigg)^{\frac{\delta}{m}}}dz_1\wedge d\bar{z_1}
\cr&=\sqrt{-1}\int_{|w|\le e^{t/2}}\frac{|w|^{2(A_X(F)-1)}}{\bigg(\sum_{i=1}^{d_m}|w|^{2\ord_F(s_i)}\bigg)^{\frac{\delta}{m}}}dw\wedge d\bar{w}\\
		&\ge\sqrt{-1}\int_{|w|\le1}\frac{|w|^{2(A_X(F)-1)}}{\bigg(\sum_{i=1}^{d_m}|w|^{2\ord_F(s_i)}\bigg)^{\frac{\delta}{m}}}dw\wedge d\bar{w},\\
	\end{aligned}
$$
where in the last inequality we used $t\ge0$.
This last integral is some positive quantity depending only on $\delta$, $m$, $A_X(F)$ and $\{\ord_F(s_i)\}_{1\le i\le d_m}$.
	This completes the proof
	of Lemma
\ref{lem:I-lambda>eta}.
\end{proof}

\bcor
\lb{lowerbndcor}
$
	\delta_m(L)\ge\delta^A_m(L).
$
\ecor 

\bpf
By \er{deltaEVAX} and
Lemma \ref{lem:I-lambda>eta},
$\lim_{t\rightarrow\infty}\eqref{eq:int-def-delta-A-m}(t)=\infty$
if $\delta>\frac{A_X(F)}{S_m(F)}$. Thus, by \er{deltaA2eq},
$
\delta_m^A(L)\le\frac{A_X(F)}{S_m(F)}.
$
Taking the infimum over all $F$ and using \er{deltam2eq} we conclude.
\epf

\begin{proof}[Proof of Theorem \ref{thm:delta-A-m=delta-m}]
This follows from Corollaries 
\ref{cor:delta-m<=delta-m-A} and \ref{lowerbndcor}.
\end{proof}

\begin{remark}
{\rm 	The argument above shows that any divisor $F$ over $X$ naturally induces a basis $\{s_i\}$ with zero locus concentrating around the center $\pi(F)$, which destabilizes $X$ along the Bergman geodesic ray
$\varphi_F(t)$.
}
\end{remark}

\subsection{Local $\delta$-invariant}
Note that the previous argument also applies to the local setting. 
More precisely, let $\Omega\subseteq\CC^n$ be a domain in $\CC^n$. We consider the Bergman space
$$
\cB(\Omega):=\cO(\Omega)\cap L^2(\Omega).
$$
Namely $\cB$ is the linear space consisting of all the square integrable holomorphic functions on $\Omega$. Let
$$
V\subseteq\cB(\Omega)
$$
be a finite dimensional subspace. Then for any point $x\in\Omega$, one can define a local $\delta$-invariant by puting
\begin{equation*}
	\delta_x(V):=\sup\bigg\{\delta>0\,:\,\text{for $\forall$ basis }\{f_i\}\text{ of }V,\ \exists \text{ nbhd. $x\in U$ s.t.}\int_U\frac{d\mu}{\prod_{i=1}^{\dim V}|f_i|^{\frac{2\delta}{\dim V}}}<+\infty\ \bigg\}.
\end{equation*}
This local invariant also appears in a recent work of Xu--Zhuang \cite{XZ20}.

There is a natural $L^2$ inner product on $V$, given by
$
\langle f,g\rangle:=\int_\Omega f\bar{g}d\mu,\ \forall f,g\in V.
$
So it is convenient to consider orthonormal basis of this inner product.
We have the following local version of Proposition \ref{prop:deltam=int<C}.
\begin{proposition}
One has
	$$
	\delta_x(V)=\sup\bigg\{\delta>0\,:\,\exists\text{ nbhd. }x\in U_\delta,\ C_\delta>0\text{ s.t.}\int_{U_\delta}\frac{d\mu}{\prod_{i=1}^{\dim V}|f_i|^{\frac{2\delta}{\dim V}}}<C_\delta,\forall\text{ orthonormal }\{f_i\}\bigg\}
	$$
\end{proposition}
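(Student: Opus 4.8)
The plan is to follow the proof of Proposition~\ref{prop:deltam=int<C} almost verbatim, in two independent steps. Write $N:=\dim V$; for a basis $\{f_i\}$ of $V$ set $\psi_{\{f_i\}}:=\tfrac1N\sum_{i=1}^N\log|f_i|^2$ and $D_{\{f_i\}}:=\tfrac1N\sum_{i=1}^N(f_i)$, so that $e^{-\delta\psi_{\{f_i\}}}=\prod_i|f_i|^{-2\delta/N}$ and, by monotonicity of the integrability condition in $\delta$ together with the definition of the complex singularity exponent $c_x$, one has $\delta_x(V)=\inf_{\{f_i\}}c_x(\psi_{\{f_i\}})$, the infimum over all bases. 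I would introduce the intermediate quantity $\hat\delta_x(V):=\inf_{\{f_i\}\ L^2\text{-o.n.}}c_x(\psi_{\{f_i\}})$, the non-uniform orthonormal version, and reduce the Proposition to proving $\delta_x(V)=\hat\delta_x(V)$ (Step~1) and that $\hat\delta_x(V)$ equals the right-hand side of the statement (Step~2).

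For Step~1 I would argue exactly as in the proof of Proposition~\ref{prop:deltam=int<C}: by the analytic interpretation of the log canonical threshold \cite[\S8]{Kol97}, $c_x(\psi_{\{f_i\}})=\lct_x(D_{\{f_i\}})=\inf_F A_\Omega(F)/\ord_F(D_{\{f_i\}})$, the infimum over prime divisors $F$ over $\Omega$ whose center passes through $x$. For a fixed such $F$, a Gram--Schmidt process adapted to the flag $\{f\in V:\ord_F(f)\ge j\}_{j\ge0}$ produces an $L^2$-orthonormal basis with $\sum_i\ord_F(f_i)=\sum_{j\ge1}\dim\{f\in V:\ord_F(f)\ge j\}$, while a one-line counting argument (the $g_i$ with $\ord_F(g_i)\ge j$ are linearly independent in $\{f\in V:\ord_F(f)\ge j\}$) shows $\sum_i\ord_F(g_i)$ never exceeds this for any basis $\{g_i\}$. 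Hence $\sup_{\{g_i\}}\ord_F(D_{\{g_i\}})$ is the same whether the supremum runs over all bases or only over $L^2$-orthonormal ones, namely $S(F;V):=\tfrac1N\sum_{j\ge1}\dim\{f\in V:\ord_F(f)\ge j\}$ (this is the local analogue of \cite[Lemma 2.2]{FO18}). Swapping the two infima then gives $\delta_x(V)=\inf_F A_\Omega(F)/S(F;V)=\hat\delta_x(V)$.

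Step~2 is the analytic core and mirrors \eqref{eq:delta-m-omega=int_x<infty}. Fix $\delta<\hat\delta_x(V)$, so $\delta<c_x(\psi_W)$ for every orthonormal basis, which I parametrize as $\{f_i^W\}$ with $W$ in the compact group $U(N)$ acting on a fixed reference orthonormal basis; the weights $\psi_W$ are uniformly bounded above on compacta and depend continuously on $W$ in $L^1_{\mathrm{loc}}$. For each $W$, the effective semicontinuity of Demailly--Koll\'ar \cite[Theorem 0.2(3)]{DK01} provides a radius $r_W>0$ for which $e^{-\delta\psi_{W'}}\to e^{-\delta\psi_W}$ in $L^1(B(x,r_W))$ as $W'\to W$; in particular $W'\mapsto\int_{B(x,r_W)}e^{-\delta\psi_{W'}}\,d\mu$ is finite and continuous at $W$, hence bounded by some $C_W$ on a neighbourhood $\mathcal N_W\subseteq U(N)$ of $W$. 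Choosing a finite subcover $\mathcal N_{W_1},\dots,\mathcal N_{W_L}$ of $U(N)$ and taking $U_\delta:=B(x,\min_\ell r_{W_\ell})$, $C_\delta:=\max_\ell C_{W_\ell}$, we obtain $\int_{U_\delta}e^{-\delta\psi_W}\,d\mu\le C_\delta$ uniformly in $W$, so $\delta$ lies below the right-hand side of the Proposition; the reverse inequality is trivial, since a uniform bound on a fixed neighbourhood implies pointwise finiteness.

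I expect Step~2 to be the main obstacle: in Proposition~\ref{prop:deltam=int<C} the uniformity is automatic because one integrates over the compact manifold $X$, whereas here one must manufacture a single neighbourhood of $x$ serving the entire $U(N)$-orbit, and this forces one to use that Demailly--Koll\'ar give not merely lower semicontinuity of $c_x$ but genuine $L^1$-convergence of the weights on a controlled neighbourhood, compactness of $U(N)$ then finishing the argument. Step~1 should be routine, being the verbatim local transcription of what the paper already does; the only point meriting a word of care is that the valuative formula for $\lct_x$ involves only divisors $F$ with center through $x$, which is harmless because the remaining divisors affect neither $\lct_x$ nor the integrals over small neighbourhoods of $x$.
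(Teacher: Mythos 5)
Your proof is correct and follows precisely the approach the paper leaves implicit by labeling this the ``local version'' of Proposition \ref{prop:deltam=int<C}: the valuative reduction to $L^2$-orthonormal bases (your Step 1, the local transcription of \eqref{EV2eq} and \cite[Lemma 2.2]{FO18}) combined with Demailly--Koll\'ar \cite[Theorem 0.2(3)]{DK01} to upgrade basis-by-basis finiteness to a uniform bound (your Step 2, mirroring \eqref{eq:delta-m-omega=int_x<infty}). You also correctly isolate the one genuinely local wrinkle --- that a single neighbourhood of $x$ must serve all orthonormal bases --- and handle it by the natural compactness argument on $U(\dim V)$, using that Demailly--Koll\'ar give $L^1$-convergence of $e^{-\delta\psi_W}$ on a controlled ball, not merely lower semicontinuity of $c_x$.
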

We also have the following local version of Theorem \ref{thm:delta-A-m=delta-m}.
\begin{proposition}
	One has
	$$
	\delta_x(V)=\sup\Bigg\{\delta>0\,:\,
	\begin{aligned}
		&\exists\text{ nbhd. } 
		U_{\delta}\ni x,\ C_\delta>0\text{ such that\ } \int_{U_\delta}\frac{\prod_{i=1}^{\dim V}\mu_i^{\frac{\delta}{\dim V}}d\mu}{(\sum_{i=1}^{\dim V}\mu_i|f_i|^2)^\delta}<C_\delta
		\cr
		&\text{for\ }\forall\text{ orthonormal basis }\{f_i\}\text{ of }V \text{and any positive parameters }\{\mu_i\}
		\\
	\end{aligned}
	\Bigg\}
	$$
\end{proposition}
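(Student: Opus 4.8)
The plan is to transcribe the proof of Theorem~\ref{thm:delta-A-m=delta-m} to this local setting almost verbatim. Write $N:=\dim V$ and let $\delta_x^A(V)$ denote the quantity on the right-hand side of the asserted identity. As in the global case, I would prove $\delta_x^A(V)\ge\delta_x(V)$ and $\delta_x^A(V)\le\delta_x(V)$ separately: the first is the elementary arithmetic--geometric mean estimate of Corollary~\ref{cor:delta-m<=delta-m-A}, and the second is a local version of the destabilization argument of \S\ref{subsec:destabilizing-ray}.

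For $\delta_x^A(V)\ge\delta_x(V)$, I would fix $\delta<\delta_x(V)$ and invoke the previous proposition to get a neighborhood $U_\delta\ni x$ and $C_\delta>0$ with $\int_{U_\delta}d\mu\big/\prod_{i=1}^N|f_i|^{2\delta/N}<C_\delta$ for every orthonormal basis $\{f_i\}$ of $V$. From the inequality $\sum_i\mu_i|f_i|^2\ge N\big(\prod_i\mu_i\big)^{1/N}\big(\prod_i|f_i|^2\big)^{1/N}$ one gets the pointwise bound $\prod_i\mu_i^{\delta/N}\big/\big(\sum_i\mu_i|f_i|^2\big)^\delta\le N^{-\delta}\prod_i|f_i|^{-2\delta/N}$, so integrating over $U_\delta$ bounds the quantized integral by $N^{-\delta}C_\delta$ uniformly in $\{f_i\}$ and $\{\mu_i\}$; hence $\delta\le\delta_x^A(V)$, and letting $\delta\uparrow\delta_x(V)$ finishes this direction.

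For $\delta_x^A(V)\le\delta_x(V)$ the first step is to record the local valuative description
\[
\delta_x(V)=\inf_{F}\frac{A_\Omega(F)}{S_V(F)},
\]
where $F$ runs over prime divisors over $\Omega$ with $x\in c_\Omega(F)$, $A_\Omega(F)$ is the log discrepancy, and $S_V(F):=\frac1N\sum_{j\ge1}\dim\big(V\cap\{\ord_F\ge j\}\big)$ is the expected vanishing order of $V$ along $F$; this follows from the reduction to orthonormal bases in the previous proposition, the analytic interpretation of the local log canonical threshold, and the compatible-basis/filtration argument, exactly as in \eqref{deltam2eq}. Granting this, I would fix $\delta>\delta_x(V)$, choose $F$ with $A_\Omega(F)/S_V(F)<\delta$, take an orthonormal basis $\{f_i\}$ compatible with the filtration $\{V\cap\{\ord_F\ge j\}\}_j$, and set $\mu_i(t)=e^{t\,\ord_F(f_i)}$ for $t\ge0$. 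The local analogue of Lemma~\ref{lem:E-m=S-m} gives $\tfrac1N\sum_i\ord_F(f_i)=S_V(F)$, so the quantized integrand factors along this Bergman geodesic as $e^{t\delta S_V(F)}\big(\sum_i e^{t\,\ord_F(f_i)}|f_i|^2\big)^{-\delta}$. For any neighborhood $U\ni x$ I would pick a smooth point $p$ of $F$ lying over $U$ and a polydisk chart around $p$ inside $\pi^{-1}(U)$ with $F=\{z_1=0\}$; pulling $d\mu$ back to the model produces a Jacobian comparable to $|z_1|^{2A_\Omega(F)-2}$, and the one-variable estimate in the proof of Lemma~\ref{lem:I-lambda>eta} yields $c>0$ with $\int_U\prod_i\mu_i(t)^{\delta/N}\big(\sum_i\mu_i(t)|f_i|^2\big)^{-\delta}d\mu\ge c\,e^{t(\delta S_V(F)-A_\Omega(F))}$ for all $t\ge0$. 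Since $\delta S_V(F)-A_\Omega(F)>0$, the right-hand side blows up as $t\to\infty$, so no uniform $C_\delta$ works for this $U$; hence $\delta$ lies outside the right-hand set, and $\delta_x^A(V)\le\delta_x(V)$.

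The main obstacle is the hard direction, and within it the bookkeeping of its first step — establishing the local valuative formula for $\delta_x(V)$ — since everything after that is a direct transcription of \S\ref{sec:analy-delta}. A minor but genuine point to get right is that the destabilizing polydisk can always be placed inside a prescribed neighborhood of $x$; this is fine because $c_\Omega(F)=\overline{\pi(F)}$ is irreducible and contains $x$ (as $\pi$ is proper), so smooth points of $F$ map arbitrarily close to $x$.
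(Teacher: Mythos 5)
Your proposal is correct and is exactly the transcription the paper intends: the paper states this local proposition without proof, remarking only that ``the previous argument also applies to the local setting,'' and your plan carries out precisely that transcription — AM--GM for the easy inequality, and the local valuative formula $\delta_x(V)=\inf_F A_\Omega(F)/S_V(F)$ together with the compatible-basis Bergman geodesic and the polydisk estimate from Lemma~\ref{lem:I-lambda>eta} for the hard one. One small sharpening: the reason the destabilizing polydisk can be placed inside a prescribed $U\ni x$ is not quite that ``smooth points of $F$ map arbitrarily close to $x$'' but rather that $x\in c_\Omega(F)=\pi(F)$ forces $F\cap\pi^{-1}(U)$ to be a nonempty Zariski-open subset of the irreducible $F$, which therefore contains points that are smooth on $F$ and avoid the other components of $\operatorname{Exc}(\pi)$.
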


\section{Quantized Ding energy and balanced metrics}
\label{sec:balanced-metric}
Fix $H$ be some reference Hermitian inner product on $H^0(X,mL)$.
In this section we show that $\delta_m(L)$ serves as the coercivity threshold for certain quantized Ding functional, whose critical points correspond to balanced metrics. This then allows us to conclude Theorem \ref{thm:finite-dim-YTD}.

\subsection{Variational characterization of balanced metrics}
\label{sec:quantized-Ding-balanced}

\begin{definition}
\label{def:F-f-delta-m}
	Let $f\in C^\infty(X,\RR)$ and $\delta>0$. Define
	$F_m^{f,\delta}:\cB_m\rightarrow\RR$ by
$$
F_m^{f,\delta}(\varphi):=-\frac{1}{\delta}\log\frac{1}{V}\int_Xe^{f-\delta\varphi}\omega^n-E_m(H,\varphi),\quad \varphi\in\cB_m.
$$
	A critical point $\varphi$ of $F_m^{f,\delta}(\cdot)$ is called $(f,\delta)$-balanced.

\end{definition}

Similar functionals also appeared in, e.g., 
\cite{BBGZ,Ber13,IO20}. As we will see in Proposition \ref{prop:F-coerc=delta-m}, this functional is naturally related to the $\delta_m$-invariant.

\blem
Let $H_0\in\cP_m$. Then
$\varphi=\FS(H_0)\in\cB_m$
is $(f,\delta)$-balanced if and only if 
\begin{equation*}
H_0=\operatorname{Hilb}^{f,\delta}(\varphi):=
\frac{d_m}{\int_Xe^{f-\delta\varphi}\omega^n}
\int_Xh_\varphi^m(\,\cdot\,,\,\cdot\,)e^{f-\delta\varphi}\omega^n,
\end{equation*}
where 
$h_\varphi:=he^{-\varphi}$.
Namely, $H_0=\operatorname{Hilb^{f,\delta}}\circ\operatorname{FS}(H_0)$.
\elem
\bpf
Let $\{\sigma_i\}$ be an $H_0$-orthonormal basis. By definition,
$
\varphi=\FS(H_0)=\frac{1}{m}\log\sum_{i=1}^{d_m}|\sigma_i|^2_{h^m}\in\cB_m.
$
A computation shows that if $\vp$ is a critical point of 
$F_m^{f,\delta}(\cdot)$ then
\begin{equation}
\label{eq:balanced-equation}
	\frac{d_m}{\int_Xe^{f-\delta\varphi}\omega^n}
	\int_Xh^me^{-m\varphi}(\sigma_i,\sigma_j)e^{f-\delta\varphi}\omega^n=\delta_{ij},\ \forall\ 1\le i,j\le d_m,
\end{equation}
as desired.
\epf


\begin{remark}
{\rm One should think of $F_m^{f,\delta}$ as the quantization of the following Ding energy:
	\beq\lb{Dingfdelta}
	F^{f,\delta}(\varphi):=-\frac{1}{\delta}\log\frac{1}{V}\int_Xe^{f-\delta\varphi}\omega^n-E(\varphi),\ \varphi\in\cH_\omega.
	\eeq
Any critical point $\varphi$ of $F^{f,\delta}$ must satisfy (after a suitable normalization)
$
(\omega+\ddc\varphi)^n=e^{f-\delta\varphi}\omega^n.
$
This means that $\omega_\varphi:=\omega+\ddc\varphi$ solves the twisted \KE equation,
$
\Ric(\omega)_\varphi=\delta\omega_\varphi+\theta,
$
where $\theta:=\Ric(\omega)-\delta\omega-\ddc f$
is a smooth form representing
$c_1(X)-\delta c_1(L)$ and
 determined by $\omega$, $f$ and $\delta$.
}\end{remark}

Now we introduce the notion of coercivity for functionals on $\cB_m$. 

\begin{definition}
\label{def:coerc-F}
	We say $F_m^{f,\delta}(\cdot)$ is coercive on $\cB_m$ if there exists $\varepsilon,C>0$ such that for any $\varphi\in\cB_m$,
	$$
	F^{f,\delta}_m(\varphi)\ge\varepsilon (\sup\varphi-E_m(H,\varphi))-C.
	$$
\end{definition}

\begin{remark} {\rm
\label{anyfRem}
It follows from Definition \ref{def:coerc-F} that if $F_m^{f,\delta}$ is 
coercive then $F_m^{f^\prime,\delta}$ is coercive for any 
other $f^\prime\in C^\infty(X,\RR)$. Also, coercivity does not 
depend on the choice of the reference Hermitian product $H$. 
} 
\end{remark}
The next existence result is standard.

\begin{proposition}
\label{prop:F-m-coercive=>balanced}
	If $F^{f,\delta}_m$ is coercive on $\cB_m$ 
	there exists a $(f,\delta)$-balanced metric that minimizes it.
\end{proposition}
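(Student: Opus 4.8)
The plan is to run the direct method of the calculus of variations on the finite-dimensional manifold $\cB_m\cong GL(d_m,\CC)/U(d_m)$. First I would record two elementary properties of $F_m^{f,\delta}$: it is continuous (in fact real-analytic) on $\cB_m$, and it is invariant under adding constants to $\varphi$, since $E_m(H,\varphi+c)=E_m(H,\varphi)+c$ while $-\frac1\delta\log\frac1V\int_Xe^{f-\delta(\varphi+c)}\omega^n=-\frac1\delta\log\frac1V\int_Xe^{f-\delta\varphi}\omega^n+c$, so the two shifts cancel in $F_m^{f,\delta}$ (this is the content of Remark \ref{anyfRem}). In particular $F_m^{f,\delta}$ is finite at every point and, by the coercivity estimate of Definition \ref{def:coerc-F} together with the crude lower bound $\sup_X\varphi-E_m(H,\varphi)\ge\frac1m\log c_0$ (with $c_0$ as below), $\inf_{\cB_m}F_m^{f,\delta}>-\infty$. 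I would then pick a minimizing sequence $\varphi_j\in\cB_m$ and, using constant-invariance, normalize so that $\sup_X\varphi_j=0$ for every $j$.

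The crux is to show that $\{\varphi_j\}$ stays in a compact subset of $\cB_m$; this is the step I expect to be the main obstacle. Write $\varphi_j=\FS(K_j)$ with $K_j\in\cP_m$ and, as in \eqref{eq:E-m=prod-mu-i}, diagonalize $K_j$ relative to $H$, obtaining an $H$-orthonormal basis $\{s_i^{(j)}\}_{i=1}^{d_m}$ and positive numbers $\mu_i^{(j)}$ with
$$
\varphi_j=\frac1m\log\sum_{i=1}^{d_m}\mu_i^{(j)}\,|s_i^{(j)}|_{h^m}^2,
\qquad
E_m(H,\varphi_j)=\frac1{md_m}\sum_{i=1}^{d_m}\log\mu_i^{(j)} .
$$
Since $s\mapsto\sup_X|s|_{h^m}^2$ is continuous and strictly positive on the (compact) unit sphere of $(H^0(X,mL),H)$ — strict positivity using that $mL$ is very ample, in particular base point free — there is a uniform $c_0>0$ with $\sup_X|s|_{h^m}^2\ge c_0$ for every $H$-unit section $s$. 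Evaluating $\varphi_j$ at a point where $|s_i^{(j)}|_{h^m}^2$ attains its maximum gives $0=\sup_X\varphi_j\ge\frac1m\log(c_0\,\mu_i^{(j)})$, hence $\mu_i^{(j)}\le 1/c_0$ for all $i,j$. On the other hand the coercivity estimate, using $\sup_X\varphi_j=0$, reads $F_m^{f,\delta}(\varphi_j)\ge-\varepsilon\,E_m(H,\varphi_j)-C$; as $F_m^{f,\delta}(\varphi_j)$ is bounded above along the minimizing sequence, this yields $E_m(H,\varphi_j)\ge-B$, i.e.\ $\sum_i\log\mu_i^{(j)}\ge-md_mB$. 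Combining the two bounds forces $\log\mu_i^{(j)}\ge-md_mB-(d_m-1)\log(1/c_0)$, so all the $\mu_i^{(j)}$ lie in a fixed interval $[a,b]\subset(0,\infty)$. Consequently the $K_j$ range over a compact subset of $\cP_m$, so after passing to a subsequence $K_j\to K_\infty\in\cP_m$ and hence $\varphi_j=\FS(K_j)\to\varphi_\infty:=\FS(K_\infty)$ in $\cB_m$ (even in $C^\infty(X)$).

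Finally, continuity of $F_m^{f,\delta}$ gives $F_m^{f,\delta}(\varphi_\infty)=\lim_jF_m^{f,\delta}(\varphi_j)=\inf_{\cB_m}F_m^{f,\delta}$, so $\varphi_\infty$ is a global minimizer; being an interior minimum of a smooth function on the open manifold $\cB_m$, it is a critical point of $F_m^{f,\delta}$, i.e.\ $(f,\delta)$-balanced in the sense of Definition \ref{def:F-f-delta-m}. To summarize, the only genuine work is the compactness step, and its only delicate point is to convert the coercivity inequality — phrased through $\sup_X\varphi-E_m(H,\varphi)$ — into two-sided bounds on the eigenvalue parameters $\mu_i$ of the underlying inner products, which rests on the elementary fact that $H$-unit sections of the very ample bundle $mL$ have sup-norm bounded uniformly away from $0$.
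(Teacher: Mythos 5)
Your proof is correct, and the overall strategy is the same as the paper's: extract a minimizing sequence, normalize by translation-invariance so that $\sup_X\varphi_j=0$, show the eigenvalue parameters $\mu_i^{(j)}$ of $\operatorname{FS}^{-1}(\varphi_j)$ relative to $H$ lie in a fixed compact subinterval of $(0,\infty)$, pass to a $C^\infty$-convergent subsequence, and then note that a global minimizer on the open manifold $\cB_m$ is a critical point. Where you diverge is in how the two-sided eigenvalue bounds are obtained. The paper routes this through Lemma~\ref{lem:mu-max=sup-E} (and, underlying it, Lemma~\ref{lem:sup=max-mu}), which identify $\sup\varphi-E_m(H,\varphi)$ with $\tfrac1{md_m}\log\prod_i\frac{\max_k\mu_k}{\mu_i}$ and $\sup\varphi$ with $\tfrac1m\log\max_i\mu_i$ up to bounded errors; the upper bound in Lemma~\ref{lem:sup=max-mu} requires the Bergman-kernel/Moser-iteration estimate $\sup_X|s|^2_{h^m}\le Cm^n$ for $H_m$-unit sections. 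Your argument is more elementary: you only need the trivial fact that $s\mapsto\sup_X|s|^2_{h^m}$ is continuous and strictly positive on the compact $H$-unit sphere, which, after normalizing $\sup\varphi_j=0$, gives the upper bound $\mu_i^{(j)}\le 1/c_0$ directly by evaluating $\varphi_j$ at the maximizing point of $|s_i^{(j)}|^2_{h^m}$; the coercivity inequality then bounds $E_m(H,\varphi_j)=\tfrac1{md_m}\sum_i\log\mu_i^{(j)}$ from below, and combining with the one-sided eigenvalue bound yields the lower bound on each $\mu_i^{(j)}$. This entirely sidesteps the Moser iteration and Bergman-kernel asymptotics, and it also makes the intermediate Lemma~\ref{lem:d-1-cptness} superfluous for this particular proposition (though the paper needs those lemmas elsewhere, e.g.\ in Proposition~\ref{prop:F-coerc=delta-m}, so it is natural that it reuses them here). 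In short: same skeleton, cleaner compactness argument.
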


For its proof, we prepare two lemmas.

	\begin{lemma}
\label{lem:mu-max=sup-E}
There exists $C=C(H)$ such that for any $\varphi=\frac{1}{m}\log\sum_{i=1}^{d_m}\mu_i|s_i|^2_{h^m}\in\cB_m$, where $\{s_i\}$ is $H$-orthonormal and $\mu_i>0$,
	$$
	\frac{1}{md_m}\prod_{i=1}^{d_m}\frac{\max_i\mu_i}{\mu_i}-C
	\le
	\sup\varphi-E_m(H,\varphi)
	\le\frac{1}{md_m}\prod_{i=1}^{d_m}\frac{\max_i\mu_i}{\mu_i}+C.
	$$
	\begin{proof}
		We might as well assume that $H$ is the $L^2$ inner product
		$
		\int_Xh^m(\,\cdot\,,\,\cdot\,)\,\omega^n.
		$
		Then the assertion follows from Lemma \ref{lem:sup=max-mu}.
	\end{proof}
\end{lemma}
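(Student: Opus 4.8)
The plan is to reduce the claimed two-sided inequality to a single elementary sup-norm estimate for the Bergman potential $\varphi$, everything else being the exact formula for $E_m$ recorded in \eqref{eq:E-m=prod-mu-i}. Writing $\varphi=\frac1m\log\sum_{i=1}^{d_m}\mu_i|s_i|^2_{h^m}$ with $\{s_i\}$ an $H$-orthonormal basis and $\mu_i>0$, that formula gives $E_m(H,\varphi)=\frac1{md_m}\sum_{i=1}^{d_m}\log\mu_i$. Setting $M:=\max_i\mu_i$ and using $\frac1m\log M=\frac1{md_m}\sum_{i=1}^{d_m}\log M$, I would first record the exact identity
$$
\sup_X\varphi-E_m(H,\varphi)=\Big(\sup_X\varphi-\tfrac1m\log M\Big)+\frac1{md_m}\log\prod_{i=1}^{d_m}\frac{\max_i\mu_i}{\mu_i},
$$
so that the lemma becomes equivalent to the assertion $\big|\sup_X\varphi-\tfrac1m\log M\big|\le C(H)$ for a constant depending only on $H$ (and on the fixed $h,\omega$).

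To prove this last estimate, for the upper bound I would use $\sum_i\mu_i|s_i|^2_{h^m}\le M\sum_i|s_i|^2_{h^m}$, which gives $\sup_X\varphi\le\tfrac1m\log M+\sup_X\FS(H)$; here $\FS(H)=\tfrac1m\log\sum_i|s_i|^2_{h^m}$ is a fixed smooth function on the compact manifold $X$, so $\sup_X\FS(H)$ is a finite constant determined by $H$. For the lower bound, picking $i_0$ with $\mu_{i_0}=M$ yields $\sum_i\mu_i|s_i|^2_{h^m}\ge M\,|s_{i_0}|^2_{h^m}$, hence $\sup_X\varphi\ge\tfrac1m\log M+\tfrac1m\log\sup_X|s_{i_0}|^2_{h^m}$. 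It thus suffices to bound $\sup_X|s|^2_{h^m}$ from below uniformly over $H$-unit vectors $s\in H^0(X,mL)$, which is a compactness statement: $s\mapsto\sup_X|s|^2_{h^m}$ is continuous and strictly positive on the $H$-unit sphere of the finite-dimensional space $H^0(X,mL)$, which is compact, so it attains there a positive minimum $c(H)$. Then $\sup_X\varphi\ge\tfrac1m\log M+\tfrac1m\log c(H)$, and combining with the upper bound gives $\big|\sup_X\varphi-\tfrac1m\log M\big|\le C(H)$, proving the lemma.

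I do not expect a genuine obstacle; the only point deserving care is the dependence on the reference product $H$. One may as well take $H$ to be the $L^2$ inner product $\int_X h^m(\cdot,\cdot)\,\omega^n$, since replacing $H$ by another $K\in\cP_m$ shifts $E_m(H,\cdot)$ by the constant $E_m(H,K)$ (the cocycle relation $E_m(H,K)+E_m(K,N)=E_m(H,N)$) and alters the eigenvalues $\mu_i$ and the chosen orthonormal frame only within bounds controlled by the equivalence of norms on $H^0(X,mL)$, so the constant $C(H)$ changes only additively. With this normalization the compactness input above is moreover explicit, since every $H$-unit $s$ satisfies $\sup_X|s|^2_{h^m}\ge\big(\int_X\omega^n\big)^{-1}\int_X|s|^2_{h^m}\,\omega^n=\big(\int_X\omega^n\big)^{-1}$. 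The resulting sup-norm estimate $\big|\sup_X\varphi-\tfrac1m\log M\big|\le C(H)$ is essentially the statement of the forthcoming Lemma \ref{lem:sup=max-mu}, which one could alternatively invoke directly.
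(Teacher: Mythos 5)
Your proof is correct and rests on the same reduction as the paper's: using the exact formula $E_m(H,\varphi)=\frac1{md_m}\sum_i\log\mu_i$ from \eqref{eq:E-m=prod-mu-i}, the two-sided inequality (in which, as you implicitly correct, a $\log$ is missing in front of $\prod_i\frac{\max_j\mu_j}{\mu_i}$) is equivalent to the single estimate $\big|\sup_X\varphi-\frac1m\log\max_i\mu_i\big|\le C(H)$. Where you diverge is in how this last estimate is proved. The paper cites the forthcoming Lemma \ref{lem:sup=max-mu}, whose proof relies on the Bochner-type identity $\Delta|s|^2_{h^m}=|\nabla s|^2_{h^m}-nm|s|^2_{h^m}$ and Moser iteration to obtain $\sup_X|s|^2_{h^m}\le Cm^n$ for $L^2$-unit sections, and thus the sharper discrepancy $\varepsilon_m\to0$. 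You instead get the upper bound trivially from $\sup_X\varphi\le\frac1m\log M+\sup_X\FS(H)$ (a fixed smooth function on the compact $X$), and the lower bound from compactness of the $H$-unit sphere of the finite-dimensional space $H^0(X,mL)$, on which $s\mapsto\sup_X|s|^2_{h^m}$ attains a positive minimum. This is more elementary and entirely sufficient here, since $m$ is fixed and $C$ is allowed to depend on it; the paper's route via Lemma \ref{lem:sup=max-mu} is used because the uniform-in-$m$ decay $\varepsilon_m\to0$ is needed elsewhere (e.g.\ in Lemma \ref{lem:sup-E<1+e}), whereas your compactness constant $c(H)$ carries no such control. One small remark: your compactness argument works verbatim for any $H\in\cP_m$, so the opening reduction to the $L^2$ inner product (and the discussion of how $C(H)$ shifts under change of $H$) is not actually needed for the logic, though it does make the lower bound explicit as $\sup_X|s|^2_{h^m}\ge1/V$.
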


The next lemma is implicit in 
computations in \cite{Tia90}.

\begin{lemma} 
\label{lem:sup=max-mu}
There exists $\varepsilon_m\rightarrow0$ such that
	$
	\big|\sup\varphi-\frac{1}{m}\log\max_i\mu_i\big|\leq\varepsilon_m,
	$
	for any $\varphi=\frac{1}{m}\log\sum_{i=1}^{d_m}\mu_i|s_i|^2_{h^m}\in\cB_m$, where $\{s_i\}$ is $H_m$-orthonormal for
	$
    H_m:=\int_Xh^m(\,\cdot\,,\,\cdot\,)\,\omega^n\in\cP_m,
    $
	and $\mu_i>0$.
\end{lemma}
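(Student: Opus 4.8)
The statement to prove is Lemma \ref{lem:sup=max-mu}: for $\varphi = \frac{1}{m}\log\sum_i \mu_i |s_i|^2_{h^m}$ with $\{s_i\}$ orthonormal for the $L^2$ product $H_m = \int_X h^m(\cdot,\cdot)\,\omega^n$, one has $\big|\sup_X\varphi - \frac{1}{m}\log\max_i\mu_i\big| \le \varepsilon_m$ with $\varepsilon_m \to 0$ independent of $\varphi$. My plan is a standard Tian-type partial $C^0$-estimate/peak-section argument. First I would reduce to controlling the Bergman kernel function: writing $\rho_m(x) := \sum_{i=1}^{d_m} |s_i|^2_{h^m}(x)$ for the density (with respect to $H_m$), the classical Bochner--Kodaira/Tian expansion \cite{Tian90} gives uniform constants $c_m \le \rho_m(x) \le C_m$ on $X$ with $\frac{1}{m}\log c_m \to 0$ and $\frac{1}{m}\log C_m \to 0$; in fact $\rho_m = m^n + O(m^{n-1})$ uniformly, so $\frac{1}{m}\log\rho_m \to 0$ uniformly. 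This is exactly the ingredient that makes $H_m$ the distinguished reference product.

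Next, the upper bound. Since $\sum_i \mu_i |s_i|^2_{h^m} \le (\max_i \mu_i)\sum_i |s_i|^2_{h^m} = (\max_i\mu_i)\,\rho_m$, taking $\frac{1}{m}\log$ and the sup over $X$ gives $\sup_X\varphi \le \frac{1}{m}\log\max_i\mu_i + \frac{1}{m}\log\sup_X\rho_m$, and the second term is $\le \varepsilon_m$ by the expansion. For the lower bound, let $i_0$ be an index realizing $\max_i\mu_i = \mu_{i_0}$. Then $\sum_i\mu_i|s_i|^2_{h^m} \ge \mu_{i_0}|s_{i_0}|^2_{h^m}$, so $\sup_X\varphi \ge \frac{1}{m}\log\mu_{i_0} + \frac{1}{m}\log\sup_X|s_{i_0}|^2_{h^m}$. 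The point now is that any single $L^2$-normalized section $s_{i_0}$ has $\sup_X |s_{i_0}|^2_{h^m}$ bounded below uniformly (not just above): indeed $\int_X |s_{i_0}|^2_{h^m}\omega^n = 1$ while $\int_X \omega^n = V$, so $\sup_X|s_{i_0}|^2_{h^m} \ge 1/V > 0$, which already gives $\frac{1}{m}\log\sup_X|s_{i_0}|^2_{h^m} \ge -\frac{1}{m}\log V \to 0$. Combining the two directions yields the claim with $\varepsilon_m := \max\big\{\tfrac{1}{m}\log\sup_X\rho_m,\ \tfrac{1}{m}\log V\big\} \to 0$.

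**Where the work is.** Strictly speaking the only nontrivial input is the uniform two-sided bound on the Bergman density $\rho_m$, i.e. $c\,m^n \le \rho_m(x) \le C\,m^n$ for all $x$ and all large $m$; the lower bound $\rho_m(x) > 0$ alone is not uniform, so one genuinely invokes the Tian expansion (or Bouche/Catlin/Zelditch), which holds here because $h$ is smooth and positively curved — exactly the standing hypothesis on $(X,L,h)$. Everything else is AM--GM-style bookkeeping. One caveat to state carefully: I should make sure $\varepsilon_m$ depends only on $m$ (through $\|\rho_m\|_\infty$ and $V$) and not on the particular orthonormal basis $\{s_i\}$ nor on the weights $\mu_i$ — this is clear from the argument above since every estimate used only $\rho_m$ and the single-section bound $\sup_X|s|^2_{h^m}\ge 1/V$, both uniform over $H_m$-orthonormal data. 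Finally, I would remark that this lemma is what powers Lemma \ref{lem:mu-max=sup-E} (via the stated reduction to this case), so no circularity arises.
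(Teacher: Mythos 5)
Your proof is correct and has the same architecture as the paper's: the lower bound on $\sup_X\varphi$ comes from picking the index $i_0$ realizing $\max_i\mu_i$ and using the elementary fact that an $L^2$-normalized section satisfies $\sup_X|s_{i_0}|^2_{h^m}\geq 1/V$; the upper bound comes from $\sum_i\mu_i|s_i|^2_{h^m}\leq(\max_i\mu_i)\,\rho_m$ together with a uniform upper bound $\rho_m\leq Cm^n$. The only real difference is how that upper bound is sourced. You invoke the full Tian--Catlin--Zelditch asymptotic expansion $\rho_m=m^n+O(m^{n-1})$, whereas the paper deliberately uses a more self-contained and weaker input: the pointwise identity $\Delta|s|^2_{h^m}=|\nabla s|^2_{h^m}-nm|s|^2_{h^m}$ (Tian \cite[(5.1)]{Tia90}) together with Moser iteration, yielding $\sup_X|s|^2_{h^m}\leq Cm^n$ for any $L^2$-normalized $s$ — this is equivalent, via the extremal characterization of the Bergman kernel, to $\sup_X\rho_m\leq Cm^n$. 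What the paper's route buys is that it avoids appealing to the full expansion when only a crude power bound is needed; what yours buys is a one-line citation. Both give $\varepsilon_m=O(\log m/m)\rightarrow0$, uniformly in the orthonormal basis and the weights $\mu_i$, as you correctly observe.
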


\begin{proof}
Let $s\in H^0(X,mL)$ be any holomorphic section with $H_m(s,s)=1$. Then it is clear that
$$
\sup|s|^2_{h^m}\geq\frac{1}{V}.
$$
On the other hand 
$
\Delta|s|^2_{h^m}=|\nabla s|^2_{h^m}-nm|s|^2_{h^m}
$
\cite[(5.1)]{Tia90} (see also \cite[Lemma 9]{Don01}).
Then standard Moser iteration yields a constant $C=C(X,\omega)>0$ such that
    $$
    \sup|s|^2_{h^m}\leq Cm^n.
    $$
    Thus,
    $
    \sup\big|\varphi-\frac{1}{m}\log\max_i\mu_i\big|\leq C^\prime\frac{\log m}{m},
    $
   as desired.
\end{proof}

\begin{proof}[Proof of Proposition \ref{prop:F-m-coercive=>balanced}]
	By coercivity, one can find a minimizing sequence $\{\varphi_j\}_{j\in\NN}$ such that
	$$
	\sup_j(\sup\varphi_j-E_m(H,\varphi_j))<\infty,\text{ and }\lim_jF_m^{f,\delta}(\varphi_j)=\inf_{\cB_m}F_m^{f,\delta}. 
	$$
Let $\{\mu_i(j)\}$ be 
	the eigenvalues of $\FS^{-1}(\varphi_j)$
	with respect to $H$. 
	By Lemma \ref{lem:mu-max=sup-E}, 
	$0<C^{-1}<\mu_i(j)/\max_i\mu_i(j)<C$ for some
	uniform $C>0$. By normalizing the $\vp_j$ so that $\sup\vp_j=0$, Lemma
	\ref{lem:sup=max-mu} gives that $\max_i\mu_i(j)$ is also uniformly bounded.
	By compactness, established below in Lemma \ref{lem:d-1-cptness} and \er{d1mmu}, we can then extract a limit $\varphi_{\infty}\in\cB_m$ such that  up to a subsequence,
	$$
	\varphi_i\xrightarrow{C^\infty}\varphi_\infty.
	$$
	Then $\varphi_\infty$ is a minimizer of $F^{f,\delta}_m$ on $\cB_m$, which hence is $(f,\delta)$-balanced concluding the proof
of Proposition \ref{prop:F-m-coercive=>balanced}.
\end{proof}

\subsection{Algebraic characterization of coercivity}
The main result of this part is the following algebraic characterization of coercivity.
\begin{proposition}
\label{prop:F-coerc=delta-m}
	$F^{f,\delta}_m$ is coercive on $\cB_m$ if and only if 
$\delta\in(0,\delta_m(L))$.
	In particular, if $\delta\in(0,\delta_m(L))$, there exists an $(f,\delta)$-balanced metric in $\cB_m$ for any $f\in C^\infty(X)$.
\end{proposition}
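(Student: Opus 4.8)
The plan is to prove the equivalence ``$F_m^{f,\delta}$ coercive $\iff \delta < \delta_m(L)$'' by relating the coercivity of $F_m^{f,\delta}$ to the finiteness of the quantized Moser--Trudinger integral, and then invoking Theorem \ref{thm:delta-A-m=delta-m}. First I would unwind the definitions: for $\varphi = \frac1m\log\sum_i \mu_i|s_i|^2_{h^m}\in\cB_m$ with $\{s_i\}$ an $H$-orthonormal basis and $\mu_i>0$, the energy $E_m(H,\varphi) = \frac1{md_m}\log\prod_i\mu_i$ by \eqref{eq:E-m=prod-mu-i}, while the first term of $F_m^{f,\delta}$ is $-\frac1\delta\log\frac1V\int_X e^{f-\delta\varphi}\omega^n$. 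Since $f$ is bounded and $\varphi$ is determined up to an additive constant by the ratios $\mu_i/\max_j\mu_j$ together with $\sup\varphi$ (via Lemmas \ref{lem:mu-max=sup-E} and \ref{lem:sup=max-mu}), I would reduce to the normalization $\sup\varphi = 0$, so that $\max_j\mu_j$ is comparable to $1$ and $\sup\varphi - E_m(H,\varphi)$ is comparable to $-\frac1{md_m}\log\prod_i(\mu_i/\max_j\mu_j) = -E_m(H,\varphi) + \text{const}$.

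Next I would observe that after this normalization, $F_m^{f,\delta}(\varphi)\ge \varepsilon(\sup\varphi - E_m(H,\varphi)) - C$ for all $\varphi$ is equivalent (up to harmless constants absorbed into $C$ and a rescaling of $\varepsilon$) to a bound of the form
\[
-\frac1\delta\log\frac1V\int_X e^{f-\delta\varphi}\omega^n \ge (\delta' / \delta)\cdot\big(-E_m(H,\varphi)\big) - C'
\]
for some $\delta' \in (\delta, \delta_m(L))$, i.e. to the statement that $\int_X e^{-\delta(\varphi - E_m(H,\varphi))}\omega^n$ (times $e^{\sup f}$) stays bounded \emph{and moreover} decays like $e^{-(\delta'-\delta)(\sup\varphi - E_m(H,\varphi))}$. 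The ``only if'' direction is the easier one: if $\delta \ge \delta_m(L) = \delta_m^A(L)$, then by Definition \ref{def:delta-A-m} and \eqref{deltaA2eq} the integral $\int_X e^{-\delta(\varphi-E_m(H,\varphi))}\omega^n$ is unbounded on $\cB_m$, which forces $F_m^{f,\delta}$ to fail the coercivity estimate along the destabilizing Bergman geodesic $\varphi_F(t)$ constructed in \S\ref{subsec:destabilizing-ray} (using Lemma \ref{lem:E-m=S-m} to identify $E_m(H,\varphi_F(t)) = tS_m(F)$ and \eqref{deltaEVAX} to see the first term of $F_m^{f,\delta}$ grows at most linearly while $-E_m$ grows linearly with the wrong sign).

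For the ``if'' direction, the key input is that $\delta < \delta_m(L)$ gives room to spare: pick $\delta'$ with $\delta < \delta' < \delta_m(L) = \delta_m^A(L)$, so that $\sup_{\varphi\in\cB_m}\int_X e^{-\delta'(\varphi-E_m(H,\varphi))}\omega^n =: C_0 < \infty$. Then by Hölder's inequality (or by a direct Jensen/convexity argument applied to the log-integral as a function of the parameter), for any $\varphi$ normalized by $\sup\varphi = 0$,
\[
\int_X e^{f-\delta\varphi}\omega^n \le e^{\sup f}\Big(\int_X e^{-\delta'(\varphi - E_m(H,\varphi))}\omega^n\Big)^{\delta/\delta'} e^{\delta E_m(H,\varphi)} V^{1-\delta/\delta'} \le C_1\, e^{\delta E_m(H,\varphi)},
\]
using $\varphi \le 0$ to absorb the leftover exponent; taking $-\frac1\delta\log$ and comparing with $-E_m(H,\varphi)$ then yields $F_m^{f,\delta}(\varphi) \ge -\frac1\delta\log C_1 - E_m(H,\varphi) - E_m(H,\varphi) + \cdots$ — more carefully, one interpolates to extract a genuine $\varepsilon = 1 - \delta/\delta' > 0$ factor in front of $-E_m(H,\varphi)$, which by Lemma \ref{lem:mu-max=sup-E} is comparable to $\sup\varphi - E_m(H,\varphi)$ under the $\sup\varphi = 0$ normalization, giving coercivity. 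The last sentence of the Proposition then follows immediately from Proposition \ref{prop:F-m-coercive=>balanced}. The main obstacle I anticipate is bookkeeping the normalization carefully: coercivity is a statement about $\sup\varphi - E_m(H,\varphi)$ (a genuinely gauge-invariant quantity on $\cB_m$), so I must consistently translate between the ``$\sup\varphi = 0$'' gauge and the ``$E_m$-centered'' gauge using Lemmas \ref{lem:mu-max=sup-E} and \ref{lem:sup=max-mu}, and make sure the interpolation exponent $\delta/\delta'$ lands the $\varepsilon$ on the correct term rather than being eaten by the additive constants.
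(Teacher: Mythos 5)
Your high-level plan — tie coercivity to the quantized Moser--Trudinger integral, apply H\"older in one direction and the destabilizing Bergman geodesic in the other — does parallel the paper, but both directions as written have real gaps.

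\textbf{The ``only if'' direction is not the easy one and needs the strict inequality.} You claim that if $\delta\ge\delta_m(L)=\delta_m^A(L)$ then the integral $\int_X e^{-\delta(\varphi-E_m(H,\varphi))}\omega^n$ is unbounded, citing Definition \ref{def:delta-A-m} and \eqref{deltaA2eq}. But a supremum gives no information about its own endpoint: $\delta=\delta_m^A(L)$ could well be in the range where the integral stays bounded, so your argument only yields $\delta\le\delta_m(L)$, not the strict $\delta<\delta_m(L)$ the proposition asserts. The proposition as stated needs the sharper conclusion, and this is precisely what makes this direction the hard one. The paper extracts the extra room quantitatively: coercivity yields a uniform bound on $\int_X \bigl(\prod_i\mu_i^{\delta(1-\varepsilon)/(md_m)}\bigr)\mu_{\max}^{\delta\varepsilon/m}\bigl(\sum_i\mu_i|s_i|^2_{h^m}\bigr)^{-\delta/m}\omega^n$ (see \eqref{eq:int-delta-epsilon-prime}), and evaluating this along $\varphi_F(t)$ with $\mu_{\max}=e^{t\tau_m(F)}$ (using \eqref{eq:mu-max=T-m}) and then Lemma \ref{lem:I-lambda>eta} gives $\delta(1-\varepsilon)S_m(F)+\delta\varepsilon\tau_m(F)/m\le A_X(F)$ for all $F$. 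The inequality $S_m(F)\le\frac{d_m-1}{md_m}\tau_m(F)$ (from base-point-freeness of $|mL|$) then turns this into $\delta_m(L)\ge\delta+\frac{\delta\varepsilon}{d_m-1}>\delta$. You are missing the appearance of $\tau_m(F)$ entirely, and without it you cannot get strictness.

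\textbf{The ``if'' direction: your two-parameter H\"older only gives boundedness, not coercivity.} With $\delta<\delta'<\delta_m^A(L)$ the interpolation $\int e^{-\delta(\varphi-E_m)}\omega^n\le\bigl(\int e^{-\delta'(\varphi-E_m)}\omega^n\bigr)^{\delta/\delta'}V^{1-\delta/\delta'}$ yields $F_m^{f,\delta}(\varphi)\ge-C$, with no $\varepsilon(\sup\varphi-E_m(H,\varphi))$ term appearing; the asserted ``$\varepsilon=1-\delta/\delta'$'' does not fall out of this estimate. To produce the genuine slope you need a third exponent $\alpha<\min\{\delta,\alpha_m(L)\}$: the $\alpha$-factor contributes $e^{-(1-s)\alpha\sup\varphi}$ (bounded via $\alpha_m(L)$), the $\gamma$-factor contributes $e^{-s\gamma E_m}$ (bounded since $\gamma<\delta_m^A(L)$), and recombining via $\delta=s\gamma+(1-s)\alpha$ yields $F_m^{f,\delta}(\varphi)\ge\frac{\alpha(\gamma-\delta)}{\delta(\gamma-\alpha)}(\sup\varphi-E_m(H,\varphi))-C$, which is the paper's estimate. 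So you need to bring in the $\alpha_m(L)$-invariant (in the guise of \cite[\S6]{Tian89}) as a separate input, and perform a genuine three-exponent H\"older split rather than a two-exponent one.

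Your use of Lemmas \ref{lem:mu-max=sup-E} and \ref{lem:sup=max-mu} to set up the $\sup\varphi=0$ gauge, and of Proposition \ref{prop:F-m-coercive=>balanced} for the final sentence, are correct; once the two issues above are patched the overall architecture you describe is the right one.
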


\begin{proof}
The following invariant goes back to \cite[\S6]{Tian89}, 
\begin{equation*}
	\alpha_m(L):=\sup\bigg\{
	\alpha>0\,:\,\sup_{\varphi\in\cB_m}\int_Xe^{-\alpha(\varphi-\sup\varphi)}\omega^n<\infty\bigg\}.
\end{equation*}

By Remark \ref{anyfRem}, it suffices to consider the case $f=0$. We first show that  for any $\delta\in(0,\delta_m(L))$, $\gamma\in(\delta,\delta_m(L))$ and $\alpha\in(0,\min\{\delta,\alpha_m(L
	)\})$, there exists $C>0$ such that
	$$
	-\frac{1}{\delta}\log\frac{1}{V}\int_Xe^{-\delta\varphi}\omega^n-E_m(H,\varphi)\ge \frac{\alpha(\gamma-\delta)}{\delta(\gamma-\alpha)}(\sup\varphi-E_m(H,\varphi))-C,\ \forall\varphi\in\cB_m.
	$$
	This follows easily from H\"older inequality. Indeed,
	\begin{equation*}
		\begin{aligned}
			\frac{1}{\delta}\log\frac{1}{V}\int_Xe^{-\delta\varphi}\omega^n&=\frac{1}{\delta}\log\frac{1}{V}\int_Xe^{-\alpha\frac{\gamma-\delta}{\gamma-\alpha}\varphi-\gamma\frac{\delta-\alpha}{\gamma-\alpha}\varphi}\omega^n\\
			&\le\frac{\gamma-\delta}{\delta(\gamma-\alpha)}\log\frac{1}{V}\int_Xe^{-\alpha\varphi}\omega^n+\frac{\delta-\alpha}{\delta(\gamma-\alpha)}\log\frac{1}{V}\int_Xe^{-\gamma\varphi}\omega^n\\
			&=\frac{\gamma-\delta}{\delta(\gamma-\alpha)}\log\frac{1}{V}\int_Xe^{-\alpha(\varphi-\sup\varphi)}\omega^n+\frac{\delta-\alpha}{\delta(\gamma-\alpha)}\log\frac{1}{V}\int_Xe^{-\gamma(\varphi-E_m(H,\varphi))}\omega^n\\
			&\ \ \ -\frac{\alpha(\gamma-\delta)}{\delta(\gamma-\alpha)}\sup\varphi-\frac{\gamma(\delta-\alpha)}{\delta(\gamma-\alpha)}E_m(H,\varphi)\\
			&\le C-\frac{\alpha(\gamma-\delta)}{\delta(\gamma-\alpha)}\sup\varphi-\frac{\gamma(\delta-\alpha)}{\delta(\gamma-\alpha)}E_m(H,\varphi)\\
			&=C-\frac{\alpha(\gamma-\delta)}{\delta(\gamma-\alpha)}(\sup\varphi-E_m(H,\varphi))-E_m(H,\varphi).
		\end{aligned}
	\end{equation*}
	From this we obtain that  for any $\delta\in(0,\delta_m(L))$, $F_m^{f,\delta}(\cdot)$ is coercive.
	
	Now conversely, suppose that  for some $\delta>0$, there exist $\varepsilon>0$ and $C>0$ such that
	\begin{equation}
	\label{eq:F-f-delta>e-sup-E-C}
		-\frac{1}{\delta}\log\int_Xe^{-\delta\varphi}\omega^n-E_m(H,\varphi)\ge\varepsilon(\sup\varphi-E_m(H,\varphi))-C,\ \forall\varphi\in\cB_m.
	\end{equation}
	Our goal is to show that $\delta_m(L)>\delta$. The argument will be similar to the one for Theorem \ref{thm:delta-A-m=delta-m}. More precisely, for any $\varphi\in\cB_m$, we write
	$\varphi=\frac{1}{m}\log\sum_{i=1}^{d_m}\mu_i|s_i|^2_{h^m}$
	for some $H$-orthonormal basis $\{s_i\}$ with $\mu_i>0$.
	Put
	\begin{equation*}
		\mu_{\maxop}:=\max_i\{\mu_i\}
	\end{equation*}
	Then by Lemma \ref{lem:mu-max=sup-E}, the coercivity assumption \eqref{eq:F-f-delta>e-sup-E-C} is equivalent to
	$$
	-\frac{1}{\delta}\log\int_Xe^{-\delta\varphi}\omega^n-\frac{1}{md_m}\log\prod_{i=1}^{d_m}\mu_i\ge\frac{\varepsilon}{md_m}\log\prod_{i=1}^{d_m}\frac{\mu_{\maxop}}{\mu_i}-C^\prime
	$$
	for some $\varepsilon>0$ and $C^\prime>0$. Rearranging, we derive that
	\begin{equation}
	\label{eq:int-delta-epsilon-prime}
		\int_X\frac{\bigg(\prod_{i=1}^{d_m}\mu_i^{\frac{\delta(1-\varepsilon)}{md_m}}\bigg)\mu_{\maxop}^{\frac{\delta\varepsilon}{m}}}{\bigg(\sum_{i=1}^{d_m}\mu_i|s_i|^2_{h^m}\bigg)^{\frac{\delta}{m}}}\omega^n\le e^{\delta C^\prime}
	\end{equation}
	for any $H$-orthonormal basis $\{s_i\}$ and parameters $\mu_i>0$. Now for any prime divisor $F$ over $X$, as in the proof of Theorem \ref{thm:delta-A-m=delta-m}, we let $\{s_i\}$ be compatible with the filtration on $H^0(X,mL)$ induced by $\ord_F$. Let $t\ge0$ be a parameter and put
	$$
	\mu_i:=e^{t\,\ord_F(s_i)}.
	$$
	Moreover, consider the \mthit\ pseudo-effective threshold
	$$
	\tau_m(F)=\max_{s\in H^0(X,mL)}\ord_F(s).
	$$
	Then
	\begin{equation}
	\label{eq:mu-max=T-m}
	    \mu_{\maxop}=e^{t\,\tau_m(F)}.
	\end{equation}
	Plugging all these into \eqref{eq:int-delta-epsilon-prime}, we find that
	$$
	e^{t\big(\delta(1-\varepsilon)S_m(F)+\delta\varepsilon \tau_m(F)/m-A_X(F)\big)}\int_X\frac{e^{t\,A_X(F)}\omega^n}{\bigg(\sum_{i=1}^{d_m}e^{t\,\ord_F(s_i)}|s_i|_{h^m}^2\bigg)^{
	\frac{\delta}{m}}}\le e^{\delta C^\prime},
	$$
	for any $t\ge0$. Then by Lemma \ref{lem:I-lambda>eta}, we must have
	$$
	\delta(1-\varepsilon)S_m(F)+\delta\varepsilon \tau_m(F)/m-A_X(F)\le0,\ \forall F.
	$$
Now further assume that $S_m(F)\neq0$.
	Then we get
	$$
	\frac{A_X(F)}{S_m(F)}\ge\delta(1-\varepsilon)+\delta\varepsilon\frac{\tau_m(F)}{mS_m(F)}.
	$$
	To finish the proof, it is enough to notice that
	$$
	S_m(F)=\frac{1}{md_m}\sum_{i=1}^{\tau_m(F)}\dim H^0(Y,m\pis L-iF)\leq\frac{1}{md_m}\sum_{i=1}^{\tau_m(F)}(d_m-1)=\frac{d_m-1}{md_m}\cdot \tau_m(F).
	$$
	Here we used the fact that $\dim H^0(Y,m\pis L-iF)<\dim H^0(X,mL)$ for all $i>0$, as $|mL|$ is base point free by \er{eq:m-divisible-mL-ample}. As a consequence,
	$$
	\delta_m(L)=\inf_F\frac{A_X(F)}{S_m(F)}\ge\delta(1-\varepsilon)+\delta\varepsilon\cdot\frac{d_m}{d_m-1}=\delta+\frac{\delta\varepsilon}{d_m-1}>\delta,
	$$
concluding the proof of Proposition \ref{prop:F-coerc=delta-m}.
\end{proof}

We record the following analytic description of $\delta_m(L)$ alluded
to in the Introduction.

\begin{corollary}
Let $L$ be an ample $\QQ$-line bundle and assume \er{eq:m-divisible-mL-ample}.
Then,	
$$
	\delta_m(L)=\sup\Big\{\delta>0\,:\,F_m^{f,\delta}\text{ is coercive on }\cB_m\Big\}.
$$
\end{corollary}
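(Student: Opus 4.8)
The plan is to deduce the statement directly from Proposition \ref{prop:F-coerc=delta-m}, of which this corollary is essentially a repackaging. Write
$$
S:=\Big\{\delta>0\,:\,F_m^{f,\delta}\text{ is coercive on }\cB_m\Big\}.
$$
First I would observe, using Remark \ref{anyfRem}, that whether a given $\delta>0$ lies in $S$ is independent of the auxiliary function $f\in C^\infty(X,\RR)$ (and of the reference $H$), so that $S$, and hence the right-hand side of the asserted identity, is unambiguously defined. Proposition \ref{prop:F-coerc=delta-m} then says exactly that $\delta\in S$ if and only if $\delta\in(0,\delta_m(L))$; that is, $S=(0,\delta_m(L))$.

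It then remains only to check that this interval is nonempty, i.e. that $\delta_m(L)>0$, so that $\sup S=\delta_m(L)$. I would obtain this from the valuative description \eqref{deltam2eq}: the infimum $\inf_F A_X(F)/S_m(F)$ is attained by some prime divisor $F$ over $X$ (as recalled after \eqref{deltam2eq}, cf. Lemma \ref{lem:finite-lct-delta-g-m}), and for such $F$ one has $A_X(F)>0$ and $0<S_m(F)<\infty$ — the latter because $mL$ is very ample, so some section vanishes along (the image of) $F$ to positive order while the filtration \eqref{filtration} terminates. Hence $\delta_m(L)$ equals a positive real number; alternatively one may simply quote $\delta_m(L)\ge\alpha_m(L)>0$. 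Consequently $S=(0,\delta_m(L))$ is a nonempty interval with supremum $\delta_m(L)$, which proves the corollary.

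I do not anticipate any genuine obstacle: all of the substance is already contained in Proposition \ref{prop:F-coerc=delta-m} (itself resting on the H\"older-inequality estimate for the easy direction and on Lemma \ref{lem:I-lambda>eta} together with the $\tau_m$/$S_m$ comparison for the hard direction), and the only thing to be careful about in the writeup is to record the harmless positivity $\delta_m(L)>0$ and the $f$-independence of coercivity so that both sides of the identity make sense.
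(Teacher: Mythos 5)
Your argument is correct and coincides with the paper's intent: the corollary is stated as an immediate consequence of Proposition \ref{prop:F-coerc=delta-m} (the paper offers no separate proof), and you have correctly unpacked that $S=(0,\delta_m(L))$ via the proposition and Remark \ref{anyfRem}, so $\sup S=\delta_m(L)$. The extra check that $\delta_m(L)>0$ is sound and harmless — the paper leaves it implicit, but your justification via attainment of the infimum in \eqref{deltam2eq} and finiteness of $S_m(F)$ (or the alternative $\delta_m(L)\ge\alpha_m(L)>0$) is exactly the right kind of remark.
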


\subsection{$\theta$-balanced metrics}
\label{sec:alpha-balanced}

In this part we consider a special kind of $(f,\delta)$-balanced metric, which we will refer to as the $\theta$-balanced metric. The goal then is to prove Theorem \ref{thm:finite-dim-YTD}.

 The setup is as follows. 
Let $L$ be an ample $\QQ$-line bundle and assume \er{eq:m-divisible-mL-ample}.
As before, we fix some smooth Hermitian metric $h$ on $L$ with positive curvature form $\omega$ as the background K\"ahler form on $X$. Let $\theta$
be a smooth form cohomologous to $c_1(X)-c_1(L)$. The $\theta$-Ricci potential $f_\theta$ is the unique smooth function on $X$ satisfying
 \begin{equation}
 \label{eq:def-f-alpha}
 	\Ric(\omega)=\omega+\theta+\ddc f_\theta,\quad \int_Xe^{f_\theta}\omega^n=
\int_X\omega^n.
 \end{equation}
Define the quantized $\theta$-Ding energy on $\cB_m$:
\begin{equation*}
	F^\theta_m(\varphi):=-\log\frac{1}{V}\int_Xe^{f_\theta-\varphi}\omega^n-E_m(H,\varphi).
\end{equation*}

\begin{definition}
	Any critical point $\varphi\in\cB_m$ of $F^\theta_m$ is called $\theta$-balanced of level $m$, and $\omega_\varphi:=\omega+\ddc\varphi$ is called a $\theta$-balanced metric of level $m$.
\end{definition}

It is straightforward to verify that  the definition of $\theta$-balanced metric does not depend on the choice of the background K\"ahler form $\omega$ representing $c_1(L)$. More specifically, if $\varphi$ is $\theta$-balanced with respect to $\omega$, then given another background K\"ahler form $\omega^\prime:=\omega+\ddc\phi$, $\varphi-\phi$ is $\theta$-balanced with respect to $\omega^\prime$. 
\begin{remark}
If
$
\varphi=\frac{1}{m}\log\sum_{i=1}^{d_m}|\sigma_i|^2_{h^m}\in\cB_m
$
is $\theta$-balanced, then $\varphi$ satisfies
\begin{equation*}
	\frac{d_m}{\int_Xe^{f_\theta-\varphi}\omega^n}\int_Xh_\varphi^m(\sigma_i,\sigma_j)e^{f_\theta-\varphi}\omega^n=\delta_{ij},\ \forall\ 1\le i,j\le d_m.
\end{equation*}
where 
$h_\varphi:=he^{-\varphi}$
is the Fubini-Study Hermitian metric on $L$ induced by $\varphi$.\end{remark}

We can now turn to the proof of one of our main results,
Theorem \ref{thm:finite-dim-YTD}.
But before embarking on the proof we recall
the notion of Bergman geodesics.
Fix $m$.
Any two elements of $\cB_m$
$\varphi=\frac{1}{m}\log\sum_{i=1}^{d_m}|s_i|^2_{h^m}\text{ and }\psi=\frac{1}{m}\log\sum_{i=1}^{d_m}|\sigma_i|^2_{h^m}\in\cB_m
$
can be joined by a \emph{Bergman geodesic} that can be written as follows.
Diagonalize the basis so that $\sigma_i=e^{\lambda_i/2}s_i$ with $\lambda_i\in\RR$. Let $T>0$ be some parameter. Then
$$
\varphi(t):=\frac{1}{m}\log\sum_{i=1}^{d_m}e^{\lambda_it/T}|s_i|^2_{h^m},\ t\in[0,T].
$$
Then $\varphi(t)$, connecting $\varphi$ and $\psi$, is a geodesic segment with respect to the natural Riemannian structure of the homogeneous space $GL(d_m,\CC)/U(d_m)$. 
Bergman geodesics are \emph{sub-geodesics} in the pluripotential theory. More precisely,
consider the projection $\CC\times X\xrightarrow{\Phi}X$. Then,
\begin{equation}
\lb{subgeod}
    \Phi^*\omega+\ddc(\varphi_{\operatorname{Re}\tau})\ge0,
\end{equation}
where $\tau\in\CC$ is a complex variable.
Another ingredient we shall recall comes from \cite{DLR19}, where a well-behaved metric structure on $\cB_m$ is defind by the quantizing Darvas' $d_1$-distance. 
\begin{definition}
	For any $\varphi_1=\frac{1}{m}\log\sum_{i=1}^{d_m}|s_i|^2_{h^m},\varphi_2=\frac{1}{m}\log\sum_{i=1}^{d_m}|\sigma_i|^2_{h^m}\in\cB_m$, the \mthit\ quantum rooftop envelope $P_m(\varphi_1,\varphi_2)$ is defined in the following way. Up to unitary transformation, one can diagonalize the basis so that	$\sigma_i=\mu_is_i$ with $\mu_i>0$. Define	
	$$
	P_m(\varphi_1,\varphi_2):=\frac{1}{m}\log\sum_{i=1}^{d_m}
\min\{1,\mu_i\}|s_i|^2_{h^m}.
	$$
\end{definition}

\begin{definition}
	For any two $\varphi_1,\varphi_2\in\cB_m$, the \mth\ quantum $d_1$-distance between them is given by
	$$
	d_{1,m}(\varphi_1,\varphi_2):=E_m(H,\varphi_1)+E_m(H,\varphi_2)-2E_m(H,P_m(\varphi_1,\varphi_2)).
	$$
\end{definition}

\noindent
More concretely, diagonalizing
$$
\varphi_1=\frac{1}{m}\log\sum_{i=1}^{d_m}|s_i|^2_{h^m},\quad
\varphi_2=\frac{1}{m}\log\sum_{i=1}^{d_m}\mu_i|s_i|^2_{h^m}\in\cB_m,
$$
one has
\beq
\lb{d1mmu}
d_{1,m}(\varphi_1,\varphi_2)=\frac{1}{md_m}\sum_{i=1}^{d_m}|\log\mu_i|.
\eeq
From this, one obtains the following compactness principle.
\begin{lemma}
\label{lem:d-1-cptness}
	For any $\varphi_0\in\cB_m$ and $C>0$, the subset
	$$
	\cC:=\Big\{\varphi\in\cB_m\,:\,d_{1,m}(\varphi_0,\varphi)\le C\Big\}
	$$
	is compact with respect to the $C^\infty$-topology.
\end{lemma}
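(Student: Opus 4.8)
The plan is to transport the statement to the finite--dimensional space $\cP_m$ via the isomorphism $\FS\colon\cP_m\to\cB_m$, and then read the conclusion off from the compactness of closed bounded subsets of the real vector space of Hermitian forms on $H^0(X,mL)$. Throughout I would fix the reference point $H_0:=\FS^{-1}(\varphi_0)\in\cP_m$.

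First I would unwind the hypothesis. Given $\varphi\in\cC$, diagonalize $K:=\FS^{-1}(\varphi)$ relative to $H_0$: there are an $H_0$-orthonormal basis $\{s_i\}$ of $H^0(X,mL)$ and numbers $\mu_i>0$ with $\varphi=\frac1m\log\sum_i\mu_i|s_i|^2_{h^m}$, and then \eqref{d1mmu} gives $\frac1{md_m}\sum_{i=1}^{d_m}|\log\mu_i|=d_{1,m}(\varphi_0,\varphi)\le C$. Since all summands are nonnegative this forces $|\log\mu_i|\le Cmd_m$ for every $i$; as the $\mu_i$ (up to reciprocals) are exactly the eigenvalues of $K$ relative to $H_0$, this is equivalent to $e^{-Cmd_m}H_0\le K\le e^{Cmd_m}H_0$ as Hermitian forms. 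Hence $\FS^{-1}(\cC)$ is contained in $\cK:=\{K\in\cP_m : e^{-Cmd_m}H_0\le K\le e^{Cmd_m}H_0\}$, which is a closed and bounded — hence compact — subset of the (finite--dimensional) space of Hermitian forms on $H^0(X,mL)$, and which crucially stays inside the open cone $\cP_m$: no degenerate limit form can occur.

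To conclude I would use the continuity of $\FS$ for the $C^\infty$-topology: with respect to a fixed basis $\{e_j\}$ of $H^0(X,mL)$ one has the Bergman--kernel identity $e^{m\FS(K)}=\sum_{j,l}(K^{-1})_{jl}\langle e_j,e_l\rangle_{h^m}$, whose right--hand side depends real--analytically on $K\in\cP_m$ with values in $C^\infty(X)$; thus $\FS$ is a homeomorphism of $\cP_m$ onto $\cB_m\subseteq C^\infty(X)$, and $\FS(\cK)$ is a compact subset of $C^\infty(X)$ containing $\cC$. Finally I would check that $\cC$ is closed in $C^\infty(X)$: if $\varphi_j\in\cC$ and $\varphi_j\to\varphi$ in $C^\infty$, then $K_j:=\FS^{-1}(\varphi_j)\in\cK$, so a subsequence converges to some $K_\infty\in\cK\subseteq\cP_m$; hence $\varphi=\FS(K_\infty)\in\cB_m$, and $d_{1,m}(\varphi_0,\varphi)=\lim_j d_{1,m}(\varphi_0,\varphi_j)\le C$ by continuity of $d_{1,m}$ (equivalently, of the eigenvalues of $K_j$ relative to $H_0$), so $\varphi\in\cC$. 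Being a closed subset of the compact set $\FS(\cK)$, $\cC$ is compact, which is the assertion.

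The only point that is not purely formal is the identification of the $C^\infty$-topology on $\cB_m$ with the topology induced through $\FS$ from $\cP_m$; once the Bergman--kernel formula above is recorded this is routine, and the remainder is soft point--set topology together with the linear algebra of simultaneous diagonalization, so I do not anticipate a genuine obstacle here.
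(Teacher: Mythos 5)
Your proof is correct and follows essentially the same route as the paper's: both reduce the claim to the eigenvalue bound $C'^{-1}\le\mu_i\le C'$ extracted from \eqref{d1mmu} together with the compactness of the relevant set of Hermitian forms (equivalently, of $H_0$-orthonormal bases), and both then transfer compactness to $\cB_m$ via $\FS$. Your write-up is more explicit about the continuity of $\FS$ in the $C^\infty$-topology (through the Bergman-kernel identity) and about closedness of $\cC$, points the paper leaves to the reader with "so the assertion follows," but there is no difference in substance.
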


\begin{proof}
Let
$
H_0:=\operatorname{FS}^{-1}(\varphi_0).
$
Then any $\varphi\in\cB_m$ can be written as
	$
	\varphi=\frac{1}{m}\log\sum_{i=1}^{d_m}\mu_i|s_i|^2_{h^m}
	$
	for some $H_0$-orthonormal basis $\{s_i\}$ with $\mu_i>0$.
	The condition $d_{1,m}(\varphi_0,\varphi)\le C$ then implies that  there exists $C^\prime>0$ such that
	$$
	\frac{1}{C^\prime}\le\mu_i\le C^\prime,\ \forall\,1\le i\le d_m.
	$$
	Namely the eigenvalues of $\operatorname{FS}^{-1}(\varphi)$ are
contained in a compact set in $(0,\infty)$.
Also note that the set of all the $H_0$-orthonormal bases is compact. So the assertion follows.
\end{proof}

\begin{lemma}
\label{lem:d-1-m=sup-E}
Given $H\in\cP_m$,
	let
	$
	\varphi_0=\operatorname{FS}(H).
	$
	There exists $C=C(H)>0$ such that for any $\varphi\in\cB_m$ with $E_m(H,\varphi)=0$,
	$$
	\frac{1}{2}d_{1,m}(\varphi_0,\varphi)-C\le\sup\varphi-E_m(H,\varphi)\le \frac{d_m}{2} d_{1,m}(\varphi_0,\varphi)+C.
	$$
\end{lemma}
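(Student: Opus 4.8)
The plan is to reduce the estimate to an elementary inequality for real numbers summing to zero and then to invoke Lemma~\ref{lem:mu-max=sup-E}. First I would write $\varphi=\frac1m\log\sum_{i=1}^{d_m}\mu_i|s_i|^2_{h^m}\in\cB_m$ for an $H$-orthonormal basis $\{s_i\}$ and positive reals $\mu_i$, and put $\ell_i:=\log\mu_i$. Since $\varphi_0=\operatorname{FS}(H)=\frac1m\log\sum_{i=1}^{d_m}|s_i|^2_{h^m}$, formula \eqref{d1mmu} gives
\[
d_{1,m}(\varphi_0,\varphi)=\frac1{md_m}\sum_{i=1}^{d_m}|\ell_i|,
\]
while \eqref{eq:E-m=prod-mu-i} gives $E_m(H,\varphi)=\frac1{md_m}\sum_{i=1}^{d_m}\ell_i$, so the hypothesis $E_m(H,\varphi)=0$ is simply $\sum_i\ell_i=0$.

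Next I would record the elementary fact that if $\ell_1,\dots,\ell_{d_m}\in\RR$ satisfy $\sum_i\ell_i=0$ then, with $\ell_{\max}:=\max_i\ell_i\ (\geq 0)$,
\[
\frac1{2d_m}\sum_{i=1}^{d_m}|\ell_i|\ \leq\ \ell_{\max}\ \leq\ \frac12\sum_{i=1}^{d_m}|\ell_i|.
\]
Indeed, setting $S:=\sum_{\ell_i>0}\ell_i$, the vanishing of $\sum_i\ell_i$ forces $\sum_i|\ell_i|=2S$, and one has $\ell_{\max}\leq S\leq d_m\,\ell_{\max}$ (the first since $\ell_{\max}$ is among the positive terms, or all $\ell_i=0$; the second since $S$ is a sum of at most $d_m$ terms each $\leq\ell_{\max}$); combining these yields both bounds, both of which are sharp — this is the source of the asymmetric constants $\tfrac12$ and $\tfrac{d_m}2$ in the statement. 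Dividing by $m$ and using the expression for $d_{1,m}(\varphi_0,\varphi)$ above, this becomes
\[
\tfrac12\, d_{1,m}(\varphi_0,\varphi)\ \leq\ \tfrac{\ell_{\max}}{m}\ \leq\ \tfrac{d_m}2\, d_{1,m}(\varphi_0,\varphi).
\]

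Finally, I would apply Lemma~\ref{lem:mu-max=sup-E}: using $E_m(H,\varphi)=0$ one has $\frac1{md_m}\sum_i\log\frac{\max_j\mu_j}{\mu_i}=\frac1{md_m}\bigl(d_m\ell_{\max}-\sum_i\ell_i\bigr)=\frac{\ell_{\max}}{m}$, so the lemma produces a constant $C=C(H)$ with $\bigl|\sup\varphi-\tfrac{\ell_{\max}}{m}\bigr|\leq C$. Since $\sup\varphi=\sup\varphi-E_m(H,\varphi)$ here, inserting this into the previous display gives exactly
\[
\tfrac12\, d_{1,m}(\varphi_0,\varphi)-C\ \leq\ \sup\varphi-E_m(H,\varphi)\ \leq\ \tfrac{d_m}2\, d_{1,m}(\varphi_0,\varphi)+C,
\]
as desired. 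There is no serious obstacle: the argument is elementary, and the only point requiring care is tracking the normalization $E_m(H,\varphi)=0$ through the three quantities $d_{1,m}$, $E_m$, and $\sup\varphi$ so that the combinatorial inequality can be applied cleanly and the two sharp directions line up with the stated constants.
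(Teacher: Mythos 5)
Your proof is correct and follows essentially the same route as the paper's: both reduce $\sup\varphi-E_m(H,\varphi)$ to $\tfrac{1}{md_m}\log\prod_i\frac{\mu_{\max}}{\mu_i}=\tfrac{\ell_{\max}}{m}$ via Lemma~\ref{lem:mu-max=sup-E}, and then compare this with $d_{1,m}(\varphi_0,\varphi)=\tfrac{1}{md_m}\sum_i|\ell_i|$ through the same elementary inequality (the paper phrases it multiplicatively with $\prod_i\max\{1,\mu_i\}$, you phrase it additively with $S=\sum_{\ell_i>0}\ell_i$). Your additive presentation is cleaner and makes the sharpness of the two constants transparent, but the underlying idea is identical.
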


Lemma \ref{lem:d-1-m=sup-E} suggests that $\sup\varphi-E_m(H,\varphi)$, as a scaling invariant function on $\cB_m$, behaves like a distance function. This explains
its appearance in the coercivity
thresholds on $\cB_m$ (see, e.g., Definition \ref{def:coerc-F}).

\begin{proof}
	Any $\varphi\in\cB_m$ with $E_m(H,\varphi)=0$ can be written as
	$$
	\varphi=\frac{1}{m}\log\sum_{i=1}^{d_m}\mu_i|s_i|^2_{h^m},
	$$
	where $\{s_i\}$ is $H$-orthonormal so that $\varphi_0=\frac{1}{m}\log\sum_{i=1}^{d_m}|s_i|^2_{h^m}$ and 
	$
	\prod_{i=1}^{d_m}\mu_i=1.
	$
	Thus,
	$$
	\prod_{i=1}^{d_m}\min\{1,\mu_i\}=\frac{1}{\prod_{i=1}^{d_m}\max\{1,\mu_i\}}.
	$$
	Thus by definition,
	$$
	d_{1,m}(\varphi_0,\varphi)=-\frac{2}{md_m}\log\prod_{i=1}^{d_m}\min\{1,\mu_i\}=\frac{2}{md_m}\log\prod_{i=1}^{d_m}\max\{1,\mu_i\}.
	$$
	Now observe that
	$$
	\log\prod_{i=1}^{d_m}\max\{1,\mu_i\}\ge\log\mu_{\maxop}^2=\frac{1}{d_m}\log\prod_{i=1}^{d_m}\frac{\mu_{\maxop}}{\mu_{i}},
	$$
	and also
	$$
	\log\prod_{i=1}^{d_m}\frac{\mu_{\maxop}}{\mu_i}=\log\prod_{i=1}^{d_m}\mu^2_{max}\ge\log\prod_{i=1}^{d_m}\max\{1,\mu_i\}.
	$$
	So we get that
	$$
	\frac{1}{2}d_{1,m}(\varphi_0,\varphi)\le\frac{1}{md_m}\log\prod_{i=1}^{d_m}\frac{\mu_{\maxop}}{\mu_i}\le\frac{d_m}{2}d_{1,m}(\varphi_0,\varphi).
	$$
	Then the assertion follows from Lemma \ref{lem:mu-max=sup-E}.
\end{proof}

\begin{proof}[Proof of Theorem \ref{thm:finite-dim-YTD}]
	By Proposition \ref{prop:F-m-coercive=>balanced} and \ref{prop:F-coerc=delta-m}, the first part follows immediately. So it remains to show the second part. The key property we need to use is the Berndtsson convexity \cite[Section 7]{Bern15}, which implies that, under the assumption $\theta\ge0$, the functional $F^\theta_m$ is convex along Bergman geodesics (this
	uses \er{subgeod}). 
	This in particular, implies that $\theta$-balanced metric has to minimize $F_m^\theta$. So the existence of an $\theta$-balanced metric implies that
	$$
	F^\theta_m(\varphi)\ge-C,\ \forall\varphi\in\cB_m
	$$
for some $C>0$. Then by Definition \ref{def:delta-A-m}, $\delta^A_m(L)\ge1$. So $\delta_m(L)\ge1$ by Theorem \ref{thm:delta-A-m=delta-m}.

Now assume further that $\varphi_0\in\cB_m$ is the unique (up to constant) $\theta$-balanced potential. The goal is to show that $\delta_m(L)>1$. By Proposition \ref{prop:F-coerc=delta-m}, it amounts to proving that $F^\theta_m$ is coercive on $\cB_m$. To this end, we use the argument of \cite{DR17}. We choose the reference Hermitian product $H$ to be
$$
H:=\operatorname{FS}^{-1}(\varphi_0).
$$
Consider the subspace
$$
\cR:=\bigg\{\varphi\in\cB_m\,:\,E_m(H,\varphi)=0\bigg\}\subseteq\cB_m.
$$
So $\varphi_0\in\cR$ and any two elements in $\cR$ can be connected by a Bergman geodesic that is contained in $\cR$. Moreover, $\varphi_0$ is the unique minimizer of $F^\theta_m$ in $\cR$. We define
$$
\lambda:=\inf\bigg\{\frac{F^\theta_m(\varphi)-F^\theta_m(\varphi_0)}{d_{1,m}(\varphi_0,\varphi)}\,:\,\varphi\in\cR,\ d_{1,m}(\varphi_0,\varphi)\ge2\bigg\}.
$$
We claim that $\lambda>0$. Assume to the contrary that $\lambda=0$. Then there exists $\{\varphi_i\}_{i\in\NN}\subset\cR$ such that
$$
\frac{F^\theta_m(\varphi_i)-F^\theta_m(\varphi_0)}{d_{1,m}(\varphi_0,\varphi_i)}\rightarrow0.
$$
Let $[0,d_{1,m}(\varphi_0,\varphi_i)]\ni t\mapsto\varphi_{i,t}$ be the unit speed Bergman geodesic joining $\varphi_0$ to $\varphi_i$.
Then by convexity,
$$
0\le F^\theta_m(\varphi_{i,1})-F^\theta_m(\varphi_0)\le\frac{F^\theta_m(\varphi_i)-F^\theta_m(\varphi_0)}{d_{1,m}(\varphi_0,\varphi_i)}\rightarrow0.
$$
Note that $d_{1,m}(\varphi_0,\varphi_{i,1})=1$. Then by Lemma \ref{lem:d-1-cptness}, we can extract a limit
$
\varphi_{i,1}\xrightarrow{C^\infty}\varphi_\infty\in\cR
$
with $F^\theta_m(\varphi_\infty)=F^\theta_m(\varphi_0)$ and $d_{1,m}(\varphi_0,\varphi_\infty)=1$. This contradicts the uniqueness of the minimizer. So $\lambda>0$ as claimed.

Then by compactness (see Lemma \ref{lem:d-1-cptness}), we can find $C>0$ such that
$$
F^\theta_m(\varphi)\ge\lambda d_{1,m}(\varphi_0,\varphi)-C,\ \forall\varphi\in\cR.
$$
So by Lemma \ref{lem:d-1-m=sup-E}, there exist $\lambda^\prime>0$ and $C^\prime>0$ such that
$$
F^\theta_m(\varphi)\ge\lambda^\prime (\sup\varphi-E_m(H,\varphi))-C^\prime,\ \forall\varphi\in\cR.
$$
Note that this inequality is invariant under translation, so it holds for any $\varphi\in\cB_m$.
Therefore $\delta_m(L)>1$ by Proposition \ref{prop:F-coerc=delta-m}. 
This completes the proof of 
Theorem \ref{thm:finite-dim-YTD}.
\end{proof}

\section{Limiting behavior}
\label{sec:limit-behavior}
Our previous results were concerned with $m\in\NN$. In this section 
we study the limit $m\rightarrow\infty$. In this context it is convenient to use the functional 
\begin{equation}
    \label{eq:def-E-m}
    E_m(\varphi):=E_m(H_m,\varphi),
\end{equation}
where $H_m:=\int_Xh^m(\,\cdot\,,\,\cdot\,)\omega^n\in\cP_m$ is the Hermitian product induced by $h$.
To begin with,
we recall the following approximation result proved by Donaldson \cite[\S3]{D05}.
\begin{lemma}
\label{lem:lim-E-m=E}
	Given any $\varphi\in\cH_\omega$, let
	$
	\varphi_m:=\frac{1}{m}\log\sum_{i=1}^{d_m}|\sigma_i|^2_{h^m},
	$
	where $\{\sigma_i\}$ is any orthonormal basis of the following $L^2$-inner product
	$
	\int_X(he^{-\varphi})^m(\,\cdot\,,\,\cdot\,)\omega^n_\varphi.
	$
	Then,
	$
	E_m(\varphi_m)\rightarrow E(\varphi) \text{ as }m\rightarrow\infty.
	$
\end{lemma}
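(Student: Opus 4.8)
The plan is to recall Donaldson's argument and spell out how it combines two ingredients: a lower bound coming from the reproduction of sections, and an upper bound coming from the fact that the Bergman kernel is $\leq 1$ plus an asymptotic expansion. Write $h_\varphi := he^{-\varphi}$ and let $\rho_m(\varphi)$ denote the density of states (Bergman function) of the $L^2$-inner product $\int_X h_\varphi^m(\cdot,\cdot)\,\omega_\varphi^n$, so that $\sum_i |\sigma_i|^2_{h^m} = \rho_m(\varphi) e^{m\varphi}$ and hence $\varphi_m = \varphi + \tfrac1m \log \rho_m(\varphi)$. The Tian--Yau--Zelditch--Catlin--Lu expansion gives $\rho_m(\varphi) = m^n(1 + O(1/m))$ uniformly on $X$, so in particular $\varphi_m \to \varphi$ in $C^0$ as $m\to\infty$; but this pointwise convergence is not by itself enough to control $E_m(\varphi_m)$, since $E_m$ is the \emph{quantized} Monge--Amp\`ere energy, not $E$ evaluated at $\varphi_m$.

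The key step, following \cite{D05}, is to differentiate along the path $t \mapsto \varphi_m^{(t)}$ obtained by quantizing the segment $t\varphi$, $t\in[0,1]$, connecting the base point $0$ to $\varphi$. First I would record the variational formulas: $\frac{d}{dt} E(t\varphi) = \frac1V \int_X \varphi\, \omega_{t\varphi}^n$, while $\frac{d}{dt} E_m(\varphi_m^{(t)})$ equals $\frac{1}{md_m}\operatorname{tr}$ of the derivative of the corresponding Hermitian form, which unwinds (using $E_m(H,K)+E_m(K,N)=E_m(H,N)$ and the definition of $\operatorname{Hilb}$) to $\frac{1}{d_m}\int_X \varphi\, \rho_m(t\varphi)\, e^{mt\varphi}\, (h_{t\varphi}^m\text{-normalized measure})$, i.e. to an average of $\varphi$ against the probability measure $\frac{1}{d_m}\rho_m(t\varphi)\,\omega_{t\varphi}^n$. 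Since $d_m = \chi(X,mL) = \frac{m^n}{n!}\int_X c_1(L)^n + O(m^{n-1})$ and $\rho_m(t\varphi) = m^n + O(m^{n-1})$ uniformly, this probability measure converges weakly to $\frac{1}{V}\omega_{t\varphi}^n$, uniformly in $t\in[0,1]$, with an error of size $O(1/m)$ in total variation after normalization. Hence $\frac{d}{dt}E_m(\varphi_m^{(t)}) \to \frac{d}{dt}E(t\varphi)$ uniformly on $[0,1]$, and integrating from $0$ to $1$ (both energies vanish at the base point $\varphi=0$ by our normalization) gives $E_m(\varphi_m) \to E(\varphi)$.

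The main obstacle is making the convergence of the derivative \emph{uniform in $t$} and controlling it with a rate: one needs the Bergman density expansion $\rho_m(t\varphi) = m^n + O(m^{n-1})$ to hold with constants uniform over the compact family $\{t\varphi : t\in[0,1]\}\subset \cH_\omega$, which follows from the standard Tian--Zelditch--Catlin--Lu estimates since the metrics $\omega_{t\varphi}$ vary smoothly with $t$ over a compact interval; and one needs the analogous uniform control of $d_m^{-1}$ against $V^{-1}$, which is just Riemann--Roch. Once these two uniform asymptotics are in hand, the dominated convergence (in $t$) of $\int_X \varphi \cdot (\text{prob. measure})$ to $\frac1V\int_X \varphi\,\omega_{t\varphi}^n$ is routine, and the proof concludes. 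Since this is precisely Donaldson's computation in \cite[\S3]{D05}, I would present it compactly and refer there for the Bergman kernel asymptotics.
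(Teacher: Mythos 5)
The paper provides no proof of this lemma; it is stated as a known result and attributed directly to Donaldson \cite[\S3]{D05}, and your argument is precisely a reconstruction of Donaldson's proof, so in that sense it follows the same route. The argument is essentially correct, but one term is missing from the displayed formula for $\frac{d}{dt}E_m(\varphi_m^{(t)})=\frac{d}{dt}E_m\bigl(H_m,\Hilb(t\varphi)\bigr)$: since $\Hilb(t\varphi)=\int_X(he^{-t\varphi})^m(\cdot,\cdot)\,\omega_{t\varphi}^n$ depends on $t$ through \emph{both} the weight $e^{-mt\varphi}$ and the volume form $\omega_{t\varphi}^n$, the derivative is
$\frac{1}{d_m}\int_X\varphi\,\rho_m(t\varphi)\,\omega_{t\varphi}^n-\frac{n}{md_m}\int_X\rho_m(t\varphi)\,\ddc\varphi\wedge\omega_{t\varphi}^{n-1}$,
and you only wrote the first piece. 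The second piece is $O(1/m)$ uniformly in $t\in[0,1]$ (indeed $o(1/m)$, using $\int_X\ddc\varphi\wedge\omega_{t\varphi}^{n-1}=0$ together with the uniform Bergman expansion $\rho_m/d_m\to 1/V$), so it does not change the conclusion, but it must be recorded for the differentiation to be correct; a related minor slip is the normalization $\rho_m\sim m^n$ versus $\rho_m\sim m^n/n!$, which is harmless since you ultimately only use $\rho_m/d_m\to 1/V$.
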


Also recall the following result (cf. \cite[Lemma 7.7]{BBGZ}).
\begin{lemma}
\label{lem:sup-E<1+e}
	There exists $\varepsilon_m\rightarrow0$ such that  for all $m\gg1$,
	$$
	\sup\varphi-E(\varphi)\le(1+\varepsilon_m)(\sup\varphi-E_m(\varphi))+\varepsilon_m\text{ on }\cB_m. 
	$$
\end{lemma}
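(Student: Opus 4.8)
The plan is to exploit that both $\sup\varphi-E(\varphi)$ and $\sup\varphi-E_m(\varphi)$ are invariant under $\varphi\mapsto\varphi+c$ on $\cB_m$: after normalizing $\sup\varphi=0$, it will be enough to establish the uniform comparison of energies
\begin{equation*}
E_m(\varphi)\ \le\ E(\varphi)+\varepsilon_m\big(\sup\varphi-E_m(\varphi)\big)+\varepsilon_m,\qquad \varphi\in\cB_m,
\end{equation*}
with $\varepsilon_m\to0$ depending only on $(X,\omega)$ (here $E_m=E_m(H_m,\,\cdot\,)$ as in \eqref{eq:def-E-m}). Indeed, writing $\sup\varphi-E(\varphi)=\big(\sup\varphi-E_m(\varphi)\big)+\big(E_m(\varphi)-E(\varphi)\big)$ and inserting this comparison immediately yields the asserted inequality with the same $\varepsilon_m$.

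To prove the comparison I would diagonalize $\varphi=\frac1m\log\sum_{i=1}^{d_m}\mu_i|s_i|^2_{h^m}$ with $\{s_i\}$ an $H_m$-orthonormal basis and $\mu_i>0$, so that $E_m(\varphi)=\frac1{md_m}\sum_i\log\mu_i$ by \eqref{eq:E-m=prod-mu-i}. Combining the concavity of $t\mapsto E(t\varphi)$, which gives $E(\varphi)\ge\frac1V\int_X\varphi\,\omega_\varphi^n$, with the pointwise arithmetic--geometric mean inequality $\sum_i\mu_i|s_i|^2_{h^m}\ge d_m\big(\prod_i\mu_i\big)^{1/d_m}\big(\prod_i|s_i|^2_{h^m}\big)^{1/d_m}$, then integrating against $\omega_\varphi^n/V$, one arrives at
\begin{equation*}
E(\varphi)\ \ge\ E_m(\varphi)+\frac{\log d_m}{m}+\frac1{d_m}\sum_{i=1}^{d_m}\frac1V\int_X\psi_i\,\omega_\varphi^n,\qquad \psi_i:=\tfrac1m\log|s_i|^2_{h^m}\in\PSH(X,\omega).
\end{equation*}
By Tian's sup-norm bound $\sup_X|s_i|^2_{h^m}\le Cm^n$ for $H_m$-unit sections (as in the proof of Lemma \ref{lem:sup=max-mu}) one has $\sup_X\psi_i\le\frac1m\log(Cm^n)\to0$, while Lemma \ref{lem:sup=max-mu} itself gives $\sup\varphi-E_m(\varphi)=\frac1{md_m}\sum_i\log(\mu_{\max}/\mu_i)+o(1)$. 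Hence the whole statement reduces to the uniform lower bound
\begin{equation*}
\frac1{d_m}\sum_{i=1}^{d_m}\frac1V\int_X\psi_i\,\omega_\varphi^n\ \ge\ -\,\varepsilon_m\big(\sup\varphi-E_m(\varphi)\big)-\varepsilon_m,\qquad \varphi\in\cB_m,
\end{equation*}
that is, to a lower bound for the integral of the $\omega$-plurisubharmonic function $\frac1{d_m}\sum_i\psi_i$ --- whose supremum is $o(1)$ --- against the Bergman Monge--Amp\`ere measure $\omega_\varphi^n/V$ (the same reduction results if one instead argues by concavity of $E$ along the Bergman geodesic from $\FS(H_m)$ to $\varphi$, cf.\ \eqref{subgeod}).

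This last estimate is the crux, and it is where one invokes \cite[\S7]{BBGZ}: although $\frac1V\int_X\psi_i\,\omega^n$ is bounded below uniformly in $i$ and $m$ (by compactness of $\{\psi\in\PSH(X,\omega):\sup_X\psi\le0\}$ in $L^1(\omega^n)$), the measure $\omega_\varphi^n/V$ may concentrate, so this alone is insufficient. The point to use is that wherever $\psi_i$ is very negative the summand $\mu_i|s_i|^2_{h^m}$ is negligible in $e^{m\varphi}=\sum_j\mu_j|s_j|^2_{h^m}$ --- indeed $\psi_i\le\varphi-\frac1m\log\mu_i$ pointwise --- so there $\omega_\varphi^n$ is governed by the remaining summands; partitioning $X$ according to which $\mu_j|s_j|^2_{h^m}$ dominates and combining the sub-mean-value inequality for the $\psi_i$ with the fact that $\omega_\varphi^n/V$ is a probability measure, one absorbs the contribution of the ``bad'' sets into an error of the desired form, with $\varepsilon_m$ depending only on $(X,\omega)$ and $\dim X$. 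I expect this uniform absorption to be the main difficulty, precisely because it is where the special structure of Bergman potentials (as opposed to arbitrary normalized $\omega$-psh functions) has to be exploited; the reductions preceding it are elementary. The proof then concludes by collecting the various null sequences (the $\tfrac{\log m}{m}$-type terms, the error in Lemma \ref{lem:sup=max-mu}, and $\tfrac{\log d_m}{m}$) into a single $\varepsilon_m\to0$.
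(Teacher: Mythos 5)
Your reduction (concavity of $E$ along $t\mapsto t\varphi$, then AM--GM inside the logarithm) is correct, but the ``crux estimate'' it leads to is actually \emph{false}, so the whole route is a dead end; the partitioning/absorption argument you sketch cannot be made to work. Here is a counterexample on $\PP^1$ with $L=\cO(1)$. Let $\{s_i\}_{i=0}^{m}$ be the $H_m$-orthonormal monomial basis and take $\mu_i=e^{-iT}$ (the Bergman geodesic of Definition~\ref{FsiBergGeod} associated to the valuation $\ord_{\{z_0=0\}}$). Then $E_m(\varphi)=-T/2$ and $\sup\varphi=O(1)$, so $\sup\varphi-E_m(\varphi)\approx T/2$; on the other hand a direct computation with $\omega_\varphi=\ddc\log(|z_0|^2+e^{-T}|z_1|^2)$ gives $\int_X\psi_i\,\omega_\varphi= T(i/m-1)+O(1)$, hence $\frac1{d_m}\sum_i\int_X\psi_i\,\omega_\varphi = -T/2+O(1)$. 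Your crux estimate would require $-T/2\geq-\varepsilon_m\,T/2-\varepsilon_m-\tfrac{\log d_m}{m}$, forcing $\varepsilon_m\geq1-o(1)$ as $T\to\infty$, which contradicts $\varepsilon_m\to0$. (Note that the Lemma \emph{does} hold in this example --- one has $E(\varphi)\approx-T/2$ as well, so both sides of the Lemma are $\approx T/2$ --- the problem is that your chain of inequalities discards too much along the way.)

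The loss occurs in the AM--GM step $\sum_i\mu_i|s_i|^2\ge d_m\big(\prod\mu_i\big)^{1/d_m}\big(\prod|s_i|^2\big)^{1/d_m}$. This is sharp only when the $\mu_i|s_i|^2$ are comparable, but the regime relevant to coercivity is precisely where one $\mu_j$ dominates the rest (large $\sup\varphi-E_m(\varphi)$); there the geometric mean is a hugely pessimistic lower bound, and you are forced to compensate by proving an estimate on $\int\psi_i\,\omega_\varphi^n$ that is stronger than the Lemma itself, and in fact wrong. Relatedly, by bounding $E(\varphi)$ from below via the slope of $t\mapsto E(t\varphi)$ at $t=1$, you land on the wild measure $\omega_\varphi^n$, which is exactly what one should avoid.

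The paper's proof sidesteps both difficulties: it uses \emph{convexity} (not concavity) of $E$ along the Bergman geodesic $\varphi_t=\tfrac1m\log\sum e^{\lambda_i t}|s_i|^2$ from $\varphi_0=\tfrac1m\log\rho_m=\FS(H_m)$ to $\varphi_1=\varphi$, and evaluates the slope at the \emph{tame} end $t=0$, where $\omega_{\varphi_0}^n\to\omega^n$ by the Bergman kernel expansion. There one never needs any lower bound on $\int\psi_i\,\omega_\varphi^n$; the relevant integrals reduce (up to $1+\varepsilon_m$ factors) to $\int|s_i|^2_{h^m}\omega^n=1$ by orthonormality, and then everything collapses to elementary arithmetic on the $\lambda_i$ plus Lemma~\ref{lem:sup=max-mu}. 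Your parenthetical remark that the Bergman-geodesic route would ``result in the same reduction'' is therefore mistaken on two counts: $E$ is convex, not concave, along Bergman geodesics, and computing the slope at the $\FS(H_m)$ end yields a genuinely different and much more tractable integral, namely the paper's.
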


\begin{proof}
	 For any $\varphi\in\cB_m$, we may write
	$
	\varphi=\frac{1}{m}\log\sum_{i=1}^{d_m} e^{\lambda_i}|s_i|^2_{h^m}
	$
	for some $H_m$-orthonormal basis $\{s_i\}$ and $\lambda_i\in\RR$.
	Set
	$
	\lambda_{max}:=\max_i\{\lambda_i\}
	$
	and
	$
	\varphi(t):=\frac{1}{m}\log\sum_{i=1}^{d_m} e^{\lambda_it}|s_i|^2_{h^m},\ t\ge0.
	$
	Note that  $E$ is convex along Bergman geodesics (cf. \cite[Proposition 1]{D05}).
	Thus
	\begin{equation*}
		\begin{aligned}
			E(\varphi)&=E(\varphi_1)-E(\varphi_0)+E(\varphi_0)\\
			&\ge\frac{d}{dt}\bigg|_{t=0}E(\varphi_t)+E(\varphi_0)\\
			&=\frac{1}{V}\int_X\dot{\varphi}_0\omega^n_{\varphi_0}+E(\varphi_0)\\
			&=\frac{1}{mV}\int_X\frac{\sum_{i=1}^{d_m}\lambda_i|s_i|^2_{h^m}}{\sum_{i=1}^{d_m}|s_i|^2_{h^m}}\omega^n_{\varphi_0}+E(\varphi_0)\\
			&=\frac{1}{mV}\int_X\frac{\sum_{i=1}^{d_m}(\lambda_i-\lambda_{max})|s_i|^2_{h_m}}{\rho_m}\omega^n_{\frac{1}{m}\log\rho_m}+\frac{\lambda_{max}}{m}+E(\frac{1}{m}\log\rho_m),
		\end{aligned}
	\end{equation*}
where
$\rho_m:=\sum_{i=1}^{d_m}|s_i|^2_{h^m}$.
For each $m$ as in \ref{eq:m-divisible-mL-ample} let $\{\sigma_i\}_{i=1}^{d_m}$ be an orthonormal
basis with respect to $L^2$ inner product on $H^0(X,mL)$ induced by $h$.
By a classical theorem of Catlin, Ruan, Tian, Zelditch \cite{Cat99,Rua98,Tian89,Zel90}, 
\begin{equation}
\label{eq:rho/d->1/V}
	\frac{\rho_m
}{d_m}\xrightarrow{C^\infty}\frac{1}{V},
\end{equation}
implying that
\begin{equation}
\label{eq:log-rho->0}
	\cB_m\ni\frac{1}{m}\log\rho_m
\xrightarrow{C^\infty}0.
\end{equation}
Using \eqref{eq:rho/d->1/V} and \eqref{eq:log-rho->0}, one finds $\varepsilon_m\rightarrow0$ (independent of $\varphi\in\cB_m$) such that
\begin{equation*}
	\begin{aligned}
		E(\varphi)&\ge\frac{1+\varepsilon_m}{md_m}\int_X\sum_{i=1}^{d_m}(\lambda_i-\lambda_{max})|s_i|^2_{h^m}\omega^n+\frac{\lambda_{max}}{m}-\varepsilon_m\\
		&=\frac{1+\varepsilon_m}{md_m}\sum_{i=1}^{d_m}(\lambda_i-\lambda_{max})+\frac{\lambda_{max}}{m}-\varepsilon_m\\
		&=(1+\varepsilon_m)\bigg(E_m(\varphi)-\frac{\lambda_{max}}{m}\bigg)+\frac{\lambda_{max}}{m}-\varepsilon_m.\\
	\end{aligned}
\end{equation*}
Then the desired inequality follows from Lemma \ref{lem:sup=max-mu}.
\end{proof}

\subsection{Estimating $\delta$-invariant}
Firstly, we take the opportunity to give a more direct proof of \cite[Proposition 3.11]{Zha20} without relying on the non-Archimedean approach of \cite{BBJ18}. 
\begin{proposition}
\label{prop:delta-A<=delta}
	For any ample $\QQ$-line bundle $L$, one has
	$$
	\delta(L)\ge\delta^A(L).
	$$
\end{proposition}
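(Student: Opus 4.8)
The plan is to reduce the Archimedean inequality $\delta(L)\geq\delta^A(L)$ to the quantized statement $\delta_m(L)=\delta^A_m(L)$ (Theorem \ref{thm:delta-A-m=delta-m}) together with Blum--Jonsson's convergence $\delta(L)=\lim_m\delta_m(L)$. Since $\delta(L)=\lim_m\delta^A_m(L)$ by Corollary \ref{thm:lim-delta-A-m=delta}, it suffices to show
$$
\delta^A(L)\leq\liminf_{m\to\infty}\delta^A_m(L).
$$
In other words, I want to compare the continuous Moser--Trudinger threshold \eqref{deltaA} with its quantized counterpart in Definition \ref{def:delta-A-m}, and show that the quantized thresholds are asymptotically no smaller than the continuous one.

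The key step is to pass from an arbitrary $\varphi\in\cH_\omega$ to a nearby Bergman potential $\varphi_m\in\cB_m$ whose Monge--Amp\`ere energies are controlled. Fix $\delta<\delta^A(L)$; I want a uniform bound on $\int_X e^{-\delta(\psi-E_m(\psi))}\omega^n$ over $\psi\in\cB_m$ for all large $m$. First I would note that every $\psi\in\cB_m$ is the Fubini--Study potential $\mathrm{FS}(K)$ of some $K\in\cP_m$, and by the Tian--Yau--Zelditch-type expansion (as used in Lemma \ref{lem:sup-E<1+e}, via \eqref{eq:rho/d->1/V}--\eqref{eq:log-rho->0}) together with Donaldson's comparison of $E_m$ and $E$ (Lemma \ref{lem:lim-E-m=E} and Lemma \ref{lem:sup-E<1+e}), one has $E_m(\psi)\geq E(\psi)-\varepsilon_m$ with $\varepsilon_m\to 0$ uniformly on $\cB_m$; the needed direction of this inequality is essentially Lemma \ref{lem:sup-E<1+e} rewritten, since $E_m$ here is normalized against $H_m$. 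Consequently $\psi-E_m(\psi)\leq\psi-E(\psi)+\varepsilon_m$, hence
$$
\int_X e^{-\delta(\psi-E_m(\psi))}\omega^n\leq e^{\delta\varepsilon_m}\int_X e^{-\delta(\psi-E(\psi))}\omega^n,
$$
and since $\psi\in\cB_m\subset\cH_\omega$, the right side is bounded by $e^{\delta\varepsilon_m}\sup_{\varphi\in\cH_\omega}\int_X e^{-\delta(\varphi-E(\varphi))}\omega^n<\infty$ because $\delta<\delta^A(L)$. This uniform bound gives $\delta^A_m(L)\geq\delta$ for all $m\gg1$, whence $\liminf_m\delta^A_m(L)\geq\delta$; letting $\delta\uparrow\delta^A(L)$ finishes the reduction.

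Combining, $\delta(L)=\lim_m\delta^A_m(L)=\liminf_m\delta^A_m(L)\geq\delta^A(L)$, as desired. The main obstacle is making the comparison $E_m(\psi)\geq E(\psi)-\varepsilon_m$ truly uniform over all of $\cB_m$ (not just along a fixed geodesic or for a fixed $\varphi$), and checking that the error $\varepsilon_m$ coming from the Bergman kernel expansion does not depend on $\psi$; this is exactly what the convexity-of-$E$-along-Bergman-geodesics argument in the proof of Lemma \ref{lem:sup-E<1+e} delivers, so I would model the estimate on that lemma. A minor point to handle carefully is the normalization of the reference inner product ($H$ versus $H_m$), but since $E_m$ changes only by an additive constant under change of reference and the threshold in Definition \ref{def:delta-A-m} is insensitive to that constant, this causes no difficulty.
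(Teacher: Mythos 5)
Your strategy---reducing to $\delta^A(L)\leq\liminf_m\delta^A_m(L)$ by comparing $E$ and $E_m$ directly on $\cB_m$---is close in spirit to the paper's, but the key comparison step breaks down in two ways.

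First, a sign issue. From your claimed inequality $E_m(\psi)\geq E(\psi)-\varepsilon_m$ you correctly derive $\psi-E_m(\psi)\leq\psi-E(\psi)+\varepsilon_m$, but exponentiating $-\delta(\,\cdot\,)$ reverses the inequality: this gives the \emph{lower} bound $\int_X e^{-\delta(\psi-E_m(\psi))}\omega^n\geq e^{-\delta\varepsilon_m}\int_X e^{-\delta(\psi-E(\psi))}\omega^n$, not the upper bound you assert. To dominate the quantized Moser--Trudinger integral you need the reverse inequality, namely $E(\psi)\geq E_m(\psi)-\varepsilon_m$.

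Second, and more fundamentally, even in the correct direction a uniform \emph{additive} comparison $E(\psi)\geq E_m(\psi)-\varepsilon_m$ over all of $\cB_m$ is not what Lemma \ref{lem:sup-E<1+e} delivers. That lemma says $\sup\psi-E(\psi)\leq(1+\varepsilon_m)(\sup\psi-E_m(\psi))+\varepsilon_m$; normalizing $\sup\psi=0$ this reads $E(\psi)\geq E_m(\psi)+\varepsilon_m E_m(\psi)-\varepsilon_m$, and the extra term $\varepsilon_m E_m(\psi)$ is unbounded below since $E_m(\psi)$ ranges over all of $(-\infty,0]$ on $\cB_m$. The Bergman kernel expansion only controls the ratio, producing a \emph{multiplicative} error proportional to the height $\sup\psi-E_m(\psi)$, which cannot be converted to an additive one uniformly on $\cB_m$.

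This is exactly why the paper does not transfer the Moser--Trudinger bound directly. It first applies H\"older's inequality (as in Proposition \ref{prop:F-coerc=delta-m}) to upgrade the finiteness of $\sup_{\varphi\in\cH_\omega}\int_X e^{-\delta(\varphi-E(\varphi))}\omega^n$ to the \emph{coercivity} estimate $-\tfrac1\delta\log\int_X e^{-\delta\varphi}\omega^n-E(\varphi)\geq\lambda(\sup\varphi-E(\varphi))-C$ with a margin $\lambda>0$. Coercivity is a multiplicative statement in $\sup\varphi-E(\varphi)$, so the multiplicative error from Lemma \ref{lem:sup-E<1+e} can be absorbed into the margin: one obtains the quantized coercivity with a slightly smaller $\lambda'$, whence $\delta_m(L)>\delta$ by Proposition \ref{prop:F-coerc=delta-m}, and Blum--Jonsson finishes. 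Your outline needs this extra H\"older step to close the gap.
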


\begin{proof}
	Applying H\"older inequality as in the proof of Proposition \ref{prop:F-coerc=delta-m}, we derive that  for any $\delta\in(0,\delta^A(L))$, there exist $\lambda\in(0,1)$ and $C>0$ such that
	\begin{equation}
	\label{eq:delta-Ding-coercive}
		-\frac{1}{\delta}\log\int_Xe^{-\delta\varphi}\omega^n-E(\varphi)\ge\lambda(\sup\varphi-E(\varphi))-C,\ \forall\varphi\in\cH_\omega.
	\end{equation}
	Then by Lemma \ref{lem:sup-E<1+e}, we can find $\lambda^\prime>0$ and $C^\prime>0$ such that
	$$
	-\frac{1}{\delta}\log\int_Xe^{-\delta\varphi}\omega^n-E_m(\varphi)\ge\lambda^\prime(\sup\varphi-E_m(\varphi))-C^\prime,\ \forall\varphi\in\cB_m
	$$
	for $m\gg1$. Then Proposition \ref{prop:F-coerc=delta-m} implies that
	$$
	\delta_m(L)>\delta.
	$$
	Thus \cite[Theorem B]{BJ17} implies that
	$
	\delta(L)\ge\delta,
	$
	finishing the proof.
\end{proof}

The next result, improving Proposition \ref{prop:F-coerc=delta-m}, provides a more accurate estimate for the $\delta$-invariant.
\begin{proposition}
	Assume that there exist $\delta>0$, $\lambda>0$, $m_j\rightarrow\infty$ and $C_j>0$ such that	$$
	-\frac{1}{\delta}\log\int_Xe^{-\delta\varphi}\omega^n-E_{m_j}(\varphi)\ge\lambda(\sup\varphi-E_{m_j}(\varphi))-C_j,\ \forall\varphi\in\cB_{m_j}.
	$$
	Then one has
	$$
	\delta(L)\ge\delta(1+\frac{\lambda}{n}).
	$$
\end{proposition}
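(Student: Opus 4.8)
The plan is to rerun the proof of Proposition~\ref{prop:F-coerc=delta-m} at each level $m_j$, now with reference inner product $H=H_{m_j}$, and then to pass to the limit $j\to\infty$. Verbatim as in that proof (with $\varepsilon$ replaced by $\lambda$, $m$ by $m_j$, $H$ by $H_{m_j}$, and $f=0$), the coercivity hypothesis rearranges, via Lemma~\ref{lem:mu-max=sup-E}, to the integral bound \eqref{eq:int-delta-epsilon-prime} (with $\varepsilon=\lambda$ and $m=m_j$). Evaluating that bound along the Bergman geodesic $\varphi_F(t)$ attached to a prime divisor $F$ over $X$ — so $\mu_i=e^{t\,\ord_F(s_i)}$, $\mu_{\max}=e^{t\,\tau_{m_j}(F)}$ by \eqref{eq:mu-max=T-m}, and $\frac{1}{m_jd_{m_j}}\sum_i\ord_F(s_i)=S_{m_j}(F)$ by Lemma~\ref{lem:E-m=S-m} — and using Lemma~\ref{lem:I-lambda>eta} to bound the residual integral below by a positive constant uniformly in $t\ge 0$ forces the exponent of the prefactor $e^{t(\,\cdot\,)}$ to be $\le 0$, i.e.
\[
\delta(1-\lambda)\,S_{m_j}(F)+\delta\lambda\,\frac{\tau_{m_j}(F)}{m_j}\ \le\ A_X(F)
\]
for every $j$ and every $F$ over $X$ (this is the same step that yields $\delta(1-\varepsilon)S_m(F)+\delta\varepsilon\tau_m(F)/m-A_X(F)\le 0$ in the proof of Proposition~\ref{prop:F-coerc=delta-m}).

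Next, let $j\to\infty$. One has $S_{m_j}(F)\to S(F)$ (the asymptotic expected vanishing order of $L$ along $F$) and $\tau_{m_j}(F)/m_j\to T(F)$ (the pseudoeffective threshold of $\pi^\star L$ along $F$), both standard facts; see \cite{FO18,BJ17}. Hence
\[
\delta(1-\lambda)\,S(F)+\delta\lambda\,T(F)\ \le\ A_X(F).
\]
Now I invoke Fujita's inequality $T(F)\ge\frac{n+1}{n}S(F)$; since $\pi^\star L$ is big we have $S(F)>0$, and therefore
\[
A_X(F)\ \ge\ \delta(1-\lambda)S(F)+\delta\lambda\cdot\tfrac{n+1}{n}S(F)\ =\ \delta\Big(1+\tfrac{\lambda}{n}\Big)S(F),
\]
i.e. $A_X(F)/S(F)\ge\delta(1+\lambda/n)$ for every divisor $F$ over $X$. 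Taking the infimum over $F$ and using the valuative formula $\delta(L)=\inf_F A_X(F)/S(F)$ of Fujita--Odaka and Blum--Jonsson \cite{FO18,BJ17} then gives $\delta(L)\ge\delta(1+\lambda/n)$.

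The first paragraph is a mechanical reprise of the bookkeeping already done in Proposition~\ref{prop:F-coerc=delta-m}, and the convergences $S_{m_j}(F)\to S(F)$ and $\tau_{m_j}(F)/m_j\to T(F)$ are classical. The essential new input — and the reason the cruder estimate $S_m(F)\le\frac{d_m-1}{md_m}\tau_m(F)$ used in Proposition~\ref{prop:F-coerc=delta-m} is not good enough here, since its constant $\frac{d_m-1}{d_m}$ tends to $1$ rather than to $\frac{n}{n+1}$ — is the sharp asymptotic inequality $S(F)\le\frac{n}{n+1}T(F)$. This is due to Fujita and is by now standard: it is the convex-geometric statement that the barycenter of the Okounkov body of $\pi^\star L$ adapted to $F$ lies within a fraction $\frac{n}{n+1}$ of the way from the facet $\{t_1=0\}$ to the opposite supporting hyperplane. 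I would quote this inequality rather than reprove it, and it is the only step of the argument that is not routine.
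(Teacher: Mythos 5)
Your proof is correct, and it takes a genuinely simpler route than the paper's. Both arguments begin identically: rerun the coercivity computation from Proposition~\ref{prop:F-coerc=delta-m} at each level $m_j$ to obtain, for every prime divisor $F$ over $X$,
\[
\delta(1-\lambda)S_{m_j}(F)+\delta\lambda\,\tau_{m_j}(F)/m_j \le A_X(F),
\]
and both use Fujita's inequality $\tau(F)\ge\tfrac{n+1}{n}S(F)$ as the decisive input. Where you diverge is in how the limit $j\to\infty$ is handled. The paper first upgrades the pointwise convergences $S_{m_j}(F)\to S(F)$, $\tau_{m_j}(F)/m_j\to\tau(F)$ to uniform rates in $F$ (via \cite[Cor.\ 3.6, Thm.\ 5.1]{BJ17}), then specializes to the divisor $F_j$ computing $\delta_{m_j}(L)$ and passes to the limit using Blum--Jonsson's $\delta_{m_j}(L)\to\delta(L)$. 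Since $F_j$ varies with $j$, those uniform estimates are genuinely needed in that route. You instead fix $F$, pass to the limit pointwise to get $\delta(1-\lambda)S(F)+\delta\lambda\,\tau(F)\le A_X(F)$, apply Fujita, and only then take the infimum over $F$, invoking the valuative formula $\delta(L)=\inf_F A_X(F)/S(F)$ of Blum--Jonsson directly. This cleanly sidesteps both the uniform estimates and the intermediate quantities $\delta_{m_j}(L)$, $F_j$; the trade is that you lean on the valuative formula for $\delta(L)$ itself rather than on $\delta(L)=\lim_m\delta_m(L)$, but both are Theorem~A/B of \cite{BJ17}, so the dependency is of comparable weight. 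Your argument is shorter and arguably more transparent; the paper's has the minor virtue of running in parallel to Proposition~\ref{prop:F-coerc=delta-m} and thus reusing more of that proof's infrastructure.
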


\begin{proof}
	We proceed as in the proof of Proposition \ref{prop:F-coerc=delta-m}. For each $m_j$, one has
	$$
	\delta(1-\lambda)S_{m_j}(F)+\delta\lambda \tau_{m_j}(F)/m_j-A_X(F)\le0,\ \text{for all $F$ over }X.
	$$
	Set
	\begin{equation*}
		\begin{cases}
			S(F):=\lim_{j}S_{m_j}(F),\\
			\tau(F):=\lim_{j}\tau_{m_j}(F)/m_j.\\
		\end{cases}
	\end{equation*}
	Then we can find $\varepsilon_j\rightarrow0$ such that \cite[Corollary 3.6]{BJ17}
	$$
	S_{m_j}(F)\le(1+\varepsilon_j)S(F),\ \text{for all $F$ over }X.
	$$
	Moreover by \cite[Theorem 5.1]{BJ17}, there exists $C>0$ such that
	$$
	0\le \tau(F)-\tau_{m_j}(F)/m_j\le\frac{CA_X(F)}{m_j},\ \text{for all $F$ over }X.
	$$
	Thus we deduce that
	$$
	\delta(1-\lambda)S_{m_j}(F)+\delta\lambda\bigg(\tau(F)-\frac{CA_X(F)}{m_j}\bigg)-A_X(F)\le0,\ \text{for all $F$ over }X.
	$$
	Let $F_j$ be the divisor computing $\delta_{m_j}(L)$. Then,
	\begin{equation*}
		\begin{aligned}
			\bigg(1+\frac{C\delta\lambda}{m_j}\bigg)\delta_{m_j}(L)
			&\ge\delta-\delta\lambda+\delta\lambda\frac{\tau(F_j)}
{S_{m_j}(F_j)}
			\ge\delta-\delta\lambda+\frac{\delta\lambda}{1+\varepsilon_j}\cdot\frac{\tau(F_j)}{S(F_j)}.
		\end{aligned}
	\end{equation*}
By \cite[Proposition 2.1]{Fuj19b},
$
\frac{\tau(F)}{S(F)}\ge\frac{n+1}{n},\ \text{for all $F$ over }X,
$
so
$$
\bigg(1+\frac{C\delta\lambda}{m_j}\bigg)\delta_{m_j}(L)\ge\delta-\delta\lambda+\frac{\delta\lambda}{1+\varepsilon_j}\cdot\frac{n+1}{n}.
$$
Sending $m_j$ to $\infty$ and $\varepsilon_j$ to $0$
concludes the proof.
\end{proof}

A direct consequence is the following estimate of $\delta(L)$.

\begin{corollary}
	Assume that for some $\delta>0$, $\lambda>0$ and $C>0$, it holds that
	$$
		-\frac{1}{\delta}\log\int_Xe^{-\delta\varphi}\omega^n-E(\varphi)\ge\lambda(\sup\varphi-E(\varphi))-C,\ \forall\varphi\in\cH_\omega.
    $$
    Then we have
    $$
    \delta(L)\ge\delta(1+\frac{\lambda}{n}).
    $$
\end{corollary}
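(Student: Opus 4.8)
The plan is to deduce this immediately from the preceding Proposition by transporting the assumed coercivity on $\cH_\omega$ down to the Bergman spaces $\cB_m$. Since every Bergman potential is a genuine K\"ahler potential we have $\cB_m\subseteq\cH_\omega$, so the hypothesis holds verbatim for all $\varphi\in\cB_m$; the only discrepancy with the hypothesis of the preceding Proposition is that its right-hand side features $\sup\varphi-E_m(\varphi)$ rather than $\sup\varphi-E(\varphi)$. Replacing one by the other at an asymptotically negligible cost is exactly what Lemma~\ref{lem:sup-E<1+e} provides (which itself rests on Donaldson's convergence $E_m\to E$ of Lemma~\ref{lem:lim-E-m=E} and the Bergman kernel expansion). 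This is the same mechanism already exploited in the proof of Proposition~\ref{prop:delta-A<=delta}.

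Concretely, fix $\lambda'\in(0,\lambda)$ (we may assume $0<\lambda\le1$, the case $\lambda>1$ being only easier). For $\varphi\in\cB_m$, adding $E(\varphi)-E_m(\varphi)$ to both sides of the hypothesis turns it into
\[
-\frac{1}{\delta}\log\int_Xe^{-\delta\varphi}\omega^n-E_m(\varphi)\ \ge\ \big(\sup\varphi-E_m(\varphi)\big)-(1-\lambda)\big(\sup\varphi-E(\varphi)\big)-C .
\]
Feeding in the comparison $\sup\varphi-E(\varphi)\le(1+\varepsilon_m)\big(\sup\varphi-E_m(\varphi)\big)+\varepsilon_m$ of Lemma~\ref{lem:sup-E<1+e} (and using $\sup\varphi-E_m(\varphi)\ge-\varepsilon_m$, which follows from Lemma~\ref{lem:sup=max-mu}), the right-hand side is bounded below by $\big(\lambda-(1-\lambda)\varepsilon_m\big)\big(\sup\varphi-E_m(\varphi)\big)-C'_m$ for a suitable $C'_m>0$. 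Since $\varepsilon_m\to0$, for all $m\gg1$ this yields
\[
-\frac{1}{\delta}\log\int_Xe^{-\delta\varphi}\omega^n-E_m(\varphi)\ \ge\ \lambda'\big(\sup\varphi-E_m(\varphi)\big)-C'_m,\qquad\forall\varphi\in\cB_m ,
\]
which is precisely the hypothesis of the preceding Proposition with the sequence $m_j\uparrow\infty$ and constant $\lambda'$. Hence $\delta(L)\ge\delta(1+\lambda'/n)$, and letting $\lambda'\uparrow\lambda$ gives $\delta(L)\ge\delta(1+\lambda/n)$.

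The only point requiring any care is that the coercivity constant be preserved up to the vanishing errors $\varepsilon_m$, so that after optimizing in $\lambda'$ nothing is lost; all the genuinely nontrivial work---the divisorial-valuation estimates and the passage to the limit $m\to\infty$---is already packaged into the preceding Proposition and its inputs from \cite{BJ17,Fuj19b}. I therefore do not expect any substantial obstacle here.
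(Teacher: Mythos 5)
Your proposal is correct and follows essentially the same route as the paper: restrict the assumed coercivity to Bergman potentials, use Lemma~\ref{lem:sup-E<1+e} to swap $E$ for $E_m$ at an $O(\varepsilon_m)$ cost, invoke the preceding Proposition, and pass to the limit. The only cosmetic difference is that you fix a single $\lambda'<\lambda$ valid for all $m\gg1$ and then let $\lambda'\uparrow\lambda$ at the end, whereas the paper lets the coercivity constant $\lambda_j\to\lambda$ vary with $m_j$ and re-runs the proof of the Proposition; both are fine, and yours has the small advantage of applying the Proposition verbatim rather than re-deriving its estimate.
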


\begin{proof}
	Applying Lemma \ref{lem:sup-E<1+e}, we may find $m_j\rightarrow\infty$, $\lambda_j\rightarrow\lambda$ and $C_j\rightarrow C$ such that
		$$
		-\frac{1}{\delta}\log\int_Xe^{-\delta\varphi}\omega^n-E_{m_j}(\varphi)\ge\lambda_j(\sup\varphi-E_{m_j}(\varphi))-C_j,\ \forall\varphi\in\cB_{m_j}.
    $$
    Then the previous proof leads us to
    $$
\bigg(1+\frac{C\delta\lambda_j}{m_j}\bigg)\delta_{m_j}(L)\ge\delta-\delta\lambda_j+\frac{\delta\lambda_j}{1+\varepsilon_j}\cdot\frac{n+1}{n}.
$$
Taking the limit, we complete the proof.
\end{proof}
As a consequence, we have reproduced the following statement in \cite{BBJ18}, without using non-Archimedean tools.
\begin{corollary}
\lb{DingCor}
	 If the  Ding functional $F^{0,1}$
\er{Dingfdelta}
 is coercive (i.e., \eqref{eq:delta-Ding-coercive} holds for $\delta=1$), then $\delta(L)>1$.
\end{corollary}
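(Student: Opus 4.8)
The plan is to obtain this as an immediate consequence of the Corollary immediately preceding it, by specializing $\delta=1$. First I would unwind the definitions: coercivity of the Ding functional $F^{0,1}$ from \eqref{Dingfdelta} is, by \eqref{eq:delta-Ding-coercive} read with $\delta=1$, exactly the statement that there exist constants $\lambda>0$ (one may in fact take $\lambda\in(0,1)$) and $C>0$ with
\[
-\log\frac1V\int_Xe^{-\varphi}\omega^n-E(\varphi)\ \ge\ \lambda\big(\sup\varphi-E(\varphi)\big)-C,\qquad\forall\varphi\in\cH_\omega .
\]
This is precisely the hypothesis of the preceding Corollary in the case $\delta=1$. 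Applying that Corollary verbatim then gives $\delta(L)\ge 1\cdot(1+\tfrac{\lambda}{n})=1+\tfrac{\lambda}{n}$, and since $\lambda>0$ this yields $\delta(L)>1$, which is the claim.

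For orientation I would also recall where the strict improvement comes from, since that is the whole point of the statement. In the proof of the preceding Corollary, Lemma \ref{lem:sup-E<1+e} transports the coercivity inequality from $\cH_\omega$ down to the Bergman spaces $\cB_{m_j}$ (with slightly perturbed constants $\lambda_j\to\lambda$, $C_j\to C$); Proposition \ref{prop:F-coerc=delta-m}, evaluated along the destabilizing Bergman geodesic ray $\varphi_F(t)$ attached to a divisor $F$ computing $\delta_{m_j}(L)$, converts this into the valuative inequality $(1-\lambda_j)S_{m_j}(F)+\lambda_j\tau_{m_j}(F)/m_j-A_X(F)\le 0$; and finally the asymptotic estimates $S_{m_j}(F)\le(1+\varepsilon_j)S(F)$ and $\tau_{m_j}(F)/m_j\to\tau(F)$ of Blum--Jonsson, together with Fujita's bound $\tau(F)/S(F)\ge\tfrac{n+1}{n}$, upgrade it—after sending $m_j\to\infty$—to $\delta(L)\ge 1+\tfrac{\lambda}{n}$. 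It is precisely the fact that the Fujita ratio $\tfrac{n+1}{n}$ is strictly larger than $1$ that turns coercivity, which by itself only delivers $\delta(L)\ge 1$, into the strict inequality $\delta(L)>1$.

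Since this is a corollary, I expect essentially no obstacle beyond bookkeeping: one only needs to check that the normalization of $F^{0,1}$ entering the definition of coercivity matches \eqref{eq:delta-Ding-coercive}, and that the coercivity slope $\lambda$ extracted is genuinely positive, so that the preceding Corollary applies with no modification. No new analytic input is required.
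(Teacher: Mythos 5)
Your proposal is correct and matches the paper's intended argument: the statement is indeed an immediate specialization of the preceding corollary at $\delta=1$, giving $\delta(L)\geq 1+\lambda/n>1$, and your recollection of where the strict improvement originates (the Fujita ratio $\tfrac{n+1}{n}>1$ via Proposition \ref{prop:F-coerc=delta-m} and the Blum--Jonsson asymptotics) is accurate. The paper additionally flags that Remark \ref{anyfRem} is implicitly invoked—this just justifies calling $F^{0,1}$ ``the Ding functional'' since coercivity is insensitive to the choice of twist $f$—but as you read the hypothesis literally as \eqref{eq:delta-Ding-coercive} at $\delta=1$, nothing further is needed.
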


Note that Remark \ref{anyfRem} is used in Corollary \ref{DingCor}.

\subsection{Approximating twisted K\"ahler--Einstein metrics}

In what follows, we will prove several approximation results, exploring the relation between $\theta$-balanced metrics and $\theta$-\KE metrics. We emphasize that the twist term $\theta$ is {\it not } assumed to be semi-positive (otherwise $X$ would have to be Fano, which is a very restrictive assumption).
The following is an analogue of Donaldson's result \cite[Theorem 2]{D05}.
\begin{proposition}
\label{prop:tKE-appro-smoothly-by-balanced}
Let $L$ be an ample $\QQ$-line bundle and assume \er{eq:m-divisible-mL-ample}.
Let $\theta$ be a smooth form in $c_1(X)-c_1(L)$.
	Assume that for some $m_j\rightarrow\infty$, there exists a sequence of $\theta$-balanced metric $\omega_j$ in $\cB_{m_j}$ converging smoothly to a limit $\omega_\infty$. Then
	$
	\Ric(\omega_\infty)=\omega_\infty+\theta.
	$
\end{proposition}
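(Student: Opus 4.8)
The plan is to read a complex Monge--Amp\`ere equation for $\omega_\infty$ directly off the balanced equation by means of the \emph{leading} term of the Bergman kernel expansion, and then to recover the Einstein equation by applying $-\ddc\log$.

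First I would rewrite the $\theta$-balanced condition as a statement about Bergman density functions. Write $\omega_j=\omega_{\varphi_j}$ with $\varphi_j=\frac1{m_j}\log\sum_{i=1}^{d_{m_j}}|\sigma^{(j)}_i|^2_{h^{m_j}}\in\cB_{m_j}$. The balanced equation recorded just after the definition of $\theta$-balanced metrics says that $\{\sigma^{(j)}_i\}$ is orthonormal for the rescaled inner product $\frac{d_{m_j}}{a_j}\int_X h_{\varphi_j}^{m_j}(\,\cdot\,,\,\cdot\,)e^{f_\theta-\varphi_j}\omega^n$, where $a_j:=\int_X e^{f_\theta-\varphi_j}\omega^n$ and $h_{\varphi_j}=he^{-\varphi_j}$ has curvature $\omega_{\varphi_j}$. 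Since $\sum_i|\sigma^{(j)}_i|^2_{h^{m_j}}=e^{m_j\varphi_j}$ by the very definition of $\varphi_j$, the Bergman density function of the Hermitian metric $h_{\varphi_j}^{m_j}$ on $m_jL$ relative to the measure $e^{f_\theta-\varphi_j}\omega^n$ equals the \emph{constant} $d_{m_j}/a_j$ on $X$.

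Next, since $\omega_j\to\omega_\infty$ in $C^\infty$, after fixing a normalization of $\varphi_j$ (say $\int_X\varphi_j\,\omega^n=0$) one gets $\varphi_j\to\varphi_\infty$ in $C^\infty$ with $\omega_\infty=\omega+\ddc\varphi_\infty$; in particular the pairs $(\omega_{\varphi_j},\,e^{f_\theta-\varphi_j}\omega^n)$ converge smoothly and $a_j\to a_\infty:=\int_X e^{f_\theta-\varphi_\infty}\omega^n>0$. I would then invoke the Tian--Catlin--Zelditch expansion with a varying weight (in a form with uniform remainder, e.g. Ma--Marinescu or Berman--Boucksom--Witt Nystr\"om): the Bergman density function above equals $m_j^{n}\,\dfrac{\omega_{\varphi_j}^n/n!}{e^{f_\theta-\varphi_j}\omega^n}+O(m_j^{n-1})$ uniformly on $X$. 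Comparing with the constant $d_{m_j}/a_j$, dividing by $m_j^n$, and letting $j\to\infty$ (using $d_{m_j}/m_j^n\to \int_X\omega^n/n!$ by Riemann--Roch) yields the pointwise identity $\omega_\infty^n=\tfrac1{a_\infty}\big(\int_X\omega^n\big)\,e^{f_\theta-\varphi_\infty}\,\omega^n$ on $X$.

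Finally I would take $-\ddc\log$ of both sides and use the standard identity $\Ric(\omega_\infty)-\Ric(\omega)=-\ddc\log\!\big(\omega_\infty^n/\omega^n\big)$ together with the defining equation $\Ric(\omega)=\omega+\theta+\ddc f_\theta$ of \eqref{eq:def-f-alpha}: the $\ddc f_\theta$ contributions cancel, the additive constant disappears under $\ddc$, and the term $-\ddc(-\varphi_\infty)=\ddc\varphi_\infty$ turns $\omega$ into $\omega_\infty$, giving $\Ric(\omega_\infty)=\omega_\infty+\theta$. The main obstacle is the uniformity of the Bergman expansion across the sequence $\{(\omega_{\varphi_j},e^{f_\theta-\varphi_j}\omega^n)\}_j$: this is precisely why smooth convergence is assumed in the hypothesis, and it follows from the effective versions of Tian's theorem in which the remainder is controlled by a fixed finite number of derivatives of the K\"ahler metric and of the weight. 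Note that, unlike in Donaldson's constant-scalar-curvature analogue \cite[Theorem 2]{D05} where the scalar-curvature term of the \emph{subleading} coefficient is essential, here only the leading coefficient is used, because the volume form $e^{f_\theta-\varphi}\omega^n$ appearing in the balanced equation differs from $\omega_\varphi^n$ already at top order.
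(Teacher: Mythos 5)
Your proposal is correct and follows essentially the same route as the paper: read the balanced condition as saying the Bergman density of $h_{\varphi_j}^{m_j}$ relative to the twisted measure is constant, apply the leading-order Tian--Catlin--Zelditch/Dai--Liu--Ma expansion (with uniformity coming from the smooth convergence $\omega_j\to\omega_\infty$), and pass to the limit. The only cosmetic differences are bookkeeping ones — the paper chooses the reference metric $h_\infty$ adapted to $\omega_\infty$ so that the limit identity reads directly as $e^{-f_\infty}=1$, while you keep the original $h$ and finish with a $-\ddc\log$ — and your closing remark about only the leading Bergman coefficient being needed (in contrast to Donaldson's cscK setting) correctly identifies why the argument is so short.
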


\begin{proof}
	Let $h_\infty$ be a smooth Hermitian metric on $L$ with $
\omega_\infty$ as its curvature form. Also set $f_\infty$ to be the unique real valued function satisfying
$$
\Ric(\omega_\infty)=\omega_\infty+\theta+\ddc f_\infty,\ \int_Xe^{f_\infty}\omega_\infty^n=V.
$$
Since $\omega_j$ is $\theta$-balanced, we may write
$
\omega_j=\omega_\infty+\ddc\varphi_j,
$
where $\varphi_j$ takes the form
$$
\varphi_j=\frac{1}{m_j}\log\sum_{i=1}^{d_{m_j}}|\sigma^{(j)}_i|^2_{h_\infty^{m_j}}
$$
for some orthonormal basis $\{\sigma^{(j)}_i\}$ of the following $L^2$-inner product on $H^0(X,m_jL)$:
$$
\frac{d_{m_j}}{\int_Xe^{f_\infty-\varphi_j}\omega_\infty^n}\int_X(h_\infty e^{-\varphi_j})^{m_j}(\,\cdot\,,\,\cdot\,)e^{f_\infty-\varphi_j}\omega^n_\infty.
$$
Since $\omega_j\xrightarrow{C^\infty}\omega_\infty$, we may suitably normalize $\varphi_j$ so that
$
\varphi_j\xrightarrow{C^\infty}0.
$
Now consider the Bergman kernel
$$
\eta_{m_j}:=\sum_{i=1}^{d_{m_j}}|\sigma^{(j)}_i|^2_{(h_\infty e^{-\varphi_{j}})^{m_j}}.
$$
On the one hand, it is a constant (equal to 1) by the balanced assumption. On the other hand, by 
\cite{Cat99,Zel90},\cite[Theorem 1.3]{DLM06} 
$
\eta_{m_j}\xrightarrow{}e^{-f_\infty}.
$
This forces $f_\infty$ to be zero so that $\omega_\infty$ satisfies 
$
	\Ric(\omega_\infty)=\omega_\infty+\theta.
	$, as claimed.
\end{proof}

Next we show that, under certain reasonable assumptions, $\theta$-\KE metrics can be approximated by $\theta$-balanced metrics. Such kind of result, first appearing in \cite{Don01}, has been studied extensively by many authors over the years. The version we prove below follows closely the exposition in \cite[Section 7]{BBGZ}, with
some modifications using Darvas' work \cite{Dar15,Dar19}.
\begin{proposition}
\label{prop:tKE-approx-by-alpha-balanced}
Let $L$ be an ample $\QQ$-line bundle and assume \er{eq:m-divisible-mL-ample}. Assume that $\delta^A(L)>1$.
Let $\theta$ be a smooth form in $c_1(X)-c_1(L)$.
	Assume that $\omega$ is the unique K\"ahler form in $ c_1(L)$ solving
	$
	\Ric(\omega)=\omega+\theta.
	$
	Then there exists a sequence of $\theta$-balanced $\varphi_j\in\cB_{m_j}$ 
for some $m_j\rightarrow\infty$ such that
	$$
	\varphi_j\rightarrow 0 \quad \hbox{in the $d_1$-topology.}
	$$
	
\end{proposition}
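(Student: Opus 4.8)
The plan is to run the finite-dimensional variational argument of Berman--Boucksom--Guedj--Zeriahi \cite{BBGZ} at each level $m$ and then pass to the limit $m\to\infty$. Since $\delta^A(L)>1$, Proposition \ref{prop:delta-A<=delta} gives $\delta(L)\ge\delta^A(L)>1$, hence by Theorem \ref{thm:delta-A-m=delta-m} and Corollary \ref{thm:lim-delta-A-m=delta} we have $\delta_m(L)=\delta^A_m(L)>1$ for all $m\gg1$. For such $m$, Proposition \ref{prop:F-coerc=delta-m} shows that the quantized twisted Ding functional $F^\theta_m=F^{f_\theta,1}_m$ is coercive on $\cB_m$, so Proposition \ref{prop:F-m-coercive=>balanced} produces a minimizer $\varphi_m\in\cB_m$, which is automatically $\theta$-balanced of level $m$. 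Using the translation invariance $F^\theta_m(\varphi+c)=F^\theta_m(\varphi)$ I would normalize $\sup\varphi_m=0$, and fix the reference product $H=H_m$ so that $E_m=E_m(H_m,\,\cdot\,)$ as in \eqref{eq:def-E-m}. These $\varphi_m$ (along a sequence $m_j\to\infty$) are the candidate balanced potentials, and the remaining task is to prove $d_1(\varphi_m,0)\to0$.

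The next step is a uniform bound. Applying H\"older's inequality and Aubin's identity exactly as in the proof of Proposition \ref{prop:delta-A<=delta}, together with the boundedness of $f_\theta$, one obtains $\varepsilon,C>0$ \emph{independent of $m$} such that
$$
F^\theta_m(\varphi)\ge\varepsilon\bigl(\sup\varphi-E_m(\varphi)\bigr)-C,\qquad \varphi\in\cB_m,\ m\gg1.
$$
On the other hand, evaluating $F^\theta_m$ at $\FS(H_m)\in\cB_m$, which tends to $0$ in $C^\infty$ by \eqref{eq:log-rho->0}, gives $\inf_{\cB_m}F^\theta_m=F^\theta_m(\varphi_m)\le F^\theta_m(\FS(H_m))\to F^\theta(0)=0$. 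Combined with $\sup\varphi_m=0$, the coercivity bound forces $-E_m(\varphi_m)\le C'$ uniformly; since $E_m(\varphi_m)\le\sup\varphi_m=0$ and, by Lemma \ref{lem:sup-E<1+e}, $E(\varphi_m)\ge E_m(\varphi_m)+\varepsilon_m E_m(\varphi_m)-\varepsilon_m$, the energies $E(\varphi_m)$ are also uniformly bounded. By compactness of normalized $\omega$-plurisubharmonic functions, after passing to a subsequence $\varphi_m\to\varphi_\infty$ in $L^1$ with $\varphi_\infty\in\PSH(X,\omega)$, $\sup\varphi_\infty=0$, and $E(\varphi_\infty)>-\infty$, so $\varphi_\infty\in\cE^1(X,\omega)$.

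I would then identify $\varphi_\infty$ as the twisted K\"ahler--Einstein potential. By minimality, $F^\theta_m(\varphi_m)\le F^\theta_m(\psi_m)$ for the Bergman quantization $\psi_m$ of any fixed $\psi\in\cH_\omega$; since $E_m(\psi_m)\to E(\psi)$ by Lemma \ref{lem:lim-E-m=E} and $\psi_m\to\psi$ uniformly, $F^\theta_m(\psi_m)\to F^\theta(\psi)$, whence $\limsup_m F^\theta_m(\varphi_m)\le\inf_{\cH_\omega}F^\theta=\inf_{\cE^1}F^\theta$. Conversely, since $\sup\varphi_m=0$ together with $\delta^A(L)>1$ gives a uniform $L^p$ bound ($p>1$) on $e^{-\varphi_m}$, hence uniform integrability, we get $\int_Xe^{f_\theta-\varphi_m}\omega^n\to\int_Xe^{f_\theta-\varphi_\infty}\omega^n$; combined with the upper semicontinuity of $E$ in the weak topology and $E_m(\varphi_m)\le E(\varphi_m)+o(1)$ (Lemma \ref{lem:sup-E<1+e} again), this yields $\liminf_m F^\theta_m(\varphi_m)\ge F^\theta(\varphi_\infty)$. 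Hence $F^\theta(\varphi_\infty)=\inf_{\cE^1}F^\theta$ and $F^\theta_m(\varphi_m)\to\inf_{\cE^1}F^\theta$, so $\varphi_\infty$ minimizes the twisted Ding functional; by the $L^p$ regularity theory for complex Monge--Amp\`ere equations it is smooth and $\omega_{\varphi_\infty}$ solves $\Ric(\omega_{\varphi_\infty})=\omega_{\varphi_\infty}+\theta$, whence $\omega_{\varphi_\infty}=\omega$ by the uniqueness hypothesis and $\varphi_\infty=0$. Finally, $F^\theta_m(\varphi_m)\to F^\theta(0)$ and $\int_Xe^{f_\theta-\varphi_m}\omega^n\to V$ force $E_m(\varphi_m)\to0$, hence $E(\varphi_m)\to0=E(0)$; by Darvas' characterization of $d_1$-convergence (weak convergence plus convergence of energy) \cite{Dar15,Dar19}, $d_1(\varphi_m,0)\to0$, and since every subsequence has a further subsequence with this property the whole sequence converges.

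I expect the third step to be the main obstacle: transferring the finite-dimensional minimizers $\varphi_m\in\cB_m$ to the genuine variational problem on $\cE^1(X,\omega)$ forces one to combine the uniform integrability coming from $\delta^A(L)>1$, the upper semicontinuity of the Monge--Amp\`ere energy, and --- because $\theta$ is \emph{not} assumed semipositive, so Berndtsson's convexity is not available here --- the regularity theory for the twisted K\"ahler--Einstein equation with merely an $L^p$ density, in order to conclude that the weak minimizer is the smooth solution (assumed unique).
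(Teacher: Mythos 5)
Your argument is essentially the paper's proof: you establish $m$-uniform coercivity of $F^\theta_m$ from $\delta^A(L)>1$ via H\"older and Lemma \ref{lem:sup-E<1+e}, produce minimizing $\theta$-balanced potentials $\varphi_m$, show $\inf_{\cB_m}F^\theta_m\to F^\theta(0)=0$ by testing against Bergman approximations of smooth potentials, deduce a uniform $d_1$-bound, and then identify the limit as the unique twisted KE potential $0$ to conclude $d_1$-convergence. The one place you genuinely deviate is the final compactness and lower-semicontinuity step: where the paper simply declares that $\{\varphi_j\}$ is a $d_1$-bounded minimizing sequence and cites the general extraction principle (\cite[Theorem 4.18]{Dar19}), you build it by hand --- $L^1$-compactness of normalized $\omega$-psh functions, upper semicontinuity of $E$, and crucially uniform integrability of $e^{-\varphi_m}$ coming from $\delta^A(L)>1$ (which gives a uniform $L^p$-bound, $p>1$, once $\sup\varphi_m=0$ and $E(\varphi_m)$ is bounded). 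This yields $\liminf F^\theta(\varphi_m)\ge F^\theta(\varphi_\infty)$ and, after identifying $\varphi_\infty=0$, the energy convergence $E(\varphi_m)\to 0$ which, via Darvas' characterization of $d_1$-convergence, closes the argument. This is a correct and self-contained alternative to citing the black-box compactness theorem, and your remark that Berndtsson's convexity is unavailable because $\theta$ need not be semipositive (so the variational route is forced) is exactly the point the paper is making.

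One small thing worth noting: you pass through $\delta(L)>1\Rightarrow\delta_m(L)>1$ and Proposition \ref{prop:F-coerc=delta-m} before switching to the $m$-uniform H\"older argument. That detour is harmless but unnecessary --- the paper goes straight from $\delta^A(L)>1$ to coercivity of $F^\theta$ on $\cH_\omega$, and then to $m$-uniform coercivity of $F^\theta_m$ via Lemma \ref{lem:sup-E<1+e}, without invoking $\delta_m$ at all.
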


\begin{proof}
Since $\delta^A(L)>1$, applying H\"older inequality as in the proof of Proposition \ref{prop:F-coerc=delta-m}, we find that
	$$
	F^\theta(\varphi):=-\log\frac{1}{V}\int_Xe^{-\varphi}\omega^n-E(\varphi)\ge\lambda(\sup\varphi-E(\varphi))-C,\quad \forall\varphi\in\cH_\omega,
	$$
	for some $\lambda\in(0,1)$ and $C>0$. In other words, the \emph{$\theta$-twisted Ding functional} is coercive on $\cH_\omega$ (in this setting the $\theta$-twisted Ricci potential $f_\theta$ defined by \eqref{eq:def-f-alpha} is equal to 0). So $F^\theta$ admits a minimizer $\varphi\in\cH_\omega$, which gives rise 
to a $\theta$-\KE metric. Then by our uniqueness assumption, one must have $\varphi=\operatorname{const}$, so that
	$$
	F^\theta(\varphi)\ge F^\theta(0)=0,\ \forall\varphi\in\cH_\omega.
	$$
	Let $m_j\rightarrow\infty$ be a sequence of sufficiently divisible integers.
	Our goal is then to show that there exists a sequence of $\theta$-balanced $\varphi_j\in\cB_{m_j}$ such that
	$
	\varphi_j\rightarrow0
	$
	in a suitable sense. 
	
	By the coercivity of $F^\theta$ and Lemma \ref{lem:sup-E<1+e}, there exist $\lambda^\prime>0$ and $C^\prime>0$ such that,
	$$
	F^\theta_{m_j}(\varphi)
	=-\log\frac{1}{V}\int_Xe^{f_\theta-\varphi}\omega^n-E_{m_j}(\varphi)
	\ge\lambda^\prime(\sup\varphi-E_{m_j}(\varphi))-C^\prime,\ \forall\varphi\in\cB_{m_j}.
	$$
	So by Proposition \ref{prop:F-m-coercive=>balanced},
	we can find $\theta$-balanced metric $\varphi_j\in\cB_{m_j}$, minimizing $F^\theta_{m_j}$. Thus, using \eqref{eq:log-rho->0} 
 and Lemma \ref{lem:lim-E-m=E} proven below,
	$$
	\lambda^\prime(\sup\varphi_j-E_m(\varphi_j))-C^\prime\le F^\theta_{m_j}(\varphi_j)\le F^\theta_{m_j}\bigg(\frac{1}{m_j}\log
\sum_{i=1}^{d_{m_j}}|\sigma_i|^2_{h^{m_j}}
\bigg)\xrightarrow{m_j\rightarrow\infty}F^\theta(0)=0.
	$$
	This implies that
	$$
	\sup\varphi_j-E_{m_j}(\varphi_j)<A,\ \forall m_j\gg1,
	$$
	for some $A>0$. Then by Lemma \ref{lem:sup-E<1+e}, we derive that
	$
	\sup\varphi_j-E(\varphi_j)<A
	$
	for $m_j\gg1$. 
		We may normalize each $\varphi_j$ so that 
	$\sup\varphi_j=0.$
	Then we have
	$$
	d_1(0,\varphi_j)=-E(\varphi_j)<A
	$$
	for $m_j\gg1$. Here we are using the $d_1$-distance of Darvas \cite{Dar19}.

	Now by Lemma \ref{lem:lim-E-m=E}, Lemma \ref{lem:sup-E<1+e} and \eqref{eq:log-rho->0} again,
	\begin{equation*}
		\begin{aligned}
			0\le F^\theta(\varphi_j)&=-\log\frac{1}{V}\int_Xe^{f-\varphi_j}\omega^n-E(\varphi_j)\\
			&\le-\log\frac{1}{V}\int_Xe^{f-\varphi_j}\omega^n-(1+\varepsilon_{m_j})E_{m_j}(\varphi_j)+\varepsilon_{m_j}\\
			&=F^\theta_{m_j}(\varphi_j)-\varepsilon_{m_j} E_{m_j}(\varphi_j)+\varepsilon_{m_j}\\
			&\le F^\theta_{m_j}(\frac{1}{m_j}\log\rho_{m_j})+\varepsilon_{m_j} A+\varepsilon_{m_j}\xrightarrow{m_j\rightarrow\infty}0.
		\end{aligned}
	\end{equation*}
	Therefore, $
	\lim_{j} F^\theta(\varphi_j)=0=\inf F^\theta.
	$
	In other words, $\{\varphi_j\}$ is a $d_1$-bounded minimizing sequence of the $\theta$-twisted Ding functional $F^\theta$. Then one can extract a subsequence so that $\varphi_j$ $d_1$-converges to a minimizer of $F^\theta$ (cf. the proof of \cite[Theorem 4.18]{Dar19}). By regularity theory \cite{ST11,BBEGZ19}, this yields an $\theta$-twisted KE potential, which has to be a constant by uniqueness assumption, and hence which has to be 0 as $\sup\varphi_j=0$. Thus,
	$
	d_1(0,\varphi_j)\rightarrow0.
	$
\end{proof}

\begin{remark}{\rm
	A more delicate treatment following \cite{Don01,IO20} can possibly improve the regularity of the convergence in the above result. We leave this to the interested readers.}
\end{remark}

Subtlety arises when the $\theta$-twisted KE metric is not unique, which is usually due to the existence of non-trivial holomorphic vector fields. In this case, it is not expected that one can approximate the $\theta$-twisted KE by $\theta$-balanced metrics. However, the next result shows that  one can still get a satisfactory approximation if the twist term is allowed to be perturbed (by considering a sequence of slightly perturbed Ding functional).

\begin{proposition}
\label{prop:alpha-approx-by-0-delta-i}
	Let $L$ be an ample $\QQ$-line bundle
 and assume \er{eq:m-divisible-mL-ample}. Let $\theta$
 be a smooth form 
in $c_1(X)-c_1(L)$. Assume that $\omega$ satisfies
	$
	\Ric(\omega)=\omega+\theta.
	$
Moreover, assume that
$\varphi=0$ is a minimizer of the following twisted Ding energy
	$$
	F^\theta(\varphi):=-\log\frac{1}{V}\int_Xe^{-\varphi}\omega^n-E(\varphi),\ \varphi\in\cH_\omega.
	$$
Then we can find $m_j\rightarrow\infty$, $\delta_j\nearrow1$ and a sequence of $(0,\delta_i)$-balanced $\varphi_j\in\cB_{m_j}$ such that
		$$
	\varphi_j\rightarrow 0 \quad \hbox{in the $d_1$-topology.}
	$$
\end{proposition}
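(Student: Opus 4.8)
The plan is to turn the hypothesis — that $\varphi=0$ minimizes the twisted Ding functional $F^\theta$ — into a \emph{uniqueness} statement for a family of \emph{perturbed} functionals $F^{0,\delta}$ with $\delta\in(0,1)$, and then to feed that into the approximation scheme already established in Proposition~\ref{prop:tKE-approx-by-alpha-balanced}, together with a diagonal argument. Here $F^{0,\delta}$ is the $(f,\delta)$-Ding functional of Definition~\ref{def:F-f-delta-m} with $f=0$; note that since $\Ric(\omega)=\omega+\theta$ we have $f_\theta=0$, so $F^\theta=F^{0,1}$ and $\min_{\cH_\omega}F^{0,1}=F^{0,1}(0)=0$ by assumption.

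\emph{Step 1 (the subcritical perturbation selects $0$).} For any $\delta\in(0,1]$, $\varphi=0$ is a critical point of $F^{0,\delta}$ (it solves $\omega_\varphi^n=e^{-\delta\varphi}\omega^n$ trivially), and, because $\delta\le 1$, $F^{0,\delta}$ is convex along (finite-energy) geodesics by Berndtsson's theorem \cite{Bern15}: indeed $\delta$ times a subgeodesic is again a subgeodesic (cf. \eqref{subgeod}), so $t\mapsto-\log\frac1V\int_X e^{-\delta\varphi_t}\omega^n$ is convex while $E$ is affine. Hence $0$ minimizes $F^{0,\delta}$ and $\min F^{0,\delta}=0$. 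I claim that for $\delta<1$ this minimizer is \emph{unique modulo constants}. Write $G_p(\psi):=-\frac1p\log\frac1V\int_X e^{-p\psi}\omega^n$, so that $F^{0,p}(\psi)=G_p(\psi)-E(\psi)$. Since $\omega^n/V$ is a probability measure, monotonicity of $L^p$-norms gives $G_\delta(\psi)\ge G_1(\psi)$ for $\delta\le 1$, with equality if and only if $\psi$ is constant; thus $F^{0,\delta}(\psi)\ge F^{0,1}(\psi)\ge 0$ for all $\psi$. If $\psi$ minimizes $F^{0,\delta}$, then $F^{0,\delta}(\psi)=0$, forcing $F^{0,\delta}(\psi)=F^{0,1}(\psi)=0$, and the equality case above forces $\psi\equiv\mathrm{const}$. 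This is the only genuinely new ingredient, and it is exactly what bypasses the non-uniqueness caused by holomorphic vector fields --- without any semipositivity assumption on $\theta$.

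\emph{Step 2 (coercivity and the quantized picture).} With a unique minimizer at hand, the Darvas--Rubinstein coercivity principle \cite{DR17} shows that each $F^{0,\delta_j}$ (any $\delta_j\in(0,1)$) is coercive on $\cH_\omega$. From here the proof of Proposition~\ref{prop:tKE-approx-by-alpha-balanced} carries over with $F^\theta$ replaced by $F^{0,\delta_j}$, using Step~1 in place of the hypothesis $\delta^A(L)>1$ to supply coercivity of the continuous functional (the remaining inputs --- Lemma~\ref{lem:sup-E<1+e}, Proposition~\ref{prop:F-m-coercive=>balanced}, Berndtsson convexity, and Darvas' $d_1$-theory --- are available here as well). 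Concretely, for each \emph{fixed} $j$ and every sufficiently divisible $m\gg 1$ there is a $(0,\delta_j)$-balanced $\psi_{j,m}\in\cB_m$ minimizing $F^{0,\delta_j}_m$; comparing with $\frac1m\log\rho_m$ shows $F^{0,\delta_j}_m(\psi_{j,m})\to 0$ as $m\to\infty$, and since for $j$ fixed the coercivity constants of $F^{0,\delta_j}_m$ on $\cB_m$ do not degenerate as $m\to\infty$ (they converge to those of the coercive $F^{0,\delta_j}$), the family $\{\psi_{j,m}\}_m$ is a $d_1$-bounded minimizing sequence for $F^{0,\delta_j}$; it therefore $d_1$-converges, as $m\to\infty$, to the unique minimizer, i.e. to $0$ modulo constants. (This is precisely why one works one value of $\delta_j$ at a time: the degenerate coercivity one would meet if $\delta_j\nearrow 1$ and $m\to\infty$ were coupled is avoided.)

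\emph{Step 3 (diagonalization).} Fix any sequence $\delta_j\nearrow 1$. For each $j$, by Step~2 choose $m_j\to\infty$ sufficiently divisible with $d_1(0,\psi_{j,m_j})<1/j$, and set $\varphi_j:=\psi_{j,m_j}\in\cB_{m_j}$. Then $\varphi_j$ is $(0,\delta_j)$-balanced, $m_j\to\infty$, $\delta_j\nearrow 1$, and $\varphi_j\to 0$ in $d_1$. The step I expect to require the most care is the comparison of $F^{0,\delta_j}$-minimality with $F^{0,1}$-minimality in Step~1 on the completed space $\cE^1$ (so that $0$ really is critical, and the minimizer really is constant and not merely "constant on the regular locus"), together with checking that the proof of Proposition~\ref{prop:tKE-approx-by-alpha-balanced} is indeed insensitive to replacing the pair $(f_\theta,1)$ by $(0,\delta_j)$; the rest is routine.
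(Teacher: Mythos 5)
Your Step~1 --- the $L^p$-monotonicity argument showing that for every $\delta\in(0,1)$ the only minimizers of $F^{0,\delta}$ are constants --- is correct and is exactly the mechanism the paper uses implicitly at the end of its proof (the equality $F^\theta(\varphi^j_\infty)=F^{0,\delta_j}(\varphi^j_\infty)$ forcing $\varphi^j_\infty$ to be a constant). Your Step~3 diagonalization is the same as the paper's. The architecture is sound.

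There is, however, a genuine gap in Step~2. You invoke the Darvas--Rubinstein coercivity principle to obtain coercivity of $F^{0,\delta_j}$ on $\cH_\omega$ from uniqueness of the minimizer. That principle requires convexity of the functional along (finite-energy) geodesics, and the Berndtsson convexity you appeal to does \emph{not} hold here for $\delta$ near $1$. Concretely, the critical-point equation of $F^{0,\delta}$ corresponds to the twist $\theta':=\Ric(\omega)-\delta\omega=(1-\delta)\omega+\theta$, and Berndtsson's theorem requires $\theta'\ge 0$. Since the proposition is stated without any semipositivity hypothesis on $\theta$ (indeed the surrounding text emphasizes that $\theta$ is \emph{not} assumed $\ge 0$), the condition $(1-\delta)\omega+\theta\ge 0$ fails once $\delta$ is close enough to $1$ that $(1-\delta)\omega$ no longer dominates the negative part of $\theta$ --- precisely the regime $\delta_j\nearrow 1$ you need. (The remark in your Step~1 that ``$\delta$ times a subgeodesic is again a subgeodesic, so $-\log\int e^{-\delta\varphi_t}\omega^n$ is convex'' overlooks the fact that the \emph{fixed measure} $\omega^n$ must then supply a semi-positively curved metric on $-K_X-\delta L$, which is where $(1-\delta)\omega+\theta\ge 0$ enters.)

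The paper reaches coercivity of $F^{0,\delta_j}$ by a different and convexity-free route: the hypothesis $F^\theta\ge F^\theta(0)=0$ gives $\delta^A(L)\ge 1$, and then for any $\delta_j<1\le\delta^A(L)$ the H\"older-interpolation trick from the proof of Proposition~\ref{prop:F-coerc=delta-m} yields $\lambda_j\in(0,1),C_j>0$ with $F^{0,\delta_j}(\varphi)\ge\lambda_j(\sup\varphi-E(\varphi))-C_j$ on $\cH_\omega$; Lemma~\ref{lem:sup-E<1+e} then transfers this to the quantized functionals. That substitution repairs your Step~2, and with it the rest of your argument (the $d_1$-bounded minimizing sequence, identification of the limit via your Step~1 uniqueness, and the diagonal choice of $m_j$) goes through exactly as in the paper. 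As a minor point, note also that the proof of Proposition~\ref{prop:tKE-approx-by-alpha-balanced}, which you cite as a template, itself avoids Berndtsson convexity for the same reason, so listing it among the ``remaining inputs'' is a small misreading.
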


\begin{proof}
By our assumption,
	$
	F^\theta(\varphi)\ge F^\theta(0)=0,
	$
	thus
	$
	\delta^A(L)\ge1.
	$
	Pick any strictly increasing sequence $\delta_j\nearrow1.$ Then for each $j$, applying H\"older inequality as in the proof of Proposition \ref{prop:F-coerc=delta-m}, we can find $\lambda_j\in(0,1)$ and $C_j>0$ such that
	$$
	-\frac{1}{\delta_j}\log\frac{1}{V}\int_Xe^{-\delta_j\varphi}\omega^n-E(\varphi)\ge\lambda_j(\sup-E(\varphi))-C_j,\ \forall\varphi\in\cH_\omega.
	$$
	Then by Lemma \ref{lem:sup-E<1+e}, after perturbing slightly $\lambda_j$ and $C_j$, we deduce that
	$$
	F^{0,\delta_j}_m(\varphi)=-\frac{1}{\delta_j}\log\frac{1}{V}\int_Xe^{-\delta_j\varphi}\omega^n-E_m(\varphi)\ge\lambda_j(\sup-E_m(\varphi))-C_j,\ \forall\varphi\in \cB_m
	$$
	for all sufficiently divisible $m\gg1$. 
	Then by Proposition \ref{prop:F-m-coercive=>balanced} and the proof of Proposition \ref{prop:tKE-approx-by-alpha-balanced}, there exists $(0,\delta_j)$-balanced	$\varphi^j_m\in\cB_m$, minimizing $F^{0,\delta_j}_m$, such that
	$$
	\sup\varphi^j_m-E_m(\varphi^j_m)<A_j,
	$$
	for some $A_j>0$ (independent of $m$). So as $m\rightarrow\infty$, $\varphi^j_m-\sup\varphi^j_m$ is a $d_1$-bounded sequence. 
Now put
	$$
	F^{0,\delta_j}(\varphi):=-\frac{1}{\delta_j}\log\frac{1}{V}\int_Xe^{-\delta_j\varphi}\omega^n-E(\varphi),\ \varphi\in\cH_\omega.
	$$
	Then we have (by H\"older inequality, Lemma \ref{lem:sup-E<1+e}, \ref{lem:lim-E-m=E} and \eqref{eq:log-rho->0})
	\begin{equation*}
		\begin{aligned}
			0\le F^\theta(\varphi^j_m)&=-\log\frac{1}{V}\int_Xe^{-\varphi^j_m}\omega^n-E(\varphi^j_m)\\
			&\le-\frac{1}{\delta_j}\log\frac{1}{V}\int_Xe^{-\delta_j\varphi^j_m}\omega^n-E(\varphi^j_m)\\
			&\le F^{0,\delta_j}_m(\varphi^j_m)+\varepsilon_m(\sup\varphi^j_m-E_m(\varphi^j_m))+\varepsilon_m\\
			&\le F^{0,\delta_j}_m(\frac{1}{m}\log\rho_{m})+\varepsilon_m A+\varepsilon_m\\
			&\xrightarrow{m\rightarrow\infty}F^{\alpha,\delta_j}(0)=0.
		\end{aligned}
	\end{equation*}
	So for each fixed $j$, $\{\varphi^j_m-\sup\varphi^j_m\}_{m\gg1}$ is a $d_1$-bounded minimizing sequence of $F^{0,\delta_j}$. 	Then we may extract a limit $\varphi^j_\infty\in\cE^1(X,\omega)$ as in the previous proof, such that
	$$
	0\le F^\theta(\varphi^j_\infty)\le F^{0,\delta_j}(\varphi^j_\infty)=0.
	$$
	This forces $\varphi^j_\infty$ to be a minimizer of $F^\theta$,
	so that $\varphi^j_\infty\in\cH_\omega$ is an $\theta$-twisted KE potential (as in the previous proof). Moreover, the equality
	$$
	F^\theta(\varphi^j_\infty)= F^{0,\delta_j}(\varphi^j_\infty)
	$$
	forces $\varphi^j_\infty$ to be a constant, which hence must be 0 as $\sup\varphi^j_\infty=0$. Therefore, we obtain that (up to a subsequence)
	$$
	d_1(\varphi^j_m-\sup \varphi^j_m,0)\rightarrow0\ \text{as }m\rightarrow\infty.
	$$
	So we can pick a sufficiently divisible $m_j$ such that
	$$
	d_1(\varphi^j_{m_j}-\sup\varphi^j_{m_j},0)<\frac{1}{j}.
	$$
	Set
	$
	\varphi_j:=\varphi^j_{m_j},
	$
	then $\varphi_j$ is $(0,\delta_j)$-balanced and
	$
	d_1(\varphi_j-\sup\varphi_j,0)\rightarrow0\text{ as }j\rightarrow\infty.
	$
This completes the proof.
	\end{proof}
	
\begin{remark}{
\rm	Using the balanced equation of $\varphi_j$ and the proof of \cite[Theorem 4.4]{DLR19}, it is not hard to derive that  the above convergence $d_1(\varphi_j-\sup\varphi_j,0)\rightarrow0$ implies that
	$$
	\sup\varphi_j-E_{m_j}(\varphi_j)<A
	$$
	for some uniform $A>0$.}
\end{remark}

It turns out that  the converse direction of the previous proposition also holds. The result we state next can be thought of as the quantization of the classical continuity method.

\begin{proposition}
	Let $L$ be an ample $\QQ$-line bundle  and assume 
\er{eq:m-divisible-mL-ample}. Let $\theta$
be a smooth form in $c_1(X)-c_1(L)$. Let $f_\theta$ be the twisted Ricci potential function defined by \eqref{eq:def-f-alpha}. Assume that for some $m_j\rightarrow\infty$, and $\delta_j\nearrow1$, there exists a sequence of $(f_\theta,\delta_j)$-twisted balanced $\varphi_j\in\cB_{m_j}$, minimizing $F^{f_\theta,\delta_j}_{m_j}$ on $\cB_{m_j}$, such that
	$$
	\sup\varphi_j-E_{m_j}(\varphi_j)<A
	$$
	for some uniform $A>0$. Then there exists a K\"ahler form $\omega_\infty\in  c_1(L)$ solving
	$
	\Ric(\omega)_\infty=\omega_\infty+\theta.
	$
\end{proposition}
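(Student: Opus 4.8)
\section*{Proof proposal}

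The plan is to run the quantized continuity method in reverse: from the given $(f_\theta,\delta_j)$-balanced metrics one extracts a $d_1$-bounded minimizing sequence for the (untwisted, $\delta=1$) $\theta$-Ding functional
\[
F^\theta(\varphi):=-\log\frac1V\int_Xe^{f_\theta-\varphi}\omega^n-E(\varphi),\qquad \varphi\in\cE^1(X,\omega),
\]
passes to a $d_1$-limit by Darvas' compactness, and appeals to the regularity theory for minimizers of Ding-type functionals. All the analytic ingredients have already been assembled in the proofs of Propositions \ref{prop:tKE-approx-by-alpha-balanced} and \ref{prop:alpha-approx-by-0-delta-i}; only their reassembly is needed. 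First normalize so that $\sup\varphi_j=0$. Then the hypothesis $\sup\varphi_j-E_{m_j}(\varphi_j)<A$ together with Lemma \ref{lem:sup-E<1+e} gives $d_1(0,\varphi_j)=-E(\varphi_j)\le(1+\varepsilon_{m_j})A+\varepsilon_{m_j}$, so $\{\varphi_j\}$ is $d_1$-bounded for $m_j\gg1$.

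Next I would show $\{\varphi_j\}$ is a minimizing sequence of $F^\theta$. Since $\delta_j<1$ and $t\mapsto\log\frac1V\int_Xe^{f_\theta-t\varphi}\omega^n$ is convex and vanishes at $t=0$ by \eqref{eq:def-f-alpha} (the normalization $\int_Xe^{f_\theta}\omega^n=V$ makes $\frac1Ve^{f_\theta}\omega^n$ a probability measure), one has $F^\theta(\varphi_j)\le F^{f_\theta,\delta_j}(\varphi_j)$, where $F^{f_\theta,\delta_j}$ is the unquantized functional of \eqref{Dingfdelta}. Moreover $F^{f_\theta,\delta_j}(\varphi_j)-F^{f_\theta,\delta_j}_{m_j}(\varphi_j)=E_{m_j}(\varphi_j)-E(\varphi_j)\le\varepsilon_{m_j}(A+1)$ by a second application of Lemma \ref{lem:sup-E<1+e} (using $\sup\varphi_j=0$ and the uniform bound). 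Finally, since $\varphi_j$ minimizes $F^{f_\theta,\delta_j}_{m_j}$ on $\cB_{m_j}$, for every fixed $\psi\in\cH_\omega$ with Bergman approximation $\psi_{m_j}$ as in Lemma \ref{lem:lim-E-m=E} we have $F^{f_\theta,\delta_j}_{m_j}(\varphi_j)\le F^{f_\theta,\delta_j}_{m_j}(\psi_{m_j})$, and the right side tends to $F^\theta(\psi)$ because $\psi_{m_j}\to\psi$ in $C^\infty$, $E_{m_j}(\psi_{m_j})\to E(\psi)$ (Lemma \ref{lem:lim-E-m=E}), and $\delta_j\to1$. Chaining these, $\limsup_j F^\theta(\varphi_j)\le\inf_{\cH_\omega}F^\theta=\inf_{\cE^1}F^\theta$, while $F^\theta(\varphi_j)\ge\inf F^\theta$ trivially; hence $F^\theta(\varphi_j)\to\inf F^\theta$.

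With $\{\varphi_j\}$ a $d_1$-bounded minimizing sequence of $F^\theta$ in hand, I would conclude as in the proof of Proposition \ref{prop:tKE-approx-by-alpha-balanced}: compactness of $d_1$-balls in $\cE^1(X,\omega)$ together with lower semicontinuity of $F^\theta$ (cf.\ the proof of \cite[Theorem 4.18]{Dar19}) produces, along a subsequence, a $d_1$-limit $\varphi_\infty\in\cE^1(X,\omega)$ minimizing $F^\theta$; then the regularity theory for minimizers of twisted Ding functionals \cite{ST11,BBEGZ19} shows $\varphi_\infty$ is smooth with $\omega_\infty:=\omega+\ddc\varphi_\infty>0$ and $\omega_\infty^n=e^{f_\theta-\varphi_\infty}\omega^n$, i.e.\ $\Ric(\omega_\infty)=\omega_\infty+\theta$. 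The main obstacle is the middle step: one must send $m_j\to\infty$ and $\delta_j\to1$ simultaneously and keep all error terms under control, and it is precisely the uniform bound $\sup\varphi_j-E_{m_j}(\varphi_j)<A$ that, via Lemma \ref{lem:sup-E<1+e}, lets one absorb the discrepancy between $E_{m_j}$ and $E$ and keep $\{\varphi_j\}$ both $d_1$-bounded and asymptotically minimizing.
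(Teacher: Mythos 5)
Your proposal captures the right overall strategy and is actually a bit more streamlined than the paper's: you establish $\lim_j F^\theta(\varphi_j)=\inf F^\theta$ directly via the chain
\[
F^\theta(\varphi_j)\le F^{f_\theta,\delta_j}(\varphi_j)\le F^{f_\theta,\delta_j}_{m_j}(\varphi_j)+\varepsilon_{m_j}(A+1)\le F^{f_\theta,\delta_j}_{m_j}(\psi_{m_j})+\varepsilon_{m_j}(A+1)\longrightarrow F^\theta(\psi),
\]
whereas the paper reaches the same conclusion in two stages and argues by contradiction — first claiming $\inf_{\cH_\omega}F^\theta\ge -C'$ by supposing a competitor $\psi$ with $F^\theta(\psi)<-C'$ and running the analogous chain to derive a contradiction, and then repeating the estimates to show the sequence is minimizing. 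Your direct argument is cleaner, the convexity observation $g(\delta_j)/\delta_j\le g(1)$ (where $g(t)=\log\frac1V\int_X e^{f_\theta-t\varphi}\omega^n$, $g(0)=0$ from \eqref{eq:def-f-alpha}) is correct, and the $\varepsilon_{m_j}(A+1)$ bound follows from Lemma \ref{lem:sup-E<1+e} exactly as you say.

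However, there is a small but genuine gap: you never establish that $\inf_{\cE^1}F^\theta>-\infty$. Your chain gives $\lim_j F^\theta(\varphi_j)=\inf F^\theta$ unconditionally — but if that infimum were $-\infty$, the $d_1$-limit $\varphi_\infty$ you extract would satisfy $F^\theta(\varphi_\infty)=-\infty$ (lower semicontinuity does not rule this out on $\cE^1$), and the regularity theory of \cite{ST11,BBEGZ19} would not apply to such a $\varphi_\infty$. The paper closes this by invoking the Skoda estimate \cite[Theorem 5.7]{DR17}: since $\{\varphi_j-\sup\varphi_j\}$ is $d_1$-bounded, one has $\frac1V\int_X e^{-(\varphi_j-\sup\varphi_j)}\omega^n\le C$ uniformly, whence $F^\theta(\varphi_j)\ge -\sup f_\theta-\log C=:-C'$ for all $j$. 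Combined with your chain this yields $\inf F^\theta\ge -C'>-\infty$, after which everything you wrote goes through. You already have the $d_1$-boundedness in hand, so the fix is one line — but it must be stated, since the hypotheses of this proposition (unlike those of Proposition \ref{prop:tKE-approx-by-alpha-balanced}) do not grant you coercivity, and hence no a priori lower bound on $F^\theta$.
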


Note that $\varphi_j\in\cB_{m_j}$ being a minimizer of $F^{f_\theta,\delta_j}_{m_j}$ implies that $\delta_{m_j}(L)=\delta^A_{m_j}(L)\ge\delta_j$ (recall Theorem \ref{thm:delta-A-m=delta-m}). Thus $\delta(L)\ge1$.

\begin{proof}
Consider the Ding functional
$$
F^\theta(\varphi):=-\log\frac{1}{V}\int_Xe^{f_\theta-\varphi}\omega^n-E(\varphi),\ \varphi\in\cH_\omega,
$$
where $f_\theta$ is defined by \eqref{eq:def-f-alpha}.
We first claim that 
$$
\inf_{\varphi\in\cH_\omega}F^\theta(\varphi)>-\infty.
$$
Indeed, by Lemma \ref{lem:lim-E-m=E}, the assumption $\sup\varphi_j-E_{m_j}(\varphi_j)<A$ implies that 
$\{\varphi_j-\sup{\varphi_j}\}$ is a $d_1$-bounded sequence. Then by the Skoda estimate \cite[Theorem {5.7}]{DR17}, there exists $C>0$ such that
$$
\frac{1}{V}\int_Xe^{-(\varphi_j-\sup\varphi_j)}\omega^n<C.
$$
Therefore,
	\begin{equation*}
	\begin{aligned}
		F^\theta(\varphi_j)&=-\log\frac{1}{V}\int_Xe^{f_\theta-(\varphi_j-\sup\varphi_j)}\omega^n+\sup\varphi_j-E(\varphi_j)\\
		&\ge-\sup f_\theta-\log C=:-C^\prime.
	\end{aligned}
\end{equation*}
We then claim that
$$
\inf_{\varphi\in\cH_\omega}F^\theta(\varphi)\ge -C^\prime.
$$ 
Assume to the contrary that there exists $\psi\in \cH_\omega$ such that
$
F^\theta(\psi)<-C^\prime.
$
For each $m_j\gg1$, we put
$$
\psi_j:=\frac{1}{m_j}\log\sum_{i=1}^{d_{m_j}}|\sigma_i|_{h^{m_j}}^2,
$$
where $\{\sigma_i\}$ is any orthonormal basis of
$
\int_X(he^{-\psi})^{m_j}(\,\cdot\,,\,\cdot\,)\omega^n_\psi.
$
Then the asymptotic of Bergman kernel \eqref{eq:log-rho->0} implies that
$$
\psi_j\xrightarrow{C^\infty}\psi,\ \text{as }j\rightarrow\infty.
$$
So using Lemma \ref{lem:sup-E<1+e}, the assumption that $\varphi_j$ is a minimizer of $F^{f_\theta,\delta_j}_{m_j}$ on $\cB_{m_j}$ and Lemma \ref{lem:lim-E-m=E},  we deduce that
\begin{equation*}
	\begin{aligned}
		-C^\prime\le F^\theta(\varphi_j)&=-\log\frac{1}{V}\int_Xe^{f_\theta-\varphi_j}\omega^n-E(\varphi_j)\\
		&\le-\frac{1}{\delta_j}\log\frac{1}{V}\int_Xe^{f_\theta-\delta_j\varphi_j}\omega^n-E(\varphi_j)\\
			&\le-\frac{1}{\delta_j}\log\frac{1}{V}\int_Xe^{f-\delta_j\varphi_j}\omega^n-\sup\varphi_j+(1+\varepsilon_{m_j})(\sup\varphi_j-E_{m_j}(\varphi_j)+\varepsilon_{m_j}\\
			&=F^{f_\theta,\delta_j}_{m_j}(\varphi_j)+\varepsilon_{m_j} (\sup\varphi_j-E_{m_j}(\varphi_j)+\varepsilon_{m_j}\\
			&\le F^{f_\theta,\delta_j}_{m_j}(\psi_j)+\varepsilon_{m_j} A+\varepsilon_{m_j}\xrightarrow{j\rightarrow\infty}F^\theta(\psi)<-C^\prime,
	\end{aligned}
\end{equation*}
a contradiction. So $\inf F^\theta\ge-C^\prime$, as claimed.

We further claim that
$$
\lim_j F^\theta(\varphi_j)=\inf_{\varphi\in\cH_\omega}F^\theta(\varphi).
$$
This is clear from the above argument.
Indeed, for any $\varepsilon>0$, there exists $\psi\in\cH_\omega$ such that
$$
F^\theta(\psi)\le\inf_{\varphi\in\cH_\omega}F^\theta(\varphi)+\varepsilon,
$$
Then arguing as above, one obtains that
$$
\inf_{\varphi\in\cH_\omega}F^\theta(\varphi)\le\liminf_j F^\theta(\varphi_j)\le\limsup_j F^\theta(\varphi_j)\le\lim_j F^{f_\theta,\delta_j}_{m_j}(\psi_j)\le\inf_{\varphi\in\cH_\omega}F^\theta(\varphi)+\varepsilon.
$$
Sending $\varepsilon\rightarrow0$, we find that
$$
\lim_j F^\theta(\varphi_j)=\inf_{\varphi\in\cH_\omega}F^\theta(\varphi).
$$

Therefore $\{\varphi_j-\sup\varphi_j\}$ is a $d_1$-bounded minimizing sequence of $F^\theta$. Then one can extract a limit $\varphi_\infty\in\cE^1(X,\omega)$ such that
$$
F^\theta(\varphi_\infty)=\inf_{\varphi\in\cH_\omega}F^\theta(\varphi).
$$
By regularity theory \cite{ST11,BBEGZ19} one further has $\varphi_\infty\in\cH_\omega$ and hence $\omega_\infty:=\omega+\ddc\varphi_\infty$ is a twisted KE metric.
\end{proof}

\section{Weighted $\delta$-invariant and soliton type metrics}
\label{sec:soliton}

As we have seen, in the $\theta$-K\"ahler--Einstein setting, the above framework naturally leads us to the $\delta$-invariant.
In this section, we shall show that the same strategy extends, with some delicate
adjustments,
to soliton type metrics. This will lead us to a new, weighted, $\delta$-invariant.

\subsection{K\"ahler Ricci $g$-soliton}
We use the setup of Berman--Witt Nystr\"om \cite{BWN14}.
Let $(X,L)$ be an $n$-dimensional polarized variety admitting a Hamiltonian $T$-action, where $T\cong(S^1)^r$ is a real torus of dimension $r$ (with $r\le n$). We denote by $T_\CC\cong(\CC^*)^r$ the complexification of $T$. Here we allow $L$ to be ample $\QQ$-line bundle. Assume that $T_\CC$ acts effectively and holomorphically on $X$. Moreover assume that the action of $T_\CC$ also lifts to $L$.

We fix a $T$-invariant smooth Hermitian metric $h$ on $L$, whose curvature form will be denoted by $\omega$, which will be treated as a $T$-invariant background K\"ahler form in $ c_1(L)$. Note that $\omega$ induces a moment map
$$
m_\omega:X\rightarrow\RR^r,
$$
 whose image is a polytope 
 $$P:=m_\omega(X)\subset\RR^r,$$
that is an invariant of the class $[\omega]$.
 Let
$$
g:P\rightarrow\RR_{>0}
$$
be a smooth positive function on $P$. 

Consider
$$
\cH^T_\omega:=\bigg\{\varphi\in\cH_\omega\,:\,\varphi\text{ is $T$-invariant}\bigg\}.
$$
Then for each $\varphi\in\cH^T_\omega$, it induces a moment map
$
m_\varphi:=m_{\omega_\varphi}:X\rightarrow P,
$
so one can define the following weighted Monge-Amp\`ere measure:
$$
g\circ m_{\varphi}\,\omega_\varphi^n.
$$
Given any $T$-invariant smooth form $\theta\in(c_1(X)-c_1(L))$, let $f_\theta$ be the $T$-invariant $\theta$-twisted Ricci potential given by \eqref{eq:def-f-alpha}. Then one can consider the following Monge-Amp\`ere equation:
\begin{equation}
\label{eq:MA-equ-g-soliton}
	g\circ m_{\varphi}\,\omega_\varphi^n=e^{f_\theta-\varphi}\omega^n.
\end{equation}
A solution to this equation will give rise to an \emph{$\theta$-twisted K\"ahler-Ricci $g$-soliton}, which satisfies
$$
\Ric(\omega)_\varphi=\omega_\varphi+\theta+\ddc \log g\circ m_\varphi\,.
$$

To study this equation, a natural functional to consider is the following $g$-weighted $\theta$-Ding functional (cf. \cite{TZ02,BWN14,HL20})
\begin{equation*}
	F^{\theta,g}(\varphi):=-\log\int_Xe^{f_\theta-\varphi}\omega^n-E^g(\varphi),\ \varphi\in\cH^T_\omega,
\end{equation*}
where
\begin{equation}
\label{eq:def-E-g}
	E^g(\varphi):=E^g_{\omega}(\varphi):=\frac{1}{\int_Xg\circ m_\omega\,\omega^n}\int_0^1\int_X\varphi g\circ m_{s\varphi}\,\omega_{s\varphi}^nds
\end{equation}
is the $g$-weighted Monge-Amp\`ere energy (going back to Zhu \cite{Zhu00}). It is straightforward to show that  any critical point of $F^{\theta,g}$ 
satisfies \eqref{eq:MA-equ-g-soliton} (up to a normalization).

\begin{definition}
Set
$$
		\delta^{A,g}(L):=
\sup\bigg\{\delta>0\,:\,\sup_{\varphi\in\cH^T_\omega}\int_Xe^{-\delta(\varphi-E^g(\varphi))}\omega^n<\infty\bigg\}.
	$$
\end{definition}

\subsection{Quantization}
\label{sec:quantize-g-soliton}
Following \cite[Section 4]{BWN14}, one can quantize the above setup as follows. 
We assume \er{eq:m-divisible-mL-ample}  throughout.
By assumption, the vector space 
 $H^0(X,mL)$
 admits a $T_\CC$-action. Denote by 
 $P_m\subset\ZZ^r$
 the set of all weights of this $T_\CC$-action. It is well-known that 
 $\frac{1}{m}P_m\subseteq P$ (see e.g., \cite[Lemma 13]{Lah19}).
 Denote by $R_{m,\lambda}$ the $T_\CC$-invariant subspace consisting of all the sections of $H^0(X,mL)$ with weight $\lambda\in P_m$. So we have
\begin{equation*}
	H^0(X,mL)=\bigoplus_{\lambda\in P_m}R_{m,\lambda}.
\end{equation*}
Recall $d_m:=\dim H^0(X,mL)$ and set
\begin{equation*}
		d_{m,\lambda}:=\dim R_{m,\lambda}, \quad
		\overline{g_m}:=
\frac1{d_m}\sum_{\lambda\in P_m}g(\lambda/m)d_{m,\lambda}
\end{equation*}

Now we further assume that $mL$ is very ample. Then one can consider the $T$-invariant subspace of $\cB_m$, denoted $\cB_m^T$. Any element in $\cB^T_m$ takes the form
$$
\varphi=\frac{1}{m}\log\sum_{\lambda\in P_m}\sum_{\alpha=1}^{d_{m,\lambda}}|s^{(\lambda)}_\alpha|^2_{h^m},
$$
where $\{s^{(\lambda)}_\alpha\}_{1\le\alpha\le d_{m,\lambda}}$ is any basis of $R_{m,\lambda}$. 
Moreover, any Bergman geodesic in $\cB^T_m$ takes the form
$$
\varphi(t)=\frac{1}{m}\log\sum_{\lambda\in P_m}\sum_{\alpha=1}^{d_{m,\lambda}}e^{\gamma^{(\lambda)}_\alpha t}|s^{(\lambda)}_\alpha|^2_{h^m}
$$
for some $\gamma^{(\lambda)}_\alpha\in\RR$. Also note that $\cB^T_m$ inherits the $d_{1,m}$-distance from the ambient space $\cB_m$.

Consider
$$
H_m:=\int_Xh^m(\,\cdot\,,\,\cdot\,)\omega^n,
$$
which is a $T$-invariant Hermitian inner product on $H^0(X,mL)$. So in particular, we have
\begin{equation}
\label{eq:T-inv-orthogonality}
    H_m(s^{(\lambda_1)},s^{(\lambda_2)})=0,\ \text{for }s^{(\lambda_i)}\in E_{m,\lambda_i}\ \text{whenever }\lambda_1\neq\lambda_2.
\end{equation}
(Such orthogonality holds for any $T$-invariant $H\in\cP_m$.)
We put, as in \eqref{eq:def-E-m},
$$
E_m(\varphi):=E_m(H_m,\varphi),\ \varphi\in\cB_m^T.
$$
The only difference here is that everything is $T$-invariant.

Following \cite{BWN14}, we consider the $g$-weighted $\log\det$ functional:
\begin{equation}
\label{eq:def-E-g-m}
	E^g_m(\varphi):=\frac{1}{md_m\overline{g_m}}\sum_{\lambda\in P_m}g(\lambda/m)\log\det\bigg(\int_Xh^m(s^{(\lambda)}_\alpha,s^{(\lambda)}_\beta)\omega^n\bigg)
\end{equation}
for any $\varphi=\frac{1}{m}\log\sum_{\lambda\in P_m}\sum_{\alpha=1}^{d_{m,\lambda}}|s^{(\lambda)}_\alpha|^2_{h^m}\in\cB^T_m$. As the notation suggests, $E_m^g$ is the quntization of $E^g$ (cf. \cite{BWN14,Lah19}).

The following estimate follows easily from Lemma \ref{lem:sup=max-mu} and Lemma \ref{lem:mu-max=sup-E}.
\begin{lemma}
\label{lem:sup-E-g-m<>sup-E-m}
	There exists $\varepsilon_m\rightarrow0$ such that
	$$
	\frac{\inf_Pg}{\sup_Pg}(\sup\varphi-E_m(\varphi))-\varepsilon_m\le(\sup\varphi-E^g_m(\varphi))\le\frac{\sup_Pg}{\inf_Pg}(\sup\varphi-E_m(\varphi))+\varepsilon_m
	$$
	for any $\varphi\in\cB^T_m$.
\end{lemma}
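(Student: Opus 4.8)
The plan is to deduce both bounds from the already-established comparisons relating $\sup\varphi - E_m(\varphi)$ to the eigenvalue data of $\FS^{-1}(\varphi)$, together with an elementary comparison between the unweighted $\log\det$ functional $E_m$ and the $g$-weighted one $E_m^g$. The starting observation is that for $\varphi=\frac{1}{m}\log\sum_{\lambda\in P_m}\sum_{\alpha=1}^{d_{m,\lambda}}\mu^{(\lambda)}_\alpha|s^{(\lambda)}_\alpha|^2_{h^m}\in\cB^T_m$ (with $\{s^{(\lambda)}_\alpha\}$ an $H_m$-orthonormal basis compatible with the weight decomposition, which is legitimate by the $T$-invariant orthogonality \eqref{eq:T-inv-orthogonality}), one has by \eqref{eq:def-E-m}, \eqref{eq:def-E-g-m} and the diagonalization in \eqref{eq:E-m=prod-mu-i}:
$$
E_m(\varphi)=\frac{1}{md_m}\sum_{\lambda,\alpha}\log\mu^{(\lambda)}_\alpha,\qquad
E^g_m(\varphi)=\frac{1}{md_m\overline{g_m}}\sum_{\lambda}g(\lambda/m)\sum_{\alpha}\log\mu^{(\lambda)}_\alpha.
$$
(Here I use that a weight-respecting change of basis only shifts each term by a constant, so the formulas hold up to an additive constant bounded independently of $\varphi$, which is all we need.)

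Next I would record the two-sided comparison between $E_m$ and $E_m^g$. Writing $c:=\inf_P g>0$, $C:=\sup_P g<\infty$ and $\mu_{\max}:=\max_{\lambda,\alpha}\mu^{(\lambda)}_\alpha$, we normalize $\sup\varphi=0$; then by Lemma~\ref{lem:sup=max-mu}, $\frac{1}{m}\log\mu_{\max}$ and $\sup\varphi=0$ agree up to $\varepsilon_m\to0$, so $\log\mu^{(\lambda)}_\alpha\le m\varepsilon_m$ for every $\lambda,\alpha$, i.e.\ all terms $\log\mu^{(\lambda)}_\alpha$ are bounded above by $o(m)$. Hence both $-E_m(\varphi)$ and $-E_m^g(\varphi)$ are, up to $\varepsilon_m$, nonnegative, and the weighted sum is controlled by the unweighted one termwise:
$$
\frac{c}{C}\,\bigl(-E_m(\varphi)\bigr)-\varepsilon_m\ \le\ -E_m^g(\varphi)\ \le\ \frac{C}{c}\,\bigl(-E_m(\varphi)\bigr)+\varepsilon_m,
$$
where I also use that $\overline{g_m}\in[c,C]$ and $\overline{g_m}\to$ (a value in $[c,C]$), absorbing the resulting multiplicative error into the $\varepsilon_m$'s. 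Combining this with the identification $\sup\varphi-E_m(\varphi)=-E_m(\varphi)$ (in the chosen normalization) and with the same identity for $E_m^g$, and then noting that both quantities $\sup\varphi-E_m(\varphi)$ and $\sup\varphi-E_m^g(\varphi)$ are translation-invariant in $\varphi$ so the normalization can be dropped, yields exactly the claimed inequality
$$
\frac{\inf_P g}{\sup_P g}\bigl(\sup\varphi-E_m(\varphi)\bigr)-\varepsilon_m\ \le\ \sup\varphi-E^g_m(\varphi)\ \le\ \frac{\sup_P g}{\inf_P g}\bigl(\sup\varphi-E_m(\varphi)\bigr)+\varepsilon_m.
$$

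The genuine content is entirely in the first reduction: converting $\sup\varphi$ into $\frac{1}{m}\log\mu_{\max}$ uniformly in $\varphi\in\cB^T_m$, which is precisely Lemma~\ref{lem:sup=max-mu} (proved via the Moser iteration bound $\sup|s|^2_{h^m}\le Cm^n$ on $H_m$-normalized sections), and converting $\sup\varphi-E_m(\varphi)$ into the eigenvalue expression $\frac{1}{md_m}\log\prod_i(\mu_{\max}/\mu_i)$, which is Lemma~\ref{lem:mu-max=sup-E}. Once those two are invoked, the remaining step is the purely algebraic observation that a weighted average of the numbers $\log(\mu_{\max}/\mu^{(\lambda)}_\alpha)\ge 0$ with weights in $[c,C]$ differs from the unweighted average by a factor in $[c/C,\,C/c]$; there is no real obstacle, only bookkeeping of the $\varepsilon_m$ terms and the mild fact that $\overline{g_m}$ stays in a fixed compact subinterval of $(0,\infty)$. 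I expect the only point requiring care is making sure the additive constants coming from non-diagonal weight-respecting basis changes (and from $\overline{g_m}$ not being exactly constant) are genuinely $\varphi$-independent and can be folded into a single sequence $\varepsilon_m\to0$.
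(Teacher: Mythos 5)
Your proposal is correct and matches the paper's (one-line) proof, which simply invokes Lemmas~\ref{lem:sup=max-mu} and~\ref{lem:mu-max=sup-E}: you correctly reduce to the weighted-vs-unweighted average of the nonnegative numbers $\log(\mu_{\max}/\mu^{(\lambda)}_\alpha)$ with weights $g(\lambda/m)/\overline{g_m}\in[\inf_Pg/\sup_Pg,\,\sup_Pg/\inf_Pg]$, absorbing the Bergman-kernel error from Lemma~\ref{lem:sup=max-mu} into $\varepsilon_m$. The only wobble is in the middle paragraph, where ``the weighted sum is controlled by the unweighted one termwise'' needs the terms to be genuinely nonnegative (achieved by adding $m\varepsilon_m$ or, as your final paragraph correctly does, by working with $\log(\mu_{\max}/\mu^{(\lambda)}_\alpha)\ge 0$ rather than $-\log\mu^{(\lambda)}_\alpha$).
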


Now mimicking the definition of $\delta^A_m(L)$, we introduce the following
\begin{definition}
Set
	$$
	\delta^{A,g}_m(L)=\sup\bigg\{\delta>0\,:\,\sup_{\varphi\in\cB_m^T}\int_Xe^{-\delta(\varphi-E^g_m(\varphi))}\omega^n<\infty\bigg\}.
	$$
\end{definition}
In the case when $mL$ is not very ample, one can still make sense of the above definition by considering instead the following integral
\begin{equation}
\label{eq:I=int-mu-lambda-alpha}
	I:=\int_X\frac{\prod_{\lambda\in P_m}\prod_{\alpha=1}^{d_{m,\lambda}}\bigg(\mu^{(\lambda)}_\alpha\bigg)^{\frac{\delta g(\lambda/m)}{md_m\overline{g_m}}}}{\bigg(\sum_{\lambda\in P_m}\sum_{\alpha=1}^{d_{m,\lambda}}\mu_\alpha^{(\lambda)}|s^{(\lambda)}_\alpha|^2_{h^m} \bigg)^{\frac{\delta}{m}}}\omega^n,
\end{equation}
where $\{s^{(\lambda)}_\alpha\}$ is any $H_m$-orthonormal basis and $\mu^{(\lambda)}_\alpha>0$ are some parameters.
Then $\delta^{A,g}_m(L)$ is the supremum of all the $\delta>0$ such that $I$ is uniformly bounded from above by some $C_\delta>0$ for any choice of 
$\{\mu^{(\lambda)}_\alpha, s^{(\lambda)}_\alpha\}$.
Notice that  applying Young's inequality (compare Corollary \ref{cor:delta-m<=delta-m-A}), we deduce that
\begin{equation}
\label{eq:Young}
I\le\prod_{\lambda\in P_m}\bigg(\frac{g(\lambda/m)}{d_m\overline{g_m}}\bigg)^{\frac{\delta g(\lambda/m)d_{m,\lambda}}{md_m\overline{g_m}}}\int_X\frac{\omega^n}{\prod_{\lambda\in P_m}\prod_{\alpha=1}^{d_{m,\lambda}}\big|s^{(\lambda)}_\alpha\big|_{h^m}^{\frac{2\delta g(\lambda/m)}{md_m\overline{g_m}}}}
\end{equation}

\subsection{Algebraic $g$-weighted $\delta_m$-invariant}
\label{sec:def-delta-g-m}
Inspired by the above inequality, we can now introduce the algebraic $g$-weighted $\delta_m$-invariant as follows.
Let $m\ge1$ be any integer such that $H^0(X,mL)\neq\{0\}$.
For each weight space $R_{m,\lambda}$, we pick a basis, say $\{s_\alpha^{(\lambda)}\}_{1\le\alpha\le d_{m,\lambda}}$. Put
\begin{equation}
\label{eq:def-m-g-basis-divisor}
	D:=\frac{1}{md_m\overline{g_m}}\sum_{\lambda\in P_m}\sum_{\alpha=1}^{d_{m,\lambda}}g(\lambda/m)\{s^{(\lambda)}_\alpha=0\}.
\end{equation}
Then $D$ is an effective $\RR$-divisor that is $\RR$-linear equivalent to $L$. Such a divisor will be called an $(m,g)$-basis divisor. 
\begin{definition}
\label{def:delta-g-m}
Set
\begin{equation*}
	\delta^g_m(L):=\inf\bigg\{\lct(X,D)\,:\,D\text{ is an }(m,g)\text{-basis divisor} \bigg\},
\end{equation*}
and
\begin{equation*}
	\delta^g(L):=\limsup_{m\rightarrow\infty}\delta^g_m(L).
\end{equation*}
\end{definition}
Observe that  any $(m,g)$-basis divisor is $T_\CC$-invariant. Then its lct can be computed by some $T_\CC$-invariant divisor over $X$ (by applying a $T_\CC$-equivariant log resolution).
 So in what follows we will only consider $T_\CC$-invariant divisors over $X$.

Let $F$ be a $T_\CC$-invariant prime divisor over $X$. For any subspace $V\subseteq H^0(X,mL)$, put
\begin{equation*}
			\cF^a_{\ord_F}V:=\{s\in V\,:\,\ord_F(s)\ge a\}.\\
\end{equation*}
Then we define the \emph{$g$-weighted \mthit\ expected vanishing order} $S^g_m(F)$ to be
\begin{equation}
\label{eq:def-S-m-g}
	S^g_m(F):=S^g_m(L;F):=\frac{1}{md_m\overline{g_m}}\sum_{\lambda\in P_m}\sum_{a\ge1}g(\lambda/m)\dim\cF^a_{\ord_F}R_{m,\lambda}.
\end{equation}

\begin{lemma}
\label{lem:delta-g-m=inf-A/S}
One has
	$$
	\delta^g_m(L)=\inf_F\frac{A_X(F)}{S^g_m(F)},
	$$
	where $F$ runs through all the $T_\CC$-invariant prime divisors over $X$.
\end{lemma}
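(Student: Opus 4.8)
The plan is to mirror the proof of the unweighted valuative formula \eqref{deltam2eq}, working one $T_\CC$-weight space at a time. First I would unwind the definitions. By Definition \ref{def:delta-g-m} together with \eqref{lctdef},
$$
\delta^g_m(L)=\inf_D\lct(X,D)=\inf_D\inf_F\frac{A_X(F)}{\ord_F(D)},
$$
where $D$ ranges over $(m,g)$-basis divisors and $F$ over all prime divisors over $X$. As observed just before the statement, every $(m,g)$-basis divisor is $T_\CC$-invariant (it is a sum of zero loci of weight vectors, and the zero locus of a weight vector is $T_\CC$-invariant), so a $T_\CC$-equivariant log resolution shows the inner infimum is already attained along $T_\CC$-invariant prime divisors over $X$. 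Exchanging the infima,
$$
\delta^g_m(L)=\inf_F\inf_D\frac{A_X(F)}{\ord_F(D)}=\inf_F\frac{A_X(F)}{\sup_D\ord_F(D)},
$$
where now $F$ ranges over $T_\CC$-invariant prime divisors over $X$ and the supremum is over $(m,g)$-basis divisors. Thus the lemma is reduced to the identity $\sup_D\ord_F(D)=S^g_m(F)$ for each such $F$.

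Next I would establish this identity. Fix a $T_\CC$-invariant prime divisor $F$ over $X$ and write $v:=\ord_F$. Since $v$ is $T_\CC$-invariant, its associated graded ring is $T_\CC$-graded, hence $v$ is diagonalized by the weight decomposition $H^0(X,mL)=\bigoplus_{\lambda}R_{m,\lambda}$: for $s=\sum_\lambda s_\lambda$ one has $v(s)=\min_\lambda v(s_\lambda)$, because the leading terms of sections of distinct weights lie in distinct graded pieces and so cannot cancel. Consequently $\cF^a_{\ord_F}H^0(X,mL)=\bigoplus_\lambda\cF^a_{\ord_F}R_{m,\lambda}$ for every $a$. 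Now pick, for each $\lambda\in P_m$, a basis $\{s^{(\lambda)}_\alpha\}_\alpha$ of $R_{m,\lambda}$ compatible with the filtration $\{\cF^a_{\ord_F}R_{m,\lambda}\}_a$; the telescoping computation in the proof of Lemma \ref{lem:E-m=S-m} then gives
$$
\sum_\alpha\ord_F(s^{(\lambda)}_\alpha)=\sum_{a\ge1}\dim\cF^a_{\ord_F}R_{m,\lambda}.
$$
Substituting this into \eqref{eq:def-m-g-basis-divisor} and comparing with \eqref{eq:def-S-m-g} yields $\ord_F(D)=S^g_m(F)$ for the $(m,g)$-basis divisor $D$ attached to this choice of bases. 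Conversely, for an arbitrary basis of $R_{m,\lambda}$ the standard inequality $\sum_\alpha\ord_F(s^{(\lambda)}_\alpha)\le\sum_{a\ge1}\dim\cF^a_{\ord_F}R_{m,\lambda}$ holds (the filtration-compatible basis maximizes the left side), so $\ord_F(D)\le S^g_m(F)$ for every $(m,g)$-basis divisor $D$. Hence $\sup_D\ord_F(D)=S^g_m(F)$, and combining with the reduction above finishes the proof.

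I do not anticipate a genuine obstacle here; the statement is the weighted counterpart of a result of Fujita--Odaka and Blum--Jonsson, and the argument is essentially bookkeeping done weight-space by weight-space. The only two points that deserve to be spelled out carefully are (i) the reduction, via a $T_\CC$-equivariant log resolution, to $T_\CC$-invariant divisors over $X$ when computing the lct of the (automatically $T_\CC$-invariant) $(m,g)$-basis divisors, and (ii) the claim that a $T_\CC$-invariant divisorial valuation respects the weight decomposition of $H^0(X,mL)$, i.e. $v(\sum_\lambda s_\lambda)=\min_\lambda v(s_\lambda)$, which is what lets us choose a filtration-compatible basis weight space by weight space. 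Everything else is the same telescoping argument as in the unweighted case.
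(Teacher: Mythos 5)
Your proof is correct and matches the paper's (very terse) proof, which simply states that the result follows by applying the argument of Fujita--Odaka's Lemma 2.2 to each weight space $R_{m,\lambda}$. You have filled in exactly what that one-liner means: reduce to $T_\CC$-invariant $F$ via the equivariant-log-resolution remark preceding the lemma, exchange infima, and then apply the filtration/telescoping argument weight space by weight space. (One minor note: the observation that $v(\sum_\lambda s_\lambda)=\min_\lambda v(s_\lambda)$ for a $T_\CC$-invariant $v$ is a nice structural fact but is not strictly needed for the bookkeeping, since the definition of an $(m,g)$-basis divisor already forces you to choose bases inside each $R_{m,\lambda}$ separately, and FO18 Lemma 2.2 applies directly to each $R_{m,\lambda}$ with the induced filtration.)
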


\begin{proof}
The result follows by applying the argument of \cite[Lemma 2.2]{FO18} to each weight space $R_{m,\lambda}$.        
\end{proof}

\begin{lemma}
\label{lem:finite-lct-delta-g-m}
$\delta^g_m(L)$ is computed by some $T_\CC$-invariant $F$ over $X$.
\end{lemma}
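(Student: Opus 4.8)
The plan is to turn the defining infimum of $\delta^g_m(L)$ — over $(m,g)$-basis divisors — into a minimum by using that log canonical thresholds vary constructibly in algebraic families, and then to promote the minimizing basis divisor to a $T_\CC$-invariant divisor over $X$ via equivariant resolution and Lemma \ref{lem:delta-g-m=inf-A/S}.

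\emph{Setting up the family.} For each weight $\lambda\in P_m$ with $R_{m,\lambda}\neq 0$, a section up to scaling is a point of $\PP(R_{m,\lambda})$, and a basis of $R_{m,\lambda}$ is a point of the open subvariety $\mathcal{U}_\lambda\subseteq\PP(R_{m,\lambda})^{d_{m,\lambda}}$ defined by linear independence. Thus $\mathcal{U}:=\prod_{\lambda}\mathcal{U}_\lambda$ is a quasi-projective variety parametrizing $(m,g)$-basis divisors through $b\mapsto D_b:=\frac{1}{md_m\overline{g_m}}\sum_{\lambda}\sum_{\alpha}g(\lambda/m)(s^{(\lambda)}_\alpha)$, and this assignment is cut out by a universal $\RR$-divisor $\mathcal{D}\subseteq\mathcal{U}\times X$ with fibre $D_b$ over $b$ (each component $(s^{(\lambda)}_\alpha)$ varies algebraically with $s^{(\lambda)}_\alpha\in\PP(R_{m,\lambda})$). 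Since log canonical thresholds are constructible in families of pairs — stratify $\mathcal{U}$ by Noetherian induction so that the family admits a simultaneous log resolution over each stratum, on which the threshold is then constant; cf.\ the semicontinuity of complex singularity exponents \cite{DK01} — the function $b\mapsto\lct(X,D_b)$ takes only finitely many values. Hence
$$
\delta^g_m(L)=\inf\big\{\lct(X,D):D\text{ an }(m,g)\text{-basis divisor}\big\}=\inf_{b\in\mathcal{U}}\lct(X,D_b)
$$
is a minimum over a finite value set; fix $b_*\in\mathcal{U}$ attaining it and set $D_*:=D_{b_*}$, an honest $(m,g)$-basis divisor with $\lct(X,D_*)=\delta^g_m(L)$.

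\emph{Producing the divisor $F$.} Each component $(s^{(\lambda)}_\alpha)$ of $D_*$ is the zero locus of a $T_\CC$-eigensection, hence $T_\CC$-invariant, so $D_*$ is $T_\CC$-invariant. Applying a $T_\CC$-equivariant log resolution of $(X,D_*)$ and using that the threshold of a simple normal crossing pair is computed by one of its finitely many prime components, I get a $T_\CC$-invariant prime divisor $F$ over $X$ with $\lct(X,D_*)=A_X(F)/\ord_F(D_*)$. Since $D_*$ arises from an actual basis of each $R_{m,\lambda}$ and $\sum_\alpha\ord_F(s^{(\lambda)}_\alpha)$ is maximized over bases by one compatible with the $\ord_F$-filtration of $R_{m,\lambda}$ (the weighted analogue of \cite[Lemma 2.2]{FO18}, exactly as in the proof of Lemma \ref{lem:delta-g-m=inf-A/S}), we get $\ord_F(D_*)\le S^g_m(F)$. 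Combining this with Lemma \ref{lem:delta-g-m=inf-A/S},
$$
\delta^g_m(L)=\lct(X,D_*)=\frac{A_X(F)}{\ord_F(D_*)}\ge\frac{A_X(F)}{S^g_m(F)}\ge\inf_{F'}\frac{A_X(F')}{S^g_m(F')}=\delta^g_m(L),
$$
so all terms coincide and $F$ computes $\delta^g_m(L)$.

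\emph{Main obstacle.} The delicate point is the first step, and the temptation to avoid is compactifying $\mathcal{U}$ inside $\prod_{\lambda}\PP(R_{m,\lambda})^{d_{m,\lambda}}$ and minimizing there: $b\mapsto\ord_F(D_b)$ is only upper semicontinuous, the degenerate boundary divisors (with repeated components) genuinely have smaller log canonical threshold, and the minimum over the compactification can be strictly below $\delta^g_m(L)$. The constructibility/finiteness of the value set of $b\mapsto\lct(X,D_b)$ on the quasi-projective $\mathcal{U}$ \emph{itself} is exactly what sidesteps this, making any compactness argument unnecessary. The remaining ingredients — $T_\CC$-equivariant resolution, the compatible-basis computation of $S^g_m(F)$, and Lemma \ref{lem:delta-g-m=inf-A/S} — are routine and parallel the unweighted case of Fujita--Odaka and Blum--Jonsson.
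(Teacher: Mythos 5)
Your proof is correct and follows essentially the same strategy as the paper's: the $(m,g)$-basis divisors form an algebraic family over which the log canonical threshold takes only finitely many values, so the infimum is attained by some $D_*$, and a $T_\CC$-equivariant resolution together with the compatible-basis computation of $S^g_m$ then produces the $T_\CC$-invariant $F$. The only difference is that you re-derive the finiteness of the lct value set by a stratification/simultaneous-log-resolution argument, whereas the paper simply cites Ambro's constructibility result \cite{Am16} (valid for $\RR$-divisors built from finitely many linear systems with possibly irrational coefficients) for the same conclusion, and leaves the final equivariant-resolution step implicit.
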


\begin{proof}
	For each $m$, the coefficients of all the $(m,g)$-basis divisors are contained in a finite set. In other words, these divisors lie in a finite combination of linear systems. Even though the coefficients in front of each linear system is possibly irrational, one can still apply \cite{Am16} to conclude that  the lct of all the $(m,g)$-basis divisors can only take finitely many values. So one can find some $F$ computing $\delta_m^g(L)$.
\end{proof}

We have the following $g$-weighted version of Theorem \ref{thm:delta-A-m=delta-m}.
\begin{theorem}
\label{thm:delta-g-m=delta-g-A-m}
	One has
	$
	\delta^{A,g}_m(L)=\delta^g_m(L).
	$
\end{theorem}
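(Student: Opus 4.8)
The plan is to mirror the proof of Theorem~\ref{thm:delta-A-m=delta-m}, carrying out both inequalities in the $g$-weighted setting. As in the unweighted case, the proof splits into an ``easy'' upper bound $\delta_m^g(L)\le\delta_m^{A,g}(L)$ and a ``hard'' lower bound $\delta_m^g(L)\ge\delta_m^{A,g}(L)$.

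For the upper bound, I would first establish the weighted analogue of Proposition~\ref{prop:deltam=int<C}: namely that $\delta_m^g(L)$ can be computed using only $(m,g)$-basis divisors arising from $H_m$-orthonormal bases of the weight spaces $R_{m,\lambda}$. This again uses the lower semicontinuity of complex singularity exponents of Demailly--Koll\'ar \cite{DK01}, applied weight-space by weight-space, together with compactness of the space of orthonormal bases of each $R_{m,\lambda}$ (the $T$-invariant orthogonality \eqref{eq:T-inv-orthogonality} lets one treat the weight spaces independently). Combining this with the analytic interpretation of lct \cite{Kol97}, one gets that $\delta_m^g(L)$ equals the supremum of $\delta>0$ for which $\int_X\omega^n/\prod_{\lambda,\alpha}|s_\alpha^{(\lambda)}|_{h^m}^{2\delta g(\lambda/m)/(md_m\overline{g_m})}$ is uniformly bounded over all $H_m$-orthonormal bases. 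Then Young's inequality \eqref{eq:Young} (already recorded in the excerpt) immediately yields $I\le C_\delta$ whenever that integral is bounded, giving $\delta_m^g(L)\le\delta_m^{A,g}(L)$.

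For the lower bound, I would fix a $T_\CC$-invariant prime divisor $F$ over $X$ and, as in Definition~\ref{FsiBergGeod}, build a Bergman geodesic ray inside $\cB_m^T$ by choosing, for each weight $\lambda$, an $H_m$-orthonormal basis $\{s_\alpha^{(\lambda)}\}$ of $R_{m,\lambda}$ compatible with the filtration $\cF^\bullet_{\ord_F}R_{m,\lambda}$, and setting $\mu_\alpha^{(\lambda)}(t)=e^{t\,\ord_F(s_\alpha^{(\lambda)})}$. The key computation is the weighted analogue of Lemma~\ref{lem:E-m=S-m}: evaluating $E_m^g$ on this ray gives $E_m^g(\varphi_F(t))=t\,S_m^g(F)$, which follows by applying the summation-by-parts identity from the proof of Lemma~\ref{lem:E-m=S-m} to each weight space and weighting by $g(\lambda/m)$. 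Then, exactly as in \eqref{deltaEVAX}, the integral $I(t)$ factors as $e^{t(\delta S_m^g(F)-A_X(F))}$ times an integral of the shape controlled by Lemma~\ref{lem:I-lambda>eta}. The local estimate of Lemma~\ref{lem:I-lambda>eta} applies verbatim: it is a computation on a coordinate polydisk around a smooth point of $F$ using $K_Y=\pi^\star K_X+(A_X(F)-1)F+D$, and the fact that we now have a larger collection of sections $\{s_\alpha^{(\lambda)}\}$ only changes the harmless finite positive constant. Hence whenever $\delta>A_X(F)/S_m^g(F)$ the integral blows up along the ray, so $\delta_m^{A,g}(L)\le A_X(F)/S_m^g(F)$; taking the infimum over $F$ and invoking Lemma~\ref{lem:delta-g-m=inf-A/S} gives $\delta_m^{A,g}(L)\le\delta_m^g(L)$.

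The main obstacle, such as it is, is bookkeeping rather than a genuinely new idea: one must check that the reduction to orthonormal bases and the slope identity $E_m^g(\varphi_F(t))=tS_m^g(F)$ interact correctly with the $T_\CC$-invariance, i.e.\ that restricting to $\cB_m^T$ and to $T_\CC$-invariant $F$ loses nothing (this is where Lemma~\ref{lem:finite-lct-delta-g-m} and the $T_\CC$-equivariant log resolution are used) and that $g(\lambda/m)$ and the normalization $\overline{g_m}$ thread through both the $\log\det$ formula \eqref{eq:def-E-g-m} and the integral \eqref{eq:I=int-mu-lambda-alpha} consistently. Once these are in place the two inequalities combine to give $\delta_m^{A,g}(L)=\delta_m^g(L)$.
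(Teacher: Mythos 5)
Your proposal matches the paper's own proof: the upper bound via reduction to $H_m$-orthonormal bases (Demailly--Koll\'ar lower semicontinuity plus Young's inequality \eqref{eq:Young}), the lower bound via the Bergman geodesic ray along a $T_\CC$-invariant divisor $F$, the weighted slope identity $E_m^g(\varphi_F(t))=tS_m^g(F)$, and the local estimate of Lemma~\ref{lem:I-lambda>eta}, followed by Lemma~\ref{lem:delta-g-m=inf-A/S}. This is precisely the outline given in the paper, with the bookkeeping points you flag being exactly the ones the paper leaves implicit.
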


\begin{proof}
We only give an outline.
Firstly, following the proof of Proposition \ref{prop:deltam=int<C}, one sees that  to compute $\delta^g_m$-invariant, it is enough to consider $H_m$-orthonormal basis $\{s^{(\lambda)}_\alpha\}$.
	Then the direction $\delta^{A,g}_m(L)\ge\delta^g_m(L)$ follows from \eqref{eq:Young}. To show the reverse direction, we consider any $T_\CC$-invariant prime divisor $F$ over $X$. Then we can pick an $H_m$-orthonormal basis, say $\{s^{(\lambda)}_\alpha\}$, which is compatible with the filtration $\cF^\bullet_{\ord_F}$ on each $E_{\lambda,m}$. Then for any parameter $t\ge0$, put
	$
	\mu^{(\lambda)}_\alpha:=e^{t\,\ord_F(s^{(\lambda)}_\alpha)}.
	$
	Plugging these into the integral \eqref{eq:I=int-mu-lambda-alpha}, we find that
	$$
	I=e^{t\big(\delta S^g_m(F)-A_X(F)\big)}\int_X\frac{e^{tA_X(F)}\omega^n}{\bigg(\sum_{\lambda\in P_m}\sum_{\alpha=1}^{d_{m,\lambda}}e^{t\,\ord_F(s^{(\lambda)}_\alpha)}|s^{(\lambda)}_\alpha|^2_{h^m}\bigg)^{\frac{\delta}{m}}}.
	$$
	Then $\delta^{A,g}_m(L)\le\delta^g_m(L)$ follows from Lemma \ref{lem:I-lambda>eta} and \ref{lem:delta-g-m=inf-A/S}.
	\end{proof}
\subsection{$g$-weighted $(f,\delta)$-balanced metrics}
Let $m$ be sufficiently divisible and assume that $mL$ is very ample. For any $T$-invariant $f\in C^\infty(X,\RR)$ and $\delta>0$, we put
\begin{equation*}
	F^{f,\delta,g}_m(\varphi):=-\frac{1}{\delta}\log\frac{1}{V}\int_Xe^{f-\delta\varphi}\omega^n-E_m^g(\varphi),\ \varphi\in\cB^T_m.
\end{equation*}

\begin{definition}
	Any critical point $\varphi$ of $F_m^{f,\delta,g}$ is called $g$-weighted $(f,\delta)$-balanced, and $\omega_\varphi:=\omega+\ddc\varphi$ is called an $g$-weighted $(f,\delta)$-balanced metric.
\end{definition}

A
computation shows that if
$
\varphi=\frac{1}{m}\log\sum_{\lambda\in P_m}\sum_{\alpha=1}^{d_{m,\lambda}}|s^{(\lambda)}_\alpha|^2_{h^m}\in\cB^T_m
$
is a critical point of $F^{f,\delta,g}_m$, then
\begin{equation}
\label{eq:g-balanced-equation}
	\frac{d_m\overline{g_m}}{g(\lambda/m)\int_Xe^{f-\delta\varphi}\omega^n}\int_X(he^{-\varphi})^m(s^{(\lambda)}_\alpha,s^{(\lambda)}_\beta)e^{f-\delta\varphi}\omega^n=\delta_{\alpha\beta}
\end{equation}
for any $\lambda\in P_m$, $1\le\alpha,\beta\le d_{m,\lambda}$.

\begin{definition}
	We say $F_m^{f,\delta,g}$ is coercive on $\cB^T_m$ if there exist $\gamma>0$ and $C>0$ such that
	$$
	F^{f,\delta,g}_m(\varphi)\ge\gamma(\sup\varphi-E^g_m(\varphi))-C,\ \forall\varphi\in\cB^T_m.
	$$
\end{definition}

\begin{proposition}
\label{prop:F-f-delta-g-coerc=>g-balanced}
	If $F_m^{f,\delta,g}$ is coercive on $\cB^T_m$, then there exists a $g$-weighted $(f,\delta)$-balanced $\varphi\in\cB^T_m$, minimizing $F_m^{f,\delta,g}$.
\end{proposition}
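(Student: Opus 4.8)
The plan is to mimic the proof of Proposition \ref{prop:F-m-coercive=>balanced}, carried out inside the $T$-invariant Bergman space $\cB^T_m$, and to transfer coercivity between the weighted energy $E^g_m$ and the unweighted energy $E_m$ using Lemma \ref{lem:sup-E-g-m<>sup-E-m}.

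First I would produce a minimizing sequence with controlled geometry. By coercivity there are $\gamma,C>0$ with $F^{f,\delta,g}_m(\varphi)\ge\gamma(\sup\varphi-E^g_m(\varphi))-C$ for all $\varphi\in\cB^T_m$, so any sequence $\{\varphi_j\}\subset\cB^T_m$ with $F^{f,\delta,g}_m(\varphi_j)\to\inf_{\cB^T_m}F^{f,\delta,g}_m$ satisfies $\sup_j\big(\sup\varphi_j-E^g_m(\varphi_j)\big)<\infty$, whence $\sup_j\big(\sup\varphi_j-E_m(\varphi_j)\big)<\infty$ by Lemma \ref{lem:sup-E-g-m<>sup-E-m}. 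Both $F^{f,\delta,g}_m$ and the quantity $\sup\varphi-E_m(\varphi)$ are invariant under adding a constant to $\varphi$, so I may normalize $\sup\varphi_j=0$. Writing $\varphi_j=\frac{1}{m}\log\sum_{\lambda\in P_m}\sum_{\alpha=1}^{d_{m,\lambda}}\mu^{(\lambda)}_\alpha(j)\,|s^{(\lambda)}_\alpha|^2_{h^m}$ for an $H_m$-orthonormal basis adapted to the weight decomposition $H^0(X,mL)=\bigoplus_\lambda R_{m,\lambda}$ (available by the $T$-invariant orthogonality \eqref{eq:T-inv-orthogonality}), with $\mu^{(\lambda)}_\alpha(j)>0$, Lemma \ref{lem:mu-max=sup-E} bounds the ratios $\mu^{(\lambda)}_\alpha(j)/\max_{\lambda,\alpha}\mu^{(\lambda)}_\alpha(j)$ in a fixed compact subinterval of $(0,\infty)$, and Lemma \ref{lem:sup=max-mu} applied to the normalization $\sup\varphi_j=0$ bounds $\max_{\lambda,\alpha}\mu^{(\lambda)}_\alpha(j)$ uniformly above and below. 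Hence all the $\mu^{(\lambda)}_\alpha(j)$ lie in a fixed compact subset of $(0,\infty)$.

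Next, since the set of $H_m$-orthonormal bases respecting the weight splitting is compact (a product over $\lambda$ of unitary groups acting on the blocks $R_{m,\lambda}$), the $T$-invariant version of Lemma \ref{lem:d-1-cptness} together with \eqref{d1mmu} lets me extract a subsequence converging in $C^\infty$ to some $\varphi_\infty\in\cB^T_m$. As $F^{f,\delta,g}_m$ is continuous in the $C^\infty$ topology on $\cB^T_m$, the limit $\varphi_\infty$ attains $\inf_{\cB^T_m}F^{f,\delta,g}_m$, hence is a critical point of the smooth functional $F^{f,\delta,g}_m$ on $\cB^T_m$, i.e.\ $g$-weighted $(f,\delta)$-balanced; equivalently it satisfies the balanced equation \eqref{eq:g-balanced-equation}. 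This concludes the proof.

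I do not expect a genuine obstacle here: the content is a routine adaptation of Proposition \ref{prop:F-m-coercive=>balanced}. The only point needing care is the bookkeeping of the weight-block structure — one diagonalizes within each $R_{m,\lambda}$ separately and applies Lemmas \ref{lem:mu-max=sup-E}--\ref{lem:sup=max-mu} and the comparison Lemma \ref{lem:sup-E-g-m<>sup-E-m} to the full collection $\{\mu^{(\lambda)}_\alpha\}$ ranging over all blocks; once this is set up, the unweighted argument transfers verbatim.
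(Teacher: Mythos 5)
Your proof is correct and follows essentially the same route the paper takes: the paper's proof is the one-line remark ``It follows from Lemma \ref{lem:sup-E-g-m<>sup-E-m} and the proof of Proposition \ref{prop:F-m-coercive=>balanced},'' and your write-up simply unpacks that reduction — use Lemma \ref{lem:sup-E-g-m<>sup-E-m} to convert a bound on $\sup\varphi-E^g_m(\varphi)$ into one on $\sup\varphi-E_m(\varphi)$, then run the Lemma \ref{lem:mu-max=sup-E}--\ref{lem:sup=max-mu}--\ref{lem:d-1-cptness} compactness argument of Proposition \ref{prop:F-m-coercive=>balanced} inside $\cB^T_m$, noting the limit stays $T$-invariant.
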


\begin{proof}
	It follows from Lemma \ref{lem:sup-E-g-m<>sup-E-m} and the proof of Proposition \ref{prop:F-m-coercive=>balanced}.
\end{proof}

The next result is a $g$-weighted version of Proposition \ref{prop:F-coerc=delta-m}.
\begin{proposition}
\label{prop:F-delta-g-m-coerc=delta-g-m}
	$F_m^{f,\delta,g}$ is coercive on $\cB^T_m$ if and only if $0<\delta<\delta^g_m(L)$.
\end{proposition}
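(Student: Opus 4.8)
The plan is to transcribe the proof of Proposition~\ref{prop:F-coerc=delta-m} into the weighted, $T$-invariant setting, with each ingredient replaced by its counterpart: $E_m$ by $E^g_m$, the expected vanishing order $S_m(F)$ by the weighted version $S^g_m(F)$ of \eqref{eq:def-S-m-g}, the valuative formula $\delta_m(L)=\inf_F A_X(F)/S_m(F)$ by $\delta^g_m(L)=\inf_F A_X(F)/S^g_m(F)$ of Lemma~\ref{lem:delta-g-m=inf-A/S} (infimum over $T_\CC$-invariant $F$ over $X$), and the coincidence $\delta^A_m=\delta_m$ by $\delta^{A,g}_m(L)=\delta^g_m(L)$ (Theorem~\ref{thm:delta-g-m=delta-g-A-m}). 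Exactly as in Remark~\ref{anyfRem}, since $f$ is bounded on the compact $X$ we may assume $f=0$ throughout.

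For the ``$\Leftarrow$'' direction I would repeat the H\"older interpolation: given $\delta\in(0,\delta^g_m(L))$, pick $\gamma\in(\delta,\delta^g_m(L))$ and $\alpha\in(0,\min\{\delta,\alpha_m(L)\})$, where $\alpha_m(L)>0$ is the invariant from the proof of Proposition~\ref{prop:F-coerc=delta-m} applied on $\cB^T_m\subseteq\cB_m$; writing $e^{-\delta\varphi}=e^{-\alpha\frac{\gamma-\delta}{\gamma-\alpha}\varphi}\cdot e^{-\gamma\frac{\delta-\alpha}{\gamma-\alpha}\varphi}$ and applying H\"older, the $\alpha$-factor is controlled by $\sup_{\cB^T_m}\int_X e^{-\alpha(\varphi-\sup\varphi)}\omega^n<\infty$ and the $\gamma$-factor by $\sup_{\cB^T_m}\int_X e^{-\gamma(\varphi-E^g_m(\varphi))}\omega^n<\infty$ (since $\gamma<\delta^g_m(L)=\delta^{A,g}_m(L)$). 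The same algebra as in Proposition~\ref{prop:F-coerc=delta-m} then yields $F^{0,\delta,g}_m(\varphi)\ge\frac{\alpha(\gamma-\delta)}{\delta(\gamma-\alpha)}\big(\sup\varphi-E^g_m(\varphi)\big)-C$ on $\cB^T_m$, which is coercivity.

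For ``$\Rightarrow$'', suppose $-\frac1\delta\log\frac1V\int_X e^{-\delta\varphi}\omega^n-E^g_m(\varphi)\ge\gamma_0\big(\sup\varphi-E^g_m(\varphi)\big)-C$ on $\cB^T_m$ for some $\gamma_0,C>0$. Writing $\varphi=\frac1m\log\sum_{\lambda\in P_m}\sum_{\alpha=1}^{d_{m,\lambda}}\mu^{(\lambda)}_\alpha|s^{(\lambda)}_\alpha|^2_{h^m}$ with $\{s^{(\lambda)}_\alpha\}$ an $H_m$-orthonormal basis with $s^{(\lambda)}_\alpha\in R_{m,\lambda}$ (possible by \eqref{eq:T-inv-orthogonality}), one has $E^g_m(\varphi)=\frac1{md_m\overline{g_m}}\sum_\lambda g(\lambda/m)\log\prod_\alpha\mu^{(\lambda)}_\alpha$, and by Lemma~\ref{lem:sup=max-mu}, $\sup\varphi=\frac1m\log\mu_{\max}+O(\varepsilon_m)$ with $\mu_{\max}:=\max_{\lambda,\alpha}\mu^{(\lambda)}_\alpha$. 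Exponentiating the coercivity inequality (absorbing $\varepsilon_m$ into the constant) turns it into
\[
\int_X\frac{\Big(\prod_{\lambda,\alpha}\big(\mu^{(\lambda)}_\alpha\big)^{g(\lambda/m)}\Big)^{\frac{\delta(1-\gamma_0)}{md_m\overline{g_m}}}\mu_{\max}^{\frac{\delta\gamma_0}{m}}}{\Big(\sum_{\lambda,\alpha}\mu^{(\lambda)}_\alpha|s^{(\lambda)}_\alpha|^2_{h^m}\Big)^{\frac{\delta}{m}}}\,\omega^n\le C'
\]
for all such bases and all $\mu^{(\lambda)}_\alpha>0$. Fixing a $T_\CC$-invariant prime divisor $F$ over $X$, choosing $\{s^{(\lambda)}_\alpha\}$ compatible with the filtrations $\cF^\bullet_{\ord_F}R_{m,\lambda}$, and setting $\mu^{(\lambda)}_\alpha=e^{t\,\ord_F(s^{(\lambda)}_\alpha)}$ for $t\ge0$, the filtration identity already used in the proof of Theorem~\ref{thm:delta-g-m=delta-g-A-m} gives $\prod_{\lambda,\alpha}(\mu^{(\lambda)}_\alpha)^{g(\lambda/m)}=e^{t\,md_m\overline{g_m}\,S^g_m(F)}$, while $\mu_{\max}=e^{t\,\tau^T_m(F)}$ with $\tau^T_m(F):=\max_\lambda\max_{0\ne s\in R_{m,\lambda}}\ord_F(s)$. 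Factoring out $e^{tA_X(F)}$ and applying Lemma~\ref{lem:I-lambda>eta} (valid for $\{s^{(\lambda)}_\alpha\}$ regarded simply as a basis of $H^0(X,mL)$), boundedness in $t$ forces $\delta(1-\gamma_0)S^g_m(F)+\delta\gamma_0\tau^T_m(F)/m\le A_X(F)$.

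The last step, and the place where the weighted argument genuinely differs from the unweighted one, is to upgrade this to the strict bound $\delta^g_m(L)>\delta$. For $F$ with $S^g_m(F)\ne0$ the inequality reads $\frac{A_X(F)}{S^g_m(F)}\ge\delta(1-\gamma_0)+\delta\gamma_0\frac{\tau^T_m(F)}{mS^g_m(F)}$, so one needs $\tau^T_m(F)/(mS^g_m(F))$ bounded below by a constant strictly exceeding $1$. Since $|mL|$ is base point free by \eqref{eq:m-divisible-mL-ample} and $\ord_F$ is a valuation, some homogeneous section does not vanish along $F$: there is a weight $\lambda_0$ with $\cF^1_{\ord_F}R_{m,\lambda_0}\subsetneq R_{m,\lambda_0}$, whence $\dim\cF^a_{\ord_F}R_{m,\lambda_0}\le d_{m,\lambda_0}-1$ for all $a\ge1$; combined with $\dim\cF^a_{\ord_F}R_{m,\lambda}\le d_{m,\lambda}$ and $\max\{a:\cF^a_{\ord_F}R_{m,\lambda}\ne0\}\le\tau^T_m(F)$ for the remaining weights, \eqref{eq:def-S-m-g} gives $mS^g_m(F)\le\tau^T_m(F)\big(1-g(\lambda_0/m)/(d_m\overline{g_m})\big)\le\tau^T_m(F)\big(1-\inf_P g/(d_m\overline{g_m})\big)$. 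Taking the infimum over $T_\CC$-invariant $F$ via Lemma~\ref{lem:delta-g-m=inf-A/S} (the minimizer, which exists by Lemma~\ref{lem:finite-lct-delta-g-m}, has $S^g_m(F)\ne0$) then yields $\delta^g_m(L)\ge\delta+\delta\gamma_0\big(\tfrac{d_m\overline{g_m}}{d_m\overline{g_m}-\inf_P g}-1\big)>\delta$, completing the proof. I expect exactly this weight-space bookkeeping to be the only real obstacle: in the unweighted proof one simply uses $\dim\cF^a<\dim H^0(X,mL)$, but here an entire weight space can vanish along $F$, and one-dimensional weight spaces (ubiquitous, e.g.\ in the toric case) are unavoidable, so one must isolate a single ``non-degenerate'' weight $\lambda_0$ to extract the strict gap; everything else is a routine transcription of Proposition~\ref{prop:F-coerc=delta-m}.
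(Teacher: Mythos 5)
Your proof is correct and follows the same overall strategy the paper adopts: transcribe Proposition~\ref{prop:F-coerc=delta-m} to the weighted, $T$-invariant setting, with $E_m\rightsquigarrow E^g_m$, $S_m\rightsquigarrow S^g_m$, $\delta_m\rightsquigarrow\delta^g_m$, and $\delta^A_m=\delta_m\rightsquigarrow\delta^{A,g}_m=\delta^g_m$. You also correctly identified the one place where the transcription is not purely mechanical: getting a \emph{strict} gap in the upper bound $mS^g_m(F)\le\tau_m(F)$, which in the unweighted case came for free from the crude estimate $\dim\cF^a_{\ord_F}H^0(X,mL)\le d_m-1$, but is more delicate here because individual weight spaces can be entirely killed by $\ord_F$.

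Where you and the paper diverge is in how that strict gap is extracted. The paper argues by contradiction only for the minimizing divisor $F_0$ (whose existence Lemma~\ref{lem:finite-lct-delta-g-m} provides): if $mS^g_m(F_0)=\tau_m(F_0)$ held, then $\dim\cF^a_{\ord_{F_0}}R_{m,\lambda}=\dim R_{m,\lambda}$ for every $\lambda$ and every $1\le a\le\tau_m(F_0)$, forcing all of $H^0(X,mL)$ to vanish along $F_0$, contradicting base-point-freeness. You instead isolate a single weight $\lambda_0$ on which the filtration drops at level $1$ (by base-point-freeness plus $T_\CC$-invariance of $F$) and propagate this through \eqref{eq:def-S-m-g} to get the \emph{uniform} quantitative bound
\begin{equation*}
\frac{\tau_m(F)}{mS^g_m(F)}\ge\frac{d_m\overline{g_m}}{d_m\overline{g_m}-\inf_P g}>1
\end{equation*}
over all $T_\CC$-invariant $F$ with $S^g_m(F)\neq0$, which is strictly more informative: it gives a concrete positive lower bound $\delta^g_m(L)-\delta\ge\delta\gamma_0\bigl(\tfrac{d_m\overline{g_m}}{d_m\overline{g_m}-\inf_Pg}-1\bigr)$ rather than mere strictness, and it does not require knowing a priori that the infimum is attained. (The price is that the bound depends on $\inf_P g$, which is positive since $g$ is continuous and strictly positive on the compact polytope $P$.) This is a modest upgrade in the same spirit as the unweighted calculation rather than a different approach, but it is the cleaner of the two arguments.

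One small bookkeeping remark: your $\tau^T_m(F)$ equals $\tau_m(F)$ for $T_\CC$-invariant $F$, since $\ord_F$ of any section is the minimum of the orders of its weight components, so the maximum over all sections is achieved on a single weight space; you could simply use $\tau_m(F)$ throughout.
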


\begin{proof}
	The proof is the almost identical to the one for Proposition \ref{prop:F-coerc=delta-m}. The only thing we need to check is the following inequality:
	$$
	S^g_m(F_0)<T_m(F_0),
	$$
	where $F_0$ is the $T_\CC$-invariant divisor computing $\delta^g_m(L)$ (Lemma \ref{lem:finite-lct-delta-g-m}).
	Suppose otherwise that we have the equality. Then we must have
	$$
	\dim\cF^a_{\ord_{F_0}}R_{m,\lambda}=\dim R_{m,\lambda},\ \forall\lambda\in P_m,\ \forall 1\le a\le \tau_m(F_0).
	$$
	This implies that
	$$
	\dim\cF^a_{\ord_{F_0}}H^0(X,mL)=\sum_{\lambda\in P_m}\dim R_{m,\lambda}=\dim H^0(X,mL),\ \forall 1\le a\le \tau_m(F_0),
	$$
	contradicting the base point freeness of $|mL|$.
\end{proof}

\subsection{$\theta$-twisted $g$-balanced metric}
We fix a $T$-invariant smooth form $\theta$ in $c_1(X)-c_1(L)$. Let $f_\theta$ be the $T$-invariant function defined by \eqref{eq:def-f-alpha}. 
Consider the following $g$-weighted quantized $\theta$-Ding functional:
\begin{equation*}
	F^{\theta,g}_m(\varphi):=-\log\frac{1}{V}\int_Xe^{f_\theta-\varphi}\omega^n-E_m^g(\varphi),\ \varphi\in\cB_m^T.
\end{equation*}

Next we introduce a natural quantization of the $\theta$-twisted K\"ahler--Ricci $g$-soliton.
\begin{definition}
	Any critical point $\varphi\in\cB_m$ of $F^{\theta,g}_m$ is 
called $(\theta,g)$-balanced, and $\omega_\varphi:=\omega+\ddc\varphi$ is called an $(\theta,g)$-balanced metric.
\end{definition}

Then we can state the $g$-weighted version of Theorem \ref{thm:finite-dim-YTD}.
\begin{theorem}
\label{thm:finite-dim-g-weighted-YTD}
	The following statements hold:	\begin{enumerate}
		\item If $\delta^g_m(L)>1$, then there exists an $(\theta,g)$-balanced $\varphi$ in $\cB^T_m$.
		\item Assume further that $\alpha\ge0$. If there exists an $(\theta,g)$-balanced $\varphi$ (resp. a unique $(\theta,g)$-balanced $\varphi$ up to constant) in $\cB^T_m$, then $\delta^g_m(L)\ge1$ (resp. $\delta^g_m(L)>1$).
	\end{enumerate}
\end{theorem}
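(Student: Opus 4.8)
The plan is to mirror the proof of Theorem \ref{thm:finite-dim-YTD} line by line, replacing throughout $E_m(H,\cdot)$ by the weighted energy $E^g_m$ where appropriate and $\cB_m$ by the $T$-invariant Bergman space $\cB^T_m$, and invoking the weighted analogues already established in \S\ref{sec:soliton}. For part (1) I would observe that $F^{\theta,g}_m$ is nothing but $F^{f_\theta,\delta,g}_m$ with $\delta=1$; so if $\delta^g_m(L)>1$ then Proposition \ref{prop:F-delta-g-m-coerc=delta-g-m} gives that $F^{f_\theta,1,g}_m$ is coercive on $\cB^T_m$, and Proposition \ref{prop:F-f-delta-g-coerc=>g-balanced} produces a minimizer, which is by definition an $(\theta,g)$-balanced potential. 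This settles (1).

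The substantive new ingredient for part (2) is the convexity of $F^{\theta,g}_m$ along Bergman geodesics in $\cB^T_m$. Here I would first record that the weighted $\log\det$ functional $E^g_m$ of \eqref{eq:def-E-g-m} is \emph{affine} along such geodesics: along $\varphi(t)=\frac1m\log\sum_\lambda\sum_\alpha e^{\gamma^{(\lambda)}_\alpha t}|s^{(\lambda)}_\alpha|^2_{h^m}$, choosing the $\{s^{(\lambda)}_\alpha\}$ to be $H_m$-orthonormal within each weight space (legitimate by \eqref{eq:T-inv-orthogonality}), each block determinant equals $e^{-t\sum_\alpha\gamma^{(\lambda)}_\alpha}$ up to a constant, so $E^g_m(\varphi(t))$ is linear in $t$. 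Since the weight $g$ enters nowhere in the remaining term $-\log\frac1V\int_Xe^{f_\theta-\varphi}\omega^n$, the hypothesis $\theta\ge0$ together with Berndtsson's convexity \cite[Section 7]{Bern15} — applied exactly as in the proof of Theorem \ref{thm:finite-dim-YTD}, using that Bergman geodesics in $\cB^T_m$ are subgeodesics, \eqref{subgeod} — shows this term is convex along Bergman geodesics. Hence $F^{\theta,g}_m$ is convex along Bergman geodesics in $\cB^T_m$, and in particular every critical point of $F^{\theta,g}_m$ on $\cB^T_m$ is a global minimizer.

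With convexity in hand, the existence half of (2) is immediate: if an $(\theta,g)$-balanced $\varphi$ exists it is a global minimizer, so $F^{\theta,g}_m\ge F^{\theta,g}_m(\varphi)=:-C$ on $\cB^T_m$, and, $f_\theta$ being bounded, this forces $\int_Xe^{-(\varphi-E^g_m(\varphi))}\omega^n\le C'$ uniformly over $\varphi\in\cB^T_m$, i.e.\ $\delta^{A,g}_m(L)\ge1$, whence $\delta^g_m(L)\ge1$ by Theorem \ref{thm:delta-g-m=delta-g-A-m}. For the uniqueness half I would reproduce the Darvas--Rubinstein coercivity argument \cite{DR17} used at the end of the proof of Theorem \ref{thm:finite-dim-YTD}: set $H:=\FS^{-1}(\varphi_0)$ and $\cR:=\{\varphi\in\cB^T_m:E_m(H,\varphi)=0\}$, a geodesically convex slice containing $\varphi_0$ (since $E_m(H,\cdot)$ is affine along Bergman geodesics) in which $\varphi_0$ is, by convexity and the uniqueness hypothesis, the unique minimizer of $F^{\theta,g}_m$; a slope-plus-compactness argument using the restriction of Lemma \ref{lem:d-1-cptness} to $\cB^T_m$ and convexity shows that $\inf\{(F^{\theta,g}_m(\varphi)-F^{\theta,g}_m(\varphi_0))/d_{1,m}(\varphi_0,\varphi):\varphi\in\cR,\ d_{1,m}(\varphi_0,\varphi)\ge2\}>0$; then Lemma \ref{lem:d-1-m=sup-E} (which holds on $\cB^T_m\subseteq\cB_m$) together with Lemma \ref{lem:sup-E-g-m<>sup-E-m} upgrades the resulting $d_{1,m}$-lower bound into $F^{\theta,g}_m(\varphi)\ge\gamma(\sup\varphi-E^g_m(\varphi))-C'$ on $\cR$, and hence on all of $\cB^T_m$ by translation invariance. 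Thus $F^{\theta,g}_m$ is coercive on $\cB^T_m$, and Proposition \ref{prop:F-delta-g-m-coerc=delta-g-m} with $\delta=1$ yields $\delta^g_m(L)>1$.

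The hard part will be the convexity step — more precisely, being confident that the soliton weighting genuinely decouples, so that $g$ affects only the affine functional $E^g_m$ while Berndtsson's theorem governs the logarithmic term verbatim as in the unweighted case. A secondary, purely technical nuisance is matching the weighted and unweighted energies in the conversion lemmas (Lemmas \ref{lem:d-1-m=sup-E} and \ref{lem:sup-E-g-m<>sup-E-m}) — the slice $\cR$ is most naturally defined with $E_m(H,\cdot)$, whereas the coercivity of Proposition \ref{prop:F-delta-g-m-coerc=delta-g-m} is phrased with $E^g_m$ — together with checking that Lemma \ref{lem:d-1-cptness} and the slice $\cR$ behave properly under restriction to $\cB^T_m$; none of this should present real difficulty.
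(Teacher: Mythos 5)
Your proposal is correct and follows exactly the route the paper itself takes: the paper's proof is a one-line pointer stating that for $\theta\ge 0$ Berndtsson's convexity still gives convexity of $F^{\theta,g}_m$ along Bergman geodesics in $\cB^T_m$, after which one mirrors the proof of Theorem \ref{thm:finite-dim-YTD} restricted to the $T$-invariant slice. You have simply filled in what ``the same'' amounts to — most usefully, the observation that $E^g_m$ is affine along Bergman geodesics (modulo a harmless sign slip: the block determinant scales as $e^{+t\sum_\alpha\gamma^{(\lambda)}_\alpha}$, not $e^{-t\sum_\alpha\gamma^{(\lambda)}_\alpha}$) and the bookkeeping needed to pass between $E_m$ and $E^g_m$ via Lemmas \ref{lem:d-1-m=sup-E} and \ref{lem:sup-E-g-m<>sup-E-m}.
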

\begin{proof}
	It is enough to notice that  when $\theta\ge0$, one can still apply \cite{Bern15} to get the convexity of $F^{\theta,g}_m$ along Bergman geodesics in $\cB^T_m$. Then, after restricting everything to the $T$-invariant subspace $\cB^T_m$, the proof is almost the same as the one for Theorem \ref{thm:finite-dim-YTD}.
\end{proof}

In the $g$-weighted setting, one can also prove analogous approximation results following the lines in \S\ref{sec:limit-behavior}. We leave this to the interested readers.

\subsection{Further properties}
In this part we collect some properties of the $\delta^g$-invariant.

Following \cite[(5.41)]{HL20}, for any $T_\CC$-invariant $F$ over $X$ and $t\geq0$, put
$$
\vol^g(L-tF):=\lim_{m\rightarrow\infty}\frac{n!}{m^n}\sum_{\lambda\in P_m}g(\lambda/m)\dim \cF^{mt}_{\ord_F}R_{m,\lambda}.
$$
Thus we find that
$$
S^g(F):=S^g(L;F):=\lim_{m\rightarrow\infty}S^g_m(F)=\int_0^{\infty}\vol^g(L-xF)dx.
$$

Then choosing a Newton--Okounkov body that respects the $T_\CC$-action (cf. \cite[Section 3.2.2]{LX17}) and extending \cite[Section 2]{BJ17} to the weighted setting, one obtains:
\begin{proposition}
\lb{deltagprop}
The limsup in Definition \ref{def:delta-g-m} is in fact a limit, and we have
	$$
	\delta^g(L)=\lim_{m\rightarrow\infty}\delta^g_m(L)=\inf_F\frac{A_X(F)}{S^g(F)},
	$$
	where $F$ runs through all the $T_\CC$-invariant prime divisors over $X$.
\end{proposition}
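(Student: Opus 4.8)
The plan is to combine Lemma \ref{lem:delta-g-m=inf-A/S} with a $T_\CC$-equivariant, $g$-weighted version of the Newton--Okounkov body estimates of Blum--Jonsson \cite[Section 2]{BJ17}. First I would record the \emph{easy inequality}. By Lemma \ref{lem:delta-g-m=inf-A/S}, for each sufficiently divisible $m$ one has $\delta^g_m(L)=\inf_F A_X(F)/S^g_m(F)$, the infimum being over $T_\CC$-invariant prime divisors $F$ over $X$. Fixing such an $F$ and using the identity already established above, namely $S^g(F)=\lim_m S^g_m(F)=\int_0^\infty\vol^g(L-xF)\,dx$, we get $\limsup_m\delta^g_m(L)\le\limsup_m A_X(F)/S^g_m(F)=A_X(F)/S^g(F)$; taking the infimum over $F$ gives $\limsup_m\delta^g_m(L)\le\inf_F A_X(F)/S^g(F)$.

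The substance is the reverse inequality $\liminf_m\delta^g_m(L)\ge\inf_F A_X(F)/S^g(F)$, and this is where the Newton--Okounkov machinery is needed. Following \cite[Section 2]{BJ17}, it suffices to produce a sequence $\varepsilon_m\to0$ that is \emph{independent of $F$} and satisfies $S^g_m(F)\le(1+\varepsilon_m)\,S^g(F)$ for every $T_\CC$-invariant prime divisor $F$ over $X$. To this end I would pass to a $T_\CC$-equivariant resolution and fix a $T_\CC$-invariant admissible flag on it, chosen so that the associated rank-$n$ valuation $\nu$ refines the weight decomposition $H^0(X,mL)=\bigoplus_{\lambda\in P_m}R_{m,\lambda}$ (cf. \cite[Section 3.2.2]{LX17}). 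Then the valuative images $\nu(R_{m,\lambda})$, together with the extra coordinate recording $\ord_F$, organize $S^g_m(F)$ as a Riemann sum and $S^g(F)$ as the corresponding integral of $g$ over (a slice of) a Newton--Okounkov body. The convexity and homogeneity of Okounkov bodies, exactly as used in \cite{BJ17} in the unweighted case, yield $S^g_m(F)\le(1+\varepsilon_m)S^g(F)$ uniformly in $F$; continuity and boundedness of $g$ on the compact polytope $P$, together with $\tfrac1m P_m\to P$, contribute only an additional $o(1)$ error, uniform in $F$, when passing from $\sum_\lambda g(\lambda/m)(\,\cdot\,)$ to $\int_P g\,(\,\cdot\,)$.

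Granting this uniform estimate, $\delta^g_m(L)=\inf_F A_X(F)/S^g_m(F)\ge(1+\varepsilon_m)^{-1}\inf_F A_X(F)/S^g(F)$, whence $\liminf_m\delta^g_m(L)\ge\inf_F A_X(F)/S^g(F)$. Combined with the easy inequality, the $\limsup$, $\liminf$ and hence $\lim$ of $\delta^g_m(L)$ all equal $\inf_F A_X(F)/S^g(F)$, so in particular the limsup in Definition \ref{def:delta-g-m} is a genuine limit. Finiteness and positivity of the infimum follow as in the unweighted case: since $\inf_P g>0$ one has $S^g(F)\le(\sup_P g)\,S(F)$, so $A_X(F)/S^g(F)\ge(\sup_P g)^{-1}\delta(L)>0$, while an Izumi-type bound confines any minimizing $F$ to a compact family (compare Lemma \ref{lem:finite-lct-delta-g-m}).

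The main obstacle is the second paragraph: constructing the $T_\CC$-compatible Newton--Okounkov body and, above all, extracting the comparison $S^g_m(F)\le(1+\varepsilon_m)S^g(F)$ \emph{with $\varepsilon_m$ independent of the infinitely many divisorial valuations $F$}, while simultaneously controlling the Riemann-sum error coming from evaluating $g$ at the lattice points $\lambda/m$ rather than integrating over $P$.
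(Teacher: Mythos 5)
Your plan is essentially the paper's own approach: the paper proves this proposition only by citing a $T_\CC$-equivariant Newton--Okounkov body construction (referring to \cite[Section 3.2.2]{LX17}) and invoking a weighted extension of \cite[Section 2]{BJ17}, and your proposal unpacks exactly those two ingredients --- the easy direction from monotonicity of the infimum plus $S^g_m(F)\to S^g(F)$, and the hard direction via a uniform-in-$F$ estimate $S^g_m(F)\le(1+\varepsilon_m)S^g(F)$ obtained from a $T_\CC$-compatible Okounkov body and Riemann-sum control of the weight factor $g(\lambda/m)$. You also correctly identify the crux (uniformity over the infinitely many valuations $F$, together with controlling the $g$-evaluation error) as the part that would require real work; the paper leaves that to the cited references and does not spell it out either.
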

It is interesting to compare Proposition \ref{deltagprop} with the
recent work \cite[Section 5.6]{HL20} that relied on the non-Archimedean approach.

Finally, we state a result involving the \emph{the greatest Bakry--Emery Ricci lower bound}. Put
\begin{equation*}
	\beta^g(L):=\sup\bigg\{\beta\in\RR\,:\,\exists T\text{-invariant }\omega
	\text{ with} [\omega]=c_1(L)\text{ s.t. } \Ric(\omega)\ge\beta\omega+\ddc\log g(m_\omega)\bigg\}.
\end{equation*}
Then following \cite{BBJ18,CRZ19,HL20},\cite[Corollary 3.10]{Zha20}:
\begin{proposition}
\label{prop:beta-g=delta-g}
	$\beta^g(L)=\min\{s(L),\delta^g(L)\}=\min\{s(L),\delta^{A,g}(L)\}.$
\end{proposition}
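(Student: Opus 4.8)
The plan is to establish the chain
$$\min\{s(L),\delta^g(L)\}\le\beta^g(L)\le\min\{s(L),\delta^{A,g}(L)\}\le\min\{s(L),\delta^g(L)\},$$
which forces the three quantities to coincide. The last inequality is the $g$-weighted analogue of Proposition \ref{prop:delta-A<=delta}, namely $\delta^{A,g}(L)\le\delta^g(L)$, and I would prove it exactly as there: H\"older's inequality passes from a weighted Moser--Trudinger inequality on $\cH^T_\omega$ with constant $\delta<\delta^{A,g}(L)$ to uniform coercivity of $F^{f,\delta,g}_m$ on $\cB^T_m$ for $m\gg1$ (using the weighted analogues of Lemmas \ref{lem:lim-E-m=E} and \ref{lem:sup-E<1+e}), and then Proposition \ref{prop:F-delta-g-m-coerc=delta-g-m} gives $\delta^g_m(L)>\delta$, whence $\delta^g(L)=\lim_m\delta^g_m(L)\ge\delta$ by Proposition \ref{deltagprop}.

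For the middle inequality $\beta^g(L)\le\min\{s(L),\delta^{A,g}(L)\}$, fix $\beta<\beta^g(L)$ and a $T$-invariant K\"ahler form $\omega^*$ with $\Ric(\omega^*)\ge\beta\omega^*+\ddc\log g(m_{\omega^*})$. Then $\theta:=\Ric(\omega^*)-\beta\omega^*-\ddc\log g(m_{\omega^*})$ is a smooth semipositive form in $c_1(X)-\beta c_1(L)$; in particular that class is nef, so $\beta\le s(L)$. Moreover $\omega^*$ is, by construction, a $\beta$-twisted K\"ahler--Ricci $g$-soliton for the twist $\theta$, hence a critical point of the corresponding weighted $\theta$-Ding functional $F^{f_\theta,\beta,g}$; since $\theta\ge0$, the weighted Berndtsson convexity used in the proof of Theorem \ref{thm:finite-dim-g-weighted-YTD} makes this functional convex along geodesics and therefore bounded below on $\cH^T_\omega$, and unwinding its definition (and absorbing the bounded twist potential $f_\theta$) this boundedness says exactly $\sup_{\varphi\in\cH^T_\omega}\int_Xe^{-\beta(\varphi-E^g(\varphi))}\omega^n<\infty$, i.e. $\beta\le\delta^{A,g}(L)$. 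Letting $\beta\nearrow\beta^g(L)$ gives the claim.

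The substantive step is the first inequality, $\beta^g(L)\ge\min\{s(L),\delta^g(L)\}$. Fix $\beta<\min\{s(L),\delta^g(L)\}$. Since $\beta<s(L)$, choose a smooth semipositive $\theta_\beta\in c_1(X)-\beta c_1(L)$ with $T$-invariant Ricci potential $f_\beta$. Since $\beta<\delta^g(L)=\lim_m\delta^g_m(L)$, for $m\gg1$ we have $\delta^g_m(L)>\beta$, so by Proposition \ref{prop:F-delta-g-m-coerc=delta-g-m} the functional $F^{f_\beta,\beta,g}_m$ is coercive on $\cB^T_m$; the explicit H\"older computation in the proof of Proposition \ref{prop:F-coerc=delta-m} (in weighted form) keeps the coercivity constant bounded below uniformly in $m$, because the auxiliary exponents can be fixed strictly below $\delta^g(L)$. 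By Proposition \ref{prop:F-f-delta-g-coerc=>g-balanced} we obtain $g$-weighted $(f_\beta,\beta)$-balanced minimizers $\varphi_m\in\cB^T_m$ with $\sup\varphi_m-E^g_m(\varphi_m)$ uniformly bounded. Running the limiting argument of Proposition \ref{prop:tKE-approx-by-alpha-balanced} in the $g$-weighted setting — normalizing $\sup\varphi_m=0$, extracting a $d_1$-limit $\varphi_\infty\in\cE^1(X,\omega)$ that is $T$-invariant, and using the weighted comparison lemmas together with lower semicontinuity — produces a minimizer of $F^{f_\beta,\beta,g}$ on $\cH^T_\omega$; by the regularity theory for twisted weighted soliton equations (Han--Li \cite{HL20}, together with \cite{ST11,BBEGZ19}) this minimizer is smooth and solves $\Ric(\omega_{\varphi_\infty})=\beta\omega_{\varphi_\infty}+\theta_\beta+\ddc\log g(m_{\varphi_\infty})$. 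As $\theta_\beta\ge0$ this realizes $\beta$ as a Bakry--Emery Ricci lower bound, so $\beta\le\beta^g(L)$; letting $\beta\nearrow\min\{s(L),\delta^g(L)\}$ completes the chain, and hence the proof.

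I expect the main obstacle to be this last step. Concretely, it requires the $g$-weighted counterparts of the \S\ref{sec:limit-behavior} approximation machinery — the analogues of Lemmas \ref{lem:lim-E-m=E} and \ref{lem:sup-E<1+e}, the quantized $d_1$-geometry on $\cB^T_m$, and Bergman-kernel asymptotics for the weighted measures $g\circ m_\varphi\,\omega_\varphi^n$ — together with the correct regularity result for the twisted weighted complex Monge--Amp\`ere equation; keeping the quantized coercivity constants uniform in $m$ as $m\to\infty$ is the second place where care is needed.
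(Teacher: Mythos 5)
The paper itself does not give a self-contained proof of Proposition \ref{prop:beta-g=delta-g}: it only points to \cite{BBJ18,CRZ19,HL20} and \cite[Corollary 3.10]{Zha20}, meaning the intended argument is the weighted version of the variational/non-Archimedean machinery of Han--Li and Berman--Boucksom--Jonsson. Your steps (2) and (3) are essentially the standard arguments used in those references and are fine: the inequality $\delta^{A,g}(L)\le\delta^g(L)$ is indeed the weighted version of Proposition \ref{prop:delta-A<=delta}, and the bound $\beta^g(L)\le\min\{s(L),\delta^{A,g}(L)\}$ follows, as you say, from the nefness of $c_1(X)-\beta c_1(L)$ plus boundedness below of $F^{f_\theta,\beta,g}$ once one invokes the $g$-weighted version of Berndtsson convexity.

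The genuine gap is in step (1), precisely where you anticipated trouble. The chain you set up forces you to prove $\min\{s(L),\delta^g(L)\}\le\beta^g(L)$ starting only from $\beta<\delta^g(L)=\lim_m\delta^g_m(L)$. Your plan is to obtain balanced minimizers $\varphi_m$ of $F^{f_\beta,\beta,g}_m$ with $\sup\varphi_m-E^g_m(\varphi_m)$ uniformly bounded and then extract a $d_1$-limit. But the coercivity provided by Proposition \ref{prop:F-delta-g-m-coerc=delta-g-m} holds with constants that a priori depend on $m$, and the H\"older trick you invoke does not remove that dependence: the auxiliary constant $C$ in the proof of Proposition \ref{prop:F-coerc=delta-m} bounds $\log\frac{1}{V}\int_Xe^{-\gamma(\varphi-E_m^g(\varphi))}\omega^n$ over $\varphi\in\cB^T_m$ for some $\gamma$ strictly between $\beta$ and $\delta^g_m(L)$, and the assertion that this bound is uniform in $m$ is (up to the $\varepsilon_m$-corrections of Lemma \ref{lem:sup-E<1+e}) exactly the statement $\gamma<\delta^{A,g}(L)$. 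In other words, uniform-in-$m$ coercivity from $\beta<\delta^g(L)$ amounts to knowing $\delta^g(L)\le\delta^{A,g}(L)$ --- the hard direction, and the very thing the paper (even in the unweighted case) is careful not to claim. Note that in Proposition \ref{prop:tKE-approx-by-alpha-balanced} the paper assumes $\delta^A(L)>1$, not $\delta(L)>1$, precisely because going from the algebraic to the analytic threshold is not available; your step (1) requires that passage.

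So the proposal, as written, is circular at its decisive step. The proof the paper has in mind closes this loop by a different route: in \cite{HL20} (building on \cite{BBJ18}) one shows by non-Archimedean pluripotential theory that $\beta<\min\{s(L),\delta^g(L)\}$ already implies coercivity of the $g$-weighted twisted Ding functional on $\cE^{1,T}$, from which existence of the $\theta_\beta$-twisted $g$-soliton follows directly, with no quantization or uniform-in-$m$ estimates needed. Alternatively one can follow \cite{CRZ19} and run a continuity method. If you want to keep the quantized approach, you would need an independent argument giving uniform Moser--Trudinger constants at level $m$ --- which is not in the paper and would essentially settle the weighted form of Conjecture \ref{Zconj}.

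Minor point on step (2): the convexity of $F^{f_\theta,\beta,g}$ along (sub)geodesics in $\cH^T_\omega$ when $\theta\geq0$ needs the $g$-weighted Prekopa/Berndtsson statement (concavity of $E^g$ together with convexity of $-\log\int e^{-\beta\varphi}d\mu_\theta$); this is in \cite{HL20} but should be cited explicitly rather than attributed to the purely finite-dimensional Theorem \ref{thm:finite-dim-g-weighted-YTD}.
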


\section{$\delta_m$-invariants associated to torus actions
}
\label{sec:delta-T}

Let $T=(S^1)^r$ and denote by $T_\CC\cong(\CC^*)^r$ its complexification.
Suppose that $(X,L)$ admits a holomorphic $T_\CC$-action (extending to the
total space of $L$ and preserving fibers).
Our goal in this part is to study the $T_\CC$-equivariant $\delta_m$
defined by
\begin{equation}
\label{eq:def-delta-T-m}
	\delta^{T_\CC}_m(L):=\inf_{F\text{}\ T_\CC\text{-invariant}}
\frac{A_X(F)}{S_m(F)},
\end{equation}
where a divisor $F$ over $X$ is $T_\CC$-invariant if
$
\ord_F(\tau\cdot s)=\ord_F(s)
$
 for any 
$s\in H^0(X,mL), \,\tau\in T_\CC$.
Somewhat surprisingly, we show that the $g$-weighted analysis of \S\ref{sec:soliton} 
has some important consequences already when $g\equiv1$. In fact, in that case, 
$\delta^g_m(L)$ coincides with $\delta^{T_\CC}_m(L)$, and Theorem
\ref{thm:delta-T-m}, that we now prove, relates $\delta^{T_\CC}_m(L)$ to $\delta_m(L)$.

\begin{proof}[Proof of Theorem \ref{thm:delta-T-m}]
	We assume that $s(L)>0$, otherwise the statement is trivial. It suffices to show that  for any $\delta\in(0,\min\{\delta_m^{T_\CC}(L),s(L)\})$,
	$$
	\delta_m(L)\ge\delta.
	$$
	To this end, pick a $T$-invariant smooth form $\theta\in(c_1(X)-\delta c_1(L))$. We may assume $\theta$ to be semi-positive as $\delta<s(L)$. 
	Let $f_\theta$ be any $T$-invariant smooth function satisfying
	$$
	\Ric(\omega)=\delta\omega+\theta+\ddc f_\theta.
	$$
	Consider
	$$
	F^{f_\theta,\delta,g}_m(\varphi):=-\frac{1}{\delta}\log\int_Xe^{f_\theta-\delta\varphi}\omega^n-E_m^g(\varphi),\ \varphi\in\cB^T_m.
	$$
	When $g\equiv1$, one simply has (recall Definition \ref{def:F-f-delta-m}, \eqref{eq:def-E-m} and \eqref{eq:def-E-g-m})
	$$
	F_m^{f_\theta,\delta,g}(\varphi)=F^{f_\theta,\delta}_m\big|_{\cB^T_m}(\varphi)=-\frac{1}{\delta}\log\int_Xe^{f_\theta-\delta\varphi}\omega^n-E_m(\varphi),\ \varphi\in\cB^T_m.
	$$
	Since $\delta^{T_\CC}_m(L)>\delta$, Propositions \ref{prop:F-delta-g-m-coerc=delta-g-m} and \ref{prop:F-f-delta-g-coerc=>g-balanced} then imply that there exists a $1$-weighted $(f_\theta,\delta)$-balanced $\varphi\in\cB^T_m$ minimizing $F^{f_\theta,\delta}_m\big|_{\cB^T_m}$. Note that this $\varphi$ is nothing but a $(f_\theta,\delta)$-balanced potential in the sense of Definition \ref{def:F-f-delta-m}, only with the additional property that it is $T$-invariant (compare \eqref{eq:balanced-equation} and \eqref{eq:g-balanced-equation}; recall also \eqref{eq:T-inv-orthogonality}). So it is also a critical point of $F^{f_\theta,\delta}_m$ on the whole space $\cB_m$. Now using the non-negativity of $\theta$, we see that $F^{f_\theta,\delta}_m$ is convex along Bergman geodesics in $\cB_m$ (by \cite{Bern15} again). This shows that $\varphi$ is a minimizer of $F^{f_\theta,\delta}_m$, and hence
	$$
	F^{f_\theta,\delta}_m\ge-C \text{ on }\cB_m
	$$
	for some $C>0$.
	Then by Theorem \ref{thm:delta-A-m=delta-m},
	$\delta_m(L)=\delta^A_m(L)\ge\delta$, as desired.
\end{proof}

Applying the above result to the toric Fano setting can give us the precise formula of $\delta_m(-K_X)$ for any $m\ge1$. More precisely, following the terminologies in Fulton \cite{Ful93}, let $X$ be a toric Fano manifiold with an associated fan $\Delta$ in a lattice $N\cong\ZZ^n$. Let $M$ be the dual lattice of $N$. Given any ray $\rho\in\Delta$, its primitive generator is denoted by 
$
v_\rho,
$
whose corresponding toric divisor is denoted by $D_\rho$. Then
$
-K_X=\sum_\rho D_\rho.
$
We take $T_\CC\cong(\CC^*)^n$ to be the maximal torus, then the moment polytope $P$ is given by
$$
P:=\{u\in M_\RR\ |\ \langle u,v_\rho\rangle+1\ge0,\ \forall\rho\}.
$$
In this case, each weight space $R_{m,\lambda}$ is one-dimensional. So the $(m,1)$-basis divisor is uniquely determined (recall \eqref{eq:def-m-g-basis-divisor}), from which we deduce that
$$
\delta^{T_\CC}_m(-K_X)=\min_{\rho}\bigg\{\frac{1}{\langle b_m,v_\rho\rangle+1}\bigg\},
$$
where $b_m$ is the quantized barycenter given by
$
b_m:=\frac{1}{md_m}\sum_{u\in mP\cap M}u.
$
By completeness of the fan $\Delta$, there always exists a ray $\rho\in\Delta$ with $\langle b_m,v_\rho\rangle\geq0$. So we have
$
\delta^{T_\CC}_m(-K_X)\le s(-K_X)=1.
$
Thus Theorem \ref{thm:delta-T-m} implies the following

\begin{corollary}
\label{cor:delta-m-toric-Fano}
For toric Fano manifold $X$ and $m\ge1$, we have
$$
\delta_m(-K_X)=\delta^{T_\CC}_m(-K_X)=\min_{\rho}\bigg\{\frac{1}{\langle b_m,v_\rho\rangle+1}\bigg\}.
$$
\end{corollary}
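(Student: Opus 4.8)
The plan is to read the corollary off from Theorem \ref{thm:delta-T-m} combined with the toric computation assembled in the paragraph just above it. That theorem already supplies $\min\{\delta_m^{T_\CC}(-K_X),s(-K_X)\}\le\delta_m(-K_X)\le\delta_m^{T_\CC}(-K_X)$, so the whole game is to check that the bracket on the left equals $\delta_m^{T_\CC}(-K_X)$, i.e. that $\delta_m^{T_\CC}(-K_X)\le s(-K_X)$; granting this, the two bounds pinch $\delta_m(-K_X)$ to $\delta_m^{T_\CC}(-K_X)$, and it only remains to identify this number with $\min_\rho\{(\langle b_m,v_\rho\rangle+1)^{-1}\}$.

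First I would compute $\delta_m^{T_\CC}(-K_X)$. Since $T_\CC\cong(\CC^*)^n$ is the maximal torus, each weight space $R_{m,u}$, $u\in mP\cap M$, is one-dimensional, spanned by the character section $\chi^u\in H^0(X,-mK_X)$ whose zero divisor is $\sum_\rho(\langle u,v_\rho\rangle+m)D_\rho$; consequently there is a unique $(m,1)$-basis divisor (recall \eqref{eq:def-m-g-basis-divisor}), namely $D_m=\sum_\rho(\langle b_m,v_\rho\rangle+1)D_\rho$ with $b_m=\frac1{md_m}\sum_{u\in mP\cap M}u\in P$, so all its coefficients are $\ge0$. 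Because $\delta^g_m$ with $g\equiv1$ coincides with $\delta_m^{T_\CC}$, we get $\delta_m^{T_\CC}(-K_X)=\lct(X,D_m)$; alternatively one checks directly that $S_m(F)=\ord_F(D_m)$ for every $T_\CC$-invariant $F$, since $D_m$ is built from a basis that is automatically compatible with the $\ord_F$-filtration. Then the standard toric formula — for an effective torus-invariant divisor $\sum_\rho a_\rho D_\rho$ on a smooth (hence simplicial) complete toric variety, $\inf_F A_X(F)/\ord_F$ over torus-invariant $F$ is attained at a ray of the fan and equals $1/\max_\rho a_\rho$, because for $v=\sum_i\lambda_iv_{\rho_i}$ in the relative interior of a cone one has $A_X(F_v)/\ord_{F_v}=\sum_i\lambda_i/\sum_i\lambda_ia_{\rho_i}\ge1/\max_ia_{\rho_i}$ — gives $\delta_m^{T_\CC}(-K_X)=\min_\rho\{(\langle b_m,v_\rho\rangle+1)^{-1}\}$.

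It remains to observe $\delta_m^{T_\CC}(-K_X)\le s(-K_X)=1$. Here $s(-K_X)=1$ since $(1-s)(-K_X)$ is nef exactly for $s\le1$; and $\delta_m^{T_\CC}(-K_X)\le1$ since completeness of $\Delta$ means the $v_\rho$ positively span $N_\RR$, so no linear functional is strictly negative on all of them — hence some $\rho$ has $\langle b_m,v_\rho\rangle\ge0$, and the corresponding term is $\le(0+1)^{-1}=1$. Combining, $\min\{\delta_m^{T_\CC}(-K_X),s(-K_X)\}=\delta_m^{T_\CC}(-K_X)$, and Theorem \ref{thm:delta-T-m} closes the argument.

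I do not anticipate a genuine obstacle: the analytic content (Berndtsson convexity, balanced metrics, Theorem \ref{thm:delta-T-m}) is already done, and what remains is toric bookkeeping. The one place that warrants a careful sentence is the toric lct computation — namely, that $\lct(X,D_m)$ is realized by a boundary divisor $D_\rho$ rather than by some exceptional $T_\CC$-invariant divisor over $X$; this is exactly where convexity of $P$ and positive spanning of $N_\RR$ by the $v_\rho$ enter, and it is what makes the clean minimum over rays come out.
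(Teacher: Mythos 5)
Your argument is correct and follows the same route as the paper: compute $\delta_m^{T_\CC}(-K_X)$ from the unique $(m,1)$-basis divisor $D_m=\sum_\rho(\langle b_m,v_\rho\rangle+1)D_\rho$, observe via completeness of the fan that some $\langle b_m,v_\rho\rangle\ge0$ so $\delta_m^{T_\CC}(-K_X)\le s(-K_X)=1$, and then pinch with Theorem \ref{thm:delta-T-m}. You spell out the toric lct bookkeeping (the $A_X(F_v)/\ord_{F_v}(D_m)$ computation on cones) that the paper asserts without detail, but the substance is the same.
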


Finally we remark that, even when $X=\PP^n$, this gives a new result.

\begin{corollary}
Let $X=\PP^n$. Then for any $m\ge1$, we have
	$
	\delta_m(-K_X)=1.
	$
\end{corollary}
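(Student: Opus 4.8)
The plan is to deduce this directly from Corollary \ref{cor:delta-m-toric-Fano}, which gives, for $X=\PP^n$,
$$
\delta_m(-K_{\PP^n})=\min_{\rho}\frac{1}{\langle b_m,v_\rho\rangle+1},
\qquad
b_m=\frac{1}{md_m}\sum_{u\in mP\cap M}u .
$$
So everything reduces to showing that $\langle b_m,v_\rho\rangle=0$ for every ray $\rho$ of the fan of $\PP^n$. First I would fix the standard description of this fan: $N\cong\ZZ^n$, the rays are generated by $v_1=e_1,\dots,v_n=e_n$ and $v_0=-(e_1+\cdots+e_n)$, so that $\sum_{i=0}^n v_i=0$, and $P=\{u\in M_\RR:\langle u,v_i\rangle\ge-1,\ i=0,\dots,n\}$ is the reflexive simplex. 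The anticanonical moment polytope is $P$, and $d_m=\#(mP\cap M)$.

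The next step is to exploit symmetry. The symmetric group $G:=S_{n+1}$ acts on $N$ by permuting the set $\{v_0,\dots,v_n\}$: this is well defined as an action by elements of $GL(n,\ZZ)$ because $v_1,\dots,v_n$ is a $\ZZ$-basis of $N$ and the single relation $\sum_i v_i=0$ then forces the image of $v_0$ to be the prescribed generator. Each $g\in G$ permutes the rays of the fan, hence the induced linear action of $g$ on $M$ preserves the lattice $M$ and the polytope $P$ (its defining inequalities are merely permuted), and therefore preserves the finite set $mP\cap M$. Consequently $g$ fixes its barycenter, i.e. $g\cdot b_m=b_m$ for all $g\in G$.

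Finally I would combine invariance with the linear relation among the $v_\rho$. For $g\in G$ sending the ray $\rho$ to $\rho'$, the $G$-invariance of the natural pairing together with $g\cdot b_m=b_m$ gives $\langle b_m,v_\rho\rangle=\langle g\cdot b_m,\,g\cdot v_\rho\rangle=\langle b_m,v_{\rho'}\rangle$; since $G$ acts transitively on the $n+1$ rays, $\langle b_m,v_\rho\rangle$ is a constant $c$ independent of $\rho$. Summing over all rays and using $\sum_\rho v_\rho=0$ yields $(n+1)c=\langle b_m,\sum_\rho v_\rho\rangle=0$, hence $c=0$. Plugging back into the formula from Corollary \ref{cor:delta-m-toric-Fano} gives $\delta_m(-K_{\PP^n})=\min_\rho \tfrac{1}{0+1}=1$, as claimed. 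There is no genuinely hard step here; the only point requiring a little care is the verification that the $S_{n+1}$-action on the rays is realized by honest automorphisms of the lattice $N$ (so that it fixes the quantized barycenter $b_m$). One could alternatively bypass the symmetry group and compute the centroid of $mP\cap M$ directly from the coordinate description of the simplex, but the symmetry argument above is shorter and also makes transparent why the value is exactly $1$ for every $m$.
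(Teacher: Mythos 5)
Your proof is correct and is essentially the argument the paper leaves implicit: the paper states this corollary without proof as an immediate consequence of Corollary \ref{cor:delta-m-toric-Fano}, and what you supply is precisely the missing verification that $\langle b_m,v_\rho\rangle=0$ for every ray. The $S_{n+1}$-symmetry argument (well-defined on $N$ because $v_1,\dots,v_n$ is a $\ZZ$-basis and the relation $\sum_i v_i=0$ is preserved, hence preserves $mP\cap M$ and its centroid, hence by transitivity and $\sum_\rho v_\rho=0$ forces $\langle b_m,v_\rho\rangle=0$) is the natural and cleanest route, and your check that the permutation action is realized by honest lattice automorphisms is exactly the point that needs care.
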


\appendix

\section{Extension to coupled metrics}
\label{sec:coupled-soliton}
In this part we define a coupled $\delta$-invariant for the coupled system of Monge--Amp\`ere equations studied in \cite{HNW19,Hul19,DH18} and record
analogues of our main theorems to this more general setting.

\subsection{Coupled K\"ahler--Ricci $\boldsymbol{g}$-soliton}
We follow the setup of \cite{DH18}.
Let $X$ be a projective manifold. Fix a positive integer $k$ and take a $k$-tuple of ample $\QQ$-line bundles 
$(L_1,\cdots,L_k).$ 
As in the previous section, assume that there is an effective and holomorphic $T_\CC$-action on $X$. Also assume that this torus action lifts to each $L_i$. We equip each $L_i$ with a positively curved smooth $T$-invariant Hermitian metric $h_i$, whose curvature form will be denoted by $\omega_i$. 
Put
$
    V_i:=\int_X\omega_i^n,\ 1\leq i\leq k.
$
Denote by
$
\cH^T_\omega(X,\omega_i)
$
the subspace of $\cH(X,\omega_i)$ consisting of $T$-
invariant K\"ahler potentials and put
\begin{equation*}
\mathbfcal{H}^T:=\cH^T_\omega(X,\omega_1)\times\cdots\times\cH^T_\omega(X,\omega_k).
\end{equation*}
Note that each $\omega_i$ induces a moment map
$
m_{\omega_i}:X\rightarrow\RR^r,
$
whose image will be denoted by $P_i$. Recall that $P_i$ is a polytope, which does not depend on the choice of $\omega_i\in c_1(L_i)$. We will fix a smooth positive function
$
	g_i:P_i\rightarrow\RR_{>0}
$
for each $1\leq i\leq k$. Then for any $\varphi_i\in\cH^T_\omega(X,\omega_i)$, we have an induced function
$
	g_i(\varphi_i):=g_i\circ m_{\omega_i+\ddc\varphi_i}:X\rightarrow\RR_{>0}.
$
Put
\begin{equation*}
	\boldsymbol{g}:=(g_1,\cdots,g_k).
\end{equation*}
To set up the coupled soliton equations, we need also a twist term, by cohomological reason. Pick a $T$-invariant smooth form
$
	\theta
$
in
$
c_1(X)-\sum_{i=1}^k c_1(L).
$
Given a $k$-tuple $\boldsymbol{\varphi}:=
(\varphi_1,\cdots,\varphi_k)\in\mathbfcal{H}^T$,
we put
$
	\boldsymbol{\omega_\varphi}:=(\omega_1+\ddc\varphi_1,\cdots,\omega_i+\ddc\varphi_k).
$
\begin{definition}\cite{HNW19,Hul19,DH18}
We say that $\boldsymbol{\omega_\varphi}$ is an $\theta$-twisted coupled K\"ahler--Ricci $\boldsymbol{g}$-soliton if
$$
\Ric(\omega_i+\ddc\varphi_i)=\sum_{j=1}^k(\omega_j+\ddc\varphi_j)+\theta+\ddc\log g_i(\varphi_i),\ 1\leq i\leq k.
$$
\end{definition}

To study the above coupled soliton equations, one needs a coupled Ding function, which we now describe. For each $1\leq i\leq k$, one can find $f_i\in C^\infty(X,\RR)$ satisfying
$\Ric(\omega)_i=\sum_{i=1}^k\omega_i+\theta+\ddc f_i$ and  $\int_Xe^{f_i}\omega_i^n=V_i.
$
Then it is easy to see that  as probability measures,
$
	\frac{e^{f_1}\omega_1^n}{V_1}=\cdots=\frac{e^{f_k}\omega_k^n}{V_k}.
$
So we can put
\begin{equation*}
	\mu:=\frac{e^{f_1}\omega_1^n}{V_1}=\cdots=\frac{e^{f_k}\omega_k^n}{V_k}.
\end{equation*}
Note that $\mu$ depends on $\omega_i$ and $\alpha$.
Now, following \cite{DH18}, the $\boldsymbol{g}$-weighted $\theta$-Ding functional is defined by
\begin{equation*}
	D^{\alpha,\boldsymbol{g}}(\boldsymbol{\varphi} ):=-\log\int_X e^{-\sum_i\varphi_i}d\mu-\sum_{i=1}^{k}E^{g_i}_{\omega_i}(\varphi_i),\ \boldsymbol{\varphi}:=
(\varphi_1,\cdots,\varphi_k)\in\mathbfcal{H}^T,
\end{equation*}
where
$
	E^{g_i}_{\omega_i}(\varphi_i)
$
is the $g_i$-weighted Monge-Amp\`ere energy (recall \eqref{eq:def-E-g}). Then it is straightforward to check that  $\boldsymbol{\omega_\varphi}$ is $\theta$-twisted coupled K\"ahler--Ricci $\boldsymbol{g}$-soliton if and only if $\boldsymbol{\varphi}$ a critical point of $D^{\alpha,\boldsymbol{g}}$.

As shown in \cite{DH18}, there are obstructions to the existence of K\"ahler--Ricci $\boldsymbol{g}$-solitons, mainly coming from the non-coercivity of $D^{\alpha,\boldsymbol{g}}$. So we introduce a coupled coercivity threshold as follows:
	$$
		\delta^{A,\boldsymbol{g}}(L_1,\cdots,L_k):=\sup\bigg\{\delta>0\,:\,\sup_{(\varphi_1,\cdots,\varphi_k)\in\mathbfcal{H}^T}\int_Xe^{-\delta\sum_{i=1}^k(\varphi_i-E^{g_i}_{\omega_i}(\varphi_i))}d\mu<\infty
\bigg\}.
	$$
\subsection{Quantization}Following \cite{BWN14,Tak19}, we can quantize the above setup as follows. Choose a $k$-tuple of sufficiently divisible integers 
\begin{equation*}
    \boldsymbol{m}:=(m_1,\cdots,m_k)
\end{equation*}
 such that each $m_iL_i$ is very ample. Consider the Bergman space $\cB_{m_i}(X,\omega_i)$ for each pair $(L_i,\omega_i)$. We denote by $\cB_{m_i}^T(X,\omega_i)$ the subspace of $T$-invariant Bergman potentials in $\cB_{m_i}(X,\omega_i)$ and put
\begin{equation*}
	\boldsymbol{\cB}_{\boldsymbol{m}}^T:=\cB_{m_1}^T(X,\omega_1)\times\cdots\times\cB_{m_k}^T(X,\omega_k).
\end{equation*}

Note that  the $T_\CC$-action induces a weight decomposition for each vector space $H^0(X,m_iL_i)$. 
Then proceeding as in  \S\ref{sec:quantize-g-soliton} (especially see \eqref{eq:def-E-g-m}),
$
	E^{g_i}_{\omega_i,m_i}(\cdot) \text{ on } \cB^T_{m_i}(X,\omega_i),
$
the $m_i$-th quantized $g_i$-weighted Monge--Amp\`ere energy of the pair $(L_i,\omega_i)$.
Then we put
	$$
		\delta_{\boldsymbol{m}}^{A,\boldsymbol{g}}(L_1,\cdots,L_k):=\sup\bigg\{\delta>0\,:\,\sup_{(\varphi_1,\cdots,\varphi_k)\in\mathbfcal{B}_{\boldsymbol{m}}^T}\int_Xe^{-\delta\sum_{i=1}^k(\varphi_i-E^{g_i}_{\omega_i,m_i}(\varphi_i))}d\mu<\infty
\bigg\}.
	$$

\subsection{Algebraic coupled $\boldsymbol{g}$-weighted $\delta_{\boldsymbol{m}}$-invariant}Motivated by the above formulation, we can now define the algebraic coupled $\boldsymbol{g}$-weighted $\delta_{\boldsymbol{m}}$ in the following way.

 Following Section \ref{sec:def-delta-g-m}, choose an $(m_i,g_i)$-basis divisor
$
D_i\sim_\RR L_i
$
for each $L_i$.
Summing up, we get a $T_\CC$-invariant effective $\RR$-divisor
$
D:=\sum_{i=1}^kD_i\sim_\RR\sum_{i=1}^kL_i,
$
which will be called an $(\boldsymbol{m,g})$-basis divisor of the $k$-tuple $(L_1,\cdots,L_k)$.

\begin{definition} 
\label{def:coupled-delta-m}
\begin{enumerate}
\item The coupled $\boldsymbol{g}$-weighted $\delta_{\boldsymbol{m}}$-invariant is
\begin{equation*}
	\delta^{\boldsymbol{g}}_{\boldsymbol{m}}(L_1,\cdots,L_k):=\inf\bigg\{\lct(X,D)\,:\,D\text{ is an }(\boldsymbol{m,g})\text{-basis divisor of } (L_1,...,L_k)\bigg\}.
	\end{equation*}
	
\item 	When $\boldsymbol{g}=(1,\cdots,1)$, set
$\delta^{T_\CC}_{\boldsymbol{m}}(L_1,\cdots,L_k):=\delta^{\boldsymbol{g}}_{\boldsymbol{m}}(L_1,\cdots,L_k).
$

\item 	When the torus action is trivial, set
	$$
	\delta_{\boldsymbol{m}}(L_1,\cdots,L_k):=\inf\bigg\{\lct\Big(X,\sum_{i=1}^{k}D_i\Big)\,:\,\text{ each }D_i\text{ is an $m_i$-basis divisor of }L_i\bigg\}.
	$$

\end{enumerate}

\end{definition}

Since any $(\boldsymbol{m,g})$-basis divisor is $T_\CC$-invariant, to compute its lct, it suffices to investigate all the $T_\CC$-invariant prime divisors $F$ over $X$. Then (recall \eqref{eq:def-S-m-g}) one can consider
$
	S_{m_i}^{g_i}(L_i;F),
$
the $m_i$-th $g_i$-weighted expected vanishing order of $L_i$ along $F$.

\begin{lemma}
\label{lem:coupled-delta-g-m=inf-A/S}
	One has
	$
	\delta^{\boldsymbol{g}}_{\boldsymbol{m}}(L_1,\cdots,L_k)=\inf_F\frac{A_X(F)}{\sum_{i=1}^kS^{g_i}_{m_i}(L_i;F)},
	$
	where $F$ runs through all the $T_\CC$-invariant prime divisors over $X$.
\end{lemma}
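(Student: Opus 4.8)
The plan is to transcribe the proof of Lemma \ref{lem:delta-g-m=inf-A/S}, running the one-dimensional slice computation of \cite[Lemma 2.2]{FO18} in each weight space of each $H^0(X,m_iL_i)$ and then summing over the weight spaces and over $i=1,\dots,k$. The geometric point is that a divisor over $X$ "sees" an $(\boldsymbol{m},\boldsymbol{g})$-basis divisor only through its vanishing orders along the $T_\CC$-invariant weight sections, and the largest possible such vanishing is recorded precisely by $S^{g_i}_{m_i}(L_i;F)$.

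First, since each $D_i$ is an $(m_i,g_i)$-basis divisor it is $T_\CC$-invariant, and hence so is $D=\sum_{i=1}^k D_i$; applying a $T_\CC$-equivariant log resolution (as just before Lemma \ref{lem:delta-g-m=inf-A/S}) we may compute $\lct(X,D)=\inf_F A_X(F)/\ord_F(D)$ with $F$ ranging only over $T_\CC$-invariant prime divisors over $X$, and $\ord_F(D)=\sum_{i=1}^k\ord_F(D_i)$. Next, fix such an $F$ and fix $i$. Writing $D_i$ as in \eqref{eq:def-m-g-basis-divisor} for a choice of bases $\{s^{(\lambda)}_\alpha\}_\alpha$ of the weight spaces $R_{m_i,\lambda}$, the number $\ord_F(D_i)$ is the corresponding (normalized) weighted sum of the quantities $\sum_\alpha\ord_F(s^{(\lambda)}_\alpha)$. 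For each fixed $\lambda$ one has $\sum_\alpha\ord_F(s^{(\lambda)}_\alpha)\le\sum_{a\ge1}\dim\cF^a_{\ord_F}R_{m_i,\lambda}$, with equality whenever $\{s^{(\lambda)}_\alpha\}_\alpha$ is compatible with the filtration $\{\cF^a_{\ord_F}R_{m_i,\lambda}\}_{a\ge1}$; this is exactly the computation of \cite[Lemma 2.2]{FO18}. Summing over $\lambda$ with the weights $g_i(\lambda/m_i)$ and recalling \eqref{eq:def-S-m-g} gives $\ord_F(D_i)\le S^{g_i}_{m_i}(L_i;F)$, with equality for any bases that are, weight space by weight space, compatible with the filtration induced by $\ord_F$. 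Summing over $i$, $\ord_F(D)\le\sum_{i=1}^k S^{g_i}_{m_i}(L_i;F)$, and — since the bases of distinct weight spaces and distinct bundles may be chosen independently — a single $(\boldsymbol{m},\boldsymbol{g})$-basis divisor realizes the equality, so $\sup_D\ord_F(D)=\sum_{i=1}^k S^{g_i}_{m_i}(L_i;F)$, the supremum being attained.

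Finally, interchanging the two infima,
$$\delta^{\boldsymbol{g}}_{\boldsymbol{m}}(L_1,\dots,L_k)=\inf_D\inf_F\frac{A_X(F)}{\ord_F(D)}=\inf_F\inf_D\frac{A_X(F)}{\ord_F(D)}=\inf_F\frac{A_X(F)}{\sup_D\ord_F(D)}=\inf_F\frac{A_X(F)}{\sum_{i=1}^k S^{g_i}_{m_i}(L_i;F)},$$
where $D$ runs over $(\boldsymbol{m},\boldsymbol{g})$-basis divisors and $F$ over $T_\CC$-invariant prime divisors over $X$. The only step that is not purely formal is the third equality: for each fixed $F$ the supremum $\sup_D\ord_F(D)$ must be attained by an admissible divisor, which is exactly the statement that one can refine, simultaneously in every weight space and every bundle, the chosen bases to be compatible with the $\ord_F$-filtration. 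I expect this bookkeeping — checking that one honest $(\boldsymbol{m},\boldsymbol{g})$-basis divisor can be made to compute $\sum_i S^{g_i}_{m_i}(L_i;F)$ along a prescribed $F$ — to be the only (mild) obstacle; everything else is a direct adaptation of the single-bundle argument of Lemma \ref{lem:delta-g-m=inf-A/S}.
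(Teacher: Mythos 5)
Your proof is correct and is exactly what the paper intends: the paper omits the proof, tacitly treating it as the coupled version of Lemma \ref{lem:delta-g-m=inf-A/S}, whose proof in turn just says ``apply \cite[Lemma 2.2]{FO18} to each weight space.'' The bookkeeping worry you raise at the end is already resolved by your own observation that the bases of distinct weight spaces $R_{m_i,\lambda}$ (over all $i$ and $\lambda$) can be chosen independently and hence simultaneously compatible with the $\ord_F$-filtrations, so a single $(\boldsymbol{m},\boldsymbol{g})$-basis divisor attains $\sup_D\ord_F(D)=\sum_i S^{g_i}_{m_i}(L_i;F)$.
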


And also, we have the following result.

\begin{lemma}
	$\delta^{\boldsymbol{g}}_{\boldsymbol{m}}$ is computed by some $T_\CC$-invariant divisor $F$ over $X$.
\end{lemma}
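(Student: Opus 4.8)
The plan is to follow the proof of Lemma \ref{lem:finite-lct-delta-g-m} verbatim in the coupled setting: reduce the statement to a finiteness property of log canonical thresholds in a bounded family, deduce that the infimum in Definition \ref{def:coupled-delta-m}(1) is attained, and then extract a computing divisor that can be taken $T_\CC$-invariant.

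First I would observe that, for each fixed $i$, the coefficients appearing in any $(m_i,g_i)$-basis divisor $D_i$ of $L_i$ lie in one fixed finite subset of $\RR_{>0}$ --- the finitely many numbers built from $g_i(\lambda/m_i)$ as $\lambda$ ranges over the (finite) weight set of $H^0(X,m_iL_i)$ (recall \eqref{eq:def-m-g-basis-divisor}) --- and that the prime components of $D_i$ are cut out by sections of the finitely many weight spaces $R_{m_i,\lambda}$, hence move in a fixed finite collection of linear systems on $X$. Summing over $i=1,\dots,k$, every $(\boldsymbol{m,g})$-basis divisor $D=\sum_{i=1}^kD_i$ therefore has coefficients in a single fixed finite subset of $\RR_{>0}$ and is a nonnegative combination, with those coefficients, of members of a single fixed finite family of linear systems; in particular all such $D$ lie in a ``finite combination of linear systems'' in the sense of \cite{Am16}.

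Second, I would invoke \cite{Am16}: for divisors varying in such a finite combination of linear systems with coefficients drawn from a fixed finite set, the set of attained values of $\lct(X,D)$ is finite --- crucially, this holds even though the coefficients $g_i(\lambda/m_i)$-type constants need not be rational, which is the one point where care is needed and is exactly the role played by \cite{Am16}, just as in Lemma \ref{lem:finite-lct-delta-g-m}. Hence the infimum in Definition \ref{def:coupled-delta-m}(1) runs over a finite set of reals, so it is a minimum: there exists an $(\boldsymbol{m,g})$-basis divisor $D_0$ with $\lct(X,D_0)=\delta^{\boldsymbol{g}}_{\boldsymbol{m}}(L_1,\dots,L_k)$.

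Finally, $D_0$ is a single effective $\RR$-divisor, so by \eqref{lctdef} its log canonical threshold is computed by some prime divisor $F$ over $X$; since $D_0$ is $T_\CC$-invariant, one may use a $T_\CC$-equivariant log resolution of $(X,D_0)$ and take $F$ to be $T_\CC$-invariant. Then $\ord_F(D_0)\le\sum_{i=1}^kS^{g_i}_{m_i}(L_i;F)$ (each $S^{g_i}_{m_i}$ being the supremum of $\ord_F$ over $(m_i,g_i)$-basis divisors), so $A_X(F)/\bigl(\sum_{i=1}^kS^{g_i}_{m_i}(L_i;F)\bigr)\le A_X(F)/\ord_F(D_0)=\lct(X,D_0)=\delta^{\boldsymbol{g}}_{\boldsymbol{m}}(L_1,\dots,L_k)$, while the reverse inequality is immediate from Lemma \ref{lem:coupled-delta-g-m=inf-A/S}; hence $F$ computes $\delta^{\boldsymbol{g}}_{\boldsymbol{m}}$. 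The only genuine obstacle is the potential irrationality of the coefficients of basis divisors, which is handled by quoting the finiteness statement of \cite{Am16} instead of the classical ACC for $\QQ$-divisors.
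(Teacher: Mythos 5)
Your proof is correct and takes essentially the same approach the paper intends: it adapts the proof of Lemma \ref{lem:finite-lct-delta-g-m} to the coupled setting, using Ambro's finiteness result to attain the infimum over $(\boldsymbol{m,g})$-basis divisors, and a $T_\CC$-equivariant log resolution of the minimizing $D_0$ to produce an invariant $F$. The closing check via $\ord_F(D_0)\le\sum_i S^{g_i}_{m_i}(L_i;F)$ and Lemma \ref{lem:coupled-delta-g-m=inf-A/S}, showing that this $F$ also attains the infimum in the valuative formula, is a helpful clarification of what ``computes'' means, not a detour.
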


The coupled version of Theorem \ref{thm:delta-g-m=delta-g-A-m} also holds.

\begin{theorem}
	One has
	$
	\delta^{A,\boldsymbol{g}}_{\boldsymbol{m}}(L_1,\cdots,L_k)=\delta^{\boldsymbol{g}}_{\boldsymbol{m}}(L_1,\cdots,L_k).
	$
\end{theorem}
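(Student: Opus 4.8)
The plan is to imitate, \emph{mutatis mutandis}, the proof outline of Theorem~\ref{thm:delta-g-m=delta-g-A-m}, handling the $k$ bundles one index at a time. Throughout, $(i,\lambda,\alpha)$ will denote a triple with $1\le i\le k$, $\lambda$ a $T_\CC$-weight on $H^0(X,m_iL_i)$, and $1\le\alpha\le d_{m_i,\lambda}$, and $\{s^{(\lambda,i)}_\alpha\}$ a basis of the weight space $R_{m_i,\lambda}$; write $c^{(i)}_\lambda$ for the coefficient with which $\{s^{(\lambda,i)}_\alpha=0\}$ enters the $(\boldsymbol m,\boldsymbol g)$-basis divisor $D=\sum_iD_i$ (recall \eqref{eq:def-m-g-basis-divisor} and Definition~\ref{def:coupled-delta-m}), so that $\sum_{\lambda,\alpha}m_ic^{(i)}_\lambda=1$ for each fixed $i$ since $D_i\sim_{\RR}L_i$. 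First I would establish the coupled analogue of Proposition~\ref{prop:deltam=int<C}: to compute $\delta^{\boldsymbol g}_{\boldsymbol m}(L_1,\dots,L_k)$ it suffices to use $(\boldsymbol m,\boldsymbol g)$-basis divisors built from bases $\{s^{(\lambda,i)}_\alpha\}$ that are orthonormal for the fixed $T$-invariant products $H_{m_i}=\int_Xh_i^{m_i}(\cdot,\cdot)\,\omega_i^n$. As in Proposition~\ref{prop:deltam=int<C} this follows from the Demailly--Koll\'ar lower semicontinuity of complex singularity exponents \cite{DK01} applied to a convergent sequence of orthonormal frames, together with the analytic description of $\lct$; one uses here that the fixed measure $d\mu$ appearing in the coupled Ding functional is comparable to $\omega_1^n$ up to a bounded factor and hence harmless for integrability questions.

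For the inequality $\delta^{A,\boldsymbol g}_{\boldsymbol m}\ge\delta^{\boldsymbol g}_{\boldsymbol m}$ I would start from the integral defining $\delta^{A,\boldsymbol g}_{\boldsymbol m}$,
$$\mathbf I:=\int_X\frac{\prod_{i=1}^{k}\prod_{\lambda,\alpha}\big(\mu^{(\lambda,i)}_\alpha\big)^{\delta c^{(i)}_\lambda}}{\prod_{i=1}^{k}\big(\sum_{\lambda,\alpha}\mu^{(\lambda,i)}_\alpha|s^{(\lambda,i)}_\alpha|^2_{h_i^{m_i}}\big)^{\delta/m_i}}\,d\mu,$$
and apply weighted arithmetic--geometric mean (Young) separately in each index $i$, exactly as in \eqref{eq:Young}: since the weights $m_ic^{(i)}_\lambda$ sum to $1$ over $(\lambda,\alpha)$, one gets $\big(\sum_{\lambda,\alpha}\mu^{(\lambda,i)}_\alpha|s^{(\lambda,i)}_\alpha|^2_{h_i^{m_i}}\big)^{\delta/m_i}\ge C_i\prod_{\lambda,\alpha}(\mu^{(\lambda,i)}_\alpha)^{\delta c^{(i)}_\lambda}|s^{(\lambda,i)}_\alpha|^{2\delta c^{(i)}_\lambda}_{h_i^{m_i}}$. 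The parameters then cancel, leaving $\mathbf I\le C\int_X d\mu\big/\prod_{i}\prod_{\lambda,\alpha}|s^{(\lambda,i)}_\alpha|^{2\delta c^{(i)}_\lambda}_{h_i^{m_i}}$, which is precisely the integral controlling $\lct(X,D)$. Combined with the orthonormal reduction above, this gives $\delta^{A,\boldsymbol g}_{\boldsymbol m}\ge\inf_D\lct(X,D)=\delta^{\boldsymbol g}_{\boldsymbol m}$.

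For the reverse inequality, fix a $T_\CC$-invariant prime divisor $F$ over $X$ with center $Z=\pi(F)$; by Lemma~\ref{lem:coupled-delta-g-m=inf-A/S} it suffices to show $\delta^{A,\boldsymbol g}_{\boldsymbol m}\le A_X(F)\big/\sum_{i=1}^{k}S^{g_i}_{m_i}(L_i;F)$. Choose for each $i$ and each $\lambda$ an $H_{m_i}$-orthonormal basis of $R_{m_i,\lambda}$ compatible with the filtration $\cF^\bullet_{\ord_F}$, and run the coupled Bergman ray $\mu^{(\lambda,i)}_\alpha(t):=e^{t\,\ord_F(s^{(\lambda,i)}_\alpha)}$, $t\ge0$. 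By the same computation as in Lemma~\ref{lem:E-m=S-m} the numerator of $\mathbf I(t)$ equals $e^{t\delta\sum_i S^{g_i}_{m_i}(L_i;F)}$, so
$$\mathbf I(t)=e^{t\big(\delta\sum_{i=1}^{k}S^{g_i}_{m_i}(L_i;F)-A_X(F)\big)}\int_X\frac{e^{tA_X(F)}}{\prod_{i=1}^{k}\big(\sum_{\lambda,\alpha}e^{t\,\ord_F(s^{(\lambda,i)}_\alpha)}|s^{(\lambda,i)}_\alpha|^2_{h_i^{m_i}}\big)^{\delta/m_i}}\,d\mu.$$
Pulling back to a small polydisk $\DD$ around a smooth point of $F$ exactly as in Lemma~\ref{lem:I-lambda>eta} --- using $K_Y=\pis K_X+(A_X(F)-1)F+D$ from \eqref{eq:KY=KX+F+D}, so that the smooth measure $d\mu$ acquires the factor $|z_1|^{2A_X(F)-2}$, writing $\pis s^{(\lambda,i)}_\alpha=z_1^{\ord_F(s^{(\lambda,i)}_\alpha)}g^{(\lambda,i)}_\alpha$, and bounding the $h_i^{m_i}$ and the $|g^{(\lambda,i)}_\alpha|^2$ by a constant --- the substitution $w=e^{t/2}z_1$ goes through verbatim; the extra product over $i$ in the denominator only multiplies the resulting one-dimensional integrand by further nonnegative powers of $|w|$, and the $z_2,\dots,z_n$ integration contributes a positive constant, so the last integral stays bounded below by some $c>0$ for all $t\ge0$, exactly as in Lemma~\ref{lem:I-lambda>eta}. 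Hence $\mathbf I(t)\to\infty$ whenever $\delta>A_X(F)/\sum_iS^{g_i}_{m_i}(L_i;F)$, giving the desired bound; taking the infimum over all $T_\CC$-invariant $F$ and invoking Lemma~\ref{lem:coupled-delta-g-m=inf-A/S} concludes.

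The main obstacle is not conceptual but organizational: keeping the triple index $(i,\lambda,\alpha)$ straight, verifying that Young's inequality really applies bundle-by-bundle with the correct normalization $\sum_{\lambda,\alpha}m_ic^{(i)}_\lambda=1$, and checking that the single local model at a smooth point of $F$ is insensitive both to the product over $i$ in the denominator and to using $d\mu$ in place of $\omega^n$. The one point worth recording explicitly is that $A_X(F)$ and the center $Z$ depend only on $X$ and not on the polarizations $L_i$, which is exactly why one local computation handles all $k$ bundles simultaneously.
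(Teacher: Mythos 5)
Your proposal is correct and follows essentially the same route the paper takes: the paper proves the single-bundle case (Theorem \ref{thm:delta-g-m=delta-g-A-m}) by reducing to $H_m$-orthonormal bases, applying the weighted Young inequality \eqref{eq:Young} for one direction, and using the Bergman ray $\mu^{(\lambda)}_\alpha(t)=e^{t\,\ord_F(s^{(\lambda)}_\alpha)}$ together with Lemma \ref{lem:I-lambda>eta} for the other, and then simply asserts that the coupled version ``also holds.'' Your bookkeeping --- the normalization $\sum_{\lambda,\alpha}m_ic^{(i)}_\lambda=1$ making Young's inequality apply bundle-by-bundle, the identification of the numerator slope with $\sum_iS^{g_i}_{m_i}(L_i;F)$, the observation that the extra product over $i$ in the denominator is harmless for the local computation since $A_X(F)$ and the center depend only on $X$, and the remark that $d\mu$ is smooth and hence comparable to any fixed smooth volume form --- are exactly the minor adaptations needed, so the argument goes through as you describe.
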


\subsection{Coupled $\boldsymbol{g}$-weighted $(f,\delta)$-balanced metrics}
For any $\delta>0$ and $f\in C^\infty(X,\RR)$, put
\begin{equation*}
	F_{\boldsymbol{m}}^{f,\delta,\boldsymbol{g}}(\boldsymbol{\varphi}):=-\frac{1}{\delta}\log\int_Xe^{f-\delta\sum_{i=1}^k\varphi_i}d\mu-\sum_{i=1}^{k}E^{g_i}_{\omega_i,m_i}(\varphi_i),\ \boldsymbol{\varphi}=(\varphi_1,...,\varphi_k)\in\boldsymbol{\cB}^T_{
	\boldsymbol{m}}.
\end{equation*}

\begin{definition}
	Any critical point of $F_{\boldsymbol{m}}^{f,\delta,\boldsymbol{g}}$ is called coupled $\boldsymbol{g}$-weighted $(f,\delta)$-balanced.
\end{definition}

\begin{remark}
{\rm	One can think of $F_{\boldsymbol{m}}^{f,\delta,\boldsymbol{g}}$ as the quantization of 
	$$
	F_{\boldsymbol{m}}^{f,\delta,\boldsymbol{g}}(\boldsymbol{\varphi}):=-\frac{1}{\delta}\log\int_Xe^{f-\delta\sum_{i=1}^k\varphi_i}d\mu-\sum_{i=1}^{k}E^{g_i}_{\omega_i}(\varphi_i),\ \boldsymbol{\varphi}=(\varphi_1,...,\varphi_k)\in\boldsymbol{\cH}^T,
	$$
	with 
$		
\Ric(\omega_i+\ddc\varphi_i)
=\delta
\sum_{j=1}^{k}
(\omega_j+\ddc\varphi_j)
+(1-\delta)\sum_{j=1}^k\omega_j
+(\alpha-\ddc f)+\ddc \log g_i(\varphi_i),\ 1\leq i\leq k,
$
being the
critical point equations.
	Varying $\delta$, this can be seen as a continuity path towards the $(\theta-\ddc f)$-twisted coupled K\"ahler--Ricci $\boldsymbol{g}$-soliton metric (cf. \cite{DH18}).
	}
\end{remark}

\begin{definition}
	We say $F_{\boldsymbol{m}}^{f,\delta,\boldsymbol{g}}$ is coercive on $\boldsymbol{\cB}^T_{\boldsymbol{m}}$ if there exist $\lambda>0$ and $C>0$ such that
	$$
	F_{\boldsymbol{m}}^{f,\delta,\boldsymbol{g}}(\boldsymbol{\varphi})\geq\lambda\sum_{i=1}^k(\sup\varphi_i-E^{g_i}_{\omega_i,m_i}(\varphi_i))-C,\ \forall\boldsymbol{\varphi}=(\varphi_1,...,\varphi_k)\in\boldsymbol{\cB}^T_{
	\boldsymbol{m}}.
	$$
\end{definition}

The next result follows from the argument for Propositions \ref{prop:F-m-coercive=>balanced} and \ref{prop:F-f-delta-g-coerc=>g-balanced}.
\begin{proposition}
	If $F_{\boldsymbol{m}}^{f,\delta,\boldsymbol{g}}$ is coercive on $\boldsymbol{\cB}^T_{\boldsymbol{m}}$, then there exists a coupled $\boldsymbol{g}$-weighted $(f,\delta)$-balanced $\boldsymbol{\varphi}\in\boldsymbol{\cB}^T_{\boldsymbol{m}}$, minimizing $F_{\boldsymbol{m}}^{f,\delta,\boldsymbol{g}}$.
\end{proposition}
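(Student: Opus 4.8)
The plan is to reduce this coupled statement to the single-bundle case already established in Proposition \ref{prop:F-m-coercive=>balanced} and Proposition \ref{prop:F-f-delta-g-coerc=>g-balanced}, by running the standard direct method of the calculus of variations on the product space $\boldsymbol{\cB}^T_{\boldsymbol{m}} = \cB^T_{m_1}(X,\omega_1)\times\cdots\times\cB^T_{m_k}(X,\omega_k)$. First I would take a minimizing sequence $\boldsymbol{\varphi}^{(j)} = (\varphi_1^{(j)},\ldots,\varphi_k^{(j)})$ for $F^{f,\delta,\boldsymbol{g}}_{\boldsymbol{m}}$. Since the functional is invariant under adding a constant to any single $\varphi_i$ (the $-\tfrac1\delta\log\int e^{f-\delta\sum\varphi_i}d\mu$ term and each $E^{g_i}_{\omega_i,m_i}$ both transform additively under such shifts, and these cancel in the same way as in Definition \ref{def:F-f-delta-m}), I may normalize $\sup\varphi_i^{(j)}=0$ for every $i$ and every $j$. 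Coercivity then bounds $\sum_i\big(\sup\varphi_i^{(j)} - E^{g_i}_{\omega_i,m_i}(\varphi_i^{(j)})\big)$ uniformly; since each summand is nonnegative (this nonnegativity is exactly what Lemma \ref{lem:sup-E-g-m<>sup-E-m} packages, comparing it to $\sup - E_m\geq 0$), each term $\sup\varphi_i^{(j)} - E^{g_i}_{\omega_i,m_i}(\varphi_i^{(j)})$ is individually bounded.

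Next, for each fixed $i$, I would invoke the compactness already used in the proof of Proposition \ref{prop:F-m-coercive=>balanced}: the bound $\sup\varphi_i^{(j)} - E^{g_i}_{\omega_i,m_i}(\varphi_i^{(j)}) \le A_i$ together with $\sup\varphi_i^{(j)}=0$ controls, via Lemma \ref{lem:mu-max=sup-E} and Lemma \ref{lem:sup=max-mu} (applied to the pair $(L_i,\omega_i)$ and the Hermitian product $H_{m_i}$), the ratios $\mu_\alpha^{(\lambda)}/\mu_{\max}^{(\lambda)}$ and the size of $\mu_{\max}$ among the eigenvalues of $\FS^{-1}(\varphi_i^{(j)})$; hence the $\varphi_i^{(j)}$ lie in a $d_{1,m_i}$-bounded subset of $\cB^T_{m_i}(X,\omega_i)$, which is compact in the $C^\infty$-topology by (the $T$-invariant version of) Lemma \ref{lem:d-1-cptness}. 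Passing to a subsequence in each factor successively, I extract $\varphi_i^{(j)}\xrightarrow{C^\infty}\varphi_i^\infty\in\cB^T_{m_i}(X,\omega_i)$. Then $\boldsymbol{\varphi}^\infty=(\varphi_1^\infty,\ldots,\varphi_k^\infty)$ lies in $\boldsymbol{\cB}^T_{\boldsymbol{m}}$, and since each $E^{g_i}_{\omega_i,m_i}$ is continuous in the $C^\infty$-topology and the integral $\int_X e^{f-\delta\sum_i\varphi_i}d\mu$ converges under $C^\infty$-convergence (all integrands are smooth on the compact $X$), we get $F^{f,\delta,\boldsymbol{g}}_{\boldsymbol{m}}(\boldsymbol{\varphi}^\infty)=\inf F^{f,\delta,\boldsymbol{g}}_{\boldsymbol{m}}$. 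A minimizer of a $C^\infty$ functional on the (finite-dimensional, smooth) manifold $\boldsymbol{\cB}^T_{\boldsymbol{m}}$ is automatically a critical point, so $\boldsymbol{\varphi}^\infty$ is coupled $\boldsymbol{g}$-weighted $(f,\delta)$-balanced, concluding the proof.

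The only genuinely new bookkeeping relative to the single-bundle case is the passage from the joint coercivity bound to individual control of each factor. I expect this to be the main (though minor) obstacle: one must be slightly careful that the normalization $\sup\varphi_i=0$ is compatible with the scaling invariance in every slot simultaneously, and that the nonnegativity of each $\sup\varphi_i - E^{g_i}_{\omega_i,m_i}(\varphi_i)$ — which lets one split the sum — is correctly inherited from Lemma \ref{lem:sup-E-g-m<>sup-E-m} for each pair $(L_i,\omega_i,g_i)$. Everything else is a verbatim product-space repetition of the arguments in Propositions \ref{prop:F-m-coercive=>balanced} and \ref{prop:F-f-delta-g-coerc=>g-balanced}, which is why it is reasonable to present this only as an outline.
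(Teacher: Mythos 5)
Your proof is correct and takes essentially the same route the paper intends: the paper disposes of this coupled proposition with a one-line reference to the single-bundle arguments (Propositions \ref{prop:F-m-coercive=>balanced} and \ref{prop:F-f-delta-g-coerc=>g-balanced}), and your product-space direct-method argument fills in exactly those details, with the per-slot normalization and per-slot compactness being the only (minor) new bookkeeping. One small refinement: each $\sup\varphi_i - E^{g_i}_{\omega_i,m_i}(\varphi_i)$ is bounded below by a constant depending on $(m_i,g_i)$ (via Lemmas \ref{lem:sup=max-mu} and \ref{lem:sup-E-g-m<>sup-E-m}) rather than literally nonnegative, but this is all that is needed to split the joint coercivity bound into per-factor bounds.
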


The next result is the coupled version of Proposition \ref{prop:F-coerc=delta-m}.

\begin{proposition}
\label{prop:coupled-F-coercive=delta-m}
	$F_{\boldsymbol{m}}^{f,\delta,\boldsymbol{g}}$ is coercive on $\boldsymbol{\cB}^T_{\boldsymbol{m}}$ if and only if $\delta\in(0,\delta^{\boldsymbol{g}}_{\boldsymbol{m}}(L_1,\cdots,L_k))$.
\end{proposition}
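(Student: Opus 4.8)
The plan is to transplant, with the obvious bookkeeping, the argument of Proposition~\ref{prop:F-coerc=delta-m} together with its weighted refinement Proposition~\ref{prop:F-delta-g-m-coerc=delta-g-m} to the product Bergman space $\boldsymbol{\cB}^T_{\boldsymbol m}$. First, since $f$ is bounded, $F_{\boldsymbol m}^{f,\delta,\boldsymbol g}$ and $F_{\boldsymbol m}^{0,\delta,\boldsymbol g}$ differ by a bounded quantity on $\boldsymbol{\cB}^T_{\boldsymbol m}$, so---exactly as in Remark~\ref{anyfRem}---coercivity of one is equivalent to coercivity of the other, and I would reduce to the case $f=0$.

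For the ``if'' direction, assume $\delta\in(0,\delta^{\boldsymbol g}_{\boldsymbol m}(L_1,\cdots,L_k))$. By the coupled analogue of Theorem~\ref{thm:delta-g-m=delta-g-A-m} one has $\delta^{\boldsymbol g}_{\boldsymbol m}=\delta^{A,\boldsymbol g}_{\boldsymbol m}$, so one may fix $\gamma\in(\delta,\delta^{A,\boldsymbol g}_{\boldsymbol m})$, for which $\int_X e^{-\gamma\sum_i(\varphi_i-E^{g_i}_{\omega_i,m_i}(\varphi_i))}\,d\mu$ is uniformly bounded over $\boldsymbol\varphi\in\boldsymbol{\cB}^T_{\boldsymbol m}$. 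Since each $\cB^T_{m_i}$ is finite-dimensional and $d\mu$ has bounded smooth density with respect to each $\omega_i^n$, the level-$m_i$ $\alpha$-invariant $\alpha_{m_i}(L_i)$ is positive; a H\"older inequality with $k$ equal exponents then produces $\alpha_{\boldsymbol m}>0$ with $\int_X e^{-\alpha\sum_i(\varphi_i-\sup\varphi_i)}\,d\mu$ uniformly bounded for $\alpha<\alpha_{\boldsymbol m}$. Choosing $\alpha\in(0,\min\{\delta,\alpha_{\boldsymbol m}\})$ and writing $\delta=\alpha\tfrac{\gamma-\delta}{\gamma-\alpha}+\gamma\tfrac{\delta-\alpha}{\gamma-\alpha}$, the H\"older estimate used in Proposition~\ref{prop:F-coerc=delta-m}, now applied to $e^{-\delta\sum_i\varphi_i}\,d\mu$, gives $\lambda:=\tfrac{\alpha(\gamma-\delta)}{\delta(\gamma-\alpha)}>0$ and $C>0$ with $F^{0,\delta,\boldsymbol g}_{\boldsymbol m}(\boldsymbol\varphi)\ge\lambda\sum_i(\sup\varphi_i-E^{g_i}_{\omega_i,m_i}(\varphi_i))-C$, i.e.\ coercivity.

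For the converse, assume $F^{0,\delta,\boldsymbol g}_{\boldsymbol m}$ is coercive with constants $\varepsilon,C>0$. Writing each $\varphi_i=\tfrac{1}{m_i}\log\sum_{\lambda,\alpha}\mu^{(\lambda)}_{\alpha,i}|s^{(\lambda)}_{\alpha,i}|^2_{h_i^{m_i}}$ in an $H_{m_i}$-orthonormal basis and using the coupled forms of Lemmas~\ref{lem:sup=max-mu}, \ref{lem:mu-max=sup-E} and~\ref{lem:sup-E-g-m<>sup-E-m} to express each $\sup\varphi_i-E^{g_i}_{\omega_i,m_i}(\varphi_i)$ through the $\mu^{(\lambda)}_{\alpha,i}$, the coercivity hypothesis rearranges---just as \eqref{eq:int-delta-epsilon-prime} did in the single-bundle case---into a uniform upper bound for an integral of the same shape as \eqref{eq:I=int-mu-lambda-alpha}, now with the products running over $i=1,\cdots,k$, an extra factor $\prod_i(\max_{\lambda,\alpha}\mu^{(\lambda)}_{\alpha,i})^{\delta\varepsilon/m_i}$ in the numerator, and $\omega^n$ replaced by $d\mu$. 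Then I would fix a $T_\CC$-invariant prime divisor $F$ over $X$ with $\sum_i S^{g_i}_{m_i}(L_i;F)\neq 0$, take each basis compatible with the filtration $\cF^\bullet_{\ord_F}$ on each weight space of $H^0(X,m_iL_i)$, and specialize along the Bergman geodesic $\mu^{(\lambda)}_{\alpha,i}(t)=e^{t\,\ord_F(s^{(\lambda)}_{\alpha,i})}$, $t\ge0$. As in the proof of Theorem~\ref{thm:delta-g-m=delta-g-A-m}, $E^{g_i}_{\omega_i,m_i}(\varphi_i(t))=t\,S^{g_i}_{m_i}(L_i;F)$, while Lemma~\ref{lem:sup=max-mu} gives $\sup\varphi_i(t)=t\,\tau_{m_i}(F)/m_i+O(1)$; factoring out the exponential prefactor exactly as in \eqref{deltaEVAX} and invoking Lemma~\ref{lem:I-lambda>eta}---whose proof, a local computation at the center $\pi(F)$ pulled back to $Y$, goes through after bounding each $|g^{(\lambda)}_{\alpha,i}|^2$ and $h_i^{m_i}$ and integrating out the transverse coordinates, as $d\mu$ differs from $\omega^n$ by a bounded smooth factor---I would deduce
\[
\delta(1-\varepsilon)\sum_{i=1}^k S^{g_i}_{m_i}(L_i;F)+\delta\varepsilon\sum_{i=1}^k\frac{\tau_{m_i}(F)}{m_i}-A_X(F)\le 0
\]
for every such $F$. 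Finally, evaluating this at the $T_\CC$-invariant prime divisor $F_0$ attaining the infimum in Lemma~\ref{lem:coupled-delta-g-m=inf-A/S}, and using that base point freeness of the $|m_iL_i|$ forces the strict inequality $\sum_i\tau_{m_i}(F_0)/m_i>\sum_i S^{g_i}_{m_i}(L_i;F_0)$---argued exactly as in the last paragraph of the proof of Proposition~\ref{prop:F-delta-g-m-coerc=delta-g-m}---one concludes $\delta^{\boldsymbol g}_{\boldsymbol m}=A_X(F_0)/\sum_i S^{g_i}_{m_i}(L_i;F_0)>\delta$.

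The weighted-and-coupled bookkeeping in the lemmas invoked above is routine. The step I expect to require the most care---again just as in Proposition~\ref{prop:F-delta-g-m-coerc=delta-g-m}---is passing from $\delta^{\boldsymbol g}_{\boldsymbol m}\ge\delta$ to the strict inequality $\delta^{\boldsymbol g}_{\boldsymbol m}>\delta$, which is what actually yields the open interval $(0,\delta^{\boldsymbol g}_{\boldsymbol m})$ rather than a half-open one; this hinges on the infimum defining $\delta^{\boldsymbol g}_{\boldsymbol m}$ being attained, together with the fact that base point freeness of $|m_iL_i|$ rules out $\dim\cF^1_{\ord_{F_0}}H^0(X,m_iL_i)=\dim H^0(X,m_iL_i)$ simultaneously for all $i$, which is precisely what makes the exponential prefactor at $F_0$ strictly negative.
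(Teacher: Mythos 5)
Your proof is correct and follows exactly the approach the paper implicitly suggests: the paper states Proposition~\ref{prop:coupled-F-coercive=delta-m} without proof, labeling it merely ``the coupled version of Proposition~\ref{prop:F-coerc=delta-m},'' and your transplantation of the arguments of Propositions~\ref{prop:F-coerc=delta-m} and~\ref{prop:F-delta-g-m-coerc=delta-g-m} to $\boldsymbol{\cB}^T_{\boldsymbol m}$ is the intended filling-in of details. All the main steps check out: the reduction to $f=0$, the H\"older argument for the ``if'' direction (using the coupled $\delta^{A,\boldsymbol g}_{\boldsymbol m}=\delta^{\boldsymbol g}_{\boldsymbol m}$ and the elementary positivity of the level-$m_i$ alpha-invariants), the rearrangement of coercivity and specialization along the coupled Bergman geodesic $\mu^{(\lambda)}_{\alpha,i}(t)=e^{t\,\ord_F(s^{(\lambda)}_{\alpha,i})}$, the factorization leading to the slope constraint $\delta(1-\varepsilon)\sum_i S^{g_i}_{m_i}(L_i;F)+\delta\varepsilon\sum_i\tau_{m_i}(F)/m_i\le A_X(F)$, and the base-point-freeness argument for the strict inequality. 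One small imprecision in your closing remark: the key fact is simply that for any $T_\CC$-invariant $F_0$ with $\sum_i S^{g_i}_{m_i}(L_i;F_0)>0$, at least one $\tau_{m_i}(F_0)>0$, and for that $i$ base point freeness of $|m_iL_i|$ gives $S^{g_i}_{m_i}(L_i;F_0)<\tau_{m_i}(F_0)/m_i$ strictly, while for the remaining $j$ one only needs $\le$; there is no need to track the equality ``simultaneously for all $i$,'' and phrasing it that way could mislead a reader into thinking the ruling-out needs a coupled argument rather than just the single-$L_i$ argument from Proposition~\ref{prop:F-delta-g-m-coerc=delta-g-m}.
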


\subsection{$\theta$-twisted coupled $\boldsymbol{g}$-balanced metric}
Recall that we have already chosen a $T$-invariant smooth form $\theta\in(c_1(X)-\sum_{i=1}^kc_1(L_i))$ at the begining. Consider
\begin{equation*}
	F^{\theta,\boldsymbol{g}}_{\boldsymbol{m}}(\boldsymbol{\varphi}):=-\log\int_Xe^{-\sum_{i=1}^k\varphi_i}d\mu-\sum_{i=1}^kE^{g_i}_{\omega_i,m_i}(\varphi_i),\ \boldsymbol{\varphi}=(\varphi_1,...,\varphi_k)\in\boldsymbol{\cB}^T_{
	\boldsymbol{m}}.
\end{equation*}

The next definition gives a natural quantization of the $\theta$-twisted coupled K\"ahler--Ricci $\boldsymbol{g}$-soliton (see also \cite{Tak19}).
\begin{definition}
	Any critical point of $F^{\alpha,\boldsymbol{g}}_{\boldsymbol{m}}$ is called $\theta$-twisted coupled $\boldsymbol{g}$-balanced.
\end{definition}

The next result, 
as a generalization of Theorem \ref{thm:finite-dim-YTD}, shows that the coupled $\delta^{\boldsymbol{g}}_{\boldsymbol{m}}$-invariant characterizes the existence of $\theta$-twisted coupled $\boldsymbol{g}$-balanced metrics.
\begin{theorem}
\label{thm:finite-coupled-dim-g-weighted-YTD}
	The following statements hold:	\begin{enumerate}
		\item If $\delta^{\boldsymbol{g}}_{\boldsymbol{m}}(L_1,...,L_k)>1$, then there exists an $\theta$-twisted coupled $\boldsymbol{g}$-balanced $\boldsymbol{\varphi}$ in $\boldsymbol{\cB}^T_{
	\boldsymbol{m}}$.
		\item Assume $\theta\geq0$. If there exists an $\theta$-twisted coupled $\boldsymbol{g}$-balanced $\boldsymbol{\varphi}$ (resp. a unique $\theta$-twisted coupled $\boldsymbol{g}$-balanced $\boldsymbol{\varphi}$ up to translation) in $\boldsymbol{\cB}^T_{
	\boldsymbol{m}}$, then $\delta^{\boldsymbol{g}}_{\boldsymbol{m}}(L_1,...,L_k)\geq1$ (resp. $\delta^{\boldsymbol{g}}_{\boldsymbol{m}}(L_1,...,L_k)>1$).
	\end{enumerate}
\end{theorem}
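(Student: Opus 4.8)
The plan is to transplant the proof of Theorem~\ref{thm:finite-dim-YTD} to the product Bergman space $\boldsymbol{\cB}^T_{\boldsymbol{m}}$ via the coupled quantized Ding functional $F^{\theta,\boldsymbol{g}}_{\boldsymbol{m}}$. The first observation is that $F^{\theta,\boldsymbol{g}}_{\boldsymbol{m}}$ is exactly $F^{f,\delta,\boldsymbol{g}}_{\boldsymbol{m}}$ with $f=0$ and $\delta=1$, so that a $\theta$-twisted coupled $\boldsymbol{g}$-balanced tuple is precisely a coupled $\boldsymbol{g}$-weighted $(0,1)$-balanced tuple. Part~(1) is then immediate: if $\delta^{\boldsymbol{g}}_{\boldsymbol{m}}(L_1,\dots,L_k)>1$, then Proposition~\ref{prop:coupled-F-coercive=delta-m}, applied with $\delta=1$, makes $F^{\theta,\boldsymbol{g}}_{\boldsymbol{m}}$ coercive on $\boldsymbol{\cB}^T_{\boldsymbol{m}}$, and the existence criterion for coercive coupled functionals stated above produces a minimizer, which is in particular a critical point, i.e.\ a $\theta$-twisted coupled $\boldsymbol{g}$-balanced tuple.

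For part~(2), assuming $\theta\ge0$, the crucial input will be convexity of $F^{\theta,\boldsymbol{g}}_{\boldsymbol{m}}$ along \emph{product Bergman geodesics} $t\mapsto(\varphi_1(t),\dots,\varphi_k(t))$, $\varphi_i(t)\in\cB^T_{m_i}(X,\omega_i)$. I would first note that each $E^{g_i}_{\omega_i,m_i}$ is affine along the Bergman geodesic $\varphi_i(t)$: within each $T_\CC$-weight space the relevant $\log\det$ of a Gram matrix is linear in $t$, and the weighted sum over weights stays linear. For the coupling term $-\log\int_X e^{-\sum_i\varphi_i(t)}\,d\mu$ I would sum the sub-geodesic inequalities \eqref{subgeod} for the $\omega_i$ to see that $\sum_i\varphi_i(t)$ is a sub-geodesic for $\sum_i\omega_i$, observe that $\theta\ge0$ exhibits $d\mu$ (up to a constant, via $\Ric(\omega_1)-\ddc f_1=\sum_j\omega_j+\theta\ge0$) as a positively curved singular metric on $-K_X$, and then invoke Berndtsson's subharmonicity theorem \cite[Section~7]{Bern15}. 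Granting convexity, every critical point minimizes $F^{\theta,\boldsymbol{g}}_{\boldsymbol{m}}$, so the existence of a $\theta$-twisted coupled $\boldsymbol{g}$-balanced tuple forces $F^{\theta,\boldsymbol{g}}_{\boldsymbol{m}}\ge -C$ on $\boldsymbol{\cB}^T_{\boldsymbol{m}}$; replacing each $\varphi_i$ by $\varphi_i-E^{g_i}_{\omega_i,m_i}(\varphi_i)$ (using $E^{g_i}_{\omega_i,m_i}(\varphi_i+c)=E^{g_i}_{\omega_i,m_i}(\varphi_i)+c$) rewrites this as
$$\int_X e^{-\sum_{i=1}^k(\varphi_i-E^{g_i}_{\omega_i,m_i}(\varphi_i))}\,d\mu\le e^{C}\quad\text{on }\boldsymbol{\cB}^T_{\boldsymbol{m}},$$
i.e.\ $\delta^{A,\boldsymbol{g}}_{\boldsymbol{m}}(L_1,\dots,L_k)\ge1$, hence $\delta^{\boldsymbol{g}}_{\boldsymbol{m}}(L_1,\dots,L_k)\ge1$ by the coupled analogue of Theorem~\ref{thm:delta-g-m=delta-g-A-m}.

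For the uniqueness refinement I would run the coercivity argument of \cite{DR17}, as in the proof of Theorem~\ref{thm:finite-dim-YTD}. Using the $k$-parameter translation invariance $F^{\theta,\boldsymbol{g}}_{\boldsymbol{m}}(\varphi_1+c_1,\dots,\varphi_k+c_k)=F^{\theta,\boldsymbol{g}}_{\boldsymbol{m}}(\varphi_1,\dots,\varphi_k)$, I would normalize the given balanced tuple into the slice $\boldsymbol{\cR}:=\{\boldsymbol{\varphi}:E^{g_i}_{\omega_i,m_i}(\varphi_i)=0,\ 1\le i\le k\}$; since the $E^{g_i}_{\omega_i,m_i}$ are affine along Bergman geodesics, any product Bergman geodesic between two points of $\boldsymbol{\cR}$ stays in $\boldsymbol{\cR}$, and by uniqueness the normalized tuple is the unique minimizer of $F^{\theta,\boldsymbol{g}}_{\boldsymbol{m}}$ on $\boldsymbol{\cR}$. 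Combining convexity, compactness of $d_{1,m_i}$-balls (the product form of Lemma~\ref{lem:d-1-cptness}) and uniqueness exactly as in Theorem~\ref{thm:finite-dim-YTD} gives $\lambda>0$ with $F^{\theta,\boldsymbol{g}}_{\boldsymbol{m}}(\boldsymbol{\varphi})\ge\lambda\sum_i d_{1,m_i}(\varphi_{0,i},\varphi_i)-C$ on $\boldsymbol{\cR}$; converting $d_{1,m_i}$ into $\sup\varphi_i-E^{g_i}_{\omega_i,m_i}(\varphi_i)$ via (the product form of) Lemma~\ref{lem:d-1-m=sup-E} together with Lemma~\ref{lem:sup-E-g-m<>sup-E-m}, and extending from $\boldsymbol{\cR}$ to all of $\boldsymbol{\cB}^T_{\boldsymbol{m}}$ by translation invariance, I obtain coercivity of $F^{\theta,\boldsymbol{g}}_{\boldsymbol{m}}$, whence $\delta^{\boldsymbol{g}}_{\boldsymbol{m}}(L_1,\dots,L_k)>1$ by Proposition~\ref{prop:coupled-F-coercive=delta-m}.

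The main obstacle is the convexity input in part~(2): one must verify carefully that summing the individual sub-geodesic conditions and absorbing the semipositive twist $\theta$ really places the coupling term $-\log\int_X e^{-\sum_i\varphi_i}\,d\mu$ within the scope of Berndtsson's theorem \cite[Section~7]{Bern15} (this is the coupled counterpart of the single-bundle convexity used for Theorems~\ref{thm:finite-dim-YTD} and~\ref{thm:finite-dim-g-weighted-YTD}). Everything else is a componentwise adaptation, on the product space $\boldsymbol{\cB}^T_{\boldsymbol{m}}$, of the single-bundle arguments of \S\ref{sec:balanced-metric}.
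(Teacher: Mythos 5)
Your proposal is correct and follows precisely the route the paper intends: the paper states Theorem~\ref{thm:finite-coupled-dim-g-weighted-YTD} without an explicit proof, leaving the reader to combine Proposition~\ref{prop:coupled-F-coercive=delta-m} (coercivity threshold $=\delta^{\boldsymbol{g}}_{\boldsymbol{m}}$) and the preceding existence proposition for part~(1), and to transplant the Berndtsson-convexity plus Darvas--Rubinstein compactness argument from the proof of Theorem~\ref{thm:finite-dim-YTD} (as sketched again for Theorem~\ref{thm:finite-dim-g-weighted-YTD}) for part~(2). Your account simply makes the implicit details explicit---affineness of each $E^{g_i}_{\omega_i,m_i}$ along Bergman geodesics, the summed sub-geodesic inequality for $\sum_i L_i$, the $k$-fold translation slice $\boldsymbol{\cR}$, and the product form of Lemmas~\ref{lem:d-1-cptness} and~\ref{lem:d-1-m=sup-E}---and these are the same ingredients the authors use in the single-bundle case.
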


\subsection{Computing coupled $\delta_m$ using $T_\CC$-invariant divisors}
The proof of Theorem \ref{thm:delta-T-m} also works seamlessly for the coupled case. So we record the following result, without giving the proof.

\begin{theorem}
\label{thm:coupled-delta-T-m}
Let $X$ be K\"ahler manifold, polarized by a $k$-tuple $(L_1,\cdots,L_k)$ of ample $\QQ$-line bundles, together with a $T_\CC$-action. Put
$
s(L_1,\cdots,L_k):=\sup\big\{s\in\RR\,:\,-K_X-s\sum_{i=1}^kL_i\text{ is nef}\big\}.
$
Let $\boldsymbol{m}=(m_1,...,m_k)$ be a $k$-tuple of positive integers such that each $m_iL_i$ is very ample.  Then,
$$
\min\bigg\{s(L_1,\cdots,L_k),\delta^{T_\CC}_{\boldsymbol{m}}(L_1,\cdots,L_k)\bigg\}\leq\delta_{\boldsymbol{m}}(L_1,\cdots,L_k)\leq\delta^{T_\CC}_{\boldsymbol{m}}(L_1,\cdots,L_k).
$$
\end{theorem}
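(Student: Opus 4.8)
The plan is to run the proof of Theorem \ref{thm:delta-T-m} essentially verbatim, but over the product Bergman space $\boldsymbol{\cB}_{\boldsymbol m}$ and with the coupled functionals of this appendix, and with the single $\delta^g_m$-machinery of \S\ref{sec:soliton} replaced by its coupled analogues. The upper bound $\delta_{\boldsymbol m}(L_1,\dots,L_k)\leq\delta^{T_\CC}_{\boldsymbol m}(L_1,\dots,L_k)$ is free: applying \cite[Lemma 2.2]{FO18} to each weight space of each $H^0(X,m_iL_i)$ and summing over $i$ (as in Lemma \ref{lem:coupled-delta-g-m=inf-A/S}, noting the $D_i$ are chosen independently) gives $\delta_{\boldsymbol m}(L_1,\dots,L_k)=\inf_F A_X(F)/\sum_{i=1}^k S_{m_i}(L_i;F)$ with $F$ ranging over \emph{all} divisors over $X$, while $\delta^{T_\CC}_{\boldsymbol m}(L_1,\dots,L_k)=\delta^{\mathbf 1}_{\boldsymbol m}(L_1,\dots,L_k)$ is the infimum of the same ratio over $T_\CC$-invariant $F$; shrinking the index set only raises the infimum. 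So all the content is in the lower bound, and we may assume $s(L_1,\dots,L_k)>0$.

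For the lower bound, fix $\delta\in(0,\min\{s(L_1,\dots,L_k),\delta^{T_\CC}_{\boldsymbol m}(L_1,\dots,L_k)\})$; the goal is $\delta_{\boldsymbol m}(L_1,\dots,L_k)\geq\delta$. Since $\delta<s(L_1,\dots,L_k)$ we can pick a $T$-invariant semipositive smooth $\theta\in c_1(X)-\delta\sum_{i=1}^k c_1(L_i)$, and $T$-invariant $f_i$ with $\Ric(\omega_i)=\delta\sum_{j=1}^k\omega_j+\theta+\ddc f_i$, normalized so that the probability measures $e^{f_i}\omega_i^n/V_i$ all coincide; call the common measure $\mu_\delta$. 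Consider on $\boldsymbol{\cB}^T_{\boldsymbol m}$ the $\mathbf 1$-weighted twisted functional $F^{0,\delta,\mathbf 1}_{\boldsymbol m}(\boldsymbol\varphi)=-\frac1\delta\log\int_X e^{-\delta\sum_i\varphi_i}\,\mu_\delta-\sum_i E_{\omega_i,m_i}(\varphi_i)$, where for $\boldsymbol g=\mathbf 1$ each $E^{g_i}_{\omega_i,m_i}$ is the ordinary $E_{\omega_i,m_i}$. Because $\delta<\delta^{T_\CC}_{\boldsymbol m}(L_1,\dots,L_k)=\delta^{\mathbf 1}_{\boldsymbol m}(L_1,\dots,L_k)$, Proposition \ref{prop:coupled-F-coercive=delta-m} gives coercivity of $F^{0,\delta,\mathbf 1}_{\boldsymbol m}$ on $\boldsymbol{\cB}^T_{\boldsymbol m}$, and the coupled existence proposition produces a minimizer $\boldsymbol\varphi=(\varphi_1,\dots,\varphi_k)\in\boldsymbol{\cB}^T_{\boldsymbol m}$ that is coupled $\mathbf 1$-weighted $(0,\delta)$-balanced. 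Then, using the $T_\CC$-weight orthogonality \eqref{eq:T-inv-orthogonality} for each $(X,m_iL_i)$, one observes that for $\boldsymbol g=\mathbf 1$ the weighted balanced equations \eqref{eq:g-balanced-equation} on the individual weight spaces $R_{m_i,\lambda}$ assemble to the \emph{ordinary} balanced equation for $\varphi_i$ on all of $H^0(X,m_iL_i)$; hence $\boldsymbol\varphi$ is a critical point of the un-restricted coupled quantized Ding functional $F^{0,\delta}_{\boldsymbol m}(\boldsymbol\varphi)=-\frac1\delta\log\int_X e^{-\delta\sum_i\varphi_i}\,\mu_\delta-\sum_i E_{\omega_i,m_i}(\varphi_i)$ on the full space $\boldsymbol{\cB}_{\boldsymbol m}$.

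The decisive step, as in Theorem \ref{thm:delta-T-m}, is that $\theta\geq0$ makes $F^{0,\delta}_{\boldsymbol m}$ convex along product Bergman geodesics in $\boldsymbol{\cB}_{\boldsymbol m}$: a product geodesic $(\varphi_1(t),\dots,\varphi_k(t))$ has each $\varphi_i(t)$ a subgeodesic in $c_1(L_i)$ (cf. \eqref{subgeod}), so $\sum_i\varphi_i(t)$ is a subgeodesic for the class $\sum_i c_1(L_i)$ and Berndtsson's theorem \cite{Bern15} applied to $\CC\times X\to X$ gives convexity of $t\mapsto-\frac1\delta\log\int_X e^{-\delta\sum_i\varphi_i(t)}\mu_\delta$, while each $t\mapsto-E_{\omega_i,m_i}(\varphi_i(t))$ is affine. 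Convexity plus criticality forces $\boldsymbol\varphi$ to be a global minimizer, so $F^{0,\delta}_{\boldsymbol m}\geq-C$ on $\boldsymbol{\cB}_{\boldsymbol m}$; unwinding the logarithm this reads $\sup_{\boldsymbol\varphi\in\boldsymbol{\cB}_{\boldsymbol m}}\int_X e^{-\delta\sum_i(\varphi_i-E_{\omega_i,m_i}(\varphi_i))}\mu_\delta<\infty$, i.e. the full (torus-forgetting) analytic coupled threshold is $\geq\delta$. Applying the coupled version of Theorem \ref{thm:delta-A-m=delta-m} (equivalently the identity $\delta^{A,\boldsymbol g}_{\boldsymbol m}=\delta^{\boldsymbol g}_{\boldsymbol m}$ of this appendix, taken with trivial torus and $\boldsymbol g=\mathbf 1$, together with Definition \ref{def:coupled-delta-m}(3)) then yields $\delta_{\boldsymbol m}(L_1,\dots,L_k)\geq\delta$. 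Letting $\delta\uparrow\min\{s(L_1,\dots,L_k),\delta^{T_\CC}_{\boldsymbol m}(L_1,\dots,L_k)\}$ concludes the proof.

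I expect the main obstacle to be the convexity step in this multi-polarization setting: one must check that Berndtsson's subharmonicity genuinely applies to the sum $\sum_i\varphi_i(t)$ of geodesics living in \emph{different} ample classes (i.e. to the diagonal big class $\sum_i c_1(L_i)$), and that the twist $\theta\geq0$ is correctly absorbed into the reference data $\mu_\delta$ so that the curvature hypothesis holds; once this is in place, the remaining ingredients — the equivariant coercivity/existence (Proposition \ref{prop:coupled-F-coercive=delta-m} and its balanced-metric companion), the weight-orthogonality reduction from $\boldsymbol{\cB}^T_{\boldsymbol m}$ to $\boldsymbol{\cB}_{\boldsymbol m}$, and the coupled analogue of Theorem \ref{thm:delta-A-m=delta-m} — are formally identical to their single-bundle counterparts.
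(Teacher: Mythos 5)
Your proposal is correct and follows exactly the route the paper indicates: the paper explicitly states that the proof of Theorem \ref{thm:delta-T-m} ``works seamlessly for the coupled case'' and gives no further details, and your argument is precisely that seamless adaptation (upper bound from shrinking the index set of divisors, lower bound by producing a $T$-invariant critical point of the coupled quantized Ding functional via Proposition \ref{prop:coupled-F-coercive=delta-m}, promoting it to a global minimizer on $\boldsymbol{\cB}_{\boldsymbol m}$ via Berndtsson convexity with $\theta\geq0$, and concluding by the coupled version of Theorem \ref{thm:delta-A-m=delta-m}). The concern you flag about Berndtsson applying to $\sum_i\varphi_i(t)$ is not a genuine obstacle: a product Bergman geodesic yields a subgeodesic $\sum_i\varphi_i(t)$ for the class $\sum_i c_1(L_i)$, and with $\theta\in c_1(X)-\delta\sum_i c_1(L_i)$ semipositive the convexity of the coupled Ding term is exactly the statement used for coupled K\"ahler--Einstein functionals in \cite{HNW19,Tak19}.
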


Now we apply the above result to the toric Fano case. Assume that $X$ is toric Fano and that
$
-K_X=\sum_{i}^kL_i.
$
Using the toric setup in  \S\ref{sec:delta-T}, we write
$
L_i=\sum_\rho a^i_\rho D_\rho,\ 1\leq i\leq k.
$
Up to linear equivalence, we may arrange that
$
\sum_{i=1}^ka^i_{\rho}=1,\ \forall\rho.
$
Each moment polytope $P_i$ is given by
$
P_i:=\{u\in M_\RR\ |\ \langle u,v_\rho\rangle+a^i_\rho\ge0,\ \forall\rho\}.
$
Note that all the weight spaces of $H^0(X,m_iL_i)$ are one-dimensional. Then by definition (recall \eqref{eq:def-m-g-basis-divisor}) there is only one
$(m_i,1)$-basis divisor of each $L_i$.
Thus,
$
\delta^{T_\CC}_{\boldsymbol{m}}(L_1,\cdots,L_k)=1/\max_\rho\big
\{{\langle \sum_{i=1}^kb_{m_i}(P_i),v_\rho\rangle+1}\big\},
$
where
$b_{m_i}(P_i)$
denotes the $m_i$-th quantized barycenter of $P_i$.
Hence,
$
\delta^{T_\CC}_{\boldsymbol{m}}(L_1,\cdots,L_k)\leq s(L_1,\cdots,L_k)=1.
$
So by Theorem \ref{thm:coupled-delta-T-m},
we obtain:

\begin{corollary}
$
\delta_{\boldsymbol{m}}(L_1,\cdots,L_k)=\delta^{T_\CC}_{\boldsymbol{m}}(L_1,\cdots,L_k)=
1/\max_\rho\big\{{\langle \sum_{i=1}^kb_{m_i}(P_i),v_\rho\rangle+1}\big\}.
$
\end{corollary}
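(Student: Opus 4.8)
The plan is to deduce this immediately from Theorem~\ref{thm:coupled-delta-T-m} together with an explicit toric evaluation of $\delta^{T_\CC}_{\boldsymbol{m}}$. First I would record that $s(L_1,\dots,L_k)=1$: since we have arranged $-K_X=\sum_i L_i$, one has $-K_X-s\sum_i L_i=(1-s)(-K_X)$, which is nef precisely when $s\le 1$. Hence the lower bound in the sandwich of Theorem~\ref{thm:coupled-delta-T-m} reads $\min\{1,\delta^{T_\CC}_{\boldsymbol{m}}(L_1,\dots,L_k)\}$, and it suffices to (a) compute $\delta^{T_\CC}_{\boldsymbol{m}}$ explicitly and (b) check $\delta^{T_\CC}_{\boldsymbol{m}}(L_1,\dots,L_k)\le 1$, for then the two bounds collapse onto $\delta_{\boldsymbol{m}}$.

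For the explicit value, the key point is that since $T_\CC\cong(\CC^*)^n$ is the \emph{maximal} torus, every weight space $R_{m_i,\lambda}\subseteq H^0(X,m_iL_i)$ is one-dimensional, so each $(m_i,1)$-basis divisor of $L_i$ is uniquely determined, and therefore so is the $(\boldsymbol{m},\boldsymbol{g})$-basis divisor $D=\sum_i D_i$ for $\boldsymbol{g}=(1,\dots,1)$. I would then unwind the toric dictionary: writing $L_i=\sum_\rho a^i_\rho D_\rho$ with $\sum_i a^i_\rho=1$, a weight-$\lambda$ section of $m_iL_i$ is the character $\chi^\lambda$ with $\lambda\in m_iP_i\cap M$, whose zero divisor is $\sum_\rho(\langle\lambda,v_\rho\rangle+m_ia^i_\rho)D_\rho$; averaging over $\lambda\in m_iP_i\cap M$ and dividing by $m_id_{m_i}$ gives $D_i=\sum_\rho(\langle b_{m_i}(P_i),v_\rho\rangle+a^i_\rho)D_\rho$, and summing over $i$ yields $D=\sum_\rho\bigl(\langle\sum_i b_{m_i}(P_i),v_\rho\rangle+1\bigr)D_\rho$. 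Since $X$ is smooth the toric boundary $\sum_\rho D_\rho$ is simple normal crossing, so $\lct(X,D)=\min_\rho\bigl(\langle\sum_i b_{m_i}(P_i),v_\rho\rangle+1\bigr)^{-1}$; all the coefficients are strictly positive because each quantized barycenter $b_{m_i}(P_i)$, being the average of the lattice points of the full-dimensional lattice polytope $m_iP_i$ (rescaled by $m_i^{-1}$) whose vertices affinely span, lies in the interior of $P_i$. This identifies $\delta^{T_\CC}_{\boldsymbol{m}}(L_1,\dots,L_k)$ with $1/\max_\rho\{\langle\sum_i b_{m_i}(P_i),v_\rho\rangle+1\}$.

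To see $\delta^{T_\CC}_{\boldsymbol{m}}\le 1$ I would invoke completeness of the fan $\Delta$: the primitive generators $\{v_\rho\}$ cannot all lie in an open half-space, so for the fixed vector $b:=\sum_i b_{m_i}(P_i)\in M_\RR$ there is a ray with $\langle b,v_\rho\rangle\ge 0$, whence $\max_\rho\{\langle b,v_\rho\rangle+1\}\ge 1$ and $\delta^{T_\CC}_{\boldsymbol{m}}\le 1=s(L_1,\dots,L_k)$. Feeding this into Theorem~\ref{thm:coupled-delta-T-m} gives $\delta^{T_\CC}_{\boldsymbol{m}}=\min\{s,\delta^{T_\CC}_{\boldsymbol{m}}\}\le\delta_{\boldsymbol{m}}\le\delta^{T_\CC}_{\boldsymbol{m}}$, so all three quantities agree and equal $1/\max_\rho\{\langle\sum_i b_{m_i}(P_i),v_\rho\rangle+1\}$.

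I do not expect a serious obstacle: the analytic content has already been absorbed into Theorems~\ref{thm:delta-T-m} and~\ref{thm:coupled-delta-T-m}, so the only care needed is the toric bookkeeping — correctly tracking the affine support data $a^i_\rho$ and their normalization $\sum_i a^i_\rho=1$, and justifying that the log canonical threshold of the $T_\CC$-invariant divisor $D$ is computed by a toric valuation (equivalently, equals the minimum over the rays), which on a smooth toric variety is the standard simple-normal-crossings lct formula.
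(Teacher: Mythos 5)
Your proposal is correct and follows essentially the same route as the paper: note that all weight spaces are one-dimensional so the $(m_i,1)$-basis divisor is unique, unwind the toric dictionary to identify $\delta^{T_\CC}_{\boldsymbol{m}}(L_1,\dots,L_k)$ with $1/\max_\rho\{\langle\sum_i b_{m_i}(P_i),v_\rho\rangle+1\}$, use completeness of the fan to get $\delta^{T_\CC}_{\boldsymbol{m}}\leq s=1$, and then apply Theorem~\ref{thm:coupled-delta-T-m}. You spell out the toric bookkeeping (the divisor of $\chi^\lambda$, the snc lct formula, positivity of coefficients via interiority of the quantized barycenter) that the paper leaves implicit by referring back to the single-line-bundle computation in Section~\ref{sec:delta-T}, but the argument is the same.
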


Thus, sending $\boldsymbol{m}\rightarrow(\infty,...,\infty)$, 
$
\delta(L_1,\cdots,L_k):=\lim\delta_{\boldsymbol{m}}(L_1,\cdots,L_k)=1/\max_\rho\big\{{\langle \sum_{i=1}^kb(P_i),v_\rho\rangle+1}\big\},
$
where
$b(P_i)$
denotes the barycenter of $P_i$. In particular, one always has
$
\delta(L_1,\cdots,L_k)\leq1,
$
with equality if and only if
$
\sum_{i=1}^kb(P_i)=0.
$
Thus, using
a result of Hultgren
\cite[Theorem 2]{Hul19}
we obtain the following.
\begin{corollary}
A toric Fano manifold $X$ admits a coupled K\"ahler--Einstein tuple for $(L_1,\cdots,L_k)$ if and only if
$
\delta(L_1,\cdots,L_k)=1.
$
\end{corollary}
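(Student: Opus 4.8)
The plan is to combine the explicit formula for $\delta(L_1,\cdots,L_k)$ obtained above with Hultgren's toric existence criterion. Recall that for a toric Fano $X$ with $-K_X=\sum_{i=1}^k L_i$ normalized so that $\sum_{i=1}^k a^i_\rho=1$ for every ray $\rho$, we have shown
$$
\delta(L_1,\cdots,L_k)=\frac{1}{\max_\rho\big\{\langle\sum_{i=1}^k b(P_i),v_\rho\rangle+1\big\}},
$$
and consequently $\delta(L_1,\cdots,L_k)\le 1$, with equality precisely when $\sum_{i=1}^k b(P_i)=0$. The first step is to spell out this dichotomy: since the primitive ray generators $\{v_\rho\}$ of a complete fan positively span $N_\RR$, for any nonzero $w\in M_\RR$ there is a ray $\rho$ with $\langle w,v_\rho\rangle>0$; applying this to $w=\sum_i b(P_i)$ shows $\max_\rho\langle\sum_i b(P_i),v_\rho\rangle>0$ unless $\sum_i b(P_i)=0$, in which case the maximum equals $0$. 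Hence $\delta(L_1,\cdots,L_k)=1$ if and only if $\sum_{i=1}^k b(P_i)=0$.

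Next I would invoke Hultgren's theorem: by \cite[Theorem 2]{Hul19}, a toric Fano manifold admits a coupled \KEno{} tuple for $(L_1,\cdots,L_k)$ if and only if the sum of the barycenters of the moment polytopes $P_i$ vanishes, i.e. $\sum_{i=1}^k b(P_i)=0$. Combining this equivalence with the previous paragraph yields the corollary immediately.

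The one point requiring care --- and the only genuine obstacle --- is reconciling conventions: one must check that Hultgren's normalization of the polytopes $P_i$ (and of the coupled Ricci-type system) matches the one fixed here, namely $P_i=\{u\in M_\RR:\langle u,v_\rho\rangle+a^i_\rho\ge 0\}$ with $\sum_i a^i_\rho=1$, and that his notion of ``coupled K\"ahler--Einstein'' coincides with the $\boldsymbol{g}=(1,\dots,1)$, $\theta=0$ specialization of the coupled soliton equations used in this appendix. Once this bookkeeping is done, no further analysis is needed. It is worth emphasizing that the general existence direction $\delta>1\Rightarrow$ existence from Theorem \ref{thm:finite-coupled-dim-g-weighted-YTD} is vacuous here, since $\delta(L_1,\cdots,L_k)\le 1$ always holds in the toric case; the content of the corollary therefore comes entirely from Hultgren's boundary-case toric analysis, which accounts for the torus automorphisms.
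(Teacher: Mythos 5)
Your proof is correct and follows the paper's argument exactly: combine the barycenter formula $\delta(L_1,\dots,L_k)=1/\max_\rho\{\langle\sum_i b(P_i),v_\rho\rangle+1\}$ (which gives $\delta\le1$ with equality iff $\sum_i b(P_i)=0$) with Hultgren's criterion \cite[Theorem 2]{Hul19} that a coupled K\"ahler--Einstein tuple exists iff $\sum_i b(P_i)=0$. Your supplementary remarks---the completeness-of-fan justification for the dichotomy, the normalization bookkeeping, and the observation that Theorem~\ref{thm:finite-coupled-dim-g-weighted-YTD} is vacuous here because $\delta\le1$ always holds---are all accurate and make explicit what the paper leaves implicit.
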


\section{Extension to klt currents}
\label{sec:current}
We use the setup of \cite{BBJ18}.
Let $(X,\theta,L)$ be a triple satisfying:
\begin{enumerate}
    \item $X$ is an $n$-dimensional projective manifold;
    
    \item $\theta$ is a quasi-positive $(1,1)$-current, i.e., the sum of a positive current and a smooth form;
    
    \item $\theta$ is klt, i.e., locally writing $\theta=\ddc\psi$, then $e^{-\psi}\in L^p_{\mathrm{loc}}$ for $p\in[1,1+\epsilon)$ for some $\epsilon>0$.
    
    \item $L$ is an ample $\QQ$-line bundle such that $c_1(L)=c_1(X)-[\theta]$.
\end{enumerate}
Let $\pi:Y\rightarrow X$ be a birational morphism. For any prime divisor $F\subset Y$ over $X$ let
$$
\ord_F(\theta)
$$
be the Lelong number of $\pis\theta$ at a very generic point of $F$ (see \cite{FJ05,BFJ08}). By \cite[Lemma 3.3]{BBJ18}, $\theta$ being klt is equivalent to
$
\inf_F\frac{A_X(F)}{\ord_F(\theta)}>1.
$
Set
\begin{equation}
\lb{Atheta}
    A_\theta(F):=A_X(F)-\ord_F(\theta).
\end{equation}
For any effective $\RR$-divisor $D$ on $X$, put
\begin{equation}
    \lct_\theta(X,D):=\inf_F\frac{A_\theta(F)}{\ord_F(D)}.
\end{equation}
Then \cite{BFJ08},
\begin{equation*}
    \lct_\theta(X,D)=\sup\{\lambda>0:\cJ(\theta+\lambda [D])=\cO_X\},
\end{equation*}
where $\cJ(\theta+\lambda [D])$ denotes the multiplier ideal sheaf associated to the current $\theta+\lambda[D]$ (and $[D]$ denotes the current of integration along $D$).

\begin{definition}
    The $\delta_m$-invariant of $(X,\theta,L)$ is 
    $
    \delta_m(L;\theta):=\inf\{\lct_\theta(X,D):D\text{ $m$-basis divisor of }L\}.
    $
\end{definition}
Equivalenty, one has
$
\delta_m(L;\theta)=\inf_F\frac{A_\theta(F)}{S_m(F)}.
$
In what follows we do not claim nor do we need to know whether the infimum is attained.

Next, we turn to analytic $\delta$-invariants. As before, fix a positively curved smooth Hermitian metric $h$ on $L$ and denote by $\omega$ 
its curvature form, with $[\o]=c_1(L)$. Let $f_\theta$ be a function 
satisfying
\begin{equation*}
    \Ric(\omega)=\omega+\theta+\ddc f_\theta, \quad
\int_Xe^{f_\theta}\omega^n=\int_X\omega^n=V.
\end{equation*}
More precisely, write $\theta=\theta_0+\ddc\psi$, where $\theta_0\in[\theta]$ is a smooth representative. Let $f_{\theta_0}\in C^\infty(X,\RR)$ satisfy $\Ric(\omega)=\omega+\theta_0+\ddc f_{\theta_0}$. Then (up to a constant) $f_\theta=f_{\theta_0}-\psi$.
Now define a probability measure on $X$,
\begin{equation*}
    d\mu_\theta:=\frac{e^{f_\theta}\omega^n}{V}.
\end{equation*}

\begin{definition}
    The analytic $\delta$-invariant of $(X,\theta,L)$ is defined by
    $$
    \delta^A(L;\theta):=\sup\bigg\{\delta>0:\sup_{\varphi\in\cH_\o}\int_Xe^{-\delta(\varphi-E(\varphi))}d\mu_\theta<\infty\bigg\}.
    $$
    The analytic $\delta_m$-invariant of $(X,\theta,L)$ is defined by
    $$
    \delta^A_m(L;\theta):=\sup\bigg\{\delta>0:\sup_{\varphi\in\cB_m}\int_Xe^{-\delta(\varphi-E_m(\varphi))}d\mu_\theta<\infty\bigg\}.
    $$
\end{definition}

\begin{theorem}
\lb{deltamthetaThm}
$\delta_m(L;\theta)=\delta^A_m(L;\theta)$.
\end{theorem}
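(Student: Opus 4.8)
The statement is the klt-current analogue of Theorem \ref{thm:delta-A-m=delta-m}, so the plan is to imitate that proof, carrying the weight $d\mu_\theta$ through every step and replacing $A_X(F)$ by $A_\theta(F)=A_X(F)-\ord_F(\theta)$ at the appropriate places. First I would establish the inequality $\delta_m(L;\theta)\le\delta^A_m(L;\theta)$ by a reduction to orthonormal bases exactly as in Proposition \ref{prop:deltam=int<C}: fixing $H\in\cP_m$ and writing $\varphi=\frac1m\log\sum_i\mu_i|s_i|^2_{h^m}$ with $\{s_i\}$ an $H$-orthonormal basis and $\mu_i>0$, one has by the arithmetic-geometric mean inequality
$$
\int_X\frac{\prod_i\mu_i^{\frac{\delta}{md_m}}}{\big(\sum_i\mu_i|s_i|^2_{h^m}\big)^{\frac{\delta}{m}}}d\mu_\theta\le\Big(\frac1{d_m}\Big)^{\frac{\delta}{m}}\int_X\frac{d\mu_\theta}{\prod_i|s_i|^{\frac{2\delta}{md_m}}_{h^m}}.
$$
The right-hand side is controlled by analyzing the complex singularity exponent of $\prod_i|s_i|^{2/(md_m)}_{h^m}$ \emph{together with} the density $e^{f_\theta}=e^{f_{\theta_0}-\psi}$ of $d\mu_\theta$: since $f_{\theta_0}$ is smooth, this is governed by the multiplier ideal of the current $\theta+\delta[D]$ for $D$ the $m$-basis divisor of $\{s_i\}$, i.e.\ by $\lct_\theta(X,D)$. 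The lower semicontinuity of complex singularity exponents (Demailly--Koll\'ar \cite{DK01}), applied to the sum $\psi+\frac{1}{md_m}\sum_i\log|s_i|^2_{h^m}$, lets one pass to orthonormal bases in the supremum, and the valuative formula $\delta_m(L;\theta)=\inf_F A_\theta(F)/S_m(F)$ together with the analytic interpretation of $\lct_\theta$ via \cite{BFJ08} yields $\delta_m(L;\theta)\le\delta^A_m(L;\theta)$.

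For the reverse inequality $\delta_m(L;\theta)\ge\delta^A_m(L;\theta)$, I would repeat the destabilization argument of \S\ref{subsec:destabilizing-ray}. Fix a prime divisor $F\subset Y$ over $X$ with $S_m(F)\ne0$, choose an $H$-orthonormal basis $\{s_i\}$ compatible with the filtration \eqref{filtration}, and run along the Bergman geodesic ray $\varphi_F(t)=\frac1m\log\sum_i e^{t\ord_F(s_i)}|s_i|^2_{h^m}$, i.e.\ take $\mu_i(t)=e^{t\ord_F(s_i)}$. By Lemma \ref{lem:E-m=S-m} one has $E_m(H,\varphi_F(t))=tS_m(F)$, so the integral in Definition of $\delta^A_m(L;\theta)$ equals
$$
e^{t(\delta S_m(F)-A_\theta(F))}\int_X\frac{e^{tA_\theta(F)}}{\big(\sum_i e^{t\ord_F(s_i)}|s_i|^2_{h^m}\big)^{\delta/m}}\,d\mu_\theta.
$$
The key point is a klt-current version of Lemma \ref{lem:I-lambda>eta}: the remaining integral stays bounded below by a positive constant uniformly in $t\ge0$. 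As in the proof of Lemma \ref{lem:I-lambda>eta}, this reduces to a local computation on $Y$ around a smooth point of $F$, but now $\pi^\star d\mu_\theta$ contributes the factor $|z_1|^{2A_\theta(F)-2}$ rather than $|z_1|^{2A_X(F)-2}$: indeed pulling back, $\pi^\star\omega^n$ gives $|z_1|^{2A_X(F)-2}$ from \eqref{eq:KY=KX+F+D} while $e^{f_\theta}=e^{f_{\theta_0}}e^{-\psi}$ contributes $|z_1|^{-2\ord_F(\theta)}$ (up to an $L^p_{\mathrm{loc}}$ error controlled by the klt hypothesis $\inf_F A_X(F)/\ord_F(\theta)>1$, after possibly shrinking the chart and using that the Lelong number $\ord_F(\theta)$ governs the worst local blow-up of $e^{-\psi}$ along $F$). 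Then the substitution $w=e^{t/2}z_1$ and the inequality $\int_{|w|\le e^{t/2}}\ge\int_{|w|\le1}$ give the uniform lower bound, provided $2A_\theta(F)-2>-2$, i.e.\ $A_\theta(F)>0$, which holds by the klt assumption. Letting $t\to\infty$ shows that $\delta S_m(F)-A_\theta(F)>0$ forces the integral to blow up, hence $\delta^A_m(L;\theta)\le A_\theta(F)/S_m(F)$; taking the infimum over $F$ with $S_m(F)\ne0$ (the others impose no constraint) yields $\delta^A_m(L;\theta)\le\delta_m(L;\theta)$.

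\textbf{Main obstacle.} The delicate point is the local estimate underlying the klt version of Lemma \ref{lem:I-lambda>eta}: unlike the smooth case, $e^{-\psi}$ is not bounded, and one must show that its only obstruction to local integrability near $F$ is captured by the Lelong number $\ord_F(\theta)$, so that it behaves (up to an $L^p_{\mathrm{loc}}$-bounded factor, $p>1$) like $|z_1|^{-2\ord_F(\theta)}$ on a suitable chart. This requires the comparison of Lelong numbers of $\pi^\star\theta$ along $F$ with its generic-point behavior (as in \cite{FJ05,BFJ08}) and a careful choice of chart avoiding the other components of $\pi^\star\theta$; together with the strict inequality $A_\theta(F)>0$ from \cite[Lemma 3.3]{BBJ18} this keeps the exponent $2A_\theta(F)-2$ strictly greater than $-2$, which is exactly what makes the one-variable integral $\int_{|w|\le1}|w|^{2(A_\theta(F)-1)}(\sum_i|w|^{2\ord_F(s_i)})^{-\delta/m}\,dw\wedge d\bar w$ finite and positive. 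Everything else — the reduction to orthonormal bases, the AM--GM estimate, the geodesic slope computation — transfers from Section \ref{sec:analy-delta} essentially verbatim, simply reading $d\mu_\theta$ for $\omega^n$ and $A_\theta$ for $A_X$.
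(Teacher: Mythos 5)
Your overall strategy matches the paper's: carry the measure $d\mu_\theta$ through the proof of Theorem \ref{thm:delta-A-m=delta-m}, replace $A_X(F)$ by $A_\theta(F)$, and extend the local Lemma \ref{lem:I-lambda>eta} to account for the Lelong number of $\theta$ along $F$. But there are two genuine gaps.

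First, in the destabilization direction, your explanation of the local estimate points in the wrong direction. To bound $\int_X e^{t(\cdots)}\big(\sum_i e^{t\ord_F(s_i)}|s_i|^2_{h^m}\big)^{-\delta/m}\,d\mu_\theta$ \emph{from below}, you need $e^{-\pi^\star\psi}$ to not be too small near $F$, i.e.\ an \emph{upper} bound on $\pi^\star\psi$. That upper bound comes from the Lelong-number estimate (valid on a polydisc around a very generic point of $F$): for any $\varepsilon>0$ there is $C_\varepsilon$ with $\pi^\star\psi\le(\ord_F\theta-\varepsilon)\log|z_1|^2+C_\varepsilon$, hence $e^{-\pi^\star\psi}\ge e^{-C_\varepsilon}|z_1|^{-2(\ord_F\theta-\varepsilon)}$. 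Your appeal to the klt/$L^p_{\mathrm{loc}}$ hypothesis and the phrase ``worst local blow-up of $e^{-\psi}$'' suggest you are using an upper bound on $e^{-\psi}$, which is what klt gives and is irrelevant here; the klt hypothesis enters only through $A_\theta(F)>0$ to make the model integral $\int_{|w|\le1}|w|^{2(A_\theta(F)+\varepsilon-1)}\bigl(\sum_i|w|^{2\ord_F(s_i)}\bigr)^{-\delta/m}\,dw\wedge d\bar w$ finite. Moreover, the $\varepsilon$-loss is unavoidable: one cannot in general assert $\pi^\star\psi\le\ord_F\theta\cdot\log|z_1|^2+C$. So the correct klt analogue of Lemma \ref{lem:I-lambda>eta} bounds $\int_X e^{t(A_\theta(F)+\varepsilon)}\bigl(\sum_i e^{t\ord_F(s_i)}|s_i|^2_{h^m}\bigr)^{-\delta/m}\,d\mu_\theta\ge C_\varepsilon>0$ for all $t\ge0$, giving $\delta^A_m(L;\theta)\le(A_\theta(F)+\varepsilon)/S_m(F)$ for every $\varepsilon>0$ and then $\le A_\theta(F)/S_m(F)$ after letting $\varepsilon\to0$. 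Your formula as written, with $e^{tA_\theta(F)}$ and no $\varepsilon$, would not have a uniform lower bound.

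Second, in the orthonormal-basis reduction for $\delta_m(L;\theta)\le\delta^A_m(L;\theta)$, you invoke lower semicontinuity of complex singularity exponents as in Proposition \ref{prop:deltam=int<C} but do not address the fact that the weight is now $e^{-\psi}$ with $\psi$ merely quasi-psh and klt rather than smooth. Demailly--Koll\'ar's theorem alone no longer suffices; one also needs the openness theorem for multiplier ideals (Berndtsson, Guan--Zhou) to carry out the argument and to identify the resulting analytic threshold with $\lct_\theta$. This is a nontrivial additional ingredient your sketch omits.

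Apart from these two points, the structure you propose is the same as the paper's proof, and the AM--GM reduction, the slope formula $E_m(H,\varphi_F(t))=tS_m(F)$, and the substitution $w=e^{t/2}z_1$ transfer exactly as you describe.
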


\begin{proof}
    The proof parallels that of Theorem \ref{thm:delta-A-m=delta-m} given
in \S\ref{sec:analy-delta}. We point out the main differences for the reader's convenience. 

The proof of Theorem \ref{thm:delta-A-m=delta-m}
amounts to Corollaries 
\ref{cor:delta-m<=delta-m-A} and \ref{lowerbndcor}.
The analogue of the former, namely,
$\delta_m(L;\theta)\le\delta^A_m(L;\theta)$, 
is proven in much the same way as Corollary
\ref{cor:delta-m<=delta-m-A} with two key differences.
First, one needs to replace the volume form $\omega^n$ by the measure $d\mu_\theta$, and $A_X(F)$ by $A_\theta(F)$. Second, to apply Demailly--Koll\'ar's theorem \cite{DK01} as in the proof of Proposition \ref{prop:deltam=int<C}, one also needs to invoke the openness \cite{Bern15b,GZ15}.
The analogue of Corollary \ref{lowerbndcor}, namely,
 $\delta_m(L;\theta)\ge\delta^A_m(L;\theta)$ requires
an extension of Lemma \ref{lem:I-lambda>eta} to the setting of
non-zero Lelong number, given by Lemma \ref{lem:I>eta-theta} below.
\end{proof}

\begin{lemma}
    \label{lem:I>eta-theta}
    Let $F\subset Y\xrightarrow{\pi}X$ be a prime divisor over $X$. Assume that $\ord_F\theta>0$. Then for any $\delta>0$, $\varepsilon\in(0,\ord_F\theta)$, and any basis $\{s_i\}$ of $H^0(X,mL)$, there exists $C_\varepsilon>0$ such that  for any parameter $t\geq0$,
    $$
    \int_X\frac{e^{t(A_\theta(F)+\varepsilon)}}{\bigg(\sum_{i=1}^{d_m}e^{t\,\ord_F(s_i)}|s_i|^2_{h^m}\bigg)^{\frac{\delta}{m}}}d\mu_\theta\geq C_{\varepsilon}>0.
    $$
\end{lemma}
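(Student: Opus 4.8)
The plan is to follow the template of the proof of Lemma~\ref{lem:I-lambda>eta} verbatim, with the volume form $\omega^n$ replaced by the measure $d\mu_\theta$ and the discrepancy $A_X(F)$ replaced by $A_\theta(F)+\varepsilon$. Writing $Z=\pi(F)$ and fixing a tubular neighborhood $\Omega\supseteq Z$, it suffices to bound the integral over $\Omega$ from below by a constant independent of $t\ge0$. I would pull everything back to $Y$ and choose a coordinate polydisk $\DD\subseteq U\subseteq Y$ around a point of $F$ satisfying conditions (1)--(5) in the proof of Lemma~\ref{lem:I-lambda>eta}; in particular $F=\{z_1=0\}$, $\pis s_i=z_1^{\ord_F(s_i)}g_i$ with $g_i$ and $h^m$ bounded, and (using \eqref{eq:KY=KX+F+D} and that $U$ is away from the other exceptional divisors of $\pi$) $\pis\omega^n$ is comparable on $\DD$ to $|z_1|^{2A_X(F)-2}$ times Lebesgue measure.

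The one genuinely new ingredient is the control of the density of $\pis d\mu_\theta$ near $F$. With $\theta=\theta_0+\ddc\psi$ ($\theta_0$ smooth) and $f_\theta=f_{\theta_0}-\psi$ ($f_{\theta_0}$ smooth) as in the setup above, one has $\pis d\mu_\theta\asymp e^{-\pis\psi}\,|z_1|^{2A_X(F)-2}$ times Lebesgue measure on $\DD$. Now $\ord_F(\theta)$ is the generic Lelong number of $\pis\theta$ along $F$, so the Siu decomposition of the positive part of $\ddc\pis\psi$ gives, on $\DD$, a splitting $\pis\psi=\ord_F(\theta)\log|z_1|^2+\psi'$ in which $\ddc\psi'$ is the sum of a positive current and a smooth form; hence $\psi'$ is quasi-psh, and in particular bounded above on the compact set $\overline{\DD}$. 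Since $A_\theta(F)=A_X(F)-\ord_F(\theta)$ and $\ddc\log|z_1|^2=[F]$, this yields $e^{-\pis\psi}\,|z_1|^{2A_X(F)-2}=e^{-\psi'}\,|z_1|^{2A_\theta(F)-2}\ge c\,|z_1|^{2A_\theta(F)-2}$ on $\DD$ for some $c>0$.

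To conclude I would restrict the integral to $\{|z_1|\le e^{-t/2}\}\subseteq\DD$. There $e^{t/2}|z_1|\le1$ and $\ord_F(s_i)\ge0$ force $\big(\sum_i e^{t\,\ord_F(s_i)}|z_1|^{2\ord_F(s_i)}|g_i|^2h^m\big)^{\delta/m}$ to be bounded above by a constant independent of $t$, so together with the previous display
$$
\int_\DD\frac{e^{t(A_\theta(F)+\varepsilon)}}{\big(\sum_i e^{t\,\ord_F(s_i)}|s_i|^2_{h^m}\big)^{\delta/m}}\,\pis d\mu_\theta
\ \gtrsim\ e^{t(A_\theta(F)+\varepsilon)}\int_{|z_1|\le e^{-t/2}}|z_1|^{2A_\theta(F)-2}\,dV,
$$
where $dV$ denotes Lebesgue measure. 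Because $\theta$ is klt we have $A_\theta(F)>0$ (indeed $\inf_{F'}A_X(F')/\ord_{F'}(\theta)>1$ by \cite[Lemma 3.3]{BBJ18}), so the last integral equals a positive constant times $(e^{-t/2})^{2A_\theta(F)}=e^{-tA_\theta(F)}$, and the right-hand side is therefore $\gtrsim e^{t\varepsilon}\ge1$ for all $t\ge0$. This gives the required $C_\varepsilon>0$; note that the slack $\varepsilon$ in the exponent is exactly what absorbs the bounded-above factor $e^{-\psi'}$ together with the comparison constants coming from $\pis\omega^n\asymp|z_1|^{2A_X(F)-2}$ and from $e^{\pis f_{\theta_0}}$.

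The main obstacle is the local factorization $\pis\psi=\ord_F(\theta)\log|z_1|^2+\psi'$ with $\psi'$ bounded above, i.e.\ isolating exactly the generic Lelong number of $\pis\theta$ along $F$: one cannot subtract more than $\ord_F(\theta)\log|z_1|^2$ (the residual would cease to be quasi-positive), and subtracting less would not produce enough growth to beat the prefactor, so the identification of the coefficient with $\ord_F(\theta)$ via Siu's decomposition is essential. The only other point requiring care is keeping the normalizations straight, so that a generic Lelong number $\ord_F(\theta)$ corresponds to the singularity $e^{-\pis\psi}\sim|z_1|^{-2\ord_F(\theta)}$ — which is precisely what makes $A_\theta(F)$, rather than some other combination of $A_X(F)$ and $\ord_F(\theta)$, appear in the exponent.
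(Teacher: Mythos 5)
Your proof is correct and follows the same overall structure as the paper's: a local computation on a polydisk $\DD$ around a generic smooth point of $F$ using conditions (1)--(5) from the proof of Lemma~\ref{lem:I-lambda>eta}, together with control of the extra factor $e^{-\pis\psi}$ contributed by $d\mu_\theta$. The one substantive difference is how the singular potential is estimated. You invoke Siu's decomposition of the positive part of $\ddc\pis\psi$ to write $\pis\psi=\ord_F(\theta)\log|z_1|^2+\psi'$ with $\psi'$ quasi-psh, hence bounded above near $\overline{\DD}$; this yields the sharp bound $e^{-\pis\psi}\ge c\,|z_1|^{-2\ord_F(\theta)}$. The paper instead uses the slightly weaker estimate $\pis\psi\le(\ord_F(\theta)-\varepsilon)\log|z_1|^2+C_\varepsilon$ read off directly from the definition of the generic Lelong number, which is why the $\varepsilon$ appears inside their local integral $J(t)$. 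Your sharper bound shows that $\varepsilon$ is in fact unnecessary for the local estimate itself (the inequality would hold with $\varepsilon=0$); it is only needed later in the application to Theorem~\ref{deltamthetaThm}, where $\delta>A_\theta(F)/S_m(F)$ must be converted into $\delta S_m(F)>A_\theta(F)+\varepsilon$ for some $\varepsilon>0$. Finally, restricting the integration to $\{|z_1|\le e^{-t/2}\}$ is precisely the paper's change of variables $w=e^{t/2}z_1$ followed by shrinking to $\{|w|\le1\}$, just written in the original coordinates. Both arguments are valid; yours is marginally cleaner about where the Lelong-number comparison comes from.
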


\begin{proof}
    We follow the proof of Lemma \ref{lem:I-lambda>eta}, by using a local calculation around a generic point of $F$, the only difference being that  in the presence of the current $\theta$, one should take into account the contribution coming from Lelong numbers. 
    
    More precisely, write $\theta=\theta_0+\ddc\psi$, where $\theta_0\in[\theta]$ is a smooth representative. Then it amounts to bounding the integral
    $$
    \int_X\frac{e^{t(A_\theta(F)+\varepsilon)}e^{-\psi}}{\bigg(\sum_{i=1}^{d_m}e^{t\,\ord_F(s_i)}|s_i|^2_{h^m}\bigg)^{\frac{\delta}{m}}}\omega^n
    $$
    from below for any $t\geq0$. We pull back everything to $Y$ and work in a polydisc $\DD$ around a very generic point of $F$, as in the proof of Lemma \ref{lem:I-lambda>eta}. By definition of $\ord_F\theta$, we may further assume that
    $$
    \pis\psi\leq (\ord_F(\theta)-\varepsilon)\log|z_1|^2+C_\varepsilon\text{ on }\DD 
    $$
    for some $C_\varepsilon>0$.
    Recalling \er{Atheta}, it suffices to bound
    \begin{equation*}
        \begin{aligned}
         J(t)&:=\sqrt{-1}\int_{|z_1|\leq1}\frac{e^{t(A_\theta(F)+\varepsilon)}|z_1|^{2A_{\theta}(F)+2\varepsilon-2}}{\bigg(\sum_{i=1}^{d_m}e^{t\,\ord_F(s_i)}|z_1|^{2\ord_F(s_i)}\bigg)^{\frac{\delta}{m}}}dz_1\wedge d\bar{z_1}\\
    &\geq\sqrt{-1}\int_{|w|\leq1}\frac{|w|^{2A_{\theta}(F)+2\varepsilon-2}}{\bigg(\sum_{i=1}^{d_m}|w|^{2\ord_F(s_i)}\bigg)^{\frac{\delta}{m}}}dw\wedge d\bar{w},\\
        \end{aligned}
    \end{equation*}
    which is a positive quantity only depending on $\delta$, $m$, $A_\theta(F)$, $\varepsilon$ and $\{\ord_F(s_i)\}$. 
\end{proof}

Now we turn to balanced metrics. For any $\delta>0$, put
\begin{equation*}
    F^{\theta,\delta}_m(\varphi):=-\frac{1}{\delta}\log\int_Xe^{-\delta\varphi}d\mu_\theta-E_m(\varphi),\ \varphi\in\cB_m.
\end{equation*}

\begin{definition}
    A critical point of $F^{\theta,\delta}_m$ is called $(\theta,\delta)$-balanced of level $m$. 
\end{definition}

One can also define coercivity for $F^{\theta,\delta}_m$ as in Definition \ref{def:coerc-F}. And as in Proposition \ref{prop:F-m-coercive=>balanced}, $F^{\theta,\delta}_m$ being coercive implies the existence of $(\theta,\delta)$-balanced metrics of level $m$.

\begin{proposition}
\label{prop:F-theta-coercive=delta-m-theta}
$F^{\theta,\delta}_m$ is coercive on $\cB_m$ if and only if $0<\delta<\delta_m(L;\theta)$.
\end{proposition}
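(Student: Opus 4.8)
The plan is to run the proof of Proposition~\ref{prop:F-coerc=delta-m} essentially verbatim, making the same substitutions as in the proof of Theorem~\ref{deltamthetaThm}: replace the volume form $\omega^n$ by the probability measure $d\mu_\theta$, replace $A_X(F)$ by $A_\theta(F)$, invoke Lemma~\ref{lem:I>eta-theta} wherever Lemma~\ref{lem:I-lambda>eta} was used, and appeal to Theorem~\ref{deltamthetaThm} in place of Theorem~\ref{thm:delta-A-m=delta-m}.

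For the implication $0<\delta<\delta_m(L;\theta)\Rightarrow F^{\theta,\delta}_m$ coercive, I would first introduce the $\theta$-twisted quantized $\alpha$-invariant
\[
\alpha_m(L;\theta):=\sup\Big\{\alpha>0\,:\,\sup_{\varphi\in\cB_m}\int_Xe^{-\alpha(\varphi-\sup\varphi)}\,d\mu_\theta<\infty\Big\},
\]
and observe $\alpha_m(L;\theta)>0$: since $\theta$ is klt, the density of $d\mu_\theta$ against $\omega^n$ lies in $L^p(X)$ for a fixed $p>1$, so H\"older against the classical uniform bound $\sup_{\varphi\in\cB_m}\int_Xe^{-\alpha q(\varphi-\sup\varphi)}\omega^n<\infty$ (valid for $\alpha q<\alpha_m(L)$, $\tfrac1p+\tfrac1q=1$) does the job. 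With this in hand one reruns the three-exponent H\"older computation of Proposition~\ref{prop:F-coerc=delta-m}: pick $\gamma\in(\delta,\delta_m(L;\theta))$ and $\alpha\in(0,\min\{\delta,\alpha_m(L;\theta)\})$, split $-\delta\varphi=-\alpha\tfrac{\gamma-\delta}{\gamma-\alpha}\varphi-\gamma\tfrac{\delta-\alpha}{\gamma-\alpha}\varphi$, integrate against $d\mu_\theta$, and bound the first factor by $\alpha_m(L;\theta)$ and the second by $\delta^A_m(L;\theta)=\delta_m(L;\theta)$ (Theorem~\ref{deltamthetaThm}). This yields $\varepsilon,C>0$ with $F^{\theta,\delta}_m(\varphi)\ge\varepsilon(\sup\varphi-E_m(\varphi))-C$ on $\cB_m$.

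For the converse, suppose $F^{\theta,\delta}_m$ is coercive with gauge $\varepsilon_0>0$. As in Proposition~\ref{prop:F-coerc=delta-m}, Lemma~\ref{lem:mu-max=sup-E} turns this into the uniform bound
\[
\int_X\frac{\big(\prod_i\mu_i^{\delta(1-\varepsilon_0)/(md_m)}\big)\,\mu_{\maxop}^{\delta\varepsilon_0/m}}{\big(\sum_i\mu_i|s_i|^2_{h^m}\big)^{\delta/m}}\,d\mu_\theta\le e^{\delta C'}
\]
for all $H$-orthonormal bases $\{s_i\}$ and all $\mu_i>0$. Fix a prime divisor $F$ over $X$ with $S_m(F)\neq0$, take $\{s_i\}$ compatible with the $\ord_F$-filtration and $\mu_i=e^{t\,\ord_F(s_i)}$ ($t\ge0$), so that $\mu_{\maxop}=e^{t\tau_m(F)}$ by \eqref{eq:mu-max=T-m} and $\tfrac1{md_m}\sum_i\ord_F(s_i)=S_m(F)$ by the algebraic identity in Lemma~\ref{lem:E-m=S-m}; this gives
\[
e^{t(\delta(1-\varepsilon_0)S_m(F)+\delta\varepsilon_0\tau_m(F)/m)}\int_X\frac{d\mu_\theta}{\big(\sum_ie^{t\,\ord_F(s_i)}|s_i|^2_{h^m}\big)^{\delta/m}}\le e^{\delta C'}.
\]
If $\ord_F\theta>0$, Lemma~\ref{lem:I>eta-theta} bounds the integral below by $C_\varepsilon e^{-t(A_\theta(F)+\varepsilon)}$ for each $\varepsilon\in(0,\ord_F\theta)$, so letting $t\to\infty$ and then $\varepsilon\to0$ forces $\delta(1-\varepsilon_0)S_m(F)+\delta\varepsilon_0\tau_m(F)/m\le A_\theta(F)$. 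If $\ord_F\theta=0$, then $A_\theta(F)=A_X(F)$ and, $\theta$ being quasi-positive, its local potential is bounded above, so $d\mu_\theta\gtrsim\omega^n$ on a polydisc around a generic point of $F$; Lemma~\ref{lem:I-lambda>eta} then gives the same inequality. In either case, using $S_m(F)\le\tfrac{d_m-1}{md_m}\tau_m(F)$ (base-point-freeness of $|mL|$), we get $\tfrac{A_\theta(F)}{S_m(F)}\ge\delta+\tfrac{\delta\varepsilon_0}{d_m-1}>\delta$, and taking the infimum over $F$ yields $\delta_m(L;\theta)>\delta$.

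The step I expect to be the main obstacle is controlling the integral from below in the presence of the current: Lemma~\ref{lem:I>eta-theta} applies only when $\ord_F\theta>0$ and unavoidably carries an extra $\varepsilon$ in the exponent, so the divisors with $\ord_F\theta=0$ must be handled separately and the limit $\varepsilon\to0$ taken; moreover, the reduction to $H$-orthonormal bases underlying the ``easy'' inequality $\delta_m(L;\theta)\le\delta^A_m(L;\theta)$ (part of Theorem~\ref{deltamthetaThm}) relies on the klt hypothesis together with the openness theorem \cite{Bern15b,GZ15}. Both points are exactly as in the proof of Theorem~\ref{deltamthetaThm}, so no genuinely new ideas beyond that proof are required.
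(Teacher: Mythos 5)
Your proof is correct and follows exactly the route the paper intends: the paper's own proof is a one-line reference to running the argument of Proposition~\ref{prop:F-coerc=delta-m} with $\omega^n$ replaced by $d\mu_\theta$ and $\alpha_m(L)$ by $\alpha_m(L;\theta)$. You have in fact been more careful than the paper in spelling out the two points it leaves implicit, namely that the klt hypothesis gives $\alpha_m(L;\theta)>0$ via H\"older against an $L^p$ density, and that the divisors with $\ord_F\theta=0$ (which fall outside the hypotheses of Lemma~\ref{lem:I>eta-theta}) must be treated separately using $d\mu_\theta\gtrsim\omega^n$ locally and the original Lemma~\ref{lem:I-lambda>eta}.
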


\begin{proof}
After replacing the volume form $\omega^n$ by the measure $d\mu_\theta$ and $\alpha_m(L)$ by 
$$
\alpha_m(L;\theta):=\sup\bigg\{\lambda>0:\sup_{\varphi\in \cB_m}\int_Xe^{-\lambda(\varphi-\sup\varphi)}d\mu_\theta<\infty\bigg\},
$$
the proof goes through 
following the one for Proposition \ref{prop:F-coerc=delta-m}. 
\end{proof}

When $\delta=1$, we put
$
    F^\theta_m:=F^{\theta,1}_m
$
for simplicity and any critical point of $F^\theta_m$ is called \emph{$\theta$-balanced of level $m$}.
Then using Proposition \ref{prop:F-theta-coercive=delta-m-theta}, Berndtsson convexity \cite{Bern15} and Darvas--Rubinstein principle \cite{DR17}, we get the following quantized version of \cite[Theorem A]{BBJ18}.

\begin{theorem}
\label{thm:finite-dim-YTD-current}
{\rm (Algebraic characterization of $\theta$-balanced metrics)}
One has
\begin{enumerate}[(i)]
    \item if $\delta_m(L;\theta)>1$ there exists a
    $\theta$-balanced metric of level $m$;
    
    \item suppose $\theta$ is semipositive. If there exists a $\theta$-balanced metric of level $m$ then $\delta_m(L;\theta)\geq1$.
    If such a metric is unique then $\delta_m(L;\theta)>1$.
\end{enumerate}
\end{theorem}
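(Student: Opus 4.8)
The plan is to run the proof of Theorem~\ref{thm:finite-dim-YTD} from \S\ref{sec:alpha-balanced} verbatim, with the substitutions $\omega^n\leadsto d\mu_\theta$, $A_X(F)\leadsto A_\theta(F)$ (cf.\ \eqref{Atheta}), $\delta_m(L)\leadsto\delta_m(L;\theta)$ and $\delta^A_m(L)\leadsto\delta^A_m(L;\theta)$, invoking the current-adapted inputs that are already in place: Theorem~\ref{deltamthetaThm} ($\delta_m(L;\theta)=\delta^A_m(L;\theta)$), Proposition~\ref{prop:F-theta-coercive=delta-m-theta} (coercivity of $F^\theta_m=F^{\theta,1}_m$ on $\cB_m$ is equivalent to $\delta_m(L;\theta)>1$), and the analogue of Proposition~\ref{prop:F-m-coercive=>balanced} noted just above the statement (coercivity of $F^{\theta,\delta}_m$ produces a minimizing $(\theta,\delta)$-balanced metric). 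Observe first that, for $\varphi\in\cB_m$, the integral $\int_X e^{-\varphi}\,d\mu_\theta$ is automatically finite, since $\varphi$ is continuous on the compact manifold $X$ and $\theta$ is klt; so no extra integrability hypothesis is needed at finite level $m$, and the compactness machinery (Lemma~\ref{lem:d-1-cptness}) and the $d_{1,m}$-geometry (Lemma~\ref{lem:d-1-m=sup-E}) carry over unchanged.

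For part~(i): if $\delta_m(L;\theta)>1$, then $F^\theta_m$ is coercive on $\cB_m$ by Proposition~\ref{prop:F-theta-coercive=delta-m-theta}, hence it attains its infimum at some $\varphi_0\in\cB_m$, which is then $\theta$-balanced of level $m$. This is immediate.

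For part~(ii): assume $\theta\ge0$ and that a $\theta$-balanced $\varphi_0\in\cB_m$ exists. The key input is Berndtsson convexity \cite{Bern15}: because Bergman geodesics are subgeodesics (recall \eqref{subgeod}) and $\theta\ge0$, the functional $F^\theta_m$ is convex along Bergman geodesics in $\cB_m$, so any critical point, in particular $\varphi_0$, is a global minimizer. Hence $F^\theta_m\ge F^\theta_m(\varphi_0)=:-C$ on $\cB_m$, which on unwinding the definition of $F^\theta_m$ reads $\sup_{\varphi\in\cB_m}\int_X e^{-(\varphi-E_m(\varphi))}\,d\mu_\theta\le e^{C}$, i.e.\ $\delta^A_m(L;\theta)\ge1$; Theorem~\ref{deltamthetaThm} then gives $\delta_m(L;\theta)\ge1$. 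If moreover $\varphi_0$ is the unique $\theta$-balanced potential up to an additive constant, we run the Darvas--Rubinstein argument \cite{DR17} exactly as in the proof of Theorem~\ref{thm:finite-dim-YTD}: with $H:=\FS^{-1}(\varphi_0)$ (recall $E_m$ changes only by a constant, so coercivity is insensitive to this choice), and working inside the slice $\cR=\{\varphi\in\cB_m:E_m(H,\varphi)=0\}$, one shows the slope $\lambda:=\inf\{(F^\theta_m(\varphi)-F^\theta_m(\varphi_0))/d_{1,m}(\varphi_0,\varphi):\varphi\in\cR,\ d_{1,m}(\varphi_0,\varphi)\ge2\}$ is strictly positive---otherwise convexity along Bergman geodesics together with the compactness of Lemma~\ref{lem:d-1-cptness} would yield a second minimizer in $\cR$, contradicting uniqueness. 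Combining $\lambda>0$ with Lemmas~\ref{lem:d-1-cptness} and~\ref{lem:d-1-m=sup-E} gives $F^\theta_m(\varphi)\ge\lambda'(\sup\varphi-E_m(H,\varphi))-C'$ first on $\cR$ and then, by translation invariance, on all of $\cB_m$; thus $F^\theta_m$ is coercive and Proposition~\ref{prop:F-theta-coercive=delta-m-theta} gives $\delta_m(L;\theta)>1$.

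I expect the only genuinely new point, relative to the smooth case, to be checking that Berndtsson's convexity theorem applies verbatim to $F^\theta_m$ when $\theta$ is merely a semipositive klt current rather than a smooth semipositive form; this is the step to verify carefully, and it is precisely the quantized counterpart of the convexity input used by Berman--Boucksom--Jonsson in the proof of \cite[Theorem~A]{BBJ18}. Everything else is insensitive to replacing $\omega^n$ by $d\mu_\theta$, since at finite level $m$ all potentials in $\cB_m$ are bounded.
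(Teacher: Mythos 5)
Your proposal is correct and matches what the paper does: the paper's ``proof'' consists exactly of citing Proposition~\ref{prop:F-theta-coercive=delta-m-theta}, Berndtsson convexity, and the Darvas--Rubinstein principle, with the argument of Theorem~\ref{thm:finite-dim-YTD} carried over verbatim under the substitution $\omega^n\leadsto d\mu_\theta$. The one point you flag---that Berndtsson's convexity must survive replacing smooth semipositive $\theta$ by a semipositive klt current---does go through: writing $\theta=\theta_0+\ddc\psi$, along a Bergman subgeodesic $\varphi_t$ the weight $e^{f_{\theta_0}-\psi-\varphi_t}$ on $K_X^{-1}$ over $\CC\times X$ has curvature $\Phi^*\omega+\ddc\varphi_t+\Phi^*\theta\ge0$ as a current (subgeodesic inequality plus $\theta\ge0$), so $t\mapsto-\log\int_X e^{-\varphi_t}\,d\mu_\theta$ is convex, which is all the argument uses.
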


Finally we remark that, with the help of \cite{DLR19,Bern15b,GZ15}, all the results in \S\ref{sec:limit-behavior} (except Proposition \ref{prop:tKE-appro-smoothly-by-balanced}) and \S\ref{sec:delta-T} can be established in the current setting.
One can also extend the above discussions to soliton type metrics, as in \S\ref{sec:soliton}, and even to coupled soliton metrics, as in Appendix \ref{sec:coupled-soliton}. But we shall not pursue such generality here. 

\medskip
\noi
\textbf{Acknowledgments.}
We thank A. Lahdili and F. Wang for  helpful discussions.
The research of Y.A.R. was supported by
		NSF grants DMS-1515703,1906370 and the Rosi \& Max Varon Visiting
		Professorship at the Weizmann Institute of Science in Fall 2019 and Spring 2020. G.T. was supported by NSFC grants 11331001 and 11890661.
		K.Z. was supported by the CSC award 201706010020 and China post-doctoral grant BX20190014. K.Z. was a Visiting Scholar at the University of Maryland in 2017--2018 when part of this work was initiated and is grateful to T. Darvas for many inspiring conversations back then.

\bibliography{ref.bib}
\bibliographystyle{abbrv}

\end{document}